\numberwithin{equation}{section}
\numberwithin{figure}{section}
\theoremstyle{plain}
\newtheorem{thm}{\protect\theoremname}[section]
\theoremstyle{remark}
\newtheorem{notation}[thm]{\protect\notationname}
\theoremstyle{plain}
\newtheorem{prop}[thm]{\protect\propositionname}
\theoremstyle{remark}
\newtheorem{rem}[thm]{\protect\remarkname}
\theoremstyle{plain}
\newtheorem{assumption}[thm]{\protect\assumptionname}
\theoremstyle{definition}
\newtheorem{defn}[thm]{\protect\definitionname}
\theoremstyle{plain}
\newtheorem{lem}[thm]{\protect\lemmaname}
\theoremstyle{remark}
\newtheorem*{acknowledgement*}{\protect\acknowledgementname}
\setlist[itemize]{leftmargin=*}
\setlist[enumerate]{leftmargin=*}
\DeclareFontFamily{U}{matha}{\hyphenchar\font45}
\DeclareFontShape{U}{matha}{m}{n}{
      <5> <6> <7> <8> <9> <10> gen * matha
      <10.95> matha10 <12> <14.4> <17.28> <20.74> <24.88> matha12
      }{}
\DeclareSymbolFont{matha}{U}{matha}{m}{n}
\DeclareFontFamily{U}{mathx}{\hyphenchar\font45}
\DeclareFontShape{U}{mathx}{m}{n}{
      <5> <6> <7> <8> <9> <10>
      <10.95> <12> <14.4> <17.28> <20.74> <24.88>
      mathx10
      }{}
\DeclareSymbolFont{mathx}{U}{mathx}{m}{n}
\DeclareMathDelimiter{\vvvert}{0}{matha}{"7E}{mathx}{"17}
\DeclareMathAlphabet{\scal}{U}{dutchcal}{m}{n}
\def\th@plain{\thm@notefont{}\itshape}
\def\th@definition{\thm@notefont{}\normalfont}
\providecommand{\acknowledgementname}{Acknowledgement}
\providecommand{\assumptionname}{Assumption}
\providecommand{\definitionname}{Definition}
\providecommand{\lemmaname}{Lemma}
\providecommand{\notationname}{Notation}
\providecommand{\propositionname}{Proposition}
\providecommand{\remarkname}{Remark}
\providecommand{\theoremname}{Theorem}
\begin{document}
\title[Hierarchical Structure of Metastability in the Reversible Inclusion
Process]{Hierarchical Structure of Metastability in the Reversible Inclusion
Process: Third Time Scale and Complete Characterization}
\author{Seonwoo Kim}
\address{S. Kim. Department of Mathematical Sciences, Seoul National University,
Republic of Korea.}
\email{ksw6leta@snu.ac.kr}
\begin{abstract}
In this article, we study the hierarchical structure of metastability
in the reversible inclusion process. We fully characterize the third
time scale of metastability subject to any underlying geometry of
the system and prove that this is the last time scale. We also demonstrate
that there are no other meaningful time scales except the three identified
ones. This work completes the verification of the conjecture made
in \cite{BDG} which was partially resolved on the first time scale
in \cite{BDG} and on the second time scale in \cite{Kim RIP-2nd}.
Main tools are potential-theoretic approach and martingale approach
to metastability; we thoroughly investigate the highly-complicated
energy landscape of the system to construct suitable test objects
to provide sharp asymptotics on capacities.
\end{abstract}

\maketitle
\tableofcontents{}

\section{\label{sec1}Introduction}

\emph{Metastability} is a widespread phenomenon that occurs in numerous
dynamical systems possessing two or more locally stable states. In
the context of statistical physics, metastability corresponds to the
so-called first-order phase transition, in that certain observables
of a system exhibit discontinuity with respect to the intensive variables
of the system. Typical examples are found in small random perturbations
of dynamical systems \cite{BEGK Spec,BEGK,BGK,LeeSeo NR1,LeeSeo NR2,RezSeo},
interacting particle systems with sticky interactions \cite{BDG,GroReVa 13,Kim RIP-2nd,KS NRIP,LMS critical,LMS resolvent},
ferromagnetic spin systems in low temperatures \cite{BAC,CGOV,KS IsingPotts-growing,NZ,NS Ising1},
etc. From a dynamical point of view, metastability can be explained
as follows. Starting from one locally stable state, the system tends
to stay near the initial state since it is surrounded by a certain
\emph{energy barrier} which blocks any transitions to other stable
states. However, after a long time, the system attains sufficient
amount of randomness which triggers an energetic fluctuation to overcome
the energy barrier and make a transition to another stable state.

If a system possesses a complex energy landscape with multiple levels
of the stability of states and the energy barriers between them, one
may expect that metastability also occurs in a highly complicated
manner. This is indeed the case; the system enjoys the so-called \emph{hierarchical
structure} of metastability depending especially on the different
values of energy barriers between the locally stable states existing
in the system. This phenomenon was treated in a systematic way in
\cite{BL rev-MP,LanXu}. However, mathematical verifications of this
hierarchical behavior of metastability for actual models are limited
in the literature, and the only existing result at this moment is
presented for random walks in potential fields \cite{LanMisTsu,LanSeo RW-pot-field}.

In this article, we verify the hierarchical structure of metastability
occurring in a concrete particle system, the \emph{reversible inclusion
process}. Inclusion process is an interacting particle system first
introduced in \cite{GKR,GKRV,GRV} as a discrete dual process of certain
interacting diffusion models known as Brownian energy and momentum
processes. It was also recognized as a bosonic counterpart to the
already-well-known exclusion process, where the exclusive interactions
therein were replaced by sticky interactions between the particles.

\emph{Condensation} refers to the phenomenon such that large amount
of particles concentrate at a single site, which is triggered by sticky
behavior of the particles. It was verified in \cite{GroReVa 11} that
the phenomenon of condensation occurs in a certain class of inclusion
processes in the condensing regime, i.e. when the factor $d_{N}$
controlling the diffusive behavior of the system vanishes in a certain
manner as the particle number $N$ tends to infinity (see \eqref{e_LN-def}
and \eqref{e_dN-logN}). The results of condensation were further
extended to inclusion processes under various conditions on $d_{N}$
and with different geometries of the underlying space \cite{ACR,JCG}.
We remark that non-condensing regimes were also thoroughly investigated
in the community during the past few years, regarding the topics of
duality \cite{CGGR,CGR20,CGR21}, local equilibrium properties \cite{KR,OpokuRedig},
non-equilibrium limit theorems \cite{FRS,FGS}, etc.

If there are multiple sites on which the condensation phenomenon takes
place, we are likely to observe a slow transition from one condensed
state to another on a longer time scale. In this case, suppose that
the inclusion process starts from a condensed state. Then, the dynamics
spends a long time before escaping it, thereby revealing a metastable
nature of the initial condensed state. Moreover, in a random moment
later, the process eventually makes a sudden transition to another
condensed state of the system. This behavior fits perfectly well into
the framework of metastability.

The first result on the metastability of inclusion processes was presented
in \cite{GroReVa 13}, in which the authors considered the \emph{symmetric}
inclusion process, such that the underlying particle movements between
the sites are completely symmetric. The main idea was to accelerate
the original process by a certain time scale $\theta_{1}:=1/d_{N}$,
and prove that this accelerated process converges to a certain scaling
limit which explains the macroscopic metastable transitions between
the condensed states. It was further generalized in \cite{BDG} to
the \emph{reversible} setting, where the particle movements are not
necessarily symmetric but satisfy the detailed balance condition (cf.
\eqref{e_cxy-def}). A remarkable observation was that even though
the underlying particle movements are assumed to be irreducible, the
scaling limit process representing the metastable transitions between
the condensed states is not necessarily irreducible (see Section \ref{sec2.3}
for details). In other words, there exist pairs of condensable sites
such that the dynamics cannot make metastable transitions between
those sites on the time scale $\theta_{1}=1/d_{N}$.

Due to the ergodicity of the original inclusion process, it is natural
to claim that the time scale $\theta_{1}$ is too short to detect
such non-observable metastable transitions. Thus, the next step is
to investigate longer time scales. This was conjectured in \cite{BDG},
further claiming that there exist precisely two more time scales,
namely $\theta_{2}:=N/d_{N}^{2}$ and $\theta_{3}:=N^{2}/d_{N}^{3}$,
and all metastable transitions are observable on the third and last
time scale $\theta_{3}$. The authors justified the argument by demonstrating
it on one-dimensional simple geometry. This conjecture was partially
proved in \cite{Kim RIP-2nd} regarding the second time scale $\theta_{2}$,
in that for any underlying geometry of particle movements, the $\theta_{2}$-accelerated
inclusion process converges to a new scaling limit process, which
is still not necessarily irreducible. Thus, it was also conjectured
in \cite{Kim RIP-2nd} that there remains one more time scale.

The main result of this article verifies that this conjecture is indeed
true; \emph{we fully characterize the behavior of metastable transitions
on the third time scale $\theta_{3}=N^{2}/d_{N}^{3}$ subject to any
geometry of the underlying system}, thereby completing the investigation
of the hierarchical structure of metastability occurring in the reversible
inclusion process. In particular, we prove that the third scaling
limit process becomes \emph{irreducible}, which implies that the third
time scale is indeed the last one; all metastable transitions become
observable hereon. Moreover, we also demonstrate that there are no
other meaningful intermediate time scales between the three scales
$\theta_{1}$, $\theta_{2}$, and $\theta_{3}$.

The hierarchy of metastable time scales is closely related to the
so-called \emph{$\Gamma$-expansion approach} to metastability \cite{BGL,Landim Gamma-exp,LanMisSau},
a topic recently well recognized in the community. Ongoing work tries
to characterize this $\Gamma$-expansion of level-two large deviation
rate functionals subject to the reversible inclusion process; refer
to Remark \ref{r_3rd}-(4) for more details.

Two essential features regarding the metastability phenomenon to be
proved in this article are: \emph{Markov chain model reduction} and
\emph{Eyring--Kramers formula}. The Markov chain model reduction
explains the successive movements of the condensate of particles in
terms of a simple macroscopic limit Markov chain on the site space
by employing certain coarse-graining arguments to the complicated
configuration space; details can be found in the recent survey \cite{Landim Meta-MC}.
The Eyring--Kramers formula provides a precise asymptotic estimate
of the mean metastable transition time, on which a deep literature
is built starting from the mid 20th century. Interested readers are
referred to the monograph \cite{BdH} for intensive discussions and
rich references on this topic.

To prove the main results, we employ the so-called \emph{potential-theoretic
approach} \cite{BEGK Spec,BEGK,BGK} and \emph{martingale approach}
\cite{BL TM,BL MG} to metastability. We briefly summarize in Appendix
\ref{appenA} the essential background. For complete references on
these fruitful methodologies, we refer the readers to \cite{BdH,Landim Meta-MC}.

The main mathematical difficulty of this work lies on the analysis
of the highly-complicated energy landscape of the system. In the previous
researches, the energy landscape was mainly one-dimensional on the
first time scale \cite{BDG} and two-dimensional on the second time
scale \cite{Kim RIP-2nd}. In contrast, on the third time scale, the
energy landscape is essentially \emph{three-dimensional}, and in particular
we must conduct a precise analysis regarding all three dimensions
of the relevant simplexes of configurations. The analysis is divided
into two parts. First, we reduce the dimensions of the simplexes from
three to one or two depending on the underlying geometry. Second,
we construct an infinite ladder graph along with a resolvent equation
defined thereon, such that a solution to the resolvent equation becomes
a suitable test object after certain rescaling. A more detailed explanation
of this procedure is given in Section \ref{sec4.1}.

The rest of the article is organized as follows. In Section \ref{sec2},
we settle the notation of the model and present our main results.
In Section \ref{sec3}, we explain our main strategy and reduce the
proofs of the main results to certain precise estimates of capacities.
In Section \ref{sec4}, we conduct a precise analysis of the energy
landscape of our model, considering the intuitive case of two metastable
states. Then, in Sections \ref{sec5} and \ref{sec6}, we construct
a suitable test function (in Section \ref{sec5}) and a test flow
(in Section \ref{sec6}) which are the key objects for the capacity
estimate. Finally, in Section \ref{sec7}, we briefly discuss the
general case of three or more metastable states. In the appendix,
we summarize the potential theory used throughout the article (Appendix
\ref{appenA}) and provide some technical proofs omitted along the
way (Appendix \ref{appenB}).

\section{\label{sec2}Notation and Main Results}

In this section we define the model, mathematically formulate the
phenomena of condensation and metastability, and state the main results.
Those already familiar with the context may proceed to Section \ref{sec2.5}
for the main results.

\subsection{\label{sec2.1}Reversible condensing inclusion process}

We fix a finite site space $S$. Thereon, an underlying random walk
structure is defined as a continuous-time irreducible Markov chain,
which is characterized by some transition rate function $r(\cdot,\,\cdot):S\times S\to[0,\,\infty)$,
where for simplicity we define $r(x,\,x)=0$ for $x\in S$. In addition,
we assume that the underlying random walk is \emph{reversible} with
respect to its unique stationary measure $(m_{x})_{x\in S}$:
\begin{equation}
c_{xy}:=m_{x}r(x,\,y)=m_{y}r(y,\,x)\text{ for all }x,\,y\in S.\label{e_cxy-def}
\end{equation}
We normalize the measure $(m_{x})_{x\in S}$ such that the maximal
value is $1$, i.e., $\max_{x\in S}m_{x}=1$. See Figure \ref{fig2.1}-left
for an illustration of such structure.

\begin{figure}
\begin{tikzpicture}
\fill[rounded corners,white] (-2.25,2.25) rectangle (2.25,-2.25); 
\fill[black!30!white] (-2,2) circle (0.1); \draw (-2,2) circle (0.1);
\fill[black!30!white] (-1,2) circle (0.1); \draw (-1,2) circle (0.1);
\fill[black!30!white] (0,2) circle (0.1); \draw (0,2) circle (0.1);
\fill[black!30!white] (1,2) circle (0.1); \draw (1,2) circle (0.1);
\fill[black!30!white] (2,2) circle (0.1); \draw (2,2) circle (0.1);
\fill[black!30!white] (-2,1) circle (0.1); \draw (-2,1) circle (0.1);
\fill[black!30!white] (-1,1) circle (0.1); \draw (-1,1) circle (0.1);
\fill[black!30!white] (0,1) circle (0.1); \draw (0,1) circle (0.1);
\fill[black!30!white] (1,1) circle (0.1); \draw (1,1) circle (0.1);
\fill[black!30!white] (2,1) circle (0.1); \draw (2,1) circle (0.1);
\fill[black!30!white] (-2,0) circle (0.1); \draw (-2,0) circle (0.1);
\fill[black!30!white] (-1,0) circle (0.1); \draw (-1,0) circle (0.1);
\fill[black!30!white] (0,0) circle (0.1); \draw (0,0) circle (0.1);
\fill[black!30!white] (1,0) circle (0.1); \draw (1,0) circle (0.1);
\fill[black!30!white] (2,0) circle (0.1); \draw (2,0) circle (0.1);
\fill[black!30!white] (-2,-1) circle (0.1); \draw (-2,-1) circle (0.1);
\fill[black!30!white] (-1,-1) circle (0.1); \draw (-1,-1) circle (0.1);
\fill[black!30!white] (0,-1) circle (0.1); \draw (0,-1) circle (0.1);
\fill[black!30!white] (1,-1) circle (0.1); \draw (1,-1) circle (0.1);
\fill[black!30!white] (2,-1) circle (0.1); \draw (2,-1) circle (0.1);
\fill[black!30!white] (-2,-2) circle (0.1); \draw (-2,-2) circle (0.1);
\fill[black!30!white] (-1,-2) circle (0.1); \draw (-1,-2) circle (0.1);
\fill[black!30!white] (0,-2) circle (0.1); \draw (0,-2) circle (0.1);
\fill[black!30!white] (1,-2) circle (0.1); \draw (1,-2) circle (0.1);
\fill[black!30!white] (2,-2) circle (0.1); \draw (2,-2) circle (0.1);
\draw[very thick,to-] (-1.9,2)--(-1.1,2); \draw[very thick] (-0.9,2)--(-0.1,2); \draw[very thick,-to] (0.1,2)--(0.9,2); \draw[very thick] (1.1,2)--(1.9,2);
\draw[very thick] (-2,1.9)--(-2,1.1); \draw[very thick,to-] (1,1.9)--(1,1.1); \draw[very thick, to-] (2,1.9)--(2,1.1);
\draw[very thick,to-] (-1.9,1)--(-1.1,1); \draw[very thick] (0.1,1)--(0.9,1); \draw[very thick] (1.1,1)--(1.9,1);
\draw[very thick,to-] (-2,0.9)--(-2,0.1); \draw[very thick,-to] (-1,0.9)--(-1,0.1); \draw[very thick,-to] (0,0.9)--(0,0.1); \draw[very thick,-to] (1,0.9)--(1,0.1);
\draw[very thick] (-1.9,0)--(-1.1,0); \draw[very thick,-to] (-0.9,0)--(-0.1,0); \draw[very thick,to-] (1.1,0)--(1.9,0);
\draw[very thick,-to] (-2,-0.1)--(-2,-0.9); \draw[very thick,-to] (-1,-0.1)--(-1,-0.9); \draw[very thick,-to] (0,-0.1)--(0,-0.9); \draw[very thick,-to] (2,-0.1)--(2,-0.9);
\draw[very thick] (-1.9,-1)--(-1.1,-1); \draw[very thick,to-] (-0.9,-1)--(-0.1,-1); \draw[very thick,-to] (0.1,-1)--(0.9,-1);
\draw[very thick] (-2,-1.1)--(-2,-1.9); \draw[very thick] (-1,-1.1)--(-1,-1.9); \draw[very thick,-to] (2,-1.1)--(2,-1.9);
\draw[very thick] (-1.9,-2)--(-1.1,-2); \draw[very thick,to-] (-0.9,-2)--(-0.1,-2); \draw[very thick,to-] (0.1,-2)--(0.9,-2); \draw[very thick,-to] (1.1,-2)--(1.9,-2);
\end{tikzpicture}
\hspace{20mm}
\begin{tikzpicture}
\fill[rounded corners,white] (-2.25,2.25) rectangle (2.25,-2.25); 
\fill[black] (-2,2) circle (0.1); \draw (-2,2) circle (0.1);
\fill[white] (-1,2) circle (0.1); \draw[black!30!white] (-1,2) circle (0.1);
\fill[white] (0,2) circle (0.1); \draw[black!30!white] (0,2) circle (0.1);
\fill[black] (1,2) circle (0.1); \draw (1,2) circle (0.1);
\fill[black] (2,2) circle (0.1); \draw (2,2) circle (0.1);
\fill[black] (-2,1) circle (0.1); \draw (-2,1) circle (0.1);
\fill[white] (-1,1) circle (0.1); \draw[black!30!white] (-1,1) circle (0.1);
\fill[white] (0,1) circle (0.1); \draw[black!30!white] (0,1) circle (0.1);
\fill[white] (1,1) circle (0.1); \draw[black!30!white] (1,1) circle (0.1);
\fill[white] (2,1) circle (0.1); \draw[black!30!white] (2,1) circle (0.1);
\fill[white] (-2,0) circle (0.1); \draw[black!30!white] (-2,0) circle (0.1);
\fill[white] (-1,0) circle (0.1); \draw[black!30!white] (-1,0) circle (0.1);
\fill[white] (0,0) circle (0.1); \draw[black!30!white] (0,0) circle (0.1);
\fill[black] (1,0) circle (0.1); \draw (1,0) circle (0.1);
\fill[white] (2,0) circle (0.1); \draw[black!30!white] (2,0) circle (0.1);
\fill[black] (-2,-1) circle (0.1); \draw (-2,-1) circle (0.1);
\fill[black] (-1,-1) circle (0.1); \draw (-1,-1) circle (0.1);
\fill[white] (0,-1) circle (0.1); \draw[black!30!white] (0,-1) circle (0.1);
\fill[white] (1,-1) circle (0.1); \draw[black!30!white] (1,-1) circle (0.1);
\fill[white] (2,-1) circle (0.1); \draw[black!30!white] (2,-1) circle (0.1);
\fill[black] (-2,-2) circle (0.1); \draw (-2,-2) circle (0.1);
\fill[black] (-1,-2) circle (0.1); \draw (-1,-2) circle (0.1);
\fill[white] (0,-2) circle (0.1); \draw[black!30!white] (0,-2) circle (0.1);
\fill[white] (1,-2) circle (0.1); \draw[black!30!white] (1,-2) circle (0.1);
\fill[black] (2,-2) circle (0.1); \draw (2,-2) circle (0.1);
\draw[very thick,to-] (-1.9,2)--(-1.1,2); \draw[very thick] (-0.9,2)--(-0.1,2); \draw[very thick,-to] (0.1,2)--(0.9,2); \draw[very thick] (1.1,2)--(1.9,2);
\draw[very thick] (-2,1.9)--(-2,1.1); \draw[very thick,to-] (1,1.9)--(1,1.1); \draw[very thick, to-] (2,1.9)--(2,1.1);
\draw[very thick,to-] (-1.9,1)--(-1.1,1); \draw[very thick] (0.1,1)--(0.9,1); \draw[very thick] (1.1,1)--(1.9,1);
\draw[very thick,to-] (-2,0.9)--(-2,0.1); \draw[very thick,-to] (-1,0.9)--(-1,0.1); \draw[very thick,-to] (0,0.9)--(0,0.1); \draw[very thick,-to] (1,0.9)--(1,0.1);
\draw[very thick] (-1.9,0)--(-1.1,0); \draw[very thick,-to] (-0.9,0)--(-0.1,0); \draw[very thick,to-] (1.1,0)--(1.9,0);
\draw[very thick,-to] (-2,-0.1)--(-2,-0.9); \draw[very thick,-to] (-1,-0.1)--(-1,-0.9); \draw[very thick,-to] (0,-0.1)--(0,-0.9); \draw[very thick,-to] (2,-0.1)--(2,-0.9);
\draw[very thick] (-1.9,-1)--(-1.1,-1); \draw[very thick,to-] (-0.9,-1)--(-0.1,-1); \draw[very thick,-to] (0.1,-1)--(0.9,-1);
\draw[very thick] (-2,-1.1)--(-2,-1.9); \draw[very thick] (-1,-1.1)--(-1,-1.9); \draw[very thick,-to] (2,-1.1)--(2,-1.9);
\draw[very thick] (-1.9,-2)--(-1.1,-2); \draw[very thick,to-] (-0.9,-2)--(-0.1,-2); \draw[very thick,to-] (0.1,-2)--(0.9,-2); \draw[very thick,-to] (1.1,-2)--(1.9,-2);
\end{tikzpicture}\caption{\label{fig2.1}In the left figure, $25$ circles indicate the sites
in $S$ and edges indicate the underlying random walk $r(\cdot,\,\cdot)$.
Here, $x,\,y\in S$ are connected with a directed edge from $x$ to
$y$ if $r(x,\,y)>r(y,\,x)>0$ and with an undirected edge if $r(x,\,y)=r(y,\,x)>0$.
In the right figure, black circles denote the sites in $S_{\star}$
and white circles denote the sites in $S_{0}=S\setminus S_{\star}$.}
\end{figure}
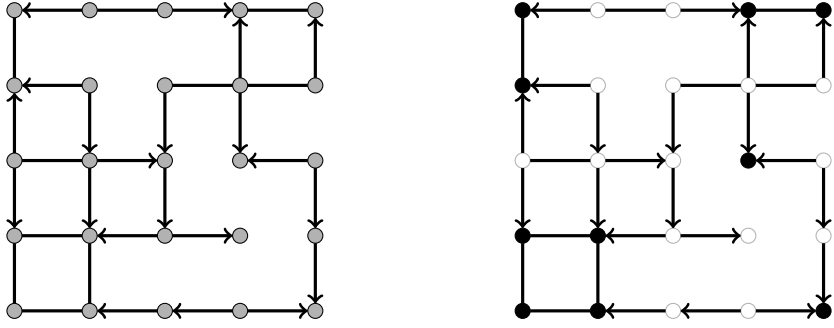

\begin{notation}
\label{n_different}In this article, for a set $A$ and elements $\alpha$
and $\beta$, writing $\alpha,\,\beta\in A$ or $\{\alpha,\,\beta\}\subseteq A$
implies implicitly that $\alpha\ne\beta$. The same is also true for
three or more elements.
\end{notation}

We define the configuration space $\mathcal{H}_{N}$ as
\begin{equation}
\mathcal{H}_{N}:=\Big\{\eta=(\eta_{x})_{x\in S}\in\mathbb{N}^{S}:\sum_{x\in S}\eta_{x}=N\Big\}.\label{e_HN-def}
\end{equation}
Here, each $\eta\in\mathcal{H}_{N}$ represents the configuration
of $N$ particles in $S$. Notice that since $S$ is finite, $\mathcal{H}_{N}$
is also finite. Then, the \emph{inclusion process} is the continuous-time
Markov chain $\{\eta_{N}(t)\}_{t\ge0}$ on $\mathcal{H}_{N}$, characterized
by its transition rate function $r_{N}(\cdot,\,\cdot)$ defined by
\begin{equation}
r_{N}(\eta,\,\zeta):=\begin{cases}
\eta_{x}(d_{N}+\eta_{y})r(x,\,y) & \text{if }\zeta=\eta^{x,\,y}\text{ for some }x,\,y\in S,\\
0 & \text{otherwise}.
\end{cases}\label{e_rN-def}
\end{equation}
Here, $\eta^{x,\,y}\in\mathcal{H}_{N}$ is the configuration obtained
from $\eta$ by sending a particle, if possible, from $x\in S$ to
$y\in S$, and $d_{N}$ is a positive control parameter which depends
only on the number of particles $N$. Denote by $\mathcal{L}_{N}$
the infinitesimal generator corresponding to the process $\eta_{N}(\cdot)$,
such that we have the following representation:
\begin{equation}
(\mathcal{L}_{N}f)(\eta)=\sum_{x,\,y\in S}\eta_{x}(d_{N}+\eta_{y})r(x,\,y)(f(\eta^{x,\,y})-f(\eta)).\label{e_LN-def}
\end{equation}
Further, we assume that
\begin{equation}
\lim_{N\to\infty}d_{N}\log N=0,\label{e_dN-logN}
\end{equation}
which guarantees that the diffusive characteristics of the particle
system driven by $\eta_{x}d_{N}r(x,\,y)$ is much smaller than the
sticky characteristics driven by $\eta_{x}\eta_{y}r(x,\,y)$. Consequently,
the inclusion process exhibits the so-called \emph{condensation} phenomenon,
which is rigorously formulated in Section \ref{sec2.2}.

We denote by $\mathbb{P}_{\eta}=\mathbb{P}_{\eta}^{N}$ and $\mathbb{E}_{\eta}=\mathbb{E}_{\eta}^{N}$
the law and the corresponding expectation of the process starting
from $\eta\in\mathcal{H}_{N}$.

\subsection{\label{sec2.2}Condensation phenomenon}

Since the underlying random walk $r(\cdot,\,\cdot)$ is irreducible,
it is straightforward to see that the inclusion process $\eta_{N}(\cdot)$
is irreducible on the configuration space $\mathcal{H}_{N}$. Thus,
it admits a unique stationary distribution, which is denoted as $\mu_{N}$.
The following proposition gives an explicit characterization of $\mu_{N}$.
\begin{prop}[{\cite[Theorem 2.1]{GroReVa 11}}]
\label{p_muN-prod}The inclusion process $\eta_{N}(\cdot)$ is reversible
with respect to $\mu_{N}$. Moreover,
\begin{equation}
\mu_{N}(\eta)=\frac{1}{Z_{N}}\prod_{x\in S}\big[w_{N}(\eta_{x})m_{x}^{\eta_{x}}\big]\text{ for all }\eta\in\mathcal{H}_{N},\label{e_muN-prod}
\end{equation}
where $w_{N}(n):=\frac{\Gamma(d_{N}+n)}{n!\Gamma(d_{N})}$, $\Gamma(\cdot)$
is the usual Gamma function, and $Z_{N}$ is the normalization factor
such that $\mu_{N}$ is a probability measure.
\end{prop}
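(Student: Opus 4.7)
The plan is to directly verify that the candidate product measure $\mu_N$ defined in \eqref{e_muN-prod} satisfies the detailed balance relation
\[
\mu_N(\eta)\, r_N(\eta,\,\eta^{x,y}) \;=\; \mu_N(\eta^{x,y})\, r_N(\eta^{x,y},\,\eta)
\]
for every $\eta\in\mathcal{H}_N$ with $\eta_x\ge 1$ and every $x,\,y\in S$. Because $\mathcal{H}_N$ is finite and $\eta_N(\cdot)$ is irreducible, there is a unique stationary probability measure, so as soon as detailed balance is established, both reversibility and the explicit product formula for $\mu_N$ follow simultaneously.

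Concretely, I would fix such $\eta$ and $x,\,y$, and compute the ratio $\mu_N(\eta^{x,y})/\mu_N(\eta)$. Since $\mu_N$ factorizes over sites and $\eta^{x,y}$ differs from $\eta$ only at the coordinates $x$ and $y$, only the factors $w_N(\eta_x)m_x^{\eta_x}$ and $w_N(\eta_y)m_y^{\eta_y}$ change; the $m$-ratio contributes $m_y/m_x$, while the Gamma-function recursion $\Gamma(d_N+n)=(d_N+n-1)\,\Gamma(d_N+n-1)$ gives
\[
\frac{w_N(\eta_x-1)}{w_N(\eta_x)}=\frac{\eta_x}{d_N+\eta_x-1},\qquad\frac{w_N(\eta_y+1)}{w_N(\eta_y)}=\frac{d_N+\eta_y}{\eta_y+1}.
\]
Substituting this together with the jump rates $r_N(\eta,\eta^{x,y})=\eta_x(d_N+\eta_y)r(x,y)$ and $r_N(\eta^{x,y},\eta)=(\eta_y+1)(d_N+\eta_x-1)r(y,x)$ from \eqref{e_rN-def}, the four combinatorial factors $\eta_x$, $\eta_y+1$, $d_N+\eta_x-1$, $d_N+\eta_y$ cancel cleanly, and the detailed balance identity reduces to
\[
m_x\,r(x,\,y)\;=\;m_y\,r(y,\,x),
\]
which is exactly the reversibility hypothesis \eqref{e_cxy-def} on the underlying random walk.

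The degenerate case $\eta_x=0$ is automatic since then $r_N(\eta,\eta^{x,y})=0$ and no jump from $x$ to $y$ is possible. I do not anticipate any genuine obstacle here: the proposition is essentially a routine lifting of the single-particle reversibility of $r(\cdot,\,\cdot)$ to the many-particle inclusion dynamics, and the specific weights $w_N(n)=\Gamma(d_N+n)/(n!\,\Gamma(d_N))$ are precisely what align the combinatorial factors coming from the rates \eqref{e_rN-def} with the Gamma-function recursion so that this lifting goes through.
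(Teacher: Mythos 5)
Your proposal is correct: the detailed balance verification is exactly the standard argument, and all the ratios you compute ($w_N(\eta_x-1)/w_N(\eta_x)=\eta_x/(d_N+\eta_x-1)$, $w_N(\eta_y+1)/w_N(\eta_y)=(d_N+\eta_y)/(\eta_y+1)$, the reverse rate $(\eta_y+1)(d_N+\eta_x-1)r(y,x)$) check out, reducing the identity precisely to the hypothesis $m_xr(x,y)=m_yr(y,x)$ of \eqref{e_cxy-def}. The paper does not reprove this proposition but cites \cite[Theorem 2.1]{GroReVa 11}, whose proof is this same detailed-balance computation, so your approach coincides with the source.
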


\begin{rem}
\label{r_non-rev}Proposition \ref{p_muN-prod} relies heavily on
the fact that the underlying random walk $r(\cdot,\,\cdot)$ is reversible.
Indeed, if $r(\cdot,\,\cdot)$ is more generally non-reversible, then
the inclusion process is not necessarily reversible; even worse, $\mu_{N}$
does not admit a product-type formula as in \eqref{e_muN-prod}. Thus,
it is far more challenging to describe the condensation and metastable
behaviors. Refer to \cite{KS NRIP} for a detailed investigation under
this more general setting.
\end{rem}

According to the formula in \eqref{e_muN-prod}, in terms of the stationary
distribution $\mu_{N}$, it is natural to expect that the sites in
$S$ with maximal value of $m$ (which is $1$) are the more important
ones. Therefore, we define
\begin{equation}
S_{\star}:=\{x\in S:m_{x}=1\}\quad\text{and}\quad S_{0}:=S\setminus S_{\star}.\label{e_S-star-S0-def}
\end{equation}
Then, it is obvious that $m_{a}<1$ for all $a\in S_{0}$. For an
illustration, refer to Figure \ref{fig2.1}-right.

As stated in Section \ref{sec2.1}, owing to the decaying condition
\eqref{e_dN-logN} of $d_{N}$, we observe the (static) condensation
phenomenon in the system. This is mathematically formulated as follows.
For each $x\in S$, we define
\begin{equation}
\mathcal{E}_{N}^{x}:=\{\xi^{x}\}:=\{\eta\in\mathcal{H}_{N}:\eta_{x}=N\},\label{e_ENx-xix-def}
\end{equation}
so that $\xi^{x}$ represents the configuration of which all $N$
particles are located at the single site $x\in S$. Moreover, for
notational convenience, we write
\begin{equation}
\mathcal{E}_{N}(A):=\bigcup_{x\in A}\mathcal{E}_{N}^{x}\text{ for each }A\subseteq S.\label{e_ENA-def}
\end{equation}

\begin{prop}[{Condensation, \cite[Proposition 2.1]{BDG}}]
\label{p_condensation}It holds that
\[
\lim_{N\to\infty}\mu_{N}(\mathcal{E}_{N}^{x})=\frac{1}{|S_{\star}|}\text{ for every }x\in S_{\star}.
\]
Consequently, $\lim_{N\to\infty}\mu_{N}(\mathcal{H}_{N}\setminus\mathcal{E}_{N}(S_{\star}))=0$.
\end{prop}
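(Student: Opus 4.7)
The plan is to reduce the claim to a sharp asymptotic for the normalization constant $Z_{N}$. By the product formula in Proposition \ref{p_muN-prod}, $\mu_{N}(\mathcal{E}_{N}^{x}) = \mu_{N}(\xi^{x}) = w_{N}(N)\, m_{x}^{N}/Z_{N}$, which for $x\in S_{\star}$ reduces to $w_{N}(N)/Z_{N}$ since $m_{x}=1$. Hence it suffices to show
\[
Z_{N} \;=\; |S_{\star}|\, w_{N}(N)\, (1+o(1)),
\]
after which the first assertion follows immediately, and the consequence is obtained by summing over $x\in S_{\star}$ and using $\mu_{N}(\mathcal{H}_{N})=1$.

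First I would establish uniform pointwise asymptotics for $w_{N}(\cdot)$. Rewriting $w_{N}(n) = \frac{d_{N}}{n}\prod_{j=1}^{n-1}(1+d_{N}/j)$ and using $\sum_{j=1}^{n-1}d_{N}/j \le d_{N}\log N = o(1)$ by \eqref{e_dN-logN}, one obtains $w_{N}(n)=\frac{d_{N}}{n}(1+o(1))$ uniformly in $1\le n\le N$; in particular $w_{N}(N)\sim d_{N}/N$, and there exists a universal constant $C$ with $w_{N}(n)\le Cd_{N}/n$ for all $1\le n\le N$.

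Next I decompose $Z_{N}$ according to the support size $k:=|\{x\in S:\eta_{x}>0\}|$. For $k=1$ the contribution equals $\sum_{x\in S}w_{N}(N)\,m_{x}^{N}$; the $x\in S_{\star}$ terms sum to the leading order $|S_{\star}|\,w_{N}(N)$, while each $a\in S_{0}$ contributes $w_{N}(N)(m_{a})^{N}$, which is exponentially smaller than $w_{N}(N)$ since $m_{a}<1$. For each $k\ge 2$, combining $\prod_{i}w_{N}(n_{i})\le C^{k}d_{N}^{k}/(n_{1}\cdots n_{k})$ with the combinatorial estimate
\[
\sum_{\substack{n_{1}+\cdots+n_{k}=N\\ n_{i}\ge 1}}\frac{1}{n_{1}\cdots n_{k}} \;\le\; \frac{C_{k}(\log N)^{k-1}}{N},
\]
proved by induction on $k$ (the base case $k=1$ is trivial, and the inductive step reduces to a harmonic-type sum via partial fractions $\frac{1}{n_{i}(N-n_{i})}=\frac{1}{N}(\frac{1}{n_{i}}+\frac{1}{N-n_{i}})$), yields that the $k$-th contribution is bounded by
\[
C'_{k}\,\frac{d_{N}^{k}(\log N)^{k-1}}{N} \;=\; \frac{d_{N}}{N}\cdot C'_{k}(d_{N}\log N)^{k-1} \;=\; o(w_{N}(N))
\]
by \eqref{e_dN-logN}. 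Summing over the finitely many $2\le k\le|S|$ and combining with the $k=1$ computation gives $Z_{N}=|S_{\star}|\,w_{N}(N)(1+o(1))$.

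The main subtlety, and where a naive argument fails, lies in the $k\ge 2$ estimate: merely counting $\binom{N-1}{k-1}$ compositions and using $w_{N}(n_{i})\le Cd_{N}$ yields a bound of order $d_{N}^{k}N^{k-1}$, which is far too large since \eqref{e_dN-logN} permits $d_{N}$ to decay as slowly as $1/\log N$. The decisive gain comes from exploiting the sharper pointwise estimate $w_{N}(n)\le Cd_{N}/n$ together with the fact that in any composition $n_{1}+\cdots+n_{k}=N$ at least one part is of order $N$, so that the $1/n_{i}$ factors concentrate a $1/N$ saving at the largest part while the remaining factors produce only harmonic $(\log N)^{k-1}$ growth. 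This yields the ratio $(d_{N}\log N)^{k-1}\to 0$ needed to make the $k\ge 2$ terms negligible compared to $w_{N}(N)\sim d_{N}/N$, closing the argument.
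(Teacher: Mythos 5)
Your proof is correct. The paper does not prove this proposition itself (it is quoted from \cite[Proposition 2.1]{BDG}), but your argument is the standard one: the reduction to $Z_{N}\simeq|S_{\star}|\,w_{N}(N)$, the uniform asymptotic $w_{N}(n)\simeq d_{N}/n$, and the harmonic-sum bound $\sum_{n_{1}+\cdots+n_{k}=N}(n_{1}\cdots n_{k})^{-1}=O((\log N)^{k-1}/N)$ are exactly the estimates the paper records in \eqref{e_wN-ZN} and attributes to \cite[Section 3]{BDG} (and the combinatorial lemma is the one cited from \cite[Lemma 9.1]{KS NRIP}). Your closing discussion correctly identifies why the crude bound $w_{N}(n_{i})\le Cd_{N}$ is insufficient and where the $1/N$ gain comes from.
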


According to Proposition \ref{p_condensation}, in the long-time limit,
the system is most likely to stay in a condensed state in $S_{\star}$.
Moreover, starting from such a condensed state $\xi^{x}$ for some
$x\in S_{\star}$, it is very unlikely to observe a transition to
other states, thereby explaining the \emph{metastable} feature of
the condensed states.

However, provided that $|S_{\star}|\ge2$, due to the ergodic nature
of the dynamics, starting from $\xi^{x}$ the system will eventually
make a metastable transition to another condensed state $\xi^{y}$.
This feature cannot be explained by the static result provided in
Proposition \ref{p_condensation}.

\subsection{\label{sec2.3}First time scale of metastability}

As explained in the introduction, the inclusion process exhibits the
phenomenon of metastable transitions regarding the location of the
condensate of particles. To formulate this in a rigorous manner, it
is useful to introduce a projection operator $\Psi_{1}=\Psi_{N,\,1}:\mathcal{H}_{N}\to S_{\star}\cup\{\mathfrak{0}\}$
defined as
\[
\Psi_{1}(\eta):=\begin{cases}
x & \text{if }\eta=\xi^{x}\text{ for some }x\in S_{\star},\\
\mathfrak{0} & \text{otherwise}.
\end{cases}
\]
Then, define the order process $\{X_{N}(t)\}_{t\ge0}$ as $X_{N}(t):=\Psi_{1}(\eta_{N}(t))$
for $t\ge0$.
\begin{notation}
\label{n_fN-gN}Throughout the article, for functions $f(N)$ and
$g(N)$, we write
\[
\begin{cases}
f=O(g) & \text{if }|f(N)|\le Cg(N),\ \forall N\ge1\text{ for some constant }C>0,\\
f=o(g)\text{ or }f\ll g & \text{if }\lim_{N\to\infty}f(N)/g(N)=0,\\
f\simeq g & \text{if }\lim_{N\to\infty}f(N)/g(N)=1.
\end{cases}
\]
In particular, in the third case, we also say that $f$ and $g$ are
\emph{asymptotically equal}.
\end{notation}

Now, we are ready to formulate the first-level metastability. For
a subset $\mathcal{A}\subseteq\mathcal{H}_{N}$, we denote by $\mathcal{T}_{\mathcal{A}}$
the (random) hitting time of $\mathcal{A}$ with respect to the dynamics:
\begin{equation}
\mathcal{T}_{\mathcal{A}}:=\inf\{t\ge0:\eta_{N}(t)\in\mathcal{A}\}.\label{e_TA-def}
\end{equation}

\begin{figure}
\begin{tikzpicture}
\fill[rounded corners,white] (-2.25,2.25) rectangle (2.25,-2.25); 
\fill[black] (-2,2) circle (0.1); \draw (-2,2) circle (0.1);
\fill[white] (-1,2) circle (0.1); \draw[black!30!white] (-1,2) circle (0.1);
\fill[white] (0,2) circle (0.1); \draw[black!30!white] (0,2) circle (0.1);
\fill[black] (1,2) circle (0.1); \draw (1,2) circle (0.1);
\fill[black] (2,2) circle (0.1); \draw (2,2) circle (0.1);
\fill[black] (-2,1) circle (0.1); \draw (-2,1) circle (0.1);
\fill[white] (-1,1) circle (0.1); \draw[black!30!white] (-1,1) circle (0.1);
\fill[white] (0,1) circle (0.1); \draw[black!30!white] (0,1) circle (0.1);
\fill[white] (1,1) circle (0.1); \draw[black!30!white] (1,1) circle (0.1);
\fill[white] (2,1) circle (0.1); \draw[black!30!white] (2,1) circle (0.1);
\fill[white] (-2,0) circle (0.1); \draw[black!30!white] (-2,0) circle (0.1);
\fill[white] (-1,0) circle (0.1); \draw[black!30!white] (-1,0) circle (0.1);
\fill[white] (0,0) circle (0.1); \draw[black!30!white] (0,0) circle (0.1);
\fill[black] (1,0) circle (0.1); \draw (1,0) circle (0.1);
\fill[white] (2,0) circle (0.1); \draw[black!30!white] (2,0) circle (0.1);
\fill[black] (-2,-1) circle (0.1); \draw (-2,-1) circle (0.1);
\fill[black] (-1,-1) circle (0.1); \draw (-1,-1) circle (0.1);
\fill[white] (0,-1) circle (0.1); \draw[black!30!white] (0,-1) circle (0.1);
\fill[white] (1,-1) circle (0.1); \draw[black!30!white] (1,-1) circle (0.1);
\fill[white] (2,-1) circle (0.1); \draw[black!30!white] (2,-1) circle (0.1);
\fill[black] (-2,-2) circle (0.1); \draw (-2,-2) circle (0.1);
\fill[black] (-1,-2) circle (0.1); \draw (-1,-2) circle (0.1);
\fill[white] (0,-2) circle (0.1); \draw[black!30!white] (0,-2) circle (0.1);
\fill[white] (1,-2) circle (0.1); \draw[black!30!white] (1,-2) circle (0.1);
\fill[black] (2,-2) circle (0.1); \draw (2,-2) circle (0.1);
\draw[very thick,to-,black!10!white] (-1.9,2)--(-1.1,2); \draw[very thick,black!10!white] (-0.9,2)--(-0.1,2); \draw[very thick,-to,black!10!white] (0.1,2)--(0.9,2); \draw[very thick,latex-latex] (1.1,2)--(1.9,2);
\draw[very thick,latex-latex] (-2,1.9)--(-2,1.1); \draw[very thick,to-,black!10!white] (1,1.9)--(1,1.1); \draw[very thick, to-,black!10!white] (2,1.9)--(2,1.1);
\draw[very thick,to-,black!10!white] (-1.9,1)--(-1.1,1); \draw[very thick,black!10!white] (0.1,1)--(0.9,1); \draw[very thick,black!10!white] (1.1,1)--(1.9,1);
\draw[very thick,to-,black!10!white] (-2,0.9)--(-2,0.1); \draw[very thick,-to,black!10!white] (-1,0.9)--(-1,0.1); \draw[very thick,-to,black!10!white] (0,0.9)--(0,0.1); \draw[very thick,-to,black!10!white] (1,0.9)--(1,0.1);
\draw[very thick,black!10!white] (-1.9,0)--(-1.1,0); \draw[very thick,-to,black!10!white] (-0.9,0)--(-0.1,0); \draw[very thick,to-,black!10!white] (1.1,0)--(1.9,0);
\draw[very thick,-to,black!10!white] (-2,-0.1)--(-2,-0.9); \draw[very thick,-to,black!10!white] (-1,-0.1)--(-1,-0.9); \draw[very thick,-to,black!10!white] (0,-0.1)--(0,-0.9); \draw[very thick,-to,black!10!white] (2,-0.1)--(2,-0.9);
\draw[very thick,latex-latex] (-1.9,-1)--(-1.1,-1); \draw[very thick,to-,black!10!white] (-0.9,-1)--(-0.1,-1); \draw[very thick,-to,black!10!white] (0.1,-1)--(0.9,-1);
\draw[very thick,latex-latex] (-2,-1.1)--(-2,-1.9); \draw[very thick,latex-latex] (-1,-1.1)--(-1,-1.9); \draw[very thick,-to,black!10!white] (2,-1.1)--(2,-1.9);
\draw[very thick,latex-latex] (-1.9,-2)--(-1.1,-2); \draw[very thick,to-,black!10!white] (-0.9,-2)--(-0.1,-2); \draw[very thick,to-,black!10!white] (0.1,-2)--(0.9,-2); \draw[very thick,-to,black!10!white] (1.1,-2)--(1.9,-2);
\end{tikzpicture}
\hspace{20mm}
\begin{tikzpicture}
\fill[rounded corners,white] (-2.25,2.25) rectangle (2.25,-2.25); 
\fill[rounded corners,orange!50!white] (-2.25,2.25) rectangle (-1.75,0.75);
\fill[rounded corners,orange!50!white] (0.75,2.25) rectangle (2.25,1.75);
\fill[rounded corners,orange!50!white] (0.75,0.25) rectangle (1.25,-0.25);
\fill[rounded corners,orange!50!white] (-2.25,-0.75) rectangle (-0.75,-2.25);
\fill[rounded corners,orange!50!white] (1.75,-1.75) rectangle (2.25,-2.25);
\fill[black] (-2,2) circle (0.1); \draw (-2,2) circle (0.1);
\fill[white] (-1,2) circle (0.1); \draw[black!30!white] (-1,2) circle (0.1);
\fill[white] (0,2) circle (0.1); \draw[black!30!white] (0,2) circle (0.1);
\fill[black] (1,2) circle (0.1); \draw (1,2) circle (0.1);
\fill[black] (2,2) circle (0.1); \draw (2,2) circle (0.1);
\fill[black] (-2,1) circle (0.1); \draw (-2,1) circle (0.1);
\fill[white] (-1,1) circle (0.1); \draw[black!30!white] (-1,1) circle (0.1);
\fill[white] (0,1) circle (0.1); \draw[black!30!white] (0,1) circle (0.1);
\fill[white] (1,1) circle (0.1); \draw[black!30!white] (1,1) circle (0.1);
\fill[white] (2,1) circle (0.1); \draw[black!30!white] (2,1) circle (0.1);
\fill[white] (-2,0) circle (0.1); \draw[black!30!white] (-2,0) circle (0.1);
\fill[white] (-1,0) circle (0.1); \draw[black!30!white] (-1,0) circle (0.1);
\fill[white] (0,0) circle (0.1); \draw[black!30!white] (0,0) circle (0.1);
\fill[black] (1,0) circle (0.1); \draw (1,0) circle (0.1);
\fill[white] (2,0) circle (0.1); \draw[black!30!white] (2,0) circle (0.1);
\fill[black] (-2,-1) circle (0.1); \draw (-2,-1) circle (0.1);
\fill[black] (-1,-1) circle (0.1); \draw (-1,-1) circle (0.1);
\fill[white] (0,-1) circle (0.1); \draw[black!30!white] (0,-1) circle (0.1);
\fill[white] (1,-1) circle (0.1); \draw[black!30!white] (1,-1) circle (0.1);
\fill[white] (2,-1) circle (0.1); \draw[black!30!white] (2,-1) circle (0.1);
\fill[black] (-2,-2) circle (0.1); \draw (-2,-2) circle (0.1);
\fill[black] (-1,-2) circle (0.1); \draw (-1,-2) circle (0.1);
\fill[white] (0,-2) circle (0.1); \draw[black!30!white] (0,-2) circle (0.1);
\fill[white] (1,-2) circle (0.1); \draw[black!30!white] (1,-2) circle (0.1);
\fill[black] (2,-2) circle (0.1); \draw (2,-2) circle (0.1);
\draw[very thick,to-,black!10!white] (-1.9,2)--(-1.1,2); \draw[very thick,black!10!white] (-0.9,2)--(-0.1,2); \draw[very thick,-to,black!10!white] (0.1,2)--(0.9,2); \draw[very thick,latex-latex] (1.1,2)--(1.9,2);
\draw[very thick,latex-latex] (-2,1.9)--(-2,1.1); \draw[very thick,to-,black!10!white] (1,1.9)--(1,1.1); \draw[very thick, to-,black!10!white] (2,1.9)--(2,1.1);
\draw[very thick,to-,black!10!white] (-1.9,1)--(-1.1,1); \draw[very thick,black!10!white] (0.1,1)--(0.9,1); \draw[very thick,black!10!white] (1.1,1)--(1.9,1);
\draw[very thick,to-,black!10!white] (-2,0.9)--(-2,0.1); \draw[very thick,-to,black!10!white] (-1,0.9)--(-1,0.1); \draw[very thick,-to,black!10!white] (0,0.9)--(0,0.1); \draw[very thick,-to,black!10!white] (1,0.9)--(1,0.1);
\draw[very thick,black!10!white] (-1.9,0)--(-1.1,0); \draw[very thick,-to,black!10!white] (-0.9,0)--(-0.1,0); \draw[very thick,to-,black!10!white] (1.1,0)--(1.9,0);
\draw[very thick,-to,black!10!white] (-2,-0.1)--(-2,-0.9); \draw[very thick,-to,black!10!white] (-1,-0.1)--(-1,-0.9); \draw[very thick,-to,black!10!white] (0,-0.1)--(0,-0.9); \draw[very thick,-to,black!10!white] (2,-0.1)--(2,-0.9);
\draw[very thick,latex-latex] (-1.9,-1)--(-1.1,-1); \draw[very thick,to-,black!10!white] (-0.9,-1)--(-0.1,-1); \draw[very thick,-to,black!10!white] (0.1,-1)--(0.9,-1);
\draw[very thick,latex-latex] (-2,-1.1)--(-2,-1.9); \draw[very thick,latex-latex] (-1,-1.1)--(-1,-1.9); \draw[very thick,-to,black!10!white] (2,-1.1)--(2,-1.9);
\draw[very thick,latex-latex] (-1.9,-2)--(-1.1,-2); \draw[very thick,to-,black!10!white] (-0.9,-2)--(-0.1,-2); \draw[very thick,to-,black!10!white] (0.1,-2)--(0.9,-2); \draw[very thick,-to,black!10!white] (1.1,-2)--(1.9,-2);
\end{tikzpicture}\caption{\label{fig2.2}The left figure describes the first level of metastable
transitions between the sites in $S_{\star}$. The right figure describes
five irreducible components of $S_{\star}$ with respect to $X(\cdot)$.}
\end{figure}
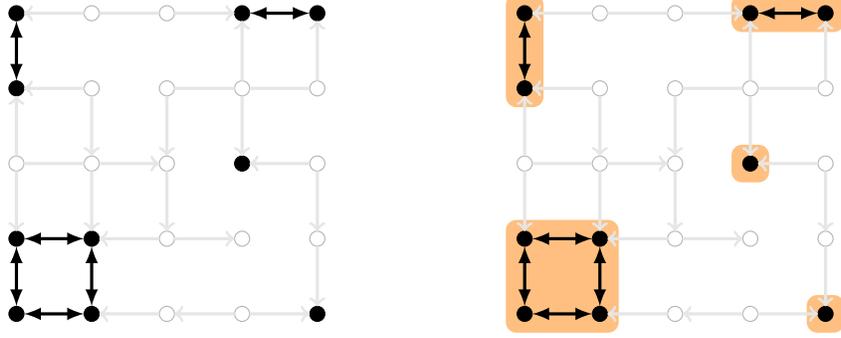

\begin{thm}[{1st time scale of metastability, \cite[Theorem 2.3]{BDG}}]
\label{t_1st}Define $\theta_{1}=\theta_{N,\,1}:=1/d_{N}$. Fix a
site $x\in S_{\star}$ and suppose that the process starts from $\mathcal{E}_{N}^{x}=\{\xi^{x}\}$.
\begin{enumerate}
\item In the limit $N\to\infty$, the law of the accelerated order process
$\{X_{N}(\theta_{1}t)\}_{t\ge0}$ converges, in the sense of finite-dimensional
marginal distributions, to the law of a Markov chain $\{X(t)\}_{t\ge0}$
on $S_{\star}$ determined by the original transition rate $r(\cdot,\,\cdot)$
(See Figure \ref{fig2.2}).
\item The Eyring--Kramers formula is given as (cf. \eqref{e_ENA-def} and
Notation \ref{n_fN-gN})
\[
\mathbb{E}_{\xi^{x}}[\mathcal{T}_{\mathcal{E}_{N}(S_{\star}\setminus\{x\})}]\simeq\frac{1}{\sum_{y\in S_{\star}}r(x,\,y)}\cdot\frac{1}{d_{N}}.
\]
If $\sum_{y\in S_{\star}}r(x,\,y)=0$, this should be read as $\lim_{N\to\infty}\theta_{1}^{-1}\cdot\mathbb{E}_{\xi^{x}}[\mathcal{T}_{\mathcal{E}_{N}(S_{\star}\setminus\{x\})}]=\infty$.
\item On the time scale $\theta_{1}$, excursions outside $\mathcal{E}_{N}(S_{\star})$
are negligible:
\[
\lim_{N\to\infty}\mathbb{E}_{\xi^{x}}\Big[\int_{0}^{t}\mathbbm{1}\{\eta_{N}(\theta_{1}s)\notin\mathcal{E}_{N}(S_{\star})\}\mathrm{d}s\Big]=0\text{ for all }t\ge0.
\]
\end{enumerate}
\end{thm}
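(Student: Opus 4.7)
The plan is to combine the potential-theoretic approach with the Beltr\'an--Landim martingale characterization of metastable convergence. All three statements of the theorem reduce to establishing the sharp capacity asymptotic
\begin{equation*}
\mathrm{cap}_N\big(\mathcal{E}_N^x,\, \mathcal{E}_N(S_\star \setminus \{x\})\big) \;\simeq\; \frac{d_N}{|S_\star|} \sum_{y \in S_\star \setminus \{x\}} r(x,y),
\end{equation*}
together with its partial versions $\mathrm{cap}_N(\mathcal{E}_N^x, \mathcal{E}_N^y) \simeq d_N r(x,y)/|S_\star|$ for $y \in S_\star \setminus \{x\}$. Statement (2) then follows from the Bovier--Eckhoff--Gayrard--Klein mean-hitting-time formula: the equilibrium potential $h := h_{\mathcal{E}_N^x,\, \mathcal{E}_N(S_\star \setminus \{x\})}$ equals $\delta_{y,x}$ on each condensed state $\xi^y$ with $y \in S_\star$, and Proposition \ref{p_condensation} implies $\sum_\eta \mu_N(\eta) h(\eta) \simeq \mu_N(\mathcal{E}_N^x) \simeq 1/|S_\star|$, giving the stated asymptotic. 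Statement (1) follows from the Beltr\'an--Landim criterion applied to the trace process on $\mathcal{E}_N(S_\star)$, whose inputs are exactly these capacity estimates (used to identify the limiting rates as $r(x,y)$) and a regularity bound on equilibrium potentials that they imply. Statement (3) is the standard occupation-time estimate in that framework, which reduces to $\mu_N(\mathcal{H}_N \setminus \mathcal{E}_N(S_\star)) = o(1)$ (Proposition \ref{p_condensation}).

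For the capacity estimate, the guiding heuristic is that on the time scale $\theta_1 = 1/d_N$ the only efficient routes of mass transfer between condensed states are the one-dimensional corridors
\begin{equation*}
\eta_k^{x,y} \;:=\; (N-k)\delta_x + k\,\delta_y, \qquad 0 \le k \le N,
\end{equation*}
for $y \in S_\star \setminus \{x\}$. Using the product form of $\mu_N$ from Proposition \ref{p_muN-prod} and the small-$d_N$ asymptotic $w_N(n) \simeq d_N/n$, a direct computation yields conductances $\mu_N(\eta_k^{x,y})\, r_N(\eta_k^{x,y}, \eta_{k+1}^{x,y}) \simeq N d_N r(x,y)/|S_\star|$ that are \emph{uniform} in $k$; hence the effective conductance of a single corridor is $d_N r(x,y)/|S_\star|$, and distinct corridors attached at $\xi^x$ act as parallel conductors. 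For the matching upper bound I would apply the Dirichlet principle with the test function equal to $1 - k/N$ on each corridor $\eta_k^{x,y}$ ($y \in S_\star$) and suitably extended to the remainder; for the lower bound I would use the Thomson principle with the unit flow that is constant along each such corridor and split at $\xi^x$ proportionally to $r(x,y)$.

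The main technical obstacle is controlling the contributions from configurations lying \emph{off} the union of pure two-site corridors to $S_\star$, namely configurations with particles distributed among three or more sites, and those associated with corridors whose target site lies in $S_0$. For the upper bound one must exhibit a test function whose Dirichlet energy on this remainder is strictly of smaller order than $d_N$; for the lower bound, one must argue that restricting the test flow to the pure $S_\star$-corridors loses at most a factor $1 + o(1)$. The key inputs are: the strict inequality $m_a < 1$ for $a \in S_0$, which exponentially suppresses $S_0$-heavy configurations in $N$; the hypothesis \eqref{e_dN-logN}, which forces the corridor-interior mass $\sum_{k=1}^{N-1} \mu_N(\eta_k^{x,y}) = O(d_N \log N)$ to vanish in the limit and thus reduces the problem to a finite-resistance one-dimensional picture; and the product structure of $\mu_N$ that controls the three-or-more-site configurations. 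Together these close the proof.
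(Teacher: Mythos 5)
This theorem is quoted verbatim from \cite[Theorem 2.3]{BDG} (with the finite-dimensional upgrade via \cite[Proposition 2.1]{LanLouMou}); the paper contains no proof of it, so there is no internal argument to compare against. Your proposal is, in substance, a correct reconstruction of the potential-theoretic proof of \cite{BDG}, and it is the same architecture this paper uses for the third scale in Section \ref{sec3}: sharp capacity asymptotics feed into \eqref{e_cap-cap-cap} to identify the limiting rates, into \eqref{e_magic-formula} for the Eyring--Kramers formula, and Proposition \ref{p_condensation} gives the occupation-time bound. Your corridor computation is right: by Proposition \ref{p_muN-prod} and \eqref{e_wN-ZN}, each conductance $\mu_N(\eta_k^{x,y})r_N(\eta_k^{x,y},\eta_{k+1}^{x,y})$ is asymptotically $Nd_N r(x,y)/|S_\star|$ uniformly in $k$, giving effective conductance $d_Nr(x,y)/|S_\star|$ per corridor and, by parallel composition, the claimed capacity. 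Three soft spots to make explicit. First, the Beltr\'an--Landim criterion of \cite{BL TM} yields convergence of the trace process; passing to finite-dimensional marginals of $X_N(\theta_1\cdot)$ is precisely the additional step from \cite{LanLouMou} that the paper flags immediately after the theorem, and your write-up should invoke it. Second, for the Thomson lower bound (Proposition \ref{p_TP}) there is nothing to ``lose'': any admissible flow supported on the corridors and split at $\xi^x$ proportionally to $r(x,y)$ already gives a valid lower bound, and its norm matches the Dirichlet upper bound; no comparison with the harmonic flow is required. Third, the unspecified ``suitable extension'' of the test function is where all the estimates live, and it must be chosen so that $F$ does not jump across edges such as $\xi^x\to(\xi^x)^{x,a}$ with $a\in S_0$, whose conductance is of order $Nd_N\gg d_N$; the natural choice $F(\eta):=\eta_x/N$ (depending on $\eta$ only through $\eta_x$) changes by $1/N$ per jump, so these edges contribute $O(d_N^2N^{-1}\log N)=o(d_N)$ in total, while configurations charging $S_0$ heavily or occupying three or more sites are suppressed by powers of $\max_{a\in S_0}m_a<1$ and extra factors of $d_N$. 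With these points filled in, the argument closes and agrees with \cite{BDG}.
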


Originally in \cite[Theorem 2.3-(ii)]{BDG}, the mode of convergence
in part (1) was alternatively chosen as the convergence of trace processes
in the Skorokhod topology. Combining this result with an additional
technicality presented in \cite[Proposition 2.1]{LanLouMou}, we may
easily deduce the convergence of finite-dimensional marginal distributions
as presented in Theorem \ref{t_1st}-(1).

Even though the underlying random walk determined by $r(\cdot,\,\cdot)$
is irreducible on the whole $S$, it is \emph{not necessarily irreducible}
when restricted to a subset $S_{\star}$ (cf. Figure \ref{fig2.2}).
Thus, the limit process $X(\cdot)$ is not necessarily irreducible.
However, since the original inclusion process is ergodic, it is natural
to expect to eventually observe the metastable transitions also between
the disconnected irreducible components of $S_{\star}$. Therefore,
we pursue to find a bigger time scale on which we can indeed observe
such new metastable transitions.

Moving further, we decompose $S_{\star}$ into $\kappa_{2}$ irreducible
components with respect to $X(\cdot)$:
\[
S_{\star}=\bigcup_{i=1}^{\kappa_{2}}S_{\star i}^{2}.
\]
For instance, in Figure \ref{fig2.2}, it holds that $\kappa_{2}=5$.
We are able to slightly generalize Theorem \ref{t_1st}-(3) as follows.
For real numbers $\alpha$ and $\beta$, we write $\llbracket\alpha,\,\beta\rrbracket:=[\alpha,\,\beta]\cap\mathbb{Z}$
throughout the article.
\begin{thm}[1st time scale, negligibility of excursions]
\label{t_1st-neg}Starting from a site $x\in S_{\star i}^{2}$ for
$i\in\llbracket1,\,\kappa_{2}\rrbracket$, on the first time scale
$\theta_{1}$, the original process spends negligible time outside
the collection $\mathcal{E}_{N}(S_{\star i}^{2})$:
\[
\lim_{N\to\infty}\mathbb{E}_{\xi^{x}}\Big[\int_{0}^{t}\mathbbm{1}\{\eta_{N}(\theta_{1}s)\notin\mathcal{E}_{N}(S_{\star i}^{2})\}\mathrm{d}s\Big]=0\text{ for all }t\ge0.
\]
\end{thm}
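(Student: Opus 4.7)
The plan is to split the indicator of the event $\{\eta_N(\theta_1 s)\notin\mathcal{E}_N(S_{\star i}^2)\}$ via the disjoint decomposition
\[
\mathcal{H}_N\setminus\mathcal{E}_N(S_{\star i}^2)\;=\;\bigl(\mathcal{H}_N\setminus\mathcal{E}_N(S_\star)\bigr)\sqcup\mathcal{E}_N(S_\star\setminus S_{\star i}^2),
\]
and to bound the two resulting time integrals separately. For the first piece, Theorem \ref{t_1st}-(3) provides exactly the required $o(1)$ estimate, so no further work is needed there.

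For the second piece, the key structural observation is that the limit chain $X(\cdot)$ on $S_\star$ from Theorem \ref{t_1st}-(1) has jump rates $r(x,\,y)$ which are \emph{symmetric} in $x,\,y\in S_\star$: indeed, $m_x=m_y=1$ on $S_\star$, so the reversibility identity \eqref{e_cxy-def} gives $r(x,\,y)=c_{xy}/m_x=c_{yx}/m_y=r(y,\,x)$. Hence the irreducible components $S_{\star i}^2$ are precisely the connected components of the symmetric jump graph on $S_\star$, and each $S_{\star i}^2$ is \emph{closed} under $X(\cdot)$: for every $x\in S_{\star i}^2$ and every $s\ge0$, $\mathbb{P}_x[X(s)\in S_{\star i}^2]=1$.

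With this in hand, Fubini's theorem gives
\[
\mathbb{E}_{\xi^x}\Bigl[\int_0^t\mathbbm{1}\{\eta_N(\theta_1 s)\in\mathcal{E}_N(S_\star\setminus S_{\star i}^2)\}\,\mathrm{d}s\Bigr]=\sum_{y\in S_\star\setminus S_{\star i}^2}\int_0^t\mathbb{P}_{\xi^x}[X_N(\theta_1 s)=y]\,\mathrm{d}s.
\]
For each fixed $s>0$ and each $y\in S_\star\setminus S_{\star i}^2$, the convergence of one-dimensional marginals in Theorem \ref{t_1st}-(1) yields $\mathbb{P}_{\xi^x}[X_N(\theta_1 s)=y]\to\mathbb{P}_x[X(s)=y]=0$, the last equality by the closure property above. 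Since the integrand is bounded by $1$, the bounded convergence theorem forces each integral on the right to vanish as $N\to\infty$, and the finite sum over $y$ then closes out the bound.

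The argument has no substantial obstacle beyond the brief structural remark identifying each $S_{\star i}^2$ as a closed class for $X(\cdot)$; the rest of the argument is a direct consequence of parts (1) and (3) of Theorem \ref{t_1st}. In that sense the present theorem is essentially a strengthening of Theorem \ref{t_1st}-(3) obtained by exploiting the additional information that the finite-dimensional marginals of $X(\cdot)$ are supported in the starting irreducible component.
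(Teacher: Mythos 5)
Your proposal is correct and follows essentially the same route as the paper: reduce to the excursions into $\mathcal{E}_{N}(S_{\star}\setminus S_{\star i}^{2})$ via Theorem \ref{t_1st}-(3), apply the marginal convergence of Theorem \ref{t_1st}-(1) together with the fact that $S_{\star i}^{2}$ is a closed class for $X(\cdot)$, and conclude by bounded convergence. Your explicit justification that the components are closed (symmetry of $r(\cdot,\,\cdot)$ on $S_{\star}$ from \eqref{e_cxy-def} since $m_{x}=m_{y}=1$) is a small but welcome addition that the paper leaves implicit.
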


\subsection{\label{sec2.4}Second time scale of metastability}

To formulate the second-level metastability, we need a new projection
operator $\Psi_{2}=\Psi_{N,\,2}:\mathcal{H}_{N}\to\llbracket1,\,\kappa_{2}\rrbracket\cup\{\mathfrak{0}\}$
defined as
\[
\Psi_{2}(\eta):=\begin{cases}
i & \text{if }\eta=\xi^{x}\text{ for some }x\in S_{\star i}^{2}\text{ and }i\in\llbracket1,\,\kappa_{2}\rrbracket,\\
\mathfrak{0} & \text{otherwise}.
\end{cases}
\]
Then, the second order process $\{Y_{N}(t)\}_{t\ge0}$ is defined
as $Y_{N}(t):=\Psi_{2}(\eta_{N}(t))$. Hereafter, we additionally
assume that $d_{N}$ \emph{decays subexponentially}, i.e.,
\begin{equation}
\lim_{N\to\infty}d_{N}e^{\epsilon N}=\infty\text{ for all }\epsilon>0.\label{e_dN-decay-subexp}
\end{equation}
See Remark \ref{r_2nd}-(2) for a discussion on this condition.

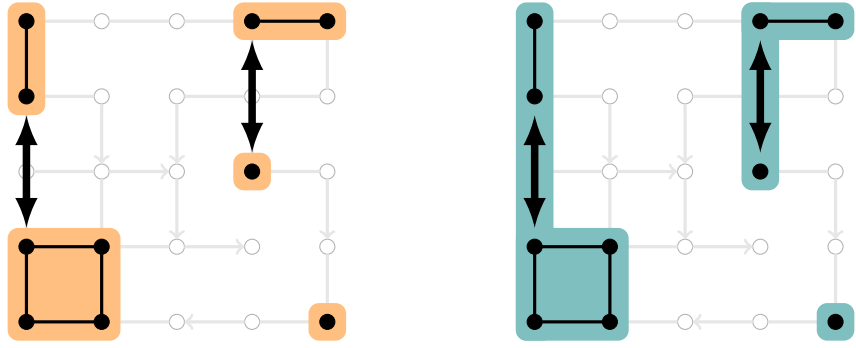
\begin{figure}
\begin{tikzpicture}
\fill[rounded corners,white] (-2.25,2.25) rectangle (2.25,-2.25); 
\fill[white] (-1,2) circle (0.1); \draw[black!30!white] (-1,2) circle (0.1);
\fill[white] (0,2) circle (0.1); \draw[black!30!white] (0,2) circle (0.1);
\fill[white] (-1,1) circle (0.1); \draw[black!30!white] (-1,1) circle (0.1);
\fill[white] (0,1) circle (0.1); \draw[black!30!white] (0,1) circle (0.1);
\fill[white] (1,1) circle (0.1); \draw[black!30!white] (1,1) circle (0.1);
\fill[white] (2,1) circle (0.1); \draw[black!30!white] (2,1) circle (0.1);
\fill[white] (-2,0) circle (0.1); \draw[black!30!white] (-2,0) circle (0.1);
\fill[white] (-1,0) circle (0.1); \draw[black!30!white] (-1,0) circle (0.1);
\fill[white] (0,0) circle (0.1); \draw[black!30!white] (0,0) circle (0.1);
\fill[white] (2,0) circle (0.1); \draw[black!30!white] (2,0) circle (0.1);
\fill[white] (0,-1) circle (0.1); \draw[black!30!white] (0,-1) circle (0.1);
\fill[white] (1,-1) circle (0.1); \draw[black!30!white] (1,-1) circle (0.1);
\fill[white] (2,-1) circle (0.1); \draw[black!30!white] (2,-1) circle (0.1);
\fill[white] (0,-2) circle (0.1); \draw[black!30!white] (0,-2) circle (0.1);
\fill[white] (1,-2) circle (0.1); \draw[black!30!white] (1,-2) circle (0.1);
\draw[very thick,to-,black!10!white] (-1.9,2)--(-1.1,2); \draw[very thick,black!10!white] (-0.9,2)--(-0.1,2); \draw[very thick,-to,black!10!white] (0.1,2)--(0.9,2);
\draw[very thick,to-,black!10!white] (1,1.9)--(1,1.1); \draw[very thick, to-,black!10!white] (2,1.9)--(2,1.1);
\draw[very thick,to-,black!10!white] (-1.9,1)--(-1.1,1); \draw[very thick,black!10!white] (0.1,1)--(0.9,1); \draw[very thick,black!10!white] (1.1,1)--(1.9,1);
\draw[very thick,to-,black!10!white] (-2,0.9)--(-2,0.1); \draw[very thick,-to,black!10!white] (-1,0.9)--(-1,0.1); \draw[very thick,-to,black!10!white] (0,0.9)--(0,0.1); \draw[very thick,-to,black!10!white] (1,0.9)--(1,0.1);
\draw[very thick,black!10!white] (-1.9,0)--(-1.1,0); \draw[very thick,-to,black!10!white] (-0.9,0)--(-0.1,0); \draw[very thick,to-,black!10!white] (1.1,0)--(1.9,0);
\draw[very thick,-to,black!10!white] (-2,-0.1)--(-2,-0.9); \draw[very thick,-to,black!10!white] (-1,-0.1)--(-1,-0.9); \draw[very thick,-to,black!10!white] (0,-0.1)--(0,-0.9); \draw[very thick,-to,black!10!white] (2,-0.1)--(2,-0.9);
\draw[very thick,to-,black!10!white] (-0.9,-1)--(-0.1,-1); \draw[very thick,-to,black!10!white] (0.1,-1)--(0.9,-1);
\draw[very thick,-to,black!10!white] (2,-1.1)--(2,-1.9);
\draw[very thick,to-,black!10!white] (-0.9,-2)--(-0.1,-2); \draw[very thick,to-,black!10!white] (0.1,-2)--(0.9,-2); \draw[very thick,-to,black!10!white] (1.1,-2)--(1.9,-2);
\fill[rounded corners,orange!50!white] (-2.25,2.25) rectangle (-1.75,0.75);
\fill[rounded corners,orange!50!white] (0.75,2.25) rectangle (2.25,1.75);
\fill[rounded corners,orange!50!white] (0.75,0.25) rectangle (1.25,-0.25);
\fill[rounded corners,orange!50!white] (-2.25,-0.75) rectangle (-0.75,-2.25);
\fill[rounded corners,orange!50!white] (1.75,-1.75) rectangle (2.25,-2.25);
\fill[black] (-2,2) circle (0.1); \draw (-2,2) circle (0.1);
\fill[black] (1,2) circle (0.1); \draw (1,2) circle (0.1);
\fill[black] (2,2) circle (0.1); \draw (2,2) circle (0.1);
\fill[black] (-2,1) circle (0.1); \draw (-2,1) circle (0.1);
\fill[black] (1,0) circle (0.1); \draw (1,0) circle (0.1);
\fill[black] (-2,-1) circle (0.1); \draw (-2,-1) circle (0.1);
\fill[black] (-1,-1) circle (0.1); \draw (-1,-1) circle (0.1);
\fill[black] (-2,-2) circle (0.1); \draw (-2,-2) circle (0.1);
\fill[black] (-1,-2) circle (0.1); \draw (-1,-2) circle (0.1);
\fill[black] (2,-2) circle (0.1); \draw (2,-2) circle (0.1);
\draw[very thick] (1.1,2)--(1.9,2);
\draw[very thick] (-2,1.9)--(-2,1.1);
\draw[very thick] (-1.9,-1)--(-1.1,-1);
\draw[very thick] (-2,-1.1)--(-2,-1.9); \draw[very thick] (-1,-1.1)--(-1,-1.9);
\draw[very thick] (-1.9,-2)--(-1.1,-2);
\draw[line width=1mm,latex-latex] (-2,0.75)--(-2,-0.75);
\draw[line width=1mm,latex-latex] (1,1.75)--(1,0.25);
\end{tikzpicture}
\hspace{20mm}
\begin{tikzpicture}
\fill[rounded corners,white] (-2.25,2.25) rectangle (2.25,-2.25); 
\fill[white] (-1,2) circle (0.1); \draw[black!30!white] (-1,2) circle (0.1);
\fill[white] (0,2) circle (0.1); \draw[black!30!white] (0,2) circle (0.1);
\fill[white] (-1,1) circle (0.1); \draw[black!30!white] (-1,1) circle (0.1);
\fill[white] (0,1) circle (0.1); \draw[black!30!white] (0,1) circle (0.1);
\fill[white] (1,1) circle (0.1); \draw[black!30!white] (1,1) circle (0.1);
\fill[white] (2,1) circle (0.1); \draw[black!30!white] (2,1) circle (0.1);
\fill[white] (-2,0) circle (0.1); \draw[black!30!white] (-2,0) circle (0.1);
\fill[white] (-1,0) circle (0.1); \draw[black!30!white] (-1,0) circle (0.1);
\fill[white] (0,0) circle (0.1); \draw[black!30!white] (0,0) circle (0.1);
\fill[white] (2,0) circle (0.1); \draw[black!30!white] (2,0) circle (0.1);
\fill[white] (0,-1) circle (0.1); \draw[black!30!white] (0,-1) circle (0.1);
\fill[white] (1,-1) circle (0.1); \draw[black!30!white] (1,-1) circle (0.1);
\fill[white] (2,-1) circle (0.1); \draw[black!30!white] (2,-1) circle (0.1);
\fill[white] (0,-2) circle (0.1); \draw[black!30!white] (0,-2) circle (0.1);
\fill[white] (1,-2) circle (0.1); \draw[black!30!white] (1,-2) circle (0.1);
\draw[very thick,to-,black!10!white] (-1.9,2)--(-1.1,2); \draw[very thick,black!10!white] (-0.9,2)--(-0.1,2); \draw[very thick,-to,black!10!white] (0.1,2)--(0.9,2);
\draw[very thick,to-,black!10!white] (1,1.9)--(1,1.1); \draw[very thick, to-,black!10!white] (2,1.9)--(2,1.1);
\draw[very thick,to-,black!10!white] (-1.9,1)--(-1.1,1); \draw[very thick,black!10!white] (0.1,1)--(0.9,1); \draw[very thick,black!10!white] (1.1,1)--(1.9,1);
\draw[very thick,to-,black!10!white] (-2,0.9)--(-2,0.1); \draw[very thick,-to,black!10!white] (-1,0.9)--(-1,0.1); \draw[very thick,-to,black!10!white] (0,0.9)--(0,0.1); \draw[very thick,-to,black!10!white] (1,0.9)--(1,0.1);
\draw[very thick,black!10!white] (-1.9,0)--(-1.1,0); \draw[very thick,-to,black!10!white] (-0.9,0)--(-0.1,0); \draw[very thick,to-,black!10!white] (1.1,0)--(1.9,0);
\draw[very thick,-to,black!10!white] (-2,-0.1)--(-2,-0.9); \draw[very thick,-to,black!10!white] (-1,-0.1)--(-1,-0.9); \draw[very thick,-to,black!10!white] (0,-0.1)--(0,-0.9); \draw[very thick,-to,black!10!white] (2,-0.1)--(2,-0.9);
\draw[very thick,to-,black!10!white] (-0.9,-1)--(-0.1,-1); \draw[very thick,-to,black!10!white] (0.1,-1)--(0.9,-1);
\draw[very thick,-to,black!10!white] (2,-1.1)--(2,-1.9);
\draw[very thick,to-,black!10!white] (-0.9,-2)--(-0.1,-2); \draw[very thick,to-,black!10!white] (0.1,-2)--(0.9,-2); \draw[very thick,-to,black!10!white] (1.1,-2)--(1.9,-2);
\fill[rounded corners,teal!50!white] (-2.25,2.25) rectangle (-1.75,-2.25);
\fill[rounded corners,teal!50!white] (0.75,2.25) rectangle (2.25,1.75);
\fill[rounded corners,teal!50!white] (0.75,2.25) rectangle (1.25,-0.25);
\fill[rounded corners,teal!50!white] (-2.25,-0.75) rectangle (-0.75,-2.25);
\fill[rounded corners,teal!50!white] (1.75,-1.75) rectangle (2.25,-2.25);
\fill[black] (-2,2) circle (0.1); \draw (-2,2) circle (0.1);
\fill[black] (1,2) circle (0.1); \draw (1,2) circle (0.1);
\fill[black] (2,2) circle (0.1); \draw (2,2) circle (0.1);
\fill[black] (-2,1) circle (0.1); \draw (-2,1) circle (0.1);
\fill[black] (1,0) circle (0.1); \draw (1,0) circle (0.1);
\fill[black] (-2,-1) circle (0.1); \draw (-2,-1) circle (0.1);
\fill[black] (-1,-1) circle (0.1); \draw (-1,-1) circle (0.1);
\fill[black] (-2,-2) circle (0.1); \draw (-2,-2) circle (0.1);
\fill[black] (-1,-2) circle (0.1); \draw (-1,-2) circle (0.1);
\fill[black] (2,-2) circle (0.1); \draw (2,-2) circle (0.1);
\draw[very thick] (1.1,2)--(1.9,2);
\draw[very thick] (-2,1.9)--(-2,1.1);
\draw[very thick] (-1.9,-1)--(-1.1,-1);
\draw[very thick] (-2,-1.1)--(-2,-1.9); \draw[very thick] (-1,-1.1)--(-1,-1.9);
\draw[very thick] (-1.9,-2)--(-1.1,-2);
\draw[line width=1mm,latex-latex] (-2,0.75)--(-2,-0.75);
\draw[line width=1mm,latex-latex] (1,1.75)--(1,0.25);
\end{tikzpicture}\caption{\label{fig2.3}The left figure illustrates the second level of metastable
transitions between the collections $S_{\star i}^{2}$ for $i\in\llbracket1,\,\kappa_{2}\rrbracket$.
The right figure illustrates three irreducible components with respect
to $Y(\cdot)$.}
\end{figure}

\begin{thm}[2nd time scale of metastability]
\label{t_2nd}Define $\theta_{2}=\theta_{N,\,2}:=N/d_{N}^{2}$. Fix
a starting position $i\in\llbracket1,\,\kappa_{2}\rrbracket$.
\begin{enumerate}
\item (\cite[Theorem 2.10-(1)]{Kim RIP-2nd}) The collection $\mathcal{E}_{N}(S_{\star i}^{2})$
thermalizes before the metastable transition:
\[
\lim_{N\to\infty}\inf_{x,\,y\in S_{\star i}^{2}}\mathbb{P}_{\xi^{x}}[\mathcal{T}_{\xi^{y}}<\mathcal{T}_{\mathcal{E}_{N}(S_{\star}\setminus S_{\star i}^{2})}]=1.
\]
\item (\cite[Theorem 2.10-(2)]{Kim RIP-2nd}) As $N\to\infty$, the law
of the accelerated second order process $\{Y_{N}(\theta_{2}t)\}_{t\ge0}$
converges to the law of a Markov chain $\{Y(t)\}_{t\ge0}$ on $\llbracket1,\,\kappa_{2}\rrbracket$
which is determined by a transition rate function defined as $r^{\textup{2nd}}(i,\,i):=0$
and
\[
r^{\textup{2nd}}(i,\,j):=\frac{1}{|S_{\star i}^{2}|\cdot\mathfrak{R}_{ij}}\text{ for }i,\,j\in\llbracket1,\,\kappa_{2}\rrbracket.
\]
Here, $\mathfrak{R}_{ij}=\mathfrak{R}_{ji}\in(0,\,\infty]$ is a (possibly
infinite) positive constant given in \eqref{e_Rij-def} below, so
that if $\mathfrak{R}_{ij}=\infty$ then $r^{\textup{2nd}}(i,\,j)$
is naturally defined as $0$. Refer to Figure \ref{fig2.3}.
\item For every $x\in S_{\star i}^{2}$, the Eyring--Kramers formula is
given as
\[
\mathbb{E}_{\xi^{x}}[\mathcal{T}_{\mathcal{E}_{N}(S_{\star}\setminus S_{\star i}^{2})}]\simeq\frac{1}{\sum_{j\in\llbracket1,\,\kappa_{2}\rrbracket}r^{\textup{2nd}}(i,\,j)}\cdot\frac{N}{d_{N}^{2}}.
\]
If $\sum_{j\in\llbracket1,\,\kappa_{2}\rrbracket}r^{\textup{2nd}}(i,\,j)=0$,
this should be read as $\lim_{N\to\infty}\theta_{2}^{-1}\cdot\mathbb{E}_{\xi^{x}}[\mathcal{T}_{\mathcal{E}_{N}(S_{\star}\setminus S_{\star i}^{2})}]=\infty$.
\end{enumerate}
\end{thm}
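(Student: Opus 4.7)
The plan is to deduce part (3) from parts (1) and (2) together with the potential-theoretic machinery summarized in Appendix \ref{appenA}. The key ingredient is the standard Dirichlet-type identity that expresses the mean hitting time of a set $\mathcal{B}$, started from the restriction of $\mu_N$ to a disjoint set $\mathcal{A}$, as an averaged equilibrium potential divided by the capacity $\mathrm{cap}(\mathcal{A}, \mathcal{B})$. I would apply this identity with $\mathcal{A} := \mathcal{E}_N(S_{\star i}^{2})$ and $\mathcal{B} := \mathcal{E}_N(S_{\star} \setminus S_{\star i}^{2})$, whose associated Eyring--Kramers constant is exactly the target of part (3).

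Given this identity, the numerator contribution is controlled by Proposition \ref{p_condensation}: the equilibrium potential $h_{\mathcal{A}, \mathcal{B}}$ equals $1$ on $\mathcal{A}$ and $0$ on $\mathcal{B}$, and is uniformly bounded by $1$ elsewhere, so the $\mu_N$-mass outside $\mathcal{E}_N(S_{\star})$ is negligible and the main contribution is $\mu_N(\mathcal{A}) \simeq |S_{\star i}^{2}|/|S_{\star}|$. The denominator requires the sharp capacity asymptotic
\begin{equation*}
\mathrm{cap}(\mathcal{A}, \mathcal{B}) \simeq \frac{|S_{\star i}^{2}|}{|S_{\star}|} \Big( \sum_{j \in \llbracket 1, \kappa_2 \rrbracket} r^{\textup{2nd}}(i, j) \Big) \frac{d_N^2}{N},
\end{equation*}
which is precisely the technical content underlying Theorem \ref{t_2nd}(2) and is already proved in \cite{Kim RIP-2nd} by a careful matching of upper (Dirichlet principle) and lower (Thomson principle) bounds. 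Substituting these two estimates into the Dirichlet-type identity yields the desired Eyring--Kramers asymptotic for the mean of $\mathcal{T}_{\mathcal{B}}$ under the initial measure $\mu_N(\cdot)/\mu_N(\mathcal{A})$ restricted to $\mathcal{A}$.

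The final step is to replace this initial measure by the point mass $\delta_{\xi^x}$ for an arbitrary $x \in S_{\star i}^{2}$, and this is where part (1) enters. By the strong Markov property together with $\inf_{x, y \in S_{\star i}^{2}} \mathbb{P}_{\xi^x}[\mathcal{T}_{\xi^y} < \mathcal{T}_{\mathcal{B}}] \to 1$, the decomposition $\mathbb{E}_{\xi^x}[\mathcal{T}_{\mathcal{B}}] = \mathbb{E}_{\xi^x}[\mathcal{T}_{\xi^y} \wedge \mathcal{T}_{\mathcal{B}}] + \mathbb{P}_{\xi^x}[\mathcal{T}_{\xi^y} < \mathcal{T}_{\mathcal{B}}]\, \mathbb{E}_{\xi^y}[\mathcal{T}_{\mathcal{B}}]$ shows, using Theorem \ref{t_1st} to control the first term at order $\theta_1 \ll \theta_2$, that all values $\mathbb{E}_{\xi^x}[\mathcal{T}_{\mathcal{B}}]$ for $x \in S_{\star i}^{2}$ are asymptotically equal and in turn agree with the expectation under the restricted stationary initial measure. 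The main obstacle is promoting the qualitative thermalization into this quantitative $L^1$ statement, which requires the a priori upper bound $\mathbb{E}_{\xi^x}[\mathcal{T}_{\mathcal{B}}] = O(\theta_2)$ uniformly in $x \in S_{\star i}^{2}$; this can be obtained from crude upper bounds on $\mathrm{cap}(\mathcal{A}, \mathcal{B})$ and the strong Markov property. The degenerate case $\sum_{j} r^{\textup{2nd}}(i, j) = 0$ is then handled by the same argument, since $\mathrm{cap}(\mathcal{A}, \mathcal{B}) = o(d_N^2/N)$ in this regime forces $\theta_2^{-1} \mathbb{E}_{\xi^x}[\mathcal{T}_{\mathcal{B}}] \to \infty$.
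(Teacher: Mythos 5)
Your proposal only addresses part (3), which is appropriate since parts (1) and (2) are citations to \cite{Kim RIP-2nd}; for part (3) the paper's intended argument is the one it carries out verbatim for Theorem \ref{t_3rd}-(3): apply the magic formula \eqref{e_magic-formula} directly from the point mass at $\xi^{x}$, so that the denominator is the \emph{point-to-set} capacity $\mathrm{Cap}_{N}(\xi^{x},\mathcal{E}_{N}(S_{\star}\setminus S_{\star i}^{2}))$ (the second-scale analogue of Theorem \ref{t_Capacity-specific}), and the numerator $\sum_{\eta}\mu_{N}(\eta)h_{\xi^{x},\mathcal{B}}(\eta)$ is evaluated via Proposition \ref{p_condensation} together with part (1), which forces $h_{\xi^{x},\mathcal{B}}(\xi^{y})\to1$ for $y\in S_{\star i}^{2}$. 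Your route is genuinely different: you use the averaged hitting-time identity with initial measure $\mu_{N}(\cdot)/\mu_{N}(\mathcal{A})$ on $\mathcal{A}=\mathcal{E}_{N}(S_{\star i}^{2})$, which only requires the \emph{set-to-set} capacity (directly extractable from \cite{Kim RIP-2nd} via \eqref{e_cap-cap-cap}), and then transfer to $\delta_{\xi^{x}}$ by the strong Markov decomposition at $\mathcal{T}_{\xi^{y}}\wedge\mathcal{T}_{\mathcal{B}}$ plus thermalization. This is a valid trade: you avoid needing the sharp point-to-set capacity, at the cost of an extra comparison step. Two details in that step need tightening. First, the a priori bound $\mathbb{E}_{\xi^{x}}[\mathcal{T}_{\mathcal{B}}]=O(\theta_{2})$ follows from a \emph{lower} bound on $\mathrm{Cap}_{N}(\xi^{x},\mathcal{B})$ of order $d_{N}^{2}/N$ (via the magic formula and $h\le1$), not from an upper bound on $\mathrm{cap}(\mathcal{A},\mathcal{B})$ as you wrote; such a lower bound comes from an explicit path flow as in the proof of \eqref{e_t_3rd-2}. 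Second, $\mathbb{E}_{\xi^{x}}[\mathcal{T}_{\xi^{y}}\wedge\mathcal{T}_{\mathcal{B}}]=o(\theta_{2})$ is not a consequence of Theorem \ref{t_1st}, which controls the hitting time of the larger set $\mathcal{E}_{N}(S_{\star}\setminus\{x\})$; it should instead be obtained from $\mathbb{E}_{\xi^{x}}[\mathcal{T}_{\{\xi^{y}\}\cup\mathcal{B}}]\le\mathrm{Cap}_{N}(\xi^{x},\xi^{y})^{-1}$ together with a path-flow lower bound on $\mathrm{Cap}_{N}(\xi^{x},\xi^{y})$ for $x,y$ in the same $S_{\star i}^{2}$. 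With these repairs the argument, including the degenerate case, goes through.
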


The Eyring--Kramers formula subject to the second time scale given
in Theorem \ref{t_2nd}-(3) is not explicitly formulated in \cite{Kim RIP-2nd}.
Nevertheless, we will also present the Eyring--Kramers formula subject
to the third time scale in Theorem \ref{t_3rd}-(3) of which the idea
of proof is the same. Thus, we do not provide an explicit proof of
Theorem \ref{t_2nd}-(3) and refer the readers to the proof of Theorem
\ref{t_3rd}-(3) to be given in Section \ref{sec3}.

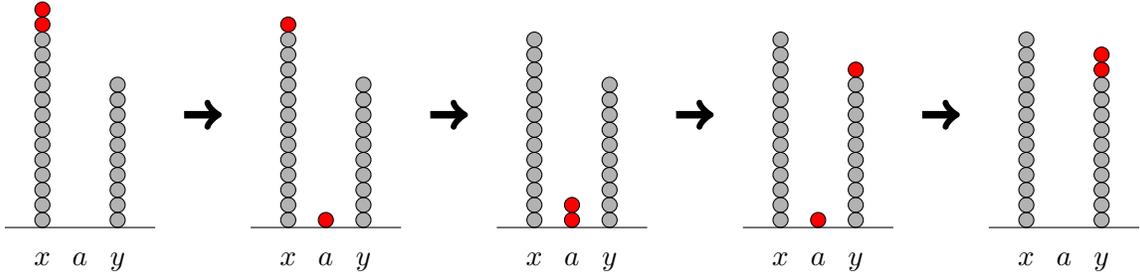
\begin{figure}
\begin{tikzpicture}
\fill[rounded corners,white] (-1,3) rectangle (1,-0.8); 
\fill[black!30!white] (-0.5,0.1) circle (0.1); \fill[black!30!white] (-0.5,0.3) circle (0.1); \fill[black!30!white] (-0.5,0.5) circle (0.1); \fill[black!30!white] (-0.5,0.7) circle (0.1); \fill[black!30!white] (-0.5,0.9) circle (0.1); \fill[black!30!white] (-0.5,1.1) circle (0.1); \fill[black!30!white] (-0.5,1.3) circle (0.1); \fill[black!30!white] (-0.5,1.5) circle (0.1); \fill[black!30!white] (-0.5,1.7) circle (0.1); \fill[black!30!white] (-0.5,1.9) circle (0.1); \fill[black!30!white] (-0.5,2.1) circle (0.1); \fill[black!30!white] (-0.5,2.3) circle (0.1); \fill[black!30!white] (-0.5,2.5) circle (0.1); \fill[red] (-0.5,2.7) circle (0.1); \fill[red] (-0.5,2.9) circle (0.1);
\fill[black!30!white] (0.5,0.1) circle (0.1); \fill[black!30!white] (0.5,0.3) circle (0.1); \fill[black!30!white] (0.5,0.5) circle (0.1); \fill[black!30!white] (0.5,0.7) circle (0.1); \fill[black!30!white] (0.5,0.9) circle (0.1); \fill[black!30!white] (0.5,1.1) circle (0.1); \fill[black!30!white] (0.5,1.3) circle (0.1); \fill[black!30!white] (0.5,1.5) circle (0.1); \fill[black!30!white] (0.5,1.7) circle (0.1); \fill[black!30!white] (0.5,1.9) circle (0.1);
\draw (-1,0)--(1,0);
\draw (-0.5,0.1) circle (0.1); \draw (-0.5,0.3) circle (0.1); \draw (-0.5,0.5) circle (0.1); \draw (-0.5,0.7) circle (0.1); \draw (-0.5,0.9) circle (0.1); \draw (-0.5,1.1) circle (0.1); \draw (-0.5,1.3) circle (0.1); \draw (-0.5,1.5) circle (0.1); \draw (-0.5,1.7) circle (0.1); \draw (-0.5,1.9) circle (0.1); \draw (-0.5,2.1) circle (0.1); \draw (-0.5,2.3) circle (0.1); \draw (-0.5,2.5) circle (0.1); \draw (-0.5,2.7) circle (0.1); \draw (-0.5,2.9) circle (0.1);
\draw (0.5,0.1) circle (0.1); \draw (0.5,0.3) circle (0.1); \draw (0.5,0.5) circle (0.1); \draw (0.5,0.7) circle (0.1); \draw (0.5,0.9) circle (0.1); \draw (0.5,1.1) circle (0.1); \draw (0.5,1.3) circle (0.1); \draw (0.5,1.5) circle (0.1); \draw (0.5,1.7) circle (0.1); \draw (0.5,1.9) circle (0.1);
\draw (-0.5,-0.2) node[below]{$x$};
\draw (0,-0.2) node[below]{$a$};
\draw (0.5,-0.2) node[below]{$y$};
\end{tikzpicture}
\begin{tikzpicture}
\fill[rounded corners,white] (-0.5,3) rectangle (0.5,-0.8); 
\draw[line width=1mm,-to] (-0.25,1.5)->(0.25,1.5);
\end{tikzpicture}
\begin{tikzpicture}
\fill[rounded corners,white] (-1,3) rectangle (1,-0.8); 
\fill[black!30!white] (-0.5,0.1) circle (0.1); \fill[black!30!white] (-0.5,0.3) circle (0.1); \fill[black!30!white] (-0.5,0.5) circle (0.1); \fill[black!30!white] (-0.5,0.7) circle (0.1); \fill[black!30!white] (-0.5,0.9) circle (0.1); \fill[black!30!white] (-0.5,1.1) circle (0.1); \fill[black!30!white] (-0.5,1.3) circle (0.1); \fill[black!30!white] (-0.5,1.5) circle (0.1); \fill[black!30!white] (-0.5,1.7) circle (0.1); \fill[black!30!white] (-0.5,1.9) circle (0.1); \fill[black!30!white] (-0.5,2.1) circle (0.1); \fill[black!30!white] (-0.5,2.3) circle (0.1); \fill[black!30!white] (-0.5,2.5) circle (0.1); \fill[red] (-0.5,2.7) circle (0.1);
\fill[red] (0,0.1) circle (0.1);
\fill[black!30!white] (0.5,0.1) circle (0.1); \fill[black!30!white] (0.5,0.3) circle (0.1); \fill[black!30!white] (0.5,0.5) circle (0.1); \fill[black!30!white] (0.5,0.7) circle (0.1); \fill[black!30!white] (0.5,0.9) circle (0.1); \fill[black!30!white] (0.5,1.1) circle (0.1); \fill[black!30!white] (0.5,1.3) circle (0.1); \fill[black!30!white] (0.5,1.5) circle (0.1); \fill[black!30!white] (0.5,1.7) circle (0.1); \fill[black!30!white] (0.5,1.9) circle (0.1);
\draw (-1,0)--(1,0);
\draw (-0.5,0.1) circle (0.1); \draw (-0.5,0.3) circle (0.1); \draw (-0.5,0.5) circle (0.1); \draw (-0.5,0.7) circle (0.1); \draw (-0.5,0.9) circle (0.1); \draw (-0.5,1.1) circle (0.1); \draw (-0.5,1.3) circle (0.1); \draw (-0.5,1.5) circle (0.1); \draw (-0.5,1.7) circle (0.1); \draw (-0.5,1.9) circle (0.1); \draw (-0.5,2.1) circle (0.1); \draw (-0.5,2.3) circle (0.1); \draw (-0.5,2.5) circle (0.1); \draw (-0.5,2.7) circle (0.1);
\draw (0,0.1) circle (0.1);
\draw (0.5,0.1) circle (0.1); \draw (0.5,0.3) circle (0.1); \draw (0.5,0.5) circle (0.1); \draw (0.5,0.7) circle (0.1); \draw (0.5,0.9) circle (0.1); \draw (0.5,1.1) circle (0.1); \draw (0.5,1.3) circle (0.1); \draw (0.5,1.5) circle (0.1); \draw (0.5,1.7) circle (0.1); \draw (0.5,1.9) circle (0.1);
\draw (-0.5,-0.2) node[below]{$x$};
\draw (0,-0.2) node[below]{$a$};
\draw (0.5,-0.2) node[below]{$y$};
\end{tikzpicture}
\begin{tikzpicture}
\fill[rounded corners,white] (-0.5,3) rectangle (0.5,-0.8); 
\draw[line width=1mm,-to] (-0.25,1.5)->(0.25,1.5);
\end{tikzpicture}
\begin{tikzpicture}
\fill[rounded corners,white] (-1,3) rectangle (1,-0.8); 
\fill[black!30!white] (-0.5,0.1) circle (0.1); \fill[black!30!white] (-0.5,0.3) circle (0.1); \fill[black!30!white] (-0.5,0.5) circle (0.1); \fill[black!30!white] (-0.5,0.7) circle (0.1); \fill[black!30!white] (-0.5,0.9) circle (0.1); \fill[black!30!white] (-0.5,1.1) circle (0.1); \fill[black!30!white] (-0.5,1.3) circle (0.1); \fill[black!30!white] (-0.5,1.5) circle (0.1); \fill[black!30!white] (-0.5,1.7) circle (0.1); \fill[black!30!white] (-0.5,1.9) circle (0.1); \fill[black!30!white] (-0.5,2.1) circle (0.1); \fill[black!30!white] (-0.5,2.3) circle (0.1); \fill[black!30!white] (-0.5,2.5) circle (0.1);
\fill[red] (0,0.1) circle (0.1); \fill[red] (0,0.3) circle (0.1);
\fill[black!30!white] (0.5,0.1) circle (0.1); \fill[black!30!white] (0.5,0.3) circle (0.1); \fill[black!30!white] (0.5,0.5) circle (0.1); \fill[black!30!white] (0.5,0.7) circle (0.1); \fill[black!30!white] (0.5,0.9) circle (0.1); \fill[black!30!white] (0.5,1.1) circle (0.1); \fill[black!30!white] (0.5,1.3) circle (0.1); \fill[black!30!white] (0.5,1.5) circle (0.1); \fill[black!30!white] (0.5,1.7) circle (0.1); \fill[black!30!white] (0.5,1.9) circle (0.1);
\draw (-1,0)--(1,0);
\draw (-0.5,0.1) circle (0.1); \draw (-0.5,0.3) circle (0.1); \draw (-0.5,0.5) circle (0.1); \draw (-0.5,0.7) circle (0.1); \draw (-0.5,0.9) circle (0.1); \draw (-0.5,1.1) circle (0.1); \draw (-0.5,1.3) circle (0.1); \draw (-0.5,1.5) circle (0.1); \draw (-0.5,1.7) circle (0.1); \draw (-0.5,1.9) circle (0.1); \draw (-0.5,2.1) circle (0.1); \draw (-0.5,2.3) circle (0.1); \draw (-0.5,2.5) circle (0.1);
\draw (0,0.1) circle (0.1); \draw (0,0.3) circle (0.1);
\draw (0.5,0.1) circle (0.1); \draw (0.5,0.3) circle (0.1); \draw (0.5,0.5) circle (0.1); \draw (0.5,0.7) circle (0.1); \draw (0.5,0.9) circle (0.1); \draw (0.5,1.1) circle (0.1); \draw (0.5,1.3) circle (0.1); \draw (0.5,1.5) circle (0.1); \draw (0.5,1.7) circle (0.1); \draw (0.5,1.9) circle (0.1);
\draw (-0.5,-0.2) node[below]{$x$};
\draw (0,-0.2) node[below]{$a$};
\draw (0.5,-0.2) node[below]{$y$};
\end{tikzpicture}
\begin{tikzpicture}
\fill[rounded corners,white] (-0.5,3) rectangle (0.5,-0.8); 
\draw[line width=1mm,-to] (-0.25,1.5)->(0.25,1.5);
\end{tikzpicture}
\begin{tikzpicture}
\fill[rounded corners,white] (-1,3) rectangle (1,-0.8); 
\fill[black!30!white] (-0.5,0.1) circle (0.1); \fill[black!30!white] (-0.5,0.3) circle (0.1); \fill[black!30!white] (-0.5,0.5) circle (0.1); \fill[black!30!white] (-0.5,0.7) circle (0.1); \fill[black!30!white] (-0.5,0.9) circle (0.1); \fill[black!30!white] (-0.5,1.1) circle (0.1); \fill[black!30!white] (-0.5,1.3) circle (0.1); \fill[black!30!white] (-0.5,1.5) circle (0.1); \fill[black!30!white] (-0.5,1.7) circle (0.1); \fill[black!30!white] (-0.5,1.9) circle (0.1); \fill[black!30!white] (-0.5,2.1) circle (0.1); \fill[black!30!white] (-0.5,2.3) circle (0.1); \fill[black!30!white] (-0.5,2.5) circle (0.1);
\fill[red] (0,0.1) circle (0.1);
\fill[black!30!white] (0.5,0.1) circle (0.1); \fill[black!30!white] (0.5,0.3) circle (0.1); \fill[black!30!white] (0.5,0.5) circle (0.1); \fill[black!30!white] (0.5,0.7) circle (0.1); \fill[black!30!white] (0.5,0.9) circle (0.1); \fill[black!30!white] (0.5,1.1) circle (0.1); \fill[black!30!white] (0.5,1.3) circle (0.1); \fill[black!30!white] (0.5,1.5) circle (0.1); \fill[black!30!white] (0.5,1.7) circle (0.1); \fill[black!30!white] (0.5,1.9) circle (0.1); \fill[red] (0.5,2.1) circle (0.1);
\draw (-1,0)--(1,0);
\draw (-0.5,0.1) circle (0.1); \draw (-0.5,0.3) circle (0.1); \draw (-0.5,0.5) circle (0.1); \draw (-0.5,0.7) circle (0.1); \draw (-0.5,0.9) circle (0.1); \draw (-0.5,1.1) circle (0.1); \draw (-0.5,1.3) circle (0.1); \draw (-0.5,1.5) circle (0.1); \draw (-0.5,1.7) circle (0.1); \draw (-0.5,1.9) circle (0.1); \draw (-0.5,2.1) circle (0.1); \draw (-0.5,2.3) circle (0.1); \draw (-0.5,2.5) circle (0.1);
\draw (0,0.1) circle (0.1);
\draw (0.5,0.1) circle (0.1); \draw (0.5,0.3) circle (0.1); \draw (0.5,0.5) circle (0.1); \draw (0.5,0.7) circle (0.1); \draw (0.5,0.9) circle (0.1); \draw (0.5,1.1) circle (0.1); \draw (0.5,1.3) circle (0.1); \draw (0.5,1.5) circle (0.1); \draw (0.5,1.7) circle (0.1); \draw (0.5,1.9) circle (0.1); \draw (0.5,2.1) circle (0.1);
\draw (-0.5,-0.2) node[below]{$x$};
\draw (0,-0.2) node[below]{$a$};
\draw (0.5,-0.2) node[below]{$y$};
\end{tikzpicture}
\begin{tikzpicture}
\fill[rounded corners,white] (-0.5,3) rectangle (0.5,-0.8); 
\draw[line width=1mm,-to] (-0.25,1.5)->(0.25,1.5);
\end{tikzpicture}
\begin{tikzpicture}
\fill[rounded corners,white] (-1,3) rectangle (1,-0.8); 
\fill[black!30!white] (-0.5,0.1) circle (0.1); \fill[black!30!white] (-0.5,0.3) circle (0.1); \fill[black!30!white] (-0.5,0.5) circle (0.1); \fill[black!30!white] (-0.5,0.7) circle (0.1); \fill[black!30!white] (-0.5,0.9) circle (0.1); \fill[black!30!white] (-0.5,1.1) circle (0.1); \fill[black!30!white] (-0.5,1.3) circle (0.1); \fill[black!30!white] (-0.5,1.5) circle (0.1); \fill[black!30!white] (-0.5,1.7) circle (0.1); \fill[black!30!white] (-0.5,1.9) circle (0.1); \fill[black!30!white] (-0.5,2.1) circle (0.1); \fill[black!30!white] (-0.5,2.3) circle (0.1); \fill[black!30!white] (-0.5,2.5) circle (0.1);
\fill[black!30!white] (0.5,0.1) circle (0.1); \fill[black!30!white] (0.5,0.3) circle (0.1); \fill[black!30!white] (0.5,0.5) circle (0.1); \fill[black!30!white] (0.5,0.7) circle (0.1); \fill[black!30!white] (0.5,0.9) circle (0.1); \fill[black!30!white] (0.5,1.1) circle (0.1); \fill[black!30!white] (0.5,1.3) circle (0.1); \fill[black!30!white] (0.5,1.5) circle (0.1); \fill[black!30!white] (0.5,1.7) circle (0.1); \fill[black!30!white] (0.5,1.9) circle (0.1); \fill[red] (0.5,2.1) circle (0.1); \fill[red] (0.5,2.3) circle (0.1);
\draw (-1,0)--(1,0);
\draw (-0.5,0.1) circle (0.1); \draw (-0.5,0.3) circle (0.1); \draw (-0.5,0.5) circle (0.1); \draw (-0.5,0.7) circle (0.1); \draw (-0.5,0.9) circle (0.1); \draw (-0.5,1.1) circle (0.1); \draw (-0.5,1.3) circle (0.1); \draw (-0.5,1.5) circle (0.1); \draw (-0.5,1.7) circle (0.1); \draw (-0.5,1.9) circle (0.1); \draw (-0.5,2.1) circle (0.1); \draw (-0.5,2.3) circle (0.1); \draw (-0.5,2.5) circle (0.1);
\draw (0.5,0.1) circle (0.1); \draw (0.5,0.3) circle (0.1); \draw (0.5,0.5) circle (0.1); \draw (0.5,0.7) circle (0.1); \draw (0.5,0.9) circle (0.1); \draw (0.5,1.1) circle (0.1); \draw (0.5,1.3) circle (0.1); \draw (0.5,1.5) circle (0.1); \draw (0.5,1.7) circle (0.1); \draw (0.5,1.9) circle (0.1); \draw (0.5,2.1) circle (0.1); \draw (0.5,2.3) circle (0.1);
\draw (-0.5,-0.2) node[below]{$x$};
\draw (0,-0.2) node[below]{$a$};
\draw (0.5,-0.2) node[below]{$y$};
\end{tikzpicture}\caption{\label{fig2.4}Typical movements of particles during the metastable
transition from $\xi^{x}$ to $\xi^{y}$ on the second time scale,
where $x,\,y\in S_{\star}$ and $a\in S_{0}$. There are three sites
occupied in each movement.}
\end{figure}

\begin{rem}
\label{r_2nd}A few remarks are in order.
\begin{enumerate}
\item The constant $\mathfrak{R}_{ij}$ that appears in Theorem \ref{t_2nd}-(2)
has an explicit formula, which is
\begin{equation}
\mathfrak{R}_{ij}=\int_{0}^{1}\Big[\sum_{x\in S_{\star i}^{2}}\sum_{y\in S_{\star j}^{2}}\sum_{a\in S_{0}}\frac{1}{(1-m_{a})(\frac{1-t}{r(x,\,a)}+\frac{t}{r(y,\,a)})}\Big]^{-1}\mathrm{d}t.\label{e_Rij-def}
\end{equation}
Thus, $\mathfrak{R}_{ij}<\infty$ if and only if there exists a triple
$(x,\,a,\,y)\in S_{\star i}^{2}\times S_{0}\times S_{\star j}^{2}$
such that $r(x,\,a)>0$ and $r(y,\,a)>0$. This is equivalent to saying
that $S_{\star i}^{2}$ and $S_{\star j}^{2}$ are exactly two graph
distance away from each other (cf. Figure \ref{fig2.3}). Thus, the
second limit process $Y(\cdot)$ is still not necessarily irreducible,
in that collections $S_{\star i}^{2}$, $i\in\llbracket1,\,\kappa_{2}\rrbracket$
with three or more graph distance away from each other are not connected.
This suggests yet another bigger time scale to emerge; \emph{this
is the main result of this article} stated in Section \ref{sec2.5}.
\item The subexponential decaying condition \eqref{e_dN-decay-subexp} of
$d_{N}$ is natural in the following sense. Consider the case where
$x,\,y\in S_{\star}$ and $a\in S_{0}$ with $r(x,\,y)=0$, $r(x,\,a)>0$,
and $r(y,\,a)>0$, such that $x$ and $y$ are not connected on the
first time scale but connected on the second time scale. Then, instead
of following the complicated mechanism of the second-time-scale transitions,
we may just simply move the condensate from $x$ to $y$ by moving
all $N$ particles at once from $x$ to $a$ and then again from $a$
to $y$, thereby following the simple mechanism of the first-time-scale
transitions. According to the one-dimensional estimates conducted
in \cite[Section 4.2]{KS NRIP} and \cite[Section 4]{Kim RIP-2nd},
the amount of time we have to wait to observe such transitions has
the scale of
\[
\frac{1}{d_{N}N}\cdot\Big(\frac{1}{m_{a}}\Big)^{N}=\frac{1}{d_{N}Nm_{a}^{N}}.
\]
To neglect these uninteresting types of metastable transitions from
our picture, the second time scale $\theta_{2}$ must be smaller than
this time scale; namely,
\[
\frac{N}{d_{N}^{2}}\ll\frac{1}{d_{N}Nm_{a}^{N}},\quad\text{which is equivalent to}\quad d_{N}\frac{1}{N^{2}}\Big(\frac{1}{m_{a}}\Big)^{N}\gg1.
\]
Requiring this to hold for all $m_{a}\in(0,\,1)$ is equivalent to
assigning the subexponential decaying condition \eqref{e_dN-decay-subexp}
to $d_{N}$.
\item Continuing the previous remark, given the subexponential decaying
condition of $d_{N}$, during the metastable transitions one has to
control the number of particles in $S_{0}$ to be properly bounded.
Then, the number of sites that are occupied during each particle movement
is $3$ (see Figure \ref{fig2.4}), which gives the exponent $3-1=2$
of $d_{N}$ in $\theta_{2}=N/d_{N}^{2}$.
\item Originally, in \cite[Theorem 2.10]{Kim RIP-2nd}, a rather suboptimal
condition $d_{N}N^{2}(\log N)^{2}\ll1$ was required. This technicality
is settled, and the results are valid under the minimal optimal condition
$d_{N}\log N\ll1$. Interested readers are referred to the latest
version in arXiv.
\end{enumerate}
\end{rem}

According to Remark \ref{r_2nd}-(1), the second limit process $Y(\cdot)$
is still not necessarily irreducible. Thus, we further decompose $S_{\star}$
into irreducible components with respect to $Y(\cdot)$:
\[
S_{\star}=\bigcup_{i=1}^{\kappa_{3}}S_{\star i}^{3}.
\]
For instance, in Figure \ref{fig2.3}, it holds that $\kappa_{3}=3$.
Then, the excursions are negligible also on the second time scale.
\begin{thm}[{2nd time scale, negligibility of excursions, \cite[Theorem 2.10-(3)]{Kim RIP-2nd}}]
\label{t_2nd-neg}Starting from a site $x\in S_{\star i}^{3}$ for
$i\in\llbracket1,\,\kappa_{3}\rrbracket$, on the second time scale
$\theta_{2}$, the original process spends negligible time outside
the collection $\mathcal{E}_{N}(S_{\star i}^{3})$:
\[
\lim_{N\to\infty}\mathbb{E}_{\xi^{x}}\Big[\int_{0}^{t}\mathbbm{1}\{\eta_{N}(\theta_{2}s)\notin\mathcal{E}_{N}(S_{\star i}^{3})\}\mathrm{d}s\Big]=0\text{ for all }t\ge0.
\]
\end{thm}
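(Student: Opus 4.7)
My plan is to reduce the statement to a direct application of the finite-dimensional convergence established in Theorem \ref{t_2nd}-(2) via an exchange of limit and integration.

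Let $J := \{j \in \llbracket 1,\,\kappa_{2}\rrbracket : S_{\star j}^{2}\subseteq S_{\star i}^{3}\}$ index the first-level components that together form $S_{\star i}^{3}$. By the very definition of $S_{\star i}^{3}$ as an irreducible class of $Y(\cdot)$, the chain $Y$ starting inside $J$ never leaves $J$. Because $x \in S_{\star i}^{3}$, there is a unique $i' \in J$ with $x \in S_{\star i'}^{2}$, so $\Psi_{2}(\xi^{x}) = i'$.

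The crucial observation is that, by definition of $\Psi_{2}$, the event $\{\eta_{N}(\theta_{2}s)\notin\mathcal{E}_{N}(S_{\star i}^{3})\}$ coincides with $\{Y_{N}(\theta_{2}s)\notin J\}$ viewed as a subset of $\llbracket 1,\,\kappa_{2}\rrbracket\cup\{\mathfrak{0}\}$: both the ``no-condensate'' case ($Y_{N}=\mathfrak{0}$) and the ``wrong-component'' case ($Y_{N}\in\llbracket 1,\,\kappa_{2}\rrbracket\setminus J$) are captured. Hence by Tonelli,
\[
\mathbb{E}_{\xi^{x}}\Big[\int_{0}^{t}\mathbbm{1}\{\eta_{N}(\theta_{2}s)\notin\mathcal{E}_{N}(S_{\star i}^{3})\}\,\mathrm{d}s\Big] = \int_{0}^{t}\mathbb{P}_{\xi^{x}}[Y_{N}(\theta_{2}s)\notin J]\,\mathrm{d}s.
\]

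Next I would invoke Theorem \ref{t_2nd}-(2). For each fixed $s \geq 0$ and each $k \in \llbracket 1,\,\kappa_{2}\rrbracket$ it yields $\mathbb{P}_{\xi^{x}}[Y_{N}(\theta_{2}s) = k] \to \mathbb{P}_{i'}[Y(s) = k]$. Summing over $k\in\llbracket 1,\,\kappa_{2}\rrbracket$ gives $\mathbb{P}_{\xi^{x}}[Y_{N}(\theta_{2}s) \in \llbracket 1,\,\kappa_{2}\rrbracket] \to 1$, so in particular $\mathbb{P}_{\xi^{x}}[Y_{N}(\theta_{2}s) = \mathfrak{0}] \to 0$. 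Combining with $\mathbb{P}_{i'}[Y(s) \in J] = 1$ produced by irreducibility, we obtain the pointwise limit $\mathbb{P}_{\xi^{x}}[Y_{N}(\theta_{2}s) \notin J] \to 0$ for every $s$. The integrand above is bounded by $1$ on the compact interval $[0,\,t]$, so the dominated convergence theorem delivers the desired vanishing of the time integral.

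Essentially the only subtle point is that Theorem \ref{t_2nd}-(2) produces convergence to a chain on the reduced state space $\llbracket 1,\,\kappa_{2}\rrbracket$ rather than on $\llbracket 1,\,\kappa_{2}\rrbracket\cup\{\mathfrak{0}\}$; one must observe that this implicitly forces $\mathbb{P}[Y_{N}(\theta_{2}s)=\mathfrak{0}]\to 0$, which follows from total-mass conservation as above. All genuinely quantitative content, including the decay of excursion time outside $\mathcal{E}_{N}(S_{\star})$ on the second time scale, is already built into the proof of Theorem \ref{t_2nd}-(2), so I do not anticipate any further substantive obstacle beyond this bookkeeping reduction.
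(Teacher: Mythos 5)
Your argument is correct. The paper does not actually reprove this statement (it is imported from \cite[Theorem 2.10-(3)]{Kim RIP-2nd}), but your route is exactly the one the paper uses for the first-time-scale analogue, Theorem \ref{t_1st-neg}: identify the event with the order process leaving its irreducible component, apply the one-dimensional marginal convergence from Theorem \ref{t_2nd}-(2) pointwise in $s$ together with closedness of the component under $Y(\cdot)$, and finish with dominated convergence --- the only cosmetic difference being that you absorb the state $\mathfrak{0}$ via total-mass conservation of the limiting marginals (valid since $\kappa_{2}<\infty$), where the proof of Theorem \ref{t_1st-neg} instead invokes the separate excursion estimate Theorem \ref{t_1st}-(3).
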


\subsection{\label{sec2.5}Main result: third time scale of metastability}

We are ready to state our main result. We define the third projection
operator $\Psi_{3}=\Psi_{N,\,3}:\mathcal{H}_{N}\to\llbracket1,\,\kappa_{3}\rrbracket\cup\{\mathfrak{0}\}$
as
\[
\Psi_{3}(\eta):=\begin{cases}
x & \text{if }\eta=\xi^{x}\text{ for some }x\in S_{\star i}^{3}\text{ and }i\in\llbracket1,\,\kappa_{3}\rrbracket,\\
\mathfrak{0} & \text{otherwise}.
\end{cases}
\]
Then, the third and \emph{final} order process $\{Z_{N}(t)\}_{t\ge0}$
is defined as $Z_{N}(t):=\Psi_{3}(\eta_{N}(t))$ for $t\ge0$. Refer
to Figure \ref{fig2.5} for an illustration of Theorem \ref{t_3rd}.

\begin{figure}
\begin{tikzpicture}
\fill[rounded corners,white] (-2.25,2.25) rectangle (2.25,-2.25); 
\fill[white] (-1,2) circle (0.1); \draw[black!30!white] (-1,2) circle (0.1);
\fill[white] (0,2) circle (0.1); \draw[black!30!white] (0,2) circle (0.1);
\fill[white] (-1,1) circle (0.1); \draw[black!30!white] (-1,1) circle (0.1);
\fill[white] (0,1) circle (0.1); \draw[black!30!white] (0,1) circle (0.1);
\fill[white] (1,1) circle (0.1); \draw[black!30!white] (1,1) circle (0.1);
\fill[white] (2,1) circle (0.1); \draw[black!30!white] (2,1) circle (0.1);
\fill[white] (-2,0) circle (0.1); \draw[black!30!white] (-2,0) circle (0.1);
\fill[white] (-1,0) circle (0.1); \draw[black!30!white] (-1,0) circle (0.1);
\fill[white] (0,0) circle (0.1); \draw[black!30!white] (0,0) circle (0.1);
\fill[white] (2,0) circle (0.1); \draw[black!30!white] (2,0) circle (0.1);
\fill[white] (0,-1) circle (0.1); \draw[black!30!white] (0,-1) circle (0.1);
\fill[white] (1,-1) circle (0.1); \draw[black!30!white] (1,-1) circle (0.1);
\fill[white] (2,-1) circle (0.1); \draw[black!30!white] (2,-1) circle (0.1);
\fill[white] (0,-2) circle (0.1); \draw[black!30!white] (0,-2) circle (0.1);
\fill[white] (1,-2) circle (0.1); \draw[black!30!white] (1,-2) circle (0.1);
\draw[very thick,to-,black!10!white] (-1.9,2)--(-1.1,2); \draw[very thick,black!10!white] (-0.9,2)--(-0.1,2); \draw[very thick,-to,black!10!white] (0.1,2)--(0.9,2);
\draw[very thick,to-,black!10!white] (1,1.9)--(1,1.1); \draw[very thick, to-,black!10!white] (2,1.9)--(2,1.1);
\draw[very thick,to-,black!10!white] (-1.9,1)--(-1.1,1); \draw[very thick,black!10!white] (0.1,1)--(0.9,1); \draw[very thick,black!10!white] (1.1,1)--(1.9,1);
\draw[very thick,to-,black!10!white] (-2,0.9)--(-2,0.1); \draw[very thick,-to,black!10!white] (-1,0.9)--(-1,0.1); \draw[very thick,-to,black!10!white] (0,0.9)--(0,0.1); \draw[very thick,-to,black!10!white] (1,0.9)--(1,0.1);
\draw[very thick,black!10!white] (-1.9,0)--(-1.1,0); \draw[very thick,-to,black!10!white] (-0.9,0)--(-0.1,0); \draw[very thick,to-,black!10!white] (1.1,0)--(1.9,0);
\draw[very thick,-to,black!10!white] (-2,-0.1)--(-2,-0.9); \draw[very thick,-to,black!10!white] (-1,-0.1)--(-1,-0.9); \draw[very thick,-to,black!10!white] (0,-0.1)--(0,-0.9); \draw[very thick,-to,black!10!white] (2,-0.1)--(2,-0.9);
\draw[very thick,to-,black!10!white] (-0.9,-1)--(-0.1,-1); \draw[very thick,-to,black!10!white] (0.1,-1)--(0.9,-1);
\draw[very thick,-to,black!10!white] (2,-1.1)--(2,-1.9);
\draw[very thick,to-,black!10!white] (-0.9,-2)--(-0.1,-2); \draw[very thick,to-,black!10!white] (0.1,-2)--(0.9,-2); \draw[very thick,-to,black!10!white] (1.1,-2)--(1.9,-2);
\fill[rounded corners,teal!50!white] (-2.25,2.25) rectangle (-1.75,-2.25);
\fill[rounded corners,teal!50!white] (0.75,2.25) rectangle (2.25,1.75);
\fill[rounded corners,teal!50!white] (0.75,2.25) rectangle (1.25,-0.25);
\fill[rounded corners,teal!50!white] (-2.25,-0.75) rectangle (-0.75,-2.25);
\fill[rounded corners,teal!50!white] (1.75,-1.75) rectangle (2.25,-2.25);
\fill[black] (-2,2) circle (0.1); \draw (-2,2) circle (0.1);
\fill[black] (1,2) circle (0.1); \draw (1,2) circle (0.1);
\fill[black] (2,2) circle (0.1); \draw (2,2) circle (0.1);
\fill[black] (-2,1) circle (0.1); \draw (-2,1) circle (0.1);
\fill[black] (1,0) circle (0.1); \draw (1,0) circle (0.1);
\fill[black] (-2,-1) circle (0.1); \draw (-2,-1) circle (0.1);
\fill[black] (-1,-1) circle (0.1); \draw (-1,-1) circle (0.1);
\fill[black] (-2,-2) circle (0.1); \draw (-2,-2) circle (0.1);
\fill[black] (-1,-2) circle (0.1); \draw (-1,-2) circle (0.1);
\fill[black] (2,-2) circle (0.1); \draw (2,-2) circle (0.1);
\draw[very thick] (1.1,2)--(1.9,2);
\draw[very thick] (-2,1.9)--(-2,1.1);
\draw[very thick] (-1.9,-1)--(-1.1,-1);
\draw[very thick] (-2,-1.1)--(-2,-1.9); \draw[very thick] (-1,-1.1)--(-1,-1.9);
\draw[very thick] (-1.9,-2)--(-1.1,-2);
\draw[very thick] (1,1.9)--(1,0.1);
\draw[very thick] (-2,0.9)--(-2,-0.9);
\draw[line width=1mm,latex-latex] (-1.75,2)--(0.75,2);
\draw[line width=1mm,latex-latex] (-0.75,-2)--(1.75,-2);
\draw[line width=1mm,latex-latex] (1.25,0)--(2,0)--(2,-1.75);
\end{tikzpicture}\caption{\label{fig2.5}Third level of metastable transitions between the collections
$S_{\star i}^{3}$ for $i\in\llbracket1,\,\kappa_{3}\rrbracket$.
The third limit process $Z(\cdot)$ is irreducible.}
\end{figure}
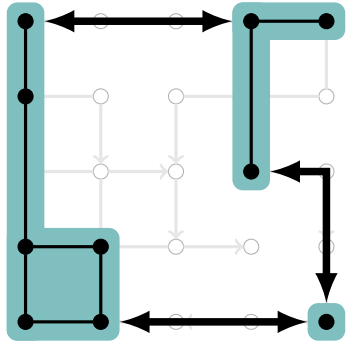

\begin{thm}[Main result: 3rd time scale of metastability]
\label{t_3rd}Define $\theta_{3}=\theta_{N,\,3}:=N^{2}/d_{N}^{3}$
and fix a starting position $i\in\llbracket1,\,\kappa_{3}\rrbracket$.
\begin{enumerate}
\item The collection $\mathcal{E}_{N}(S_{\star i}^{3})$ thermalizes before
the metastable transition:
\[
\lim_{N\to\infty}\inf_{x,\,y\in S_{\star i}^{3}}\mathbb{P}_{\xi^{x}}[\mathcal{T}_{\xi^{y}}<\mathcal{T}_{\mathcal{E}_{N}(S_{\star}\setminus S_{\star i}^{3})}]=1.
\]
\item The law of the accelerated third order process $\{Z_{N}(\theta_{3}t)\}_{t\ge0}$
converges, as $N\to\infty$, to the law of an \textbf{irreducible}
Markov chain $\{Z(t)\}_{t\ge0}$ on $\llbracket1,\,\kappa_{3}\rrbracket$
which is determined by a transition rate function defined as $r^{\textup{3rd}}(i,\,i):=0$
and
\[
r^{\textup{3rd}}(i,\,j):=\frac{1}{|S_{\star i}^{3}|\cdot\mathfrak{K}_{ij}}\text{ for }i,\,j\in\llbracket1,\,\kappa_{3}\rrbracket.
\]
Here, $\mathfrak{K}_{ij}\in(0,\,\infty]$ is an explicit (possibly
infinite) constant defined in \eqref{e_Kxy-def}.
\item For every $x\in S_{\star i}^{3}$, the Eyring--Kramers formula is
given as
\[
\mathbb{E}_{\xi^{x}}[\mathcal{T}_{\mathcal{E}_{N}(S_{\star}\setminus S_{\star i}^{3})}]\simeq\frac{1}{\sum_{j\in\llbracket1,\,\kappa_{3}\rrbracket}r^{\textup{3rd}}(i,\,j)}\cdot\frac{N^{2}}{d_{N}^{3}}.
\]
\item On the third time scale $\theta_{3}$, excursions outside $\mathcal{E}_{N}(S_{\star})$
are negligible:
\[
\lim_{N\to\infty}\mathbb{E}_{\xi^{x}}\Big[\int_{0}^{t}\mathbbm{1}\{\eta_{N}(\theta_{3}s)\notin\mathcal{E}_{N}(S_{\star})\}\mathrm{d}s\Big]=0\text{ for all }t\ge0.
\]
\end{enumerate}
\end{thm}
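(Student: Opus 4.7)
My plan is to reduce all four parts of Theorem \ref{t_3rd} to a single family of sharp capacity estimates, following the combined potential-theoretic and martingale framework summarized in Appendix \ref{appenA}. The key object to estimate is, for $x\in S_{\star i}^{3}$,
\[
\mathrm{cap}_{N}(\mathcal{E}_{N}^{x},\,\mathcal{E}_{N}(S_{\star}\setminus S_{\star i}^{3}))\simeq\frac{\mu_{N}(\mathcal{E}_{N}^{x})}{\theta_{3}}\cdot\sum_{j\in\llbracket 1,\,\kappa_{3}\rrbracket}\frac{1}{|S_{\star i}^{3}|\cdot\mathfrak{K}_{ij}},
\]
together with its refinement into pairwise capacities between distinct condensed wells. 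Given this asymptotic, part (3) (the Eyring--Kramers formula) follows from the standard identity $\mathbb{E}_{\xi^{x}}[\mathcal{T}_{\mathcal{A}}]=\langle h,\,\mu_{N}\rangle/\mathrm{cap}_{N}(\mathcal{E}_{N}^{x},\,\mathcal{A})$ for the equilibrium potential $h=h_{\mathcal{E}_{N}^{x},\,\mathcal{A}}$, once $h$ is shown to be close to $\mathbbm{1}\{\eta\in\mathcal{E}_{N}(S_{\star i}^{3})\}$ in $L^{1}(\mu_{N})$. Part (1), the thermalization statement, is precisely this approximation restricted to the condensed states; it follows by capacity comparison because the in-component transitions are already resolved by Theorem \ref{t_2nd} on the much shorter scale $\theta_{2}\ll\theta_{3}$.

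For part (2), I would apply the Beltran--Landim martingale criterion for convergence of projected Markov chains. Its three hypotheses are (a) convergence of the accelerated mean jump rates between the wells, which follows from the capacity estimate combined with Proposition \ref{p_condensation} identifying the invariant weights $\mu_{N}(\mathcal{E}_{N}^{x})/\mu_{N}(\mathcal{E}_{N}(S_{\star i}^{3}))\to 1/|S_{\star i}^{3}|$; (b) thermalization within each well, which is part (1); and (c) negligibility of excursions outside $\mathcal{E}_{N}(S_{\star})$, which is part (4). Irreducibility of the limit chain $Z(\cdot)$ is a structural claim that I would prove by combining the irreducibility of $r(\cdot,\,\cdot)$ on $S$ with a direct combinatorial argument: given any two components $S_{\star i}^{3}$ and $S_{\star j}^{3}$, the irreducibility of $r(\cdot,\,\cdot)$ produces a connecting path in $S$, and any such path can be short-circuited to a channel with at most two intermediate $S_{0}$-sites, yielding an adjacency in the quotient graph with finite $\mathfrak{K}$; iterating this propagates connectivity through all of $\llbracket 1,\,\kappa_{3}\rrbracket$.

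The principal obstacle is the capacity estimate itself. The upper bound is obtained via the Dirichlet principle by constructing an explicit test function $f_{N}:\mathcal{H}_{N}\to[0,\,1]$ equal to $1$ on $\mathcal{E}_{N}(S_{\star i}^{3})$ and $0$ on $\mathcal{E}_{N}(S_{\star}\setminus S_{\star i}^{3})$ whose Dirichlet energy matches the target; the matching lower bound requires a divergence-free unit test flow from the former set to the latter. On the third time scale, typical transition trajectories occupy up to four sites (a source $x\in S_{\star i}^{3}$, a target $y\in S_{\star j}^{3}$, and one or two intermediate sites $a,\,b\in S_{0}$), so the relevant transition tube is a three-dimensional simplex of particle distributions. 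My strategy, already outlined in Sections \ref{sec4.1}--\ref{sec6}, is first to reduce the dimension of the simplex from three to one or two by exploiting the product structure of $\mu_{N}$ from Proposition \ref{p_muN-prod} along directions corresponding to the energetically unfavorable coordinates, and then to encode the remaining problem as a resolvent equation on an auxiliary infinite ladder graph. A solution of this resolvent equation, after suitable rescaling and truncation, simultaneously yields the test function (for the upper bound) and the test flow (for the lower bound), with asymptotic energies matching the constants $\mathfrak{K}_{ij}$ defined in \eqref{e_Kxy-def}. I would execute this construction in full for the two-well case in Sections \ref{sec4}--\ref{sec6} and then extend to the general setting in Section \ref{sec7} by decomposing the capacity additively into pairwise channel contributions.

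Finally, part (4) is addressed by combining the stationary estimate $\mu_{N}(\mathcal{H}_{N}\setminus\mathcal{E}_{N}(S_{\star}))=o(1)$ from Proposition \ref{p_condensation} with a time-averaging argument based on the Dirichlet form (Appendix \ref{appenA}). The subexponential decay \eqref{e_dN-decay-subexp} is essential here: as explained in Remark \ref{r_2nd}-(2), it rules out the competing collective jump mechanism in which all $N$ particles traverse an $S_{0}$-site at once, a mechanism that would otherwise produce macroscopic excursions on a scale comparable to $\theta_{3}$.
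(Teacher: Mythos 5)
Your overall route is the same as the paper's: reduce everything to sharp capacity asymptotics, verify the Beltr\'{a}n--Landim conditions (mean jump rates, intra-well thermalization via capacity comparison, negligibility of excursions via stationarity and Proposition \ref{p_condensation}), obtain part (3) from the magic formula $\mathbb{E}_{\xi^{x}}[\mathcal{T}_{\mathcal{A}}]=\langle h,\mu_{N}\rangle/\mathrm{Cap}_{N}$, and prove the capacities by a Dirichlet-principle test function and a Thomson-principle test flow built from a resolvent equation on an infinite ladder graph after dimension reduction. This is exactly the paper's proof, so there is no methodological divergence to compare.

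Two concrete slips are worth fixing. First, your headline capacity asymptotic carries a spurious factor $|S_{\star i}^{3}|^{-1}$: the correct statement (Theorem \ref{t_Capacity-specific}) is $\mathrm{Cap}_{N}(\xi^{x},\mathcal{E}_{N}(S_{\star}\setminus S_{\star i}^{3}))\simeq\frac{d_{N}^{3}}{N^{2}}\cdot\frac{1}{|S_{\star}|}\sum_{j\ne i}\mathfrak{K}_{ij}^{-1}$, with no $|S_{\star i}^{3}|$ in the denominator; that factor enters only when converting the capacity into the mean jump rate $r^{\textup{3rd}}(i,j)$ by dividing by $\mu_{N}(\mathcal{E}_{N}(S_{\star i}^{3}))\simeq|S_{\star i}^{3}|/|S_{\star}|$. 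As written, your formula combined with $\langle h,\mu_{N}\rangle\to|S_{\star i}^{3}|/|S_{\star}|$ would make part (3) come out wrong by a factor of $|S_{\star i}^{3}|$. Second, the irreducibility argument claiming that any connecting path ``can be short-circuited to a channel with at most two intermediate $S_{0}$-sites'' is not correct: components joined only by long $S_{0}$-paths admit no such short-circuit, yet $\mathfrak{K}_{ij}<\infty$ for them anyway, because what is bounded by the mechanism of Remark \ref{r_3rd}-(2) is the number of \emph{simultaneously occupied} sites (at most four), not the length of the connecting path---this is precisely why the ladder graph is infinite. The correct statement is that $\mathfrak{K}_{ij}<\infty$ whenever $S_{\star i}^{3}$ and $S_{\star j}^{3}$ are joined by a path whose interior lies entirely in $S_{0}$, and irreducibility of $Z(\cdot)$ then follows by chaining through intermediate components exactly as in Section \ref{sec7}.
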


\begin{figure}
\begin{tikzpicture}
\fill[rounded corners,white] (-1.2,3) rectangle (1.2,-0.8); 
\fill[black!30!white] (-0.8,0.1) circle (0.1); \fill[black!30!white] (-0.8,0.3) circle (0.1); \fill[black!30!white] (-0.8,0.5) circle (0.1); \fill[black!30!white] (-0.8,0.7) circle (0.1); \fill[black!30!white] (-0.8,0.9) circle (0.1); \fill[black!30!white] (-0.8,1.1) circle (0.1); \fill[black!30!white] (-0.8,1.3) circle (0.1); \fill[black!30!white] (-0.8,1.5) circle (0.1); \fill[black!30!white] (-0.8,1.7) circle (0.1); \fill[black!30!white] (-0.8,1.9) circle (0.1); \fill[black!30!white] (-0.8,2.1) circle (0.1); \fill[black!30!white] (-0.8,2.3) circle (0.1); \fill[black!30!white] (-0.8,2.5) circle (0.1); \fill[black!30!white] (-0.8,2.7) circle (0.1); \fill[red] (-0.8,2.9) circle (0.1);
\fill[black!30!white] (0.8,0.1) circle (0.1); \fill[black!30!white] (0.8,0.3) circle (0.1); \fill[black!30!white] (0.8,0.5) circle (0.1); \fill[black!30!white] (0.8,0.7) circle (0.1); \fill[black!30!white] (0.8,0.9) circle (0.1); \fill[black!30!white] (0.8,1.1) circle (0.1); \fill[black!30!white] (0.8,1.3) circle (0.1); \fill[black!30!white] (0.8,1.5) circle (0.1); \fill[black!30!white] (0.8,1.7) circle (0.1); \fill[black!30!white] (0.8,1.9) circle (0.1);
\draw (-1.2,0)--(1.2,0);
\draw (-0.8,0.1) circle (0.1); \draw (-0.8,0.3) circle (0.1); \draw (-0.8,0.5) circle (0.1); \draw (-0.8,0.7) circle (0.1); \draw (-0.8,0.9) circle (0.1); \draw (-0.8,1.1) circle (0.1); \draw (-0.8,1.3) circle (0.1); \draw (-0.8,1.5) circle (0.1); \draw (-0.8,1.7) circle (0.1); \draw (-0.8,1.9) circle (0.1); \draw (-0.8,2.1) circle (0.1); \draw (-0.8,2.3) circle (0.1); \draw (-0.8,2.5) circle (0.1); \draw (-0.8,2.7) circle (0.1); \draw (-0.8,2.9) circle (0.1);
\draw (0.8,0.1) circle (0.1); \draw (0.8,0.3) circle (0.1); \draw (0.8,0.5) circle (0.1); \draw (0.8,0.7) circle (0.1); \draw (0.8,0.9) circle (0.1); \draw (0.8,1.1) circle (0.1); \draw (0.8,1.3) circle (0.1); \draw (0.8,1.5) circle (0.1); \draw (0.8,1.7) circle (0.1); \draw (0.8,1.9) circle (0.1);
\draw (0,-0.2) node[below]{$x$\hspace{0.22cm}$a$\hspace{0.22cm}$b$\hspace{0.22cm}$c$\hspace{0.22cm}$y$};
\end{tikzpicture}
\begin{tikzpicture}
\fill[rounded corners,white] (-0.25,3) rectangle (0.25,-0.8); 
\draw[ultra thick,-to] (-0.15,1.5)->(0.15,1.5);
\end{tikzpicture}
\begin{tikzpicture}
\fill[rounded corners,white] (-1.2,3) rectangle (1.2,-0.8); 
\fill[black!30!white] (-0.8,0.1) circle (0.1); \fill[black!30!white] (-0.8,0.3) circle (0.1); \fill[black!30!white] (-0.8,0.5) circle (0.1); \fill[black!30!white] (-0.8,0.7) circle (0.1); \fill[black!30!white] (-0.8,0.9) circle (0.1); \fill[black!30!white] (-0.8,1.1) circle (0.1); \fill[black!30!white] (-0.8,1.3) circle (0.1); \fill[black!30!white] (-0.8,1.5) circle (0.1); \fill[black!30!white] (-0.8,1.7) circle (0.1); \fill[black!30!white] (-0.8,1.9) circle (0.1); \fill[black!30!white] (-0.8,2.1) circle (0.1); \fill[black!30!white] (-0.8,2.3) circle (0.1); \fill[black!30!white] (-0.8,2.5) circle (0.1); \fill[black!30!white] (-0.8,2.7) circle (0.1);
\fill[red] (-0.4,0.1) circle (0.1);
\fill[black!30!white] (0.8,0.1) circle (0.1); \fill[black!30!white] (0.8,0.3) circle (0.1); \fill[black!30!white] (0.8,0.5) circle (0.1); \fill[black!30!white] (0.8,0.7) circle (0.1); \fill[black!30!white] (0.8,0.9) circle (0.1); \fill[black!30!white] (0.8,1.1) circle (0.1); \fill[black!30!white] (0.8,1.3) circle (0.1); \fill[black!30!white] (0.8,1.5) circle (0.1); \fill[black!30!white] (0.8,1.7) circle (0.1); \fill[black!30!white] (0.8,1.9) circle (0.1);
\draw (-1.2,0)--(1.2,0);
\draw (-0.8,0.1) circle (0.1); \draw (-0.8,0.3) circle (0.1); \draw (-0.8,0.5) circle (0.1); \draw (-0.8,0.7) circle (0.1); \draw (-0.8,0.9) circle (0.1); \draw (-0.8,1.1) circle (0.1); \draw (-0.8,1.3) circle (0.1); \draw (-0.8,1.5) circle (0.1); \draw (-0.8,1.7) circle (0.1); \draw (-0.8,1.9) circle (0.1); \draw (-0.8,2.1) circle (0.1); \draw (-0.8,2.3) circle (0.1); \draw (-0.8,2.5) circle (0.1); \draw (-0.8,2.7) circle (0.1);
\draw (-0.4,0.1) circle (0.1);
\draw (0.8,0.1) circle (0.1); \draw (0.8,0.3) circle (0.1); \draw (0.8,0.5) circle (0.1); \draw (0.8,0.7) circle (0.1); \draw (0.8,0.9) circle (0.1); \draw (0.8,1.1) circle (0.1); \draw (0.8,1.3) circle (0.1); \draw (0.8,1.5) circle (0.1); \draw (0.8,1.7) circle (0.1); \draw (0.8,1.9) circle (0.1);
\draw (0,-0.2) node[below]{$x$\hspace{0.22cm}$a$\hspace{0.22cm}$b$\hspace{0.22cm}$c$\hspace{0.22cm}$y$};
\end{tikzpicture}
\begin{tikzpicture}
\fill[rounded corners,white] (-0.25,3) rectangle (0.25,-0.8); 
\draw[ultra thick,-to] (-0.15,1.5)->(0.15,1.5);
\end{tikzpicture}
\begin{tikzpicture}
\fill[rounded corners,white] (-1.2,3) rectangle (1.2,-0.8); 
\fill[black!30!white] (-0.8,0.1) circle (0.1); \fill[black!30!white] (-0.8,0.3) circle (0.1); \fill[black!30!white] (-0.8,0.5) circle (0.1); \fill[black!30!white] (-0.8,0.7) circle (0.1); \fill[black!30!white] (-0.8,0.9) circle (0.1); \fill[black!30!white] (-0.8,1.1) circle (0.1); \fill[black!30!white] (-0.8,1.3) circle (0.1); \fill[black!30!white] (-0.8,1.5) circle (0.1); \fill[black!30!white] (-0.8,1.7) circle (0.1); \fill[black!30!white] (-0.8,1.9) circle (0.1); \fill[black!30!white] (-0.8,2.1) circle (0.1); \fill[black!30!white] (-0.8,2.3) circle (0.1); \fill[black!30!white] (-0.8,2.5) circle (0.1); \fill[black!30!white] (-0.8,2.7) circle (0.1);
\fill[red] (0,0.1) circle (0.1);
\fill[black!30!white] (0.8,0.1) circle (0.1); \fill[black!30!white] (0.8,0.3) circle (0.1); \fill[black!30!white] (0.8,0.5) circle (0.1); \fill[black!30!white] (0.8,0.7) circle (0.1); \fill[black!30!white] (0.8,0.9) circle (0.1); \fill[black!30!white] (0.8,1.1) circle (0.1); \fill[black!30!white] (0.8,1.3) circle (0.1); \fill[black!30!white] (0.8,1.5) circle (0.1); \fill[black!30!white] (0.8,1.7) circle (0.1); \fill[black!30!white] (0.8,1.9) circle (0.1);
\draw (-1.2,0)--(1.2,0);
\draw (-0.8,0.1) circle (0.1); \draw (-0.8,0.3) circle (0.1); \draw (-0.8,0.5) circle (0.1); \draw (-0.8,0.7) circle (0.1); \draw (-0.8,0.9) circle (0.1); \draw (-0.8,1.1) circle (0.1); \draw (-0.8,1.3) circle (0.1); \draw (-0.8,1.5) circle (0.1); \draw (-0.8,1.7) circle (0.1); \draw (-0.8,1.9) circle (0.1); \draw (-0.8,2.1) circle (0.1); \draw (-0.8,2.3) circle (0.1); \draw (-0.8,2.5) circle (0.1); \draw (-0.8,2.7) circle (0.1);
\draw (0,0.1) circle (0.1);
\draw (0.8,0.1) circle (0.1); \draw (0.8,0.3) circle (0.1); \draw (0.8,0.5) circle (0.1); \draw (0.8,0.7) circle (0.1); \draw (0.8,0.9) circle (0.1); \draw (0.8,1.1) circle (0.1); \draw (0.8,1.3) circle (0.1); \draw (0.8,1.5) circle (0.1); \draw (0.8,1.7) circle (0.1); \draw (0.8,1.9) circle (0.1);
\draw (0,-0.2) node[below]{$x$\hspace{0.22cm}$a$\hspace{0.22cm}$b$\hspace{0.22cm}$c$\hspace{0.22cm}$y$};
\end{tikzpicture}
\begin{tikzpicture}
\fill[rounded corners,white] (-0.25,3) rectangle (0.25,-0.8); 
\draw[ultra thick,-to] (-0.15,1.5)->(0.15,1.5);
\end{tikzpicture}
\begin{tikzpicture}
\fill[rounded corners,white] (-1.2,3) rectangle (1.2,-0.8); 
\fill[black!30!white] (-0.8,0.1) circle (0.1); \fill[black!30!white] (-0.8,0.3) circle (0.1); \fill[black!30!white] (-0.8,0.5) circle (0.1); \fill[black!30!white] (-0.8,0.7) circle (0.1); \fill[black!30!white] (-0.8,0.9) circle (0.1); \fill[black!30!white] (-0.8,1.1) circle (0.1); \fill[black!30!white] (-0.8,1.3) circle (0.1); \fill[black!30!white] (-0.8,1.5) circle (0.1); \fill[black!30!white] (-0.8,1.7) circle (0.1); \fill[black!30!white] (-0.8,1.9) circle (0.1); \fill[black!30!white] (-0.8,2.1) circle (0.1); \fill[black!30!white] (-0.8,2.3) circle (0.1); \fill[black!30!white] (-0.8,2.5) circle (0.1); \fill[black!30!white] (-0.8,2.7) circle (0.1);
\fill[red] (0.4,0.1) circle (0.1);
\fill[black!30!white] (0.8,0.1) circle (0.1); \fill[black!30!white] (0.8,0.3) circle (0.1); \fill[black!30!white] (0.8,0.5) circle (0.1); \fill[black!30!white] (0.8,0.7) circle (0.1); \fill[black!30!white] (0.8,0.9) circle (0.1); \fill[black!30!white] (0.8,1.1) circle (0.1); \fill[black!30!white] (0.8,1.3) circle (0.1); \fill[black!30!white] (0.8,1.5) circle (0.1); \fill[black!30!white] (0.8,1.7) circle (0.1); \fill[black!30!white] (0.8,1.9) circle (0.1);
\draw (-1.2,0)--(1.2,0);
\draw (-0.8,0.1) circle (0.1); \draw (-0.8,0.3) circle (0.1); \draw (-0.8,0.5) circle (0.1); \draw (-0.8,0.7) circle (0.1); \draw (-0.8,0.9) circle (0.1); \draw (-0.8,1.1) circle (0.1); \draw (-0.8,1.3) circle (0.1); \draw (-0.8,1.5) circle (0.1); \draw (-0.8,1.7) circle (0.1); \draw (-0.8,1.9) circle (0.1); \draw (-0.8,2.1) circle (0.1); \draw (-0.8,2.3) circle (0.1); \draw (-0.8,2.5) circle (0.1); \draw (-0.8,2.7) circle (0.1);
\draw (0.4,0.1) circle (0.1);
\draw (0.8,0.1) circle (0.1); \draw (0.8,0.3) circle (0.1); \draw (0.8,0.5) circle (0.1); \draw (0.8,0.7) circle (0.1); \draw (0.8,0.9) circle (0.1); \draw (0.8,1.1) circle (0.1); \draw (0.8,1.3) circle (0.1); \draw (0.8,1.5) circle (0.1); \draw (0.8,1.7) circle (0.1); \draw (0.8,1.9) circle (0.1);
\draw (0,-0.2) node[below]{$x$\hspace{0.22cm}$a$\hspace{0.22cm}$b$\hspace{0.22cm}$c$\hspace{0.22cm}$y$};
\end{tikzpicture}
\begin{tikzpicture}
\fill[rounded corners,white] (-0.25,3) rectangle (0.25,-0.8); 
\draw[ultra thick,-to] (-0.15,1.5)->(0.15,1.5);
\end{tikzpicture}
\begin{tikzpicture}
\fill[rounded corners,white] (-1.2,3) rectangle (1.2,-0.8); 
\fill[black!30!white] (-0.8,0.1) circle (0.1); \fill[black!30!white] (-0.8,0.3) circle (0.1); \fill[black!30!white] (-0.8,0.5) circle (0.1); \fill[black!30!white] (-0.8,0.7) circle (0.1); \fill[black!30!white] (-0.8,0.9) circle (0.1); \fill[black!30!white] (-0.8,1.1) circle (0.1); \fill[black!30!white] (-0.8,1.3) circle (0.1); \fill[black!30!white] (-0.8,1.5) circle (0.1); \fill[black!30!white] (-0.8,1.7) circle (0.1); \fill[black!30!white] (-0.8,1.9) circle (0.1); \fill[black!30!white] (-0.8,2.1) circle (0.1); \fill[black!30!white] (-0.8,2.3) circle (0.1); \fill[black!30!white] (-0.8,2.5) circle (0.1); \fill[black!30!white] (-0.8,2.7) circle (0.1);
\fill[black!30!white] (0.8,0.1) circle (0.1); \fill[black!30!white] (0.8,0.3) circle (0.1); \fill[black!30!white] (0.8,0.5) circle (0.1); \fill[black!30!white] (0.8,0.7) circle (0.1); \fill[black!30!white] (0.8,0.9) circle (0.1); \fill[black!30!white] (0.8,1.1) circle (0.1); \fill[black!30!white] (0.8,1.3) circle (0.1); \fill[black!30!white] (0.8,1.5) circle (0.1); \fill[black!30!white] (0.8,1.7) circle (0.1); \fill[black!30!white] (0.8,1.9) circle (0.1); \fill[red] (0.8,2.1) circle (0.1);
\draw (-1.2,0)--(1.2,0);
\draw (-0.8,0.1) circle (0.1); \draw (-0.8,0.3) circle (0.1); \draw (-0.8,0.5) circle (0.1); \draw (-0.8,0.7) circle (0.1); \draw (-0.8,0.9) circle (0.1); \draw (-0.8,1.1) circle (0.1); \draw (-0.8,1.3) circle (0.1); \draw (-0.8,1.5) circle (0.1); \draw (-0.8,1.7) circle (0.1); \draw (-0.8,1.9) circle (0.1); \draw (-0.8,2.1) circle (0.1); \draw (-0.8,2.3) circle (0.1); \draw (-0.8,2.5) circle (0.1); \draw (-0.8,2.7) circle (0.1);
\draw (0.8,0.1) circle (0.1); \draw (0.8,0.3) circle (0.1); \draw (0.8,0.5) circle (0.1); \draw (0.8,0.7) circle (0.1); \draw (0.8,0.9) circle (0.1); \draw (0.8,1.1) circle (0.1); \draw (0.8,1.3) circle (0.1); \draw (0.8,1.5) circle (0.1); \draw (0.8,1.7) circle (0.1); \draw (0.8,1.9) circle (0.1); \draw (0.8,2.1) circle (0.1);
\draw (0,-0.2) node[below]{$x$\hspace{0.22cm}$a$\hspace{0.22cm}$b$\hspace{0.22cm}$c$\hspace{0.22cm}$y$};
\end{tikzpicture}\caption{\label{fig2.6}Typical movements of a particle during the metastable
transition from $\xi^{x}$ to $\xi^{y}$ on the third time scale,
where $x,\,y\in S_{\star}$ and $a,\,b,\,c\in S_{0}$. In each movement,
there are at most $4$ sites occupied before and after. This property
continues to hold given any graph distance between $x$ and $y$.}
\end{figure}
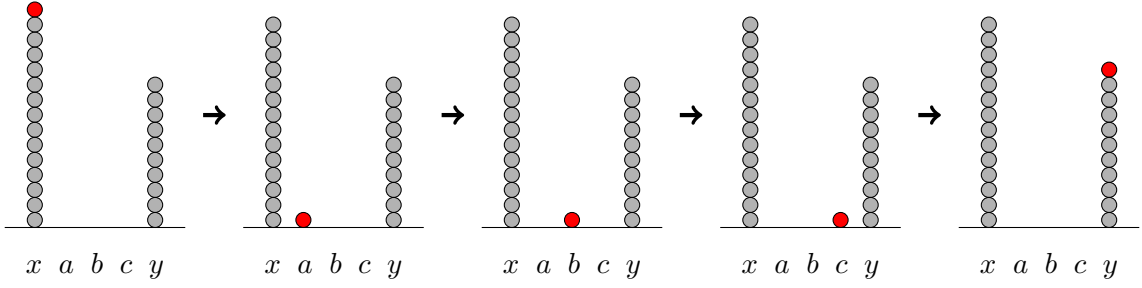

\begin{rem}
\label{r_3rd}We present some remarks regarding the third time scale.
\begin{enumerate}
\item As stated in Theorem \ref{t_3rd}-(2), the third scaling limit $Z(\cdot)$
is irreducible on $\llbracket1,\,\kappa_{3}\rrbracket$, which means
that we can observe all types of remaining metastable transitions
on this scale $\theta_{3}$. Thus, this is the last time scale. 
\item After inspecting the second time scale of metastability (in particular
Remark \ref{r_2nd}-(3)), one would have naturally predicted that
there exists a time scale for each value of graph distance between
the connected components. This is not the case; on the third time
scale $\theta_{3}$, all components with graph distance bigger than
$2$ become connected. This is due to the specific mechanism of particle
movements on the third time scale, illustrated in Figure \ref{fig2.6},
which is that given any graph distance (bigger than $2$) between
$x,\,y\in S_{\star}$, the number of occupied sites during a particle
movement is always at most $4$, which gives the exponent $4-1=3$
of $d_{N}$ in $\theta_{3}=N^{2}/d_{N}^{3}$.
\item Note that the metastable decomposition of $S_{\star}$ becomes coarser
from the first time scale to the third time scale:
\[
\bigcup_{x\in S_{\star}}\{x\}\longrightarrow\bigcup_{i\in\llbracket1,\,\kappa_{2}\rrbracket}S_{\star i}^{2}\longrightarrow\bigcup_{j\in\llbracket1,\,\kappa_{3}\rrbracket}S_{\star j}^{3}.
\]
On each time scale, the irreducible components merge with each other
and become a single collection on the next time scale.
\item Continuing the previous remark, this hierarchical structure of metastability
is closely related to the so-called \emph{$\Gamma$-expansion approach}
to metastability \cite{BGL,Landim Gamma-exp,LanMisSau}. Briefly,
the idea is as follows. Denote by $\mathcal{I}_{N}$ the \emph{level-two
large deviation rate functional} \cite{DV} of the inclusion process
$\eta_{N}(\cdot)$. Moreover, denote by $p_{N}(t)$ the empirical
measure of the process: $p_{N}(t):=\frac{1}{t}\int_{0}^{t}\delta_{\eta_{N}(s)}\mathrm{d}s$.
Then, the following large deviation principle holds:
\[
\begin{aligned}-\inf_{\nu\in A^{o}}\mathcal{I}_{N}(\nu) & \le\liminf_{t\to\infty}\min_{\eta\in\mathcal{H}_{N}}\frac{1}{t}\log\mathbb{P}_{\eta}[p_{N}(t)\in A]\\
 & \le\limsup_{t\to\infty}\max_{\eta\in\mathcal{H}_{N}}\frac{1}{t}\log\mathbb{P}_{\eta}[p_{N}(t)\in A]\le-\inf_{\nu\in\overline{A}}\mathcal{I}_{N}(\nu),
\end{aligned}
\]
where $A$ is any subset of $\mathcal{P}(\mathcal{H}_{N})$, the space
of probability measures on $\mathcal{H}_{N}$, and $A^{o}$ (resp.
$\overline{A}$) denotes the interior (resp. closure) of $A$ with
respect to the weak topology. Then, it is expected that the following
$\Gamma$-expansion of $\mathcal{I}_{N}$ holds:
\[
\mathcal{I}_{N}=\mathcal{I}^{(0)}+\frac{1}{\theta_{1}}\mathcal{I}^{(1)}+\frac{1}{\theta_{2}}\mathcal{I}^{(2)}+\frac{1}{\theta_{3}}\mathcal{I}^{(3)},
\]
which means that $\mathcal{I}_{N}\xrightarrow{N\to\infty}\mathcal{I}^{(0)}$
and $\theta_{p}\mathcal{I}_{N}\xrightarrow{N\to\infty}\mathcal{I}^{(p)}$
for $p\in\{1,\,2,\,3\}$, in the sense of $\Gamma$-convergence, where
$\mathcal{I}^{(p)}$ is the corresponding level-two large deviation
rate functional of the limit process on each $p$-time scale. An ongoing
research attempts to link this general theory of $\Gamma$-expansion
approach to the hierarchical structure of metastability occurring
in the reversible inclusion process.
\end{enumerate}
\end{rem}

\subsection{\label{sec2.6}Other time scales}

In this final subsection of Section \ref{sec2}, we deal with the
remaining time scales other than $\theta_{1}$, $\theta_{2}$, and
$\theta_{3}$, thereby completing the full analysis of the hierarchy
of metastable behavior.
\begin{thm}[Other time scales]
\label{t_other-scales}Fix a sequence $\{\alpha_{N}\}_{N\ge1}$ of
positive real numbers and consider a starting location $x\in S_{\star}$
such that $\{x\}\subseteq S_{\star i}^{2}\subseteq S_{\star j}^{3}$
for $i\in\llbracket1,\,\kappa_{2}\rrbracket$ and $j\in\llbracket1,\,\kappa_{3}\rrbracket$.
\begin{enumerate}
\item If $\alpha_{N}\ll\theta_{1}$, then $\lim_{N\to\infty}\mathbb{P}_{\xi^{x}}[\eta_{N}(\alpha_{N})=\xi^{x}]=1$.
\item If $\theta_{1}\ll\alpha_{N}\ll\theta_{2}$, then $\lim_{N\to\infty}\mathbb{P}_{\xi^{x}}[\eta_{N}(\alpha_{N})=\xi^{y}]=|S_{\star i}^{2}|^{-1}$
for all $y\in S_{\star i}^{2}$.
\item If $\theta_{2}\ll\alpha_{N}\ll\theta_{3}$, then $\lim_{N\to\infty}\mathbb{P}_{\xi^{x}}[\eta_{N}(\alpha_{N})=\xi^{y}]=|S_{\star j}^{3}|^{-1}$
for all $y\in S_{\star j}^{3}$.
\item If $\alpha_{N}\gg\theta_{3}$, then $\lim_{N\to\infty}\mathbb{P}_{\xi^{x}}[\eta_{N}(\alpha_{N})=\xi^{y}]=|S_{\star}|^{-1}$
for all $y\in S_{\star}$.
\end{enumerate}
\end{thm}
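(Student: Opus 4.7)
The plan is to derive all four cases from the scaling limit theorems (Theorems \ref{t_1st}, \ref{t_2nd}, \ref{t_3rd}) combined with an ergodicity argument for the limit chains. A preliminary observation is that each of $X(\cdot)$, $Y(\cdot)$, $Z(\cdot)$ is reversible, and the induced measure on condensed states $\{\xi^y:y\in S_\star\}$ coming from its stationary distribution on each irreducible component is the \emph{uniform} one. For $X(\cdot)$ this is immediate since $m_x = 1$ on $S_\star$ forces $r(x,y) = r(y,x)$ on $S_\star$ by \eqref{e_cxy-def}. For $Y(\cdot)$, the symmetry $|S_{\star i}^{2}|\,r^{\textup{2nd}}(i,j) = \mathfrak{R}_{ij}^{-1}$ gives $\pi_Y(i) \propto |S_{\star i}^{2}|$; combining with Proposition \ref{p_condensation} and the thermalization of Theorem \ref{t_2nd}-(1), the induced measure on sites is $1/|S_{\star k}^{3}|$ for each $y\in S_{\star k}^{3}$. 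An analogous calculation with $\mathfrak{K}_{ij}=\mathfrak{K}_{ji}$ handles $Z(\cdot)$.

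For parts (2)--(4), fix $p\in\{1,2,3\}$ and suppose $\theta_p \ll \alpha_N \ll \theta_{p+1}$ (with the convention $\theta_4:=\infty$). Let $S_{\star k}^{p+1}$ be the $(p+1)$-level component containing $x$. Pick a large constant $T>0$ and decompose via the Markov property at the intermediate time $\theta_p T$:
\begin{equation*}
\mathbb{P}_{\xi^x}[\eta_N(\alpha_N) = \xi^y] = \sum_{z\in S_\star} \mathbb{P}_{\xi^x}[\eta_N(\theta_p T) = \xi^z]\,\mathbb{P}_{\xi^z}[\eta_N(\alpha_N-\theta_p T) = \xi^y] + e_N(T),
\end{equation*}
where $e_N(T)$ absorbs the contribution of $\eta_N(\theta_p T)\notin\mathcal{E}_N(S_\star)$. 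Since the $p$-th limit chain lives on condensed sites, the scaling limit forces $\mathbb{P}[\Psi_p(\eta_N(\theta_p T))=\mathfrak{0}]\to 0$, so $e_N(T)\to 0$. The first factor converges to the distribution of the $p$-th limit chain at time $T$ started from $x$; by reversibility and ergodicity on $S_{\star k}^{p+1}$, this is close to uniform on $S_{\star k}^{p+1}$ for $T$ large. For each $z\in S_{\star k}^{p+1}$, the second factor applied at time $\alpha_N-\theta_p T$, with $(\alpha_N-\theta_p T)/\theta_p\to\infty$ and $\alpha_N\ll\theta_{p+1}$ preventing escape from $S_{\star k}^{p+1}$, likewise concentrates on the uniform distribution on $S_{\star k}^{p+1}$. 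A standard $\varepsilon$--$T$ diagonal argument yields $\mathbb{P}_{\xi^x}[\eta_N(\alpha_N)=\xi^y]\to 1/|S_{\star k}^{p+1}|$.

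Part (1), where $\alpha_N\ll\theta_1$, demands a direct semigroup estimate. First, Markov's inequality combined with Theorem \ref{t_1st}-(2) gives $\mathbb{P}_{\xi^x}[\mathcal{T}_{\mathcal{E}_N(S_\star\setminus\{x\})}\le\alpha_N]\to 0$, so no other condensate can be reached. Second, to exclude $\eta_N(\alpha_N)\in\mathcal{H}_N\setminus\mathcal{E}_N(S_\star)$ at this single time, I analyze the fast local dynamics near $\xi^x$: the exit rate from $\xi^x$ is $Nd_N\sum_y r(x,y)$ while the return rate from any neighbor $\xi^{x,y}$ is of order $Nr(y,x)$, so the restricted chain on $\{\xi^x\}\cup\{\xi^{x,y}:y\in S\}$ equilibrates on the fast scale $1/N$ with mass $1-O(d_N)$ on $\xi^x$, and the further exit rate from this neighborhood remains $O(Nd_N)=O(\theta_1^{-1})$, preventing escape on the short scale $\alpha_N\ll\theta_1$. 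I anticipate that this pointwise residence estimate will be the main obstacle---the excursion bounds from Theorems \ref{t_1st}-(3), \ref{t_2nd-neg}, and \ref{t_3rd}-(4) control only time-averaged occupation---and I expect to implement it by coupling $\eta_N$ with a truncated dynamics on a neighborhood of $\xi^x$, or via Dynkin's formula applied to $\mathbbm{1}\{\eta=\xi^x\}$ combined with the ratio bound $\mu_N(\xi^{x,y})/\mu_N(\xi^x)=O(d_N)$.
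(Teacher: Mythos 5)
Your overall strategy (a Markov-property decomposition at an intermediate time of order $\theta_{p}T$, the scaling-limit theorems, and the uniformity of the stationary distributions of the limit chains) is in the right spirit, and your preliminary computation that each limit chain induces the uniform measure on the relevant component is correct and is used in the paper as well. However, there are two concrete gaps. First, in parts (2)--(4) your decomposition at time $\theta_{p}T$ leaves a second factor $\mathbb{P}_{\xi^{z}}[\eta_{N}(\alpha_{N}-\theta_{p}T)=\xi^{y}]$ evaluated at a time that still satisfies $\theta_{p}\ll\alpha_{N}-\theta_{p}T\ll\theta_{p+1}$; the assertion that this factor ``likewise concentrates on the uniform distribution'' is exactly the statement of the theorem (with starting point $z$ in place of $x$), so the argument is circular as written. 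The paper resolves this by splitting at the \emph{end}, at time $\alpha_{N}-T\theta_{p}$: one first shows $\mathbb{P}_{\xi^{x}}[\eta_{N}(\alpha_{N}-T\theta_{p})\in\mathcal{E}_{N}(S_{\star i}^{p+1})]\to1$ (from the convergence of the $(p{+}1)$-th order process, accelerated by $\alpha_{N}-T\theta_{p}\ll\theta_{p+1}$, to the zero Markov chain), then bounds $\mathbb{P}_{\xi^{x}}[\eta_{N}(\alpha_{N})\in\mathcal{E}_{N}(A)]$ by $\sup_{z}\mathrm{P}_{z}[X(T)\in A]+o(1)$ for \emph{every} subset $A$ of the component, sends $T\to\infty$ to obtain the upper bound $|A|/|S_{\star i}^{p+1}|$ for all $A$ simultaneously, and recovers the exact singleton limits from conservation of mass. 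Your split could only be salvaged by an additional ingredient (e.g. reversibility of $\eta_{N}$ together with the asymptotic equality of the weights $\mu_{N}(\xi^{z})$), which you do not supply.

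Second, part (1) is not established. Markov's inequality bounds $\mathbb{P}[\mathcal{T}\ge t]$ from above; it cannot convert the estimate $\mathbb{E}_{\xi^{x}}[\mathcal{T}_{\mathcal{E}_{N}(S_{\star}\setminus\{x\})}]\simeq c\,\theta_{1}\gg\alpha_{N}$ into $\mathbb{P}[\mathcal{T}\le\alpha_{N}]\to0$: an expectation of order $\theta_{1}$ is compatible with an atom of $\mathcal{T}$ near $0$, so one needs the asymptotic exponentiality of the transition time (equivalently, the convergence of the order process), not a moment bound. Moreover, your pointwise residence estimate near $\xi^{x}$ is only announced, not proved; you correctly identify that the time-averaged excursion bounds of Theorems \ref{t_1st}-(3), \ref{t_2nd-neg}, and \ref{t_3rd}-(4) do not give a fixed-time statement, but the proposed coupling/Dynkin implementation is left entirely open. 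The paper bypasses both difficulties at once: it reruns the proof of Theorem \ref{t_1st}-(1) from \cite[Section 4.3]{BDG} with $\alpha_{N}$ in place of $\theta_{1}$, obtaining convergence of the finite-dimensional marginals of $X_{N}(\alpha_{N}\cdot)$ to those of the zero Markov chain on $S_{\star}$; since $X_{N}=\Psi_{1}(\eta_{N})$ maps every non-condensed configuration to $\mathfrak{0}$, this single statement yields $\mathbb{P}_{\xi^{x}}[\eta_{N}(\alpha_{N})=\xi^{x}]\to1$ directly. The same device, applied to the higher-order projections at the intermediate scales, is what supplies the ``no escape'' inputs in parts (2)--(4).
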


According to Theorem \ref{t_other-scales}, on each intermediate time
scale, the law of the inclusion process is asymptotically distributed
as the stationary distribution of the limit process on the previous
time scale, which is the uniform distribution restricted to the irreducible
component containing the initial location. In this sense, we may conclude
that $\theta_{1}$, $\theta_{2}$, and $\theta_{3}$ are the three
\emph{effective} time scales, and all the other time scales are irrelevant
to the metastable transitions.

\section{\label{sec3}Outline of Proof}

In this section, we present an outline of proofs of the results in
Section \ref{sec2}. In particular, we reduce the task of proving
the main results to verifying two capacity estimates to be presented
in Theorems \ref{t_Capacity} and \ref{t_Capacity-specific}.

Denote by $h_{\cdot,\,\cdot}$ and $\mathrm{Cap}_{N}(\cdot,\,\cdot)$
the equilibrium potential and capacity (cf. Appendix \ref{appenA}),
respectively, subject to the inclusion process $\eta_{N}(\cdot)$.
First, we provide precise estimates of the relevant capacities which
will be proved in the remainder of the article from Section \ref{sec4}.
\begin{thm}
\label{t_Capacity}For every non-trivial partition $A\cup B=\llbracket1,\,\kappa_{3}\rrbracket$,
it holds that
\[
\lim_{N\to\infty}\frac{N^{2}}{d_{N}^{3}}\cdot\mathrm{Cap}_{N}(\mathcal{E}_{N}(A),\,\mathcal{E}_{N}(B))=\frac{1}{|S_{\star}|}\cdot\sum_{i\in A}\sum_{j\in B}\frac{1}{\mathfrak{K}_{ij}},
\]
where $\mathfrak{K}_{ij}$ is the constant that appears in Theorem
\ref{t_3rd}-(2).
\end{thm}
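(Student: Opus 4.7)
The plan is to establish matching upper and lower bounds on $\mathrm{Cap}_N(\mathcal{E}_N(A),\,\mathcal{E}_N(B))$ via the Dirichlet and Thomson variational principles. The Dirichlet principle furnishes the upper bound by producing a test function $f:\mathcal{H}_N\to[0,\,1]$ with $f\equiv1$ on $\mathcal{E}_N(A)$ and $f\equiv0$ on $\mathcal{E}_N(B)$, while the Thomson principle furnishes the lower bound by producing a unit flow $\phi$ from $\mathcal{E}_N(A)$ to $\mathcal{E}_N(B)$. The asymptotic factor $d_N^3/N^2$ must emerge from the energy of these test objects, and the combinatorial sum $\sum_{i\in A}\sum_{j\in B}\mathfrak{K}_{ij}^{-1}$ must emerge from a decomposition according to ordered pairs $(i,\,j)\in A\times B$; the normalization $1/|S_\star|$ is then inherited from $\mu_N(\mathcal{E}_N^x)\to1/|S_\star|$ (Proposition \ref{p_condensation}).

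The bulk of the work lies in constructing $f$ and $\phi$, and following the roadmap sketched in the introduction I would first address the intuitive case of two metastable components, i.e.\ $A=\{i\}$, $B=\{j\}$ with $i,\,j\in\llbracket1,\,\kappa_3\rrbracket$. The relevant part of the configuration space is the set of $\eta\in\mathcal{H}_N$ whose restriction to $S_0$ is supported on the small collection of sites $(a,\,b,\,c)$ that can appear along a four-site transition mechanism as in Figure \ref{fig2.6}; $(\eta_a,\,\eta_b,\,\eta_c)$ ranges over a three-dimensional simplex. On this simplex the stationary weight is essentially $\mu_N(\xi^x)\prod m_\bullet^{\eta_\bullet}$ times the Gamma-function factors in \eqref{e_muN-prod}, which under $d_N\log N\to0$ are $1+o(1)$ except near the boundary. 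I would then carry out a two-step dimensional reduction: first, average (respectively integrate the flow) over the coordinates that are tangential to the level surfaces of the Dirichlet integrand, reducing the test-object problem from a three-dimensional simplex to a one- or two-dimensional one depending on which of the rates $r(x,\,a),\,r(a,\,b),\,r(b,\,c),\,r(c,\,y)$ vanish; second, realize the reduced problem as a resolvent equation on an infinite ladder graph whose solution, after appropriate rescaling, is the test object producing the sharp asymptotic constant $1/\mathfrak{K}_{ij}$.

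For a general partition $A\cup B=\llbracket1,\,\kappa_3\rrbracket$ I would obtain the sum $\sum_{i\in A}\sum_{j\in B}\mathfrak{K}_{ij}^{-1}$ from the pairwise asymptotics by a superposition argument. For the upper bound, the test function is defined inside the pairwise configuration channel attached to $(i,\,j)$ as the solution constructed above, and then extended to be constant $1$ (respectively $0$) on the union of condensate sets $\mathcal{E}_N(S_{\star i'}^3)$ with $i'\in A$ (respectively $i'\in B$); the thermalization statement in Theorem \ref{t_3rd}-(1), already available from the analysis of the second scale via Theorem \ref{t_2nd-neg}, ensures that the Dirichlet form localized to each such union equals the equilibrium value plus $o(1)$, so that the cross terms between different pairs are negligible. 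The test flow is likewise a superposition of pairwise flows supported on disjoint configuration channels. The main obstacle is the pairwise construction and verification, specifically that (i) the resolvent equation on the ladder graph has a solution with the required growth whose rescaling correctly decouples the one- or two-dimensional reductions, and (ii) transitions passing through five or more occupied sites contribute an additional factor of $d_N$ and are therefore negligible on the scale $\theta_3=N^2/d_N^3$; these require sharp control of the energy landscape of the system in all three relevant dimensions.
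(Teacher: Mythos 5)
Your strategy coincides with the paper's: upper bound by the Dirichlet principle, lower bound by the Thomson principle, reduction to the two-component case (Assumption \ref{a_Sstar-kappa3}), dimension reduction of the three-dimensional simplexes $\mathcal{A}_{N}^{xaby}$, a resolvent equation on an infinite ladder graph whose rescaled solution supplies the test objects, and a superposition over pairs for a general partition. But what you have written is essentially the roadmap of Section \ref{sec4.1}, not a proof: the two items you defer as ``the main obstacle'' are exactly the content of Sections \ref{sec4}--\ref{sec6}. Nothing in the proposal specifies the resolvent equation (the transition rates $\mathfrak{r}$, the data $\mathfrak{f}$ and $\mathfrak{h}$, or the auxiliary parameter $\lambda\in(\sqrt{m_{\star}},\,1)$ that is needed both for solvability in Theorem \ref{t_resolvent-equation} and for the summability of the series defining $\mathfrak{K}_{ij}$ in \eqref{e_Kxy-def}), nor the explicit test function on each of the six types of tetrahedra, nor the divergence computations showing the test flow is a genuine flow from one well to the other, nor the verification that the upper and lower bounds match at the common value $1/(2\mathfrak{K}_{xy})$. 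Asserting the existence of a test object ``producing the sharp asymptotic constant $1/\mathfrak{K}_{ij}$'' presupposes the quantitative content of the theorem rather than establishing it.

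There is also one concrete misstep: you invoke the thermalization statement of Theorem \ref{t_3rd}-(1) to control the cross terms in the upper bound for a general partition. This is circular, since Theorem \ref{t_3rd}-(1) is deduced in Section \ref{sec3} from Theorem \ref{t_Capacity} itself (through conditions \textbf{(C1)}--\textbf{(C2)}); it is also unnecessary, because the Dirichlet form of an explicit test function is a deterministic sum, and the cross terms between overlapping tetrahedra are disposed of by direct estimation as in Lemma \ref{l_MNMN}, not by any mixing property of the dynamics. For the lower bound in the general case the correct device is likewise not thermalization but an explicit correction flow that redistributes the residual divergence within each $\mathcal{E}_{N}(S_{\star i}^{3})$ at a cost negligible relative to $d_{N}^{3}/N^{2}$, as in the path construction of \eqref{e_t_3rd-2} and the discussion in Section \ref{sec7}.
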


In the proof of the Eyring--Kramers formula in Theorem \ref{t_3rd}-(3),
we also need the following specific capacity estimate.
\begin{thm}
\label{t_Capacity-specific}For $i\in\llbracket1,\,\kappa_{3}\rrbracket$
and $x\in S_{\star i}^{3}$, it holds that
\[
\lim_{N\to\infty}\frac{N^{2}}{d_{N}^{3}}\cdot\mathrm{Cap}_{N}(\xi^{x},\,\mathcal{E}_{N}(S_{\star}\setminus S_{\star i}^{3}))=\frac{1}{|S_{\star}|}\cdot\sum_{j\in\llbracket1,\,\kappa_{3}\rrbracket:\,j\ne i}\frac{1}{\mathfrak{K}_{ij}}.
\]
\end{thm}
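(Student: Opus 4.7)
The plan is to deduce Theorem~\ref{t_Capacity-specific} from Theorem~\ref{t_Capacity} combined with the thermalization assertion in Theorem~\ref{t_3rd}-(1). Set $\mathcal{A}:=\mathcal{E}_{N}(S_{\star i}^{3})$ and $\mathcal{B}:=\mathcal{E}_{N}(S_{\star}\setminus S_{\star i}^{3})$. Applying Theorem~\ref{t_Capacity} to the partition $A=\{i\}$, $B=\llbracket1,\,\kappa_{3}\rrbracket\setminus\{i\}$ yields
\[
\lim_{N\to\infty}\frac{N^{2}}{d_{N}^{3}}\cdot\mathrm{Cap}_{N}(\mathcal{A},\,\mathcal{B})=\frac{1}{|S_{\star}|}\sum_{j\ne i}\frac{1}{\mathfrak{K}_{ij}},
\]
which is already the desired right-hand side. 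Since $\xi^{x}\in\mathcal{A}$, it remains to establish the asymptotic equivalence $\mathrm{Cap}_{N}(\xi^{x},\,\mathcal{B})\simeq\mathrm{Cap}_{N}(\mathcal{A},\,\mathcal{B})$.

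The inequality $\mathrm{Cap}_{N}(\xi^{x},\,\mathcal{B})\le\mathrm{Cap}_{N}(\mathcal{A},\,\mathcal{B})$ follows at once from the Dirichlet principle, since the equilibrium potential $h_{\mathcal{A},\,\mathcal{B}}$ equals $1$ at $\xi^{x}$ and $0$ on $\mathcal{B}$, and is therefore admissible as a test function for $\mathrm{Cap}_{N}(\xi^{x},\,\mathcal{B})$.

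For the matching lower bound, let $h:=h_{\xi^{x},\,\mathcal{B}}$ and let $D_{N}$ denote the Dirichlet form, so $\mathrm{Cap}_{N}(\xi^{x},\,\mathcal{B})=D_{N}(h)$. By Theorem~\ref{t_3rd}-(1), $h(\xi^{y})=\mathbb{P}_{\xi^{y}}[\mathcal{T}_{\xi^{x}}<\mathcal{T}_{\mathcal{B}}]\to 1$ for every $y\in S_{\star i}^{3}$. Define $\widetilde{h}$ by $\widetilde{h}(\eta)=1$ for $\eta\in\mathcal{A}$ and $\widetilde{h}(\eta)=h(\eta)$ otherwise; this is admissible for $\mathrm{Cap}_{N}(\mathcal{A},\,\mathcal{B})$, so $\mathrm{Cap}_{N}(\mathcal{A},\,\mathcal{B})\le D_{N}(\widetilde{h})$. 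The task reduces to proving the error estimate $D_{N}(\widetilde{h})-D_{N}(h)=o(d_{N}^{3}/N^{2})$. Since $r_{N}(\xi^{y},\,\xi^{y'})=0$ for any two distinct points $\xi^{y},\xi^{y'}\in\mathcal{E}_{N}(S_{\star})$, only bonds joining $\mathcal{A}\setminus\{\xi^{x}\}$ to $\mathcal{A}^{c}$ contribute, and a direct expansion shows that the per-bond excess equals $(1-h(\xi^{y}))\cdot(1-2h(\zeta)+h(\xi^{y}))\cdot\mu_{N}(\xi^{y})r_{N}(\xi^{y},\,\zeta)$.

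The main obstacle will be to convert the qualitative statement $h(\xi^{y})\to 1$ into a quantitative rate fast enough to close this error estimate. The total weight around $\mathcal{A}$ is of order $O(Nd_{N})$ by the formula $\sum_{\zeta}r_{N}(\xi^{y},\,\zeta)=Nd_{N}\sum_{z\in S}r(y,\,z)$ and Proposition~\ref{p_muN-prod}, so a uniform bound $|1-2h(\zeta)+h(\xi^{y})|\le 2$ is insufficient. The required improvement proceeds in two stages: first, for neighbouring configurations $\zeta$ (differing from $\xi^{y}$ by a single particle move) the equilibrium potential $h(\zeta)$ stays close to $h(\xi^{y})$, so the factor $|1-2h(\zeta)+h(\xi^{y})|$ is itself $O(1-h(\xi^{y}))$ and the per-bond excess upgrades to $(1-h(\xi^{y}))^{2}$; second, a quantitative rate for $1-h(\xi^{y})$ can be extracted from the capacity-ratio bound $1-h(\xi^{y})\le\mathrm{Cap}_{N}(\xi^{y},\,\mathcal{B})/\mathrm{Cap}_{N}(\xi^{y},\,\xi^{x})$, exploiting the fact that the denominator is an intra-$S_{\star i}^{3}$ capacity living on the strictly shorter time scale $\theta_{1}$ or $\theta_{2}$ depending on the graph distance between $x$ and $y$. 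A careful combination of these ingredients should yield the desired $o(d_{N}^{3}/N^{2})$ excess, completing the equivalence.
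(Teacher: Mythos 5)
Your upper bound is correct and coincides with the paper's: under Assumption \ref{a_Sstar-kappa3} the two theorems are literally identical, and in the general case the paper also gets the upper bound by monotonicity, feeding the test function $F_{\textup{test}}$ (which equals $1$ on all of $\mathcal{E}_{N}(S_{\star i}^{3})\ni\xi^{x}$) into the Dirichlet principle for $\mathrm{Cap}_{N}(\xi^{x},\mathcal{E}_{N}(S_{\star}\setminus S_{\star i}^{3}))$. The lower bound, however, contains a genuine gap that your two proposed improvements cannot close. Write $\lambda_{N}(\xi^{y}):=\sum_{\zeta}r_{N}(\xi^{y},\zeta)=Nd_{N}\sum_{z\in S}r(y,z)$. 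Since $h=h_{\xi^{x},\mathcal{B}}$ is harmonic at every $\xi^{y}$ with $y\in S_{\star i}^{3}\setminus\{x\}$, the term $\sum_{\zeta}\mu_{N}(\xi^{y})r_{N}(\xi^{y},\zeta)\,(h(\xi^{y})-h(\zeta))$ vanishes, and your per-bond excess sums \emph{exactly} to
\[
\mathscr{D}_{N}(\widetilde{h})-\mathscr{D}_{N}(h)=\sum_{y\in S_{\star i}^{3}\setminus\{x\}}(1-h(\xi^{y}))^{2}\,\mu_{N}(\xi^{y})\,\lambda_{N}(\xi^{y}).
\]
As $\mu_{N}(\xi^{y})\lambda_{N}(\xi^{y})\asymp Nd_{N}$, you would need $1-h(\xi^{y})=o(d_{N}/N^{3/2})$. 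Your renewal bound gives only $1-h(\xi^{y})\le C\,\mathrm{Cap}_{N}(\xi^{y},\mathcal{B})/\mathrm{Cap}_{N}(\xi^{y},\xi^{x})\le Cd_{N}/N$, and this order is in general attained: when $y$ is joined to $x$ only on the second time scale but joined to $\mathcal{B}$ on the third, the process at $\xi^{y}$ reaches $\mathcal{B}$ before $\xi^{x}$ with probability of order $\theta_{2}/\theta_{3}=d_{N}/N$. The excess is then of order $Nd_{N}\cdot(d_{N}/N)^{2}=d_{N}^{3}/N$, which exceeds the target $o(d_{N}^{3}/N^{2})$ by a factor of $N$; the truncation $\widetilde{h}$ is simply too expensive, because forcing the value $1$ at $\xi^{y}$ creates a jump of size $\Theta(d_{N}/N)$ across bonds of total conductance $\Theta(Nd_{N})$.

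The paper (Section \ref{sec7}) repairs the mismatch on the flow side rather than the function side: it keeps the test flow $\psi$ from $\mathcal{E}_{N}(S_{\star i}^{3})$ to $\mathcal{E}_{N}(S_{\star}\setminus S_{\star i}^{3})$ and reroutes the $O(1)$ divergence left at each $\xi^{y}$, $y\in S_{\star i}^{3}\setminus\{x\}$, to $\xi^{x}$ along a path flow as in the proof of \eqref{e_t_3rd-2}. That correction has squared norm $O(N/d_{N}^{2})=o(N^{2}/d_{N}^{3})$, so by Cauchy--Schwarz the modified flow $\psi'$ satisfies $\|\psi'\|^{2}\simeq\|\psi\|^{2}$ and the Thomson principle still yields the sharp lower bound for $\mathrm{Cap}_{N}(\xi^{x},\cdot)$. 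If you wish to stay on the function side, you must interpolate rather than truncate, e.g.\ lift $h$ near $\xi^{y}$ by a multiple of an equilibrium potential between $\xi^{y}$ and $\{\xi^{x}\}\cup\mathcal{B}$; the added Dirichlet cost is then governed by $(1-h(\xi^{y}))^{2}\mathrm{Cap}_{N}(\xi^{y},\cdot)=O(d_{N}^{4}/N^{3})$ plus a cross term of order $d_{N}^{7/2}/N^{5/2}$, both $o(d_{N}^{3}/N^{2})$ — but this requires additional care with the boundary values and is not what your proposal does.
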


We assume that Theorems \ref{t_Capacity} and \ref{t_Capacity-specific}
hold and prove the main results: Theorems \ref{t_1st-neg}, \ref{t_3rd},
and \ref{t_other-scales}. The following asymptotics will be frequently
used hereafter. For $k\in\llbracket0,\,N-1\rrbracket$,
\begin{equation}
1\le\frac{(d_{N}+k)w_{N}(k)}{d_{N}}=\frac{(k+1)w_{N}(k+1)}{d_{N}}\le e^{d_{N}\log N}\quad\text{and}\quad\lim_{N\to\infty}\frac{NZ_{N}}{d_{N}}=|S_{\star}|.\label{e_wN-ZN}
\end{equation}
In particular, $(d_{N}+k)w_{N}(k)=(k+1)w_{N}(k+1)\simeq d_{N}$ uniformly.
We refer the readers to \cite[Section 3]{BDG} for a detailed proof
of these estimates.
\begin{proof}[Proof of Theorem \ref{t_1st-neg}]
 Suppose that $x\in S_{\star i}^{2}$ for some $i\in\llbracket1,\,\kappa_{2}\rrbracket$.
By Theorem \ref{t_1st}-(3), it suffices to prove that
\begin{equation}
\lim_{N\to\infty}\mathbb{E}_{\xi^{x}}\Big[\int_{0}^{t}\mathbbm{1}\{\eta_{N}(\theta_{1}s)\in\mathcal{E}_{N}(S_{\star}\setminus S_{\star i}^{2})\}\mathrm{d}s\Big]=0\text{ for all }t\ge0.\label{e_t_1st-neg}
\end{equation}
According to Theorem \ref{t_1st}-(1), for all $s\ge0$ it holds that
\[
\lim_{N\to\infty}\mathbb{P}_{\xi^{x}}[\eta_{N}(\theta_{1}s)\in\mathcal{E}_{N}(S_{\star}\setminus S_{\star i}^{2})]=\mathrm{P}_{x}[X(s)\in S_{\star}\setminus S_{\star i}^{2}]=0,
\]
where $\mathrm{P}_{x}$ denotes the law of $X(\cdot)$ starting from
$x$. The second equality holds since $S_{\star i}^{2}$ is the irreducible
component containing $x$. This result implies \eqref{e_t_1st-neg}
via the dominated convergence theorem.
\end{proof}
\begin{proof}[Proof of Theorem \ref{t_3rd}]
 We denote by $\{\eta_{N}^{\star}(t)\}_{t\ge0}$ and $r_{N}^{\star}(\cdot,\,\cdot)$
the trace process on $\mathcal{E}_{N}(S_{\star})$ and its transition
rate function, respectively, as explained in Appendix \ref{appenA}.
By \cite[Theorem 2.7]{BL TM} and \cite[Proposition 2.1]{LanLouMou},
parts (1) and (2) of Theorem \ref{t_3rd} hold if the following four
conditions are verified:
\begin{itemize}
\item \textbf{(C1)} For $i,\,j\in\llbracket1,\,\kappa_{3}\rrbracket$, it
holds that $\lim_{N\to\infty}\theta_{3}\cdot r_{N}^{\star}(\mathcal{E}_{N}(S_{\star i}^{3}),\,\mathcal{E}_{N}(S_{\star j}^{3}))=r^{\textup{3rd}}(i,\,j)$.
\item \textbf{(C2)} For each $i\in\llbracket1,\,\kappa_{3}\rrbracket$,
it holds that
\[
\lim_{N\to\infty}\frac{\mathrm{Cap}_{N}(\mathcal{E}_{N}(S_{\star i}^{3}),\,\mathcal{E}_{N}(S_{\star}\setminus S_{\star i}^{3}))}{\inf_{x,\,y\in S_{\star i}^{3}}\mathrm{Cap}_{N}(\xi^{x},\,\xi^{y})}=0.
\]
\item \textbf{(C3)} For each $x\in S_{\star}$, it holds that
\[
\lim_{N\to\infty}\mathbb{E}_{\xi^{x}}\Big[\int_{0}^{t}\mathbbm{1}\{\eta_{N}(\theta_{3}s)\notin\mathcal{E}_{N}(S_{\star})\}\mathrm{d}s\Big]=0\text{ for all }t\ge0.
\]
\item \textbf{(C4)} It holds that
\[
\lim_{\delta\to0}\limsup_{N\to\infty}\max_{x\in S_{\star}}\sup_{s\in[2\delta,\,3\delta]}\mathbb{P}_{\xi^{x}}[\eta_{N}(\theta_{3}s)\notin\mathcal{E}_{N}(S_{\star})]=0.
\]
\end{itemize}
Conditions \textbf{(C3)} and \textbf{(C4)} are straightforward from
the following estimate for all $t\ge0$:
\[
\mathbb{P}_{\xi^{x}}[\eta_{N}(t)\notin\mathcal{E}_{N}(S_{\star})]\le\frac{\mathbb{P}_{\mu_{N}}[\eta_{N}(t)\notin\mathcal{E}_{N}(S_{\star})]}{\mu_{N}(\xi^{x})}=\frac{\mu_{N}(\mathcal{H}_{N}\setminus\mathcal{E}_{N}(S_{\star}))}{\mu_{N}(\xi^{x})}=o(1),
\]
where $\mathbb{P}_{\mu_{N}}$ is the law of the process starting from
$\mu_{N}$. The last equality follows from Proposition \ref{p_condensation}.
In particular, Theorem \ref{t_3rd}-(4) is equivalent to \textbf{(C3)}
and thus verified.

For condition \textbf{(C1)}, we apply \eqref{e_cap-cap-cap}. Combining
with Theorem \ref{t_Capacity}, we obtain that
\[
\lim_{N\to\infty}\frac{N^{2}}{d_{N}^{3}}\cdot\mu_{N}(\mathcal{E}_{N}(S_{\star i}^{3}))r_{N}^{\star}(\mathcal{E}_{N}(S_{\star i}^{3}),\,\mathcal{E}_{N}(S_{\star j}^{3}))=\frac{1}{|S_{\star}|}\cdot\frac{1}{\mathfrak{K}_{ij}}.
\]
Applying Proposition \ref{p_condensation}, we obtain that
\[
\lim_{N\to\infty}\frac{N^{2}}{d_{N}^{3}}\cdot r_{N}^{\star}(\mathcal{E}_{N}(S_{\star i}^{3}),\,\mathcal{E}_{N}(S_{\star j}^{3}))=\frac{1}{|S_{\star i}^{3}|}\cdot\frac{1}{\mathfrak{K}_{ij}}=r^{\textup{3rd}}(i,\,j),
\]
which verifies \textbf{(C1)}.

To prove condition \textbf{(C2)}, we adopt the idea from \cite[Section 9]{Kim RIP-2nd}.
For the numerator, Theorem \ref{t_Capacity} implies that
\begin{equation}
\mathrm{Cap}_{N}(\mathcal{E}_{N}(S_{\star i}^{3}),\,\mathcal{E}_{N}(S_{\star}\setminus S_{\star i}^{3}))\le\frac{Cd_{N}^{3}}{N^{2}}.\label{e_t_3rd-1}
\end{equation}
For the denominator, fix $x,\,y\in S_{\star i}^{3}$. Since $x$ and
$y$ belong to the same collection $S_{\star i}^{3}$, it holds that
either $r(x,\,y)>0$, or there exists $a\in S_{0}$ such that $r(x,\,a)>0$
and $r(y,\,a)>0$. We divide into these two cases.
\begin{itemize}
\item If $r(x,\,y)>0$, we construct a sequence $\xi^{x}=\omega_{0}\sim\omega_{1}\sim\cdots\sim\omega_{N}=\xi^{y}$
such that for $n\in\llbracket0,\,N\rrbracket$,
\[
\omega_{n}(x)=N-n\quad\text{and}\quad\omega_{n}(y)=n.
\]
Namely, we send each particle from $x$ to $y$ consecutively. Denote
by $\varphi$ the flow from $\xi^{x}$ to $\xi^{y}$ of value $1$
(see \eqref{e_flow-A-B-def}) along this path:
\[
\varphi(\omega_{n},\,\omega_{n+1}):=1\text{ for each }n\in\llbracket0,\,N-1\rrbracket.
\]
Then, by Proposition \ref{p_muN-prod}, we may write the flow norm
as
\[
\|\varphi\|^{2}=\sum_{n=0}^{N-1}\frac{\varphi(\omega_{n},\,\omega_{n+1})^{2}}{\mu_{N}(\omega_{n})r_{N}(\omega_{n},\,\omega_{n+1})}=\sum_{n=0}^{N-1}\frac{Z_{N}}{w_{N}(N-n)w_{N}(n)\cdot(N-n)(n+d_{N})r(x,\,y)}.
\]
By \eqref{e_wN-ZN}, this is bounded by
\[
C\sum_{n=0}^{N-1}\frac{d_{N}}{Nd_{N}^{2}}=\frac{C}{d_{N}},
\]
and thus the Thomson principle (Proposition \ref{p_TP}) implies that
\[
\mathrm{Cap}_{N}(\xi^{x},\,\xi^{y})\ge\frac{d_{N}}{C}.
\]
\item If there exists $a\in S_{0}$ such that $r(x,\,a)>0$ and $r(y,\,a)>0$,
then we construct a sequence $\xi^{x}=\omega_{0}\sim\omega_{1}\sim\cdots\sim\omega_{2N}=\xi^{y}$
such that for $n\in\llbracket1,\,N\rrbracket$,
\[
\omega_{2n-1}(v)=\begin{cases}
N-n & \text{if }v=x,\\
1 & \text{if }v=a,\\
n-1 & \text{if }v=y,
\end{cases}\quad\text{and}\quad\omega_{2n}(v)=\begin{cases}
N-n & \text{if }v=x,\\
0 & \text{if }v=a,\\
n & \text{if }v=y.
\end{cases}
\]
Namely, we send each particle through $x\to a\to y$ consecutively.
Similarly, we define $\varphi$ as the flow from $\xi^{x}$ to $\xi^{y}$
of value $1$ along this path. Then, we may calculate as above that
\[
\|\varphi\|^{2}=\sum_{n=0}^{2N-1}\frac{\varphi(\omega_{n},\,\omega_{n+1})^{2}}{\mu_{N}(\omega_{n})r_{N}(\omega_{n},\,\omega_{n+1})}\le\frac{CN}{d_{N}^{2}}.
\]
Thus, by the Thomson principle (Proposition \ref{p_TP}), we obtain
\[
\mathrm{Cap}_{N}(\xi^{x},\,\xi^{y})\ge\frac{d_{N}^{2}}{CN}.
\]
\end{itemize}
According to the two cases, we conclude that
\begin{equation}
\mathrm{Cap}_{N}(\xi^{x},\,\xi^{y})\ge\frac{d_{N}^{2}}{CN}.\label{e_t_3rd-2}
\end{equation}
Thus, by \eqref{e_t_3rd-1} and \eqref{e_t_3rd-2}, we deduce that
\[
\frac{\mathrm{Cap}_{N}(\mathcal{E}_{N}(S_{\star i}^{3}),\,\mathcal{E}_{N}(S_{\star}\setminus S_{\star i}^{3}))}{\inf_{x,\,y\in S_{\star i}^{3}}\mathrm{Cap}_{N}(\xi^{x},\,\xi^{y})}\le\frac{Cd_{N}}{N}=o(1),
\]
which proves \textbf{(C2)}. We have checked all four conditions \textbf{(C1)}--\textbf{(C4)},
and thus parts (1) and (2) of Theorem \ref{t_3rd} hold.

We are left to prove Theorem \ref{t_3rd}-(3), the Eyring--Kramers
formula. By \eqref{e_magic-formula}, it holds that
\[
\mathbb{E}_{\xi^{x}}[\mathcal{T}_{\mathcal{E}_{N}(S_{\star}\setminus S_{\star i}^{3})}]=\frac{1}{\mathrm{Cap}_{N}(\xi^{x},\,\mathcal{E}_{N}(S_{\star}\setminus S_{\star i}^{3}))}\cdot\sum_{\eta\in\mathcal{H}_{N}}\mu_{N}(\eta)h_{\xi^{x},\,\mathcal{E}_{N}(S_{\star}\setminus S_{\star i}^{3})}(\eta).
\]
By Theorem \ref{t_Capacity-specific}, the reciprocal of the capacity
is asymptotically equal to
\[
|S_{\star}|\cdot\Big(\sum_{j\in\llbracket1,\,\kappa_{3}\rrbracket:\,j\ne i}\frac{1}{\mathfrak{K}_{ij}}\Big)^{-1}\cdot\frac{N^{2}}{d_{N}^{3}}.
\]
Moreover, by Proposition \ref{p_condensation} and part (1), we may
calculate
\[
\sum_{\eta\in\mathcal{H}_{N}}\mu_{N}(\eta)h_{\xi^{x},\,\mathcal{E}_{N}(S_{\star}\setminus S_{\star i}^{3})}(\eta)=\frac{1}{|S_{\star}|}\Big(1+\sum_{y\in S_{\star i}^{3}:\,y\ne x}h_{\xi^{x},\,\mathcal{E}_{N}(S_{\star}\setminus S_{\star i}^{3})}(\xi^{y})\Big)+o(1)=\frac{|S_{\star i}^{3}|}{|S_{\star}|}+o(1).
\]
Therefore, we conclude that
\[
\mathbb{E}_{\xi^{x}}[\mathcal{T}_{\mathcal{E}_{N}(S_{\star}\setminus S_{\star i}^{3})}]=(1+o(1))\cdot|S_{\star i}^{3}|\cdot\Big(\sum_{j\in\llbracket1,\,\kappa_{3}\rrbracket:\,j\ne i}\frac{1}{\mathfrak{K}_{ij}}\Big)^{-1}\cdot\frac{N^{2}}{d_{N}^{3}},
\]
which is equivalent to part (3).
\end{proof}
\begin{proof}[Proof of Theorem \ref{t_other-scales}]
 As stated in the theorem, take $x\in S_{\star}$, $i\in\llbracket1,\,\kappa_{2}\rrbracket$,
and $j\in\llbracket1,\,\kappa_{3}\rrbracket$ such that $\{x\}\subseteq S_{\star i}^{2}\subseteq S_{\star j}^{3}$.

First, assume that $\alpha_{N}\ll\theta_{1}$. Then, applying the
proof of Theorem \ref{t_1st}-(1) given in \cite[Section 4.3]{BDG}
but with the smaller time scale $\alpha_{N}$ instead of $\theta_{1}$,
we obtain that the finite-dimensional marginals of the $\alpha_{N}$-accelerated
inclusion process converge to the marginals of the zero Markov chain
in $S_{\star}$. Part (1) is straightforward from this observation.

Next, assume that $\theta_{1}\ll\alpha_{N}\ll\theta_{2}$. We prove
that
\begin{equation}
\lim_{N\to\infty}\mathbb{P}_{\xi^{x}}[\eta_{N}(\alpha_{N})=\xi^{y}]=\frac{1}{|S_{\star i}^{2}|}\text{ for all }y\in S_{\star i}^{2}.\label{e_t_other-scales-1}
\end{equation}
We fix an arbitrary time $T>0$. First, using the same logic explained
in the previous paragraph, we similarly deduce that the finite-dimensional
marginals of both the $\alpha_{N}$- and $(\alpha_{N}-T\theta_{1})$-accelerated
inclusion processes converge to the marginals of the zero Markov chain
in $\llbracket1,\,\kappa_{2}\rrbracket$. This implies that
\begin{equation}
\lim_{N\to\infty}\mathbb{P}_{\xi^{x}}[\eta_{N}(\alpha_{N})\in\mathcal{E}_{N}(S_{\star i}^{2})]=1\quad\text{and}\quad\lim_{N\to\infty}\mathbb{P}_{\xi^{x}}[\eta_{N}(\alpha_{N}-T\theta_{1})\in\mathcal{E}_{N}(S_{\star i}^{2})]=1.\label{e_t_other-scales-2}
\end{equation}
Now, take any subset $A\subseteq S_{\star i}^{2}$. By the Markov
property at time $\alpha_{N}-T\theta_{1}$, we have
\[
\begin{aligned}\mathbb{P}_{\xi^{x}}[\eta_{N}(\alpha_{N})\in\mathcal{E}_{N}(A)] & =\mathbb{P}_{\xi^{x}}\big[\mathbb{P}_{\eta_{N}(\alpha_{N}-T\theta_{1})}[\eta_{N}(T\theta_{1})\in\mathcal{E}_{N}(A)]\big]\\
 & \le\sup_{z\in S_{\star i}^{2}}\mathbb{P}_{\xi^{z}}[\eta_{N}(T\theta_{1})\in\mathcal{E}_{N}(A)]+o(1),
\end{aligned}
\]
where the inequality holds by the right identity in \eqref{e_t_other-scales-2}.
Moreover, by Theorem \ref{t_1st}-(1), the probability inside the
supremum in the right-hand side converges to $\mathrm{P}_{z}[X(T)\in A]$
as $N\to\infty$. Thus, what we have proved is that
\[
\limsup_{N\to\infty}\mathbb{P}_{\xi^{x}}[\eta_{N}(\alpha_{N})\in\mathcal{E}_{N}(A)]\le\sup_{z\in S_{\star i}^{2}}\mathrm{P}_{z}[X(T)\in A]\text{ for all }T>0\text{ and }A\subseteq S_{\star i}^{2}.
\]
Sending $T\to\infty$, the right-hand side converges to the stationary
measure of $A$ with respect to $X(\cdot)$ in the connected component
$S_{\star i}^{2}$, which is $|A|/|S_{\star i}^{2}|$. Hence, it holds
that
\[
\limsup_{N\to\infty}\mathbb{P}_{\xi^{x}}[\eta_{N}(\alpha_{N})\in\mathcal{E}_{N}(A)]\le\frac{|A|}{|S_{\star i}^{2}|}\text{ for all }A\subseteq S_{\star i}^{2}.
\]
Since this inequality holds for all $A\subseteq S_{\star i}^{2}$,
combining with the left identity in \eqref{e_t_other-scales-2}, we
readily deduce that \eqref{e_t_other-scales-1} is valid for all $y\in S_{\star i}^{2}$.
This completes the proof of part (2). Parts (3) and (4) can be proved
in the same way; thus, we omit the details.
\end{proof}
It remains to prove the capacity estimates in Theorems \ref{t_Capacity}
and \ref{t_Capacity-specific}. The rest of the article is devoted
to these proofs.

\section{\label{sec4}Energy Landscape Analysis and a Resolvent Equation}

In this section, we conduct the analysis of the energy landscape of
configurations regarding the third-level metastable transitions.
\begin{assumption}
\label{a_Sstar-kappa3}In the remainder of the article, we assume
that $|S_{\star}|=\kappa_{3}=2$ and write $S_{\star}=\{x,\,y\}$,
except in Section \ref{sec7} where we discuss additional difficulties
that arise in the proof of the general case of $|S_{\star}|\ge3$.
\end{assumption}

For simplicity, we define
\begin{equation}
m_{\star}:=\max_{a\in S_{0}}m_{a}\in(0,\,1).\label{e_mstar-def}
\end{equation}
We first introduce the notions of simplexes $\mathcal{A}_{N}^{R}$
and $\widehat{\mathcal{A}}_{N}^{R}$ for each $R\subseteq S$.
\begin{defn}[Simplexes $\mathcal{A}_{N}^{R}$ and $\widehat{\mathcal{A}}_{N}^{R}$
for $R\subseteq S$]
\label{d_AN-ANhat-def}$ $
\begin{enumerate}
\item For each subset $R\subseteq S$, define
\[
\mathcal{A}_{N}^{R}:=\{\eta\in\mathcal{H}_{N}:\eta_{a}=0,\ a\in S\setminus R\}\quad\text{and}\quad\widehat{\mathcal{A}}_{N}^{R}:=\{\eta\in\mathcal{A}_{N}^{R}:\eta_{a}\ge1,\ a\in R\}.
\]
We may replace the superscript $R$ with its explicit elements without
commas. For instance, if $R=\{a,\,b\}$ then we write $\mathcal{A}_{N}^{R}=\mathcal{A}_{N}^{ab}$
or $\widehat{\mathcal{A}}_{N}^{R}=\widehat{\mathcal{A}}_{N}^{ab}$.
\item If $|R|=r$ such that $R=\{a_{1},\,\dots,\,a_{r}\}$, then we denote
by $\xi_{n_{1},\,\dots,\,n_{r-1}}^{a_{1}a_{2}\cdots a_{r}}\in\mathcal{A}_{N}^{R}$
the element which satisfies
\begin{equation}
\xi_{n_{1},\,\dots,\,n_{r-1}}^{a_{1}a_{2}\cdots a_{r}}(a)=\begin{cases}
n_{j} & \text{if }a=a_{j}\text{ for }j\in\llbracket1,\,r-1\rrbracket,\\
N-(n_{1}+\cdots+n_{r-1}) & \text{if }a=a_{r}.
\end{cases}\label{e_xi-notation}
\end{equation}
This notation naturally extends the one for the configurations $\xi^{a}\in\mathcal{E}_{N}^{a}$.
\end{enumerate}
\end{defn}

As explained in Remark \ref{r_3rd}-(2), there are four sites involved
in the effective movements of particles on the third time scale. This
implies that our energy landscape analysis should be focused on each
tetrahedral subset of configurations, i.e., $\mathcal{A}_{N}^{xaby}$
for $\{a,\,b\}\subseteq S_{0}$ (cf. \eqref{e_S-star-S0-def} and
Notation \ref{n_different}).

Next, we define a graph $\mathcal{G}=(\mathcal{V},\,\mathcal{E})$
which realizes the underlying geometry given by the random walk $r(\cdot,\,\cdot)$.

\begin{figure}
\begin{tikzpicture}
\fill[rounded corners,white] (-3,2.1) rectangle (3,-0.1); 
\draw (0,0.7) node[above]{$x$\hspace{4.6cm}$y$};
\fill[rounded corners,white] (-0.25,2.25) rectangle (0.25,1.75);
\fill[rounded corners,white] (-2.25,-0.75) rectangle (-1.75,-1.25);
\fill[rounded corners,white] (-1.25,-0.75) rectangle (1.25,-2.25);
\draw[very thick] (-2,2)--(-2,1)--(-1,2)--(1,2)--(2,1)--(-2,1)--(-2,0);
\draw[very thick] (-2,1)--(-1,0)--(-2,-1);
\draw[very thick] (-1,-1)--(-1,-2)--(1,-2)--(1,0)--(-1,0)--(-1,-1)--(0,-1)--(2,1)--(2,0)--(1,0);
\draw[very thick] (-1,0)--(0,-1); \draw[very thick] (2,0)--(2,-1); \draw[very thick] (-1,2)--(-1,1);
\fill[black] (-2,1) circle (0.1); \draw (-2,1) circle (0.1);
\fill[black] (2,1) circle (0.1); \draw (2,1) circle (0.1);
\fill[red!50!white] (-2,2) circle (0.1); \draw[black!30!white] (-2,2) circle (0.1);
\fill[red!50!white] (-1,2) circle (0.1); \draw[black!30!white] (-1,2) circle (0.1);
\fill[white] (0,2) circle (0.1); \draw[black!30!white] (0,2) circle (0.1);
\fill[blue!50!white] (1,2) circle (0.1); \draw[black!30!white] (1,2) circle (0.1);
\fill[red!50!white] (-1,1) circle (0.1); \draw[black!30!white] (-1,1) circle (0.1);
\fill[blue!50!white] (1,1) circle (0.1); \draw[black!30!white] (1,1) circle (0.1);
\fill[red!50!white] (-2,0) circle (0.1); \draw[black!30!white] (-2,0) circle (0.1);
\fill[red!50!white] (-1,0) circle (0.1); \draw[black!30!white] (-1,0) circle (0.1);
\fill[blue!50!white] (1,0) circle (0.1); \draw[black!30!white] (1,0) circle (0.1);
\fill[blue!50!white] (2,0) circle (0.1); \draw[black!30!white] (2,0) circle (0.1);
\fill[white] (-2,-1) circle (0.1); \draw[black!30!white] (-2,-1) circle (0.1);
\fill[white] (-1,-1) circle (0.1); \draw[black!30!white] (-1,-1) circle (0.1);
\fill[white] (0,-1) circle (0.1); \draw[black!30!white] (0,-1) circle (0.1);
\fill[white] (1,-1) circle (0.1); \draw[black!30!white] (1,-1) circle (0.1);
\fill[white] (2,-1) circle (0.1); \draw[black!30!white] (2,-1) circle (0.1);
\fill[white] (-1,-2) circle (0.1); \draw[black!30!white] (-1,-2) circle (0.1);
\fill[white] (0,-2) circle (0.1); \draw[black!30!white] (0,-2) circle (0.1);
\fill[white] (1,-2) circle (0.1); \draw[black!30!white] (1,-2) circle (0.1);
\end{tikzpicture}
\hspace{5mm}
\begin{tikzpicture}
\fill[rounded corners,white] (-3,2.1) rectangle (3,-0.1); 
\draw (0,0.7) node[above]{$x$\hspace{4.6cm}$y$};
\fill[rounded corners,orange!50!white] (-0.25,2.25) rectangle (0.25,1.75);
\fill[rounded corners,orange!50!white] (-2.25,-0.75) rectangle (-1.75,-1.25);
\fill[rounded corners,orange!50!white] (-1.25,-0.75) rectangle (1.25,-2.25);
\fill[rounded corners,orange!50!white] (1.75,-0.75) rectangle (2.25,-1.25);
\draw[very thick] (-2,1)--(0,2)--(2,1)--(-2,1);
\draw[very thick] (-2,-1)--(-2,1)--(-1,-1)--(-1,-2)--(1,-2)--(1,-1)--(2,1)--(0,-1)--(-1,-1);
\draw[very thick] (-2,1)--(0,-1); \draw[very thick] (2,1)--(2,-1);
\fill[black] (-2,1) circle (0.1); \draw (-2,1) circle (0.1);
\fill[black] (2,1) circle (0.1); \draw (2,1) circle (0.1);
\fill[white] (0,2) circle (0.1); \draw[black!30!white] (0,2) circle (0.1);
\fill[white] (-2,-1) circle (0.1); \draw[black!30!white] (-2,-1) circle (0.1);
\fill[white] (-1,-1) circle (0.1); \draw[black!30!white] (-1,-1) circle (0.1);
\fill[white] (0,-1) circle (0.1); \draw[black!30!white] (0,-1) circle (0.1);
\fill[white] (1,-1) circle (0.1); \draw[black!30!white] (1,-1) circle (0.1);
\fill[white] (2,-1) circle (0.1); \draw[black!30!white] (2,-1) circle (0.1);
\fill[white] (-1,-2) circle (0.1); \draw[black!30!white] (-1,-2) circle (0.1);
\fill[white] (0,-2) circle (0.1); \draw[black!30!white] (0,-2) circle (0.1);
\fill[white] (1,-2) circle (0.1); \draw[black!30!white] (1,-2) circle (0.1);
\draw[rounded corners,red!50!white] (-0.25,2.25) rectangle (0.25,1.75);
\end{tikzpicture}\caption{\label{fig4.1}Illustrations of the graphs $\mathcal{G}$ (left) and
$\mathcal{G}'$ (right). In the left figure, red (resp. blue) circles
denote the sites in $\mathcal{N}_{x}$ (resp. $\mathcal{N}_{y}$),
which are merged into $x$ (resp. $y$) in the right figure. The orange
regions in the right figure denote the subgraphs $\mathcal{G}_{j}'$,
$j\in\llbracket1,\,s\rrbracket$. There are $4$ sites in $\mathcal{A}_{x}$
and $4$ sites in $\mathcal{A}_{y}$ such that $2$ sites belong to
both $\mathcal{A}_{x}$ and $\mathcal{A}_{y}$.}
\end{figure}
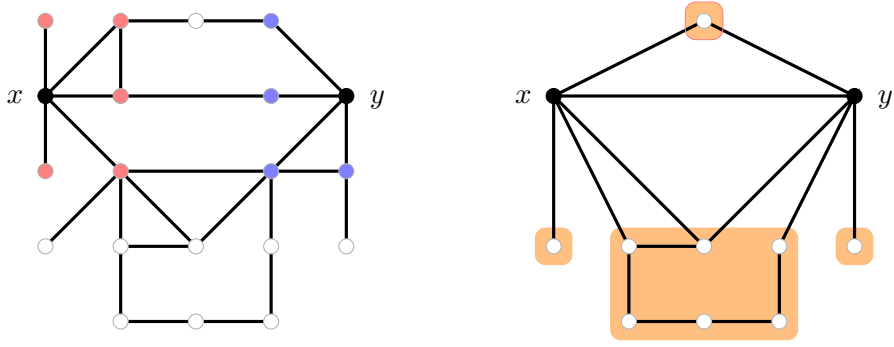

\begin{defn}[Underlying graph $\mathcal{G}$]
\label{d_G-def}We define $\mathcal{G}=(\mathcal{V},\,\mathcal{E})$
(see Figure \ref{fig4.1}-left).
\begin{enumerate}
\item The vertex set $\mathcal{V}$ is defined as $\mathcal{V}:=S$.
\item The edge set $\mathcal{E}$ is defined as follows. For $a,\,b\in\mathcal{V}$,
$\{a,\,b\}\in\mathcal{E}$ if and only if $r(a,\,b)>0$. The collection
$\mathcal{E}$ is well defined since $r(a,\,b)>0$ if and only if
$r(b,\,a)>0$ by reversibility (cf. \eqref{e_cxy-def}). Since the
underlying random walk is irreducible, $\mathcal{G}=(\mathcal{V},\,\mathcal{E})$
becomes a connected graph. We write $a\sim b$ (or $b\sim a$) if
$\{a,\,b\}\in\mathcal{E}$.
\item For a vertex $v\in\mathcal{V}$, we define the \emph{neighborhood}
$\mathcal{N}_{v}$ of $v$ as $\mathcal{N}_{v}:=\{a\in\mathcal{V}:v\sim a\}$.
Since $S_{\star}=\{x,\,y\}$ and $\kappa_{3}=2$, the graph distance
between $x$ and $y$ is at least $3$. This implies that $\mathcal{N}_{x}$
and $\mathcal{N}_{y}$ are disjoint non-empty subsets of $\mathcal{V}\setminus\{x,\,y\}=S_{0}$.
\end{enumerate}
\end{defn}

According to the collections $\mathcal{N}_{x}$ and $\mathcal{N}_{y}$,
we introduce the following six types of $\{a,\,b\}\subseteq S_{0}$:
\begin{equation}
\begin{aligned} & \textbf{(T1)}\ a,\,b\in\mathcal{N}_{x};\quad &  & \textbf{(T2)}\ a,\,b\in\mathcal{N}_{y};\quad &  & \textbf{(T3)}\ a\in\mathcal{N}_{x},\ b\in\mathcal{N}_{y};\\
 & \textbf{(T4)}\ a\in\mathcal{N}_{x},\ b\notin\mathcal{N}_{x}\cup\mathcal{N}_{y};\quad &  & \textbf{(T5)}\ a\notin\mathcal{N}_{x}\cup\mathcal{N}_{y},\ b\in\mathcal{N}_{y};\quad &  & \textbf{(T6)}\ a,\,b\notin\mathcal{N}_{x}\cup\mathcal{N}_{y}.
\end{aligned}
\label{e_ab-types}
\end{equation}

\subsection{\label{sec4.1}Heuristics on energy landscape and test objects}

In this subsection, we present some heuristics of our energy landscape
analysis and the corresponding constructions of test objects. Those
who are already familiar may proceed to Section \ref{sec4.2}.

\subsubsection*{Dimension reduction: 3D simplexes to 1D or 2D simplexes}

We explain the idea of reducing the analysis of three-dimensional
objects $\mathcal{A}_{N}^{xaby}$, $\{a,\,b\}\subseteq S_{0}$ to
the one-dimensional object $\mathcal{A}_{N}^{xy}$ or two-dimensional
objects $\mathcal{A}_{N}^{xay}$, $a\in S_{0}$.

In Section \ref{sec5}, we will construct a test function $F=F_{\textup{test}}:\mathcal{H}_{N}\to\mathbb{R}$
approximating the equilibrium potential $h_{\mathcal{E}_{N}^{x},\,\mathcal{E}_{N}^{y}}$.
Since $h_{\mathcal{E}_{N}^{x},\,\mathcal{E}_{N}^{y}}$ solves the
Dirichlet problem given in \eqref{e_h-Diri-problem}, a nice test
object $F$ would satisfy in some sense that
\[
\mu_{N}(\eta)(\mathcal{L}_{N}F)(\eta)=\sum_{\zeta\in\mathcal{H}_{N}}\mu_{N}(\eta)r_{N}(\eta,\,\zeta)(F(\zeta)-F(\eta))\approx0\text{ for all }\eta\in\mathcal{H}_{N}\setminus(\mathcal{E}_{N}^{x}\cup\mathcal{E}_{N}^{y}).
\]
Thus, it is natural to require $\mu_{N}(\eta)r_{N}(\eta,\,\zeta)\cdot|F(\zeta)-F(\eta)|$
to have a comparable scale as $N\to\infty$ for all $\eta\sim\zeta$;
in particular, $|F(\zeta)-F(\eta)|$ should be small if $\mu_{N}(\eta)r_{N}(\eta,\,\zeta)$
is big. Since we are interested in the asymptotic behavior in the
limit $N\to\infty$, it is our best strategy to design our test function
$F$ so that $F(\eta)=F(\zeta)$ for all pairs $\{\eta,\,\zeta\}$
such that $\mu_{N}(\eta)r_{N}(\eta,\,\zeta)$ has a bigger scale than
the others.

\begin{figure}
\begin{tikzpicture}
\fill[rounded corners,white] (-2.5,3.7) rectangle (2.5,-1.7); 
\draw[densely dashed] (-2,0)--(2,0); \draw[densely dashed] (0.4,-0.8)--(-1.2,0)--(0.4,2.4);
\draw (0,3)--(-2,0)--(0,-1)--(2,0)--(0,3)--(0,-1); \draw (0.4,-0.8)--(0.4,2.4);
\draw[very thick,red] (-0.4,1.2)--(0.4,0.8); \draw[very thick,red] (-0.2,1.5)--(0.4,1.2); \draw[very thick,red] (0,1.8)--(0.4,1.6); \draw[very thick,red] (0.2,2.1)--(0.4,2); 
\draw[very thick,blue] (-0.4,1.2)--(0.4,2.4)--(0.4,0.8);
\draw[very thick,blue] (-0.2,1.1)--(0.4,2); \draw[very thick,blue] (0,1)--(0.4,1.6); \draw[very thick,blue] (0.2,0.9)--(0.4,1.2);
\draw[very thick,blue] (-0.2,1.1)--(-0.2,1.5); \draw[very thick,blue] (0,1)--(0,1.8); \draw[very thick,blue] (0.2,0.9)--(0.2,2.1);
\draw (0,3.1) node[above]{$\xi^x$};
\draw (-2,-0.1) node[below]{$\xi^a$};
\draw (2,-0.1) node[below]{$\xi^y$};
\draw (0,-1.1) node[below]{$\xi^b$};
\end{tikzpicture}
\begin{tikzpicture}
\fill[rounded corners,white] (-2.5,3.7) rectangle (2.5,-1.7); 
\draw[densely dashed] (-2,0)--(2,0); \draw[densely dashed] (0.4,-0.8)--(-1.2,0)--(0.4,2.4); \draw[densely dashed] (-0.4,1.2)--(1.2,1.2); \draw[densely dashed] (-0.2,1.5)--(1,1.5); \draw[densely dashed] (0,1.8)--(0.8,1.8); \draw[densely dashed] (0.2,2.1)--(0.6,2.1);
\draw[very thick,densely dotted,red] (0.4,1.8)--(0.6,1.7);
\draw[very thick,densely dotted,red] (0.2,1.5)--(0.6,1.3); \draw[very thick,densely dotted,red] (0.6,1.5)--(0.8,1.4);
\draw[very thick,densely dotted,red] (0,1.2)--(0.6,0.9); \draw[very thick,densely dotted,red] (0.4,1.2)--(0.8,1); \draw[very thick,densely dotted,red] (0.8,1.2)--(1,1.1);
\draw[very thick,densely dotted,blue] (0.4,1.8)--(0.2,1.7);
\draw[very thick,densely dotted,blue] (0.2,1.5)--(0,1.4); \draw[very thick,densely dotted,blue] (0.6,1.5)--(0.2,1.3);
\draw[very thick,densely dotted,blue] (0,1.2)--(-0.2,1.1); \draw[very thick,densely dotted,blue] (0.4,1.2)--(0,1); \draw[very thick,densely dotted,blue] (0.8,1.2)--(0.2,0.9);
\draw (0,3)--(-2,0)--(0,-1)--(2,0)--(0,3)--(0,-1); \draw (0.4,-0.8)--(0.4,2.4);
\draw[very thick,red] (-0.4,1.2)--(0.4,0.8); \draw[very thick,red] (-0.2,1.5)--(0.4,1.2); \draw[very thick,red] (0,1.8)--(0.4,1.6); \draw[very thick,red] (0.2,2.1)--(0.4,2); 
\draw[very thick,blue] (-0.4,1.2)--(0.4,2.4);
\draw[very thick,blue] (-0.2,1.1)--(0.4,2); \draw[very thick,blue] (0,1)--(0.4,1.6); \draw[very thick,blue] (0.2,0.9)--(0.4,1.2);
\draw[very thick,blue] (0.4,0.8)--(1.2,1.2); \draw[very thick,blue] (0.4,1.2)--(1,1.5); \draw[very thick,blue] (0.4,1.6)--(0.8,1.8); \draw[very thick,blue] (0.4,2)--(0.6,2.1);
\draw (0,3.1) node[above]{$\xi^x$};
\draw (-2,-0.1) node[below]{$\xi^a$};
\draw (2,-0.1) node[below]{$\xi^y$};
\draw (0,-1.1) node[below]{$\xi^b$};
\end{tikzpicture}
\begin{tikzpicture}
\fill[rounded corners,white] (-2.5,3.7) rectangle (2.5,-1.7); 
\draw[densely dashed] (-2,0)--(2,0); \draw[densely dashed] (0.4,-0.8)--(-1.2,0)--(0.4,2.4);
\draw (0,3)--(-2,0)--(0,-1)--(2,0)--(0,3)--(0,-1); \draw (0.4,-0.8)--(0.4,2.4);
\draw[very thick,red] (-0.4,1.2)--(0.4,0.8); \draw[very thick,red] (-0.2,1.5)--(0.4,1.2); \draw[very thick,red] (0,1.8)--(0.4,1.6); \draw[very thick,red] (0.2,2.1)--(0.4,2); 
\draw[very thick,blue] (-0.4,1.2)--(0.4,2.4);
\draw[very thick,blue] (-0.2,1.1)--(0.4,2); \draw[very thick,blue] (0,1)--(0.4,1.6); \draw[very thick,blue] (0.2,0.9)--(0.4,1.2);
\draw (0,3.1) node[above]{$\xi^x$};
\draw (-2,-0.1) node[below]{$\xi^a$};
\draw (2,-0.1) node[below]{$\xi^y$};
\draw (0,-1.1) node[below]{$\xi^b$};
\end{tikzpicture}\\
\vspace{2.5mm}
\begin{tikzpicture}
\fill[rounded corners,white] (-2.5,3) rectangle (2.5,-1); 
\draw[densely dashed] (-2,0)--(2,0);
\draw (0,3)--(-2,0)--(0,-1)--(2,0)--(0,3)--(0,-1);
\fill[blue] (0,3) circle (0.07); \fill[blue] (0.4,2.4) circle (0.07); \fill[blue] (0.8,1.8) circle (0.07); \fill[blue] (1.2,1.2) circle (0.07); \fill[blue] (1.6,0.6) circle (0.07); \fill[blue] (2,0) circle (0.07);
\end{tikzpicture}
\begin{tikzpicture}
\fill[rounded corners,white] (-2.5,3) rectangle (2.5,-1); 
\draw[densely dashed] (-2,0)--(2,0);
\draw (0,3)--(-2,0)--(0,-1)--(2,0)--(0,3)--(0,-1);
\draw[very thick,red] (0,3)--(2,0);
\fill[blue] (0,3) circle (0.07);
\fill[blue] (0.4,2.4) circle (0.07);
\fill[blue] (0.8,1.8) circle (0.07);
\fill[blue] (1.2,1.2) circle (0.07);
\fill[blue] (1.6,0.6) circle (0.07);
\fill[blue] (2,0) circle (0.07);
\end{tikzpicture}
\begin{tikzpicture}
\fill[rounded corners,white] (-2.5,3) rectangle (2.5,-1); 
\fill[red!10!white] (0,3)--(0,-1)--(2,0)--cycle;
\draw[densely dashed] (-2,0)--(2,0);
\draw (0,3)--(-2,0)--(0,-1)--(2,0)--(0,3)--(0,-1);
\draw[very thick,red] (0,3)--(0,-1); \draw[very thick,red] (0.4,2.4)--(0.4,-0.8); \draw[very thick,red] (0.8,1.8)--(0.8,-0.6); \draw[very thick,red] (1.2,1.2)--(1.2,-0.4);  \draw[very thick,red] (1.6,0.6)--(1.6,-0.2);
\fill[blue] (0,3) circle (0.07); \fill[blue] (0,2.2) circle (0.07); \fill[blue] (0,1.4) circle (0.07); \fill[blue] (0,0.6) circle (0.07); \fill[blue] (0,-0.2) circle (0.07); \fill[blue] (0,-1) circle (0.07);
\fill[blue] (0.4,2.4) circle (0.07); \fill[blue] (0.4,1.6) circle (0.07); \fill[blue] (0.4,0.8) circle (0.07); \fill[blue] (0.4,0) circle (0.07); \fill[blue] (0.4,-0.8) circle (0.07);
\fill[blue] (0.8,1.8) circle (0.07); \fill[blue] (0.8,1) circle (0.07); \fill[blue] (0.8,0.2) circle (0.07); \fill[blue] (0.8,-0.6) circle (0.07);
\fill[blue] (1.2,1.2) circle (0.07); \fill[blue] (1.2,0.4) circle (0.07); \fill[blue] (1.2,-0.4) circle (0.07);
\fill[blue] (1.6,0.6) circle (0.07); \fill[blue] (1.6,-0.2) circle (0.07);
\fill[blue] (2,0) circle (0.07);
\end{tikzpicture}\caption{\label{fig4.2}Dimension reductions of $\mathcal{A}_{N}^{xaby}$ for
$\{a,\,b\}\subseteq S_{0}$ of types \textbf{(T1)} (left), \textbf{(T3)}
(middle), and \textbf{(T4)} (right). Blue (resp. red) segments denote
the big-scale (resp. small-scale) edges above, such that configurations
connected with blue segments are merged as blue circles below. The
resulting simplexes are null (left), one-dimensional (middle), or
two-dimensional (right).}
\end{figure}
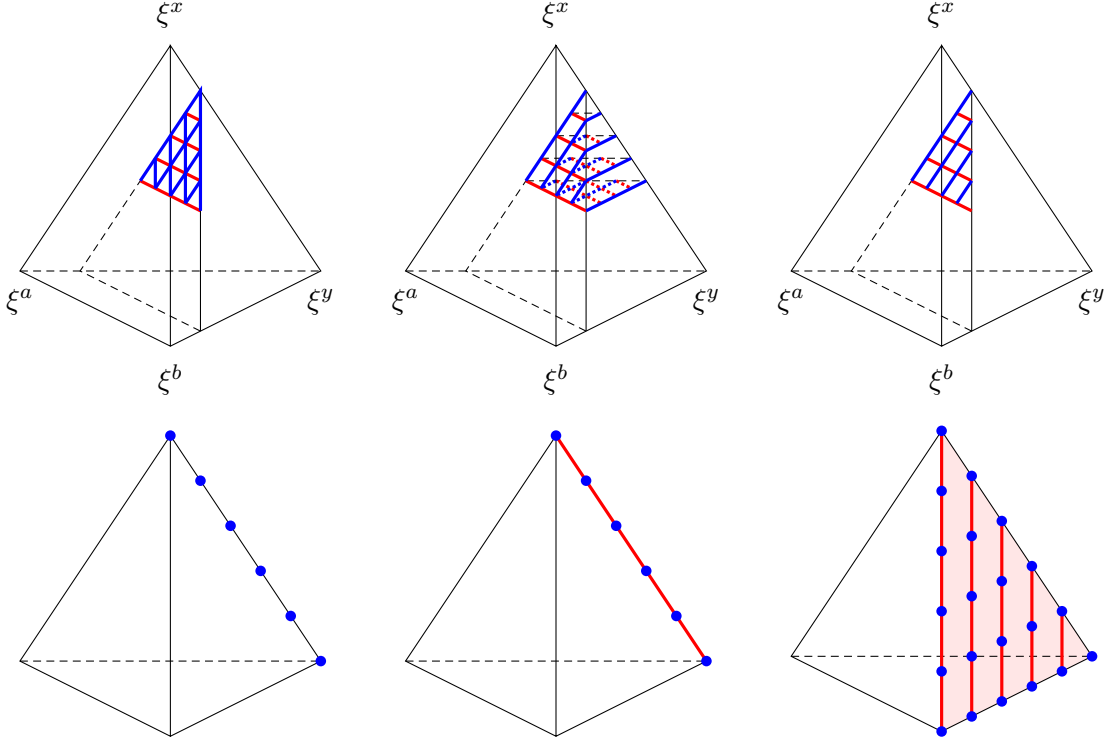

Now, we fix a tetrahedron $\mathcal{A}_{N}^{xaby}$ with $\{a,\,b\}\subseteq S_{0}$
of type \textbf{(T3)} (see \eqref{e_ab-types}) and consider an element
$\xi=\xi_{n,\,k,\,\ell}^{xaby}\in\mathcal{A}_{N}^{xaby}$ (recall
\eqref{e_xi-notation}). For a particle jump from $x$ to $a$, we
calculate $\mu_{N}(\xi)r_{N}(\xi,\,\xi_{n-1,\,k+1,\,\ell}^{xaby})=\mu_{N}(\xi)\cdot n(d_{N}+k)\cdot r(x,\,a)$.
Whereas, for a particle jump from $a$ to $b$, we calculate $\mu_{N}(\xi)r_{N}(\xi,\,\xi_{n,\,k-1,\,\ell+1}^{xaby})=\mu_{N}(\xi)\cdot k(d_{N}+\ell)\cdot r(a,\,b)$.
Moreover, since $a,\,b\in S_{0}$, the dynamics tends to avoid configurations
with lots of particles at $a$ or $b$, since they are $m_{\star}$-exponentially
negligible (cf. \eqref{e_mstar-def}) in the sense of Proposition
\ref{p_muN-prod}. Thus, the values of $k$ and $\ell$ should be
properly bounded during the entire metastable transition, whereas
the value of $n$ varies from $0$ to $N$. From these observations,
we roughly deduce that $\mu_{N}(\xi)r_{N}(\xi,\,\xi_{n-1,\,k+1,\,\ell}^{xaby})$
has a bigger scale than $\mu_{N}(\xi)r_{N}(\xi,\,\xi_{n,\,k-1,\,\ell+1}^{xaby})$.
Thus, by the arguments given in the previous paragraph, \emph{it is
natural to define $F$ to be unchanged subject to the particle movements
between $x$ and $a$.} The same logic remains valid on the other
side as well, such that it is natural to define $F$ as constant subject
to the particle movements between $b$ and $y$.

We may apply the above logic to the other types of $\{a,\,b\}\subseteq S_{0}$
as well, thereby collecting the following reductions for $\mathcal{A}_{N}^{xaby}$
according to the types of $\{a,\,b\}$. Refer to Figure \ref{fig4.2}.
\begin{itemize}
\item \textbf{(T1)} Configurations obtained from $x\leftrightarrow a,\,b$
merge as one (Figure \ref{fig4.2}-left).
\item \textbf{(T2)} Configurations obtained from $a,\,b\leftrightarrow y$
merge.
\item \textbf{(T3)} Configurations obtained from $x\leftrightarrow a$ and
$b\leftrightarrow y$ merge (Figure \ref{fig4.2}-middle).
\item \textbf{(T4)} Configurations obtained from $x\leftrightarrow a$ merge
(Figure \ref{fig4.2}-right).
\item \textbf{(T5)} Configurations obtained from $b\leftrightarrow y$ merge.
\end{itemize}
In particular, $\mathcal{A}_{N}^{xaby}$ for $\{a,\,b\}\subseteq S_{0}$
of type \textbf{(T$\boldsymbol{i}$)} reduces to $\emptyset$ ($i=1,\,2$),
$\mathcal{A}_{N}^{xy}$ ($i=3$), $\mathcal{A}_{N}^{xby}$ ($i=4$),
or $\mathcal{A}_{N}^{xay}$ ($i=5$). Note that $\mathcal{A}_{N}^{xaby}$
for $\{a,\,b\}\subseteq S_{0}$ of type \textbf{(T6)} remains as three-dimensional.

\subsubsection*{Construction of test objects: infinite ladder graph}

Moving further, as in Figure \ref{fig4.3}-left, consider $a,\,b,\,c\notin\mathcal{N}_{x}\cup\mathcal{N}_{y}$
with $a\sim b\sim c$ (red edges) such that there exist $a_{0},\,b_{0}\in\mathcal{N}_{x}$
and $c_{0}\in\mathcal{N}_{y}$ with $a_{0}\sim a$, $b_{0}\sim b$,
and $c\sim c_{0}$. Then, the dimension reductions can be applied
to the simplexes $\mathcal{A}_{N}^{xa_{0}ay}$, $\mathcal{A}_{N}^{xb_{0}by}$,
and $\mathcal{A}_{N}^{xcc_{0}y}$ to produce $\mathcal{A}_{N}^{xay}$
(black), $\mathcal{A}_{N}^{xby}$ (violet), and $\mathcal{A}_{N}^{xcy}$
(blue).

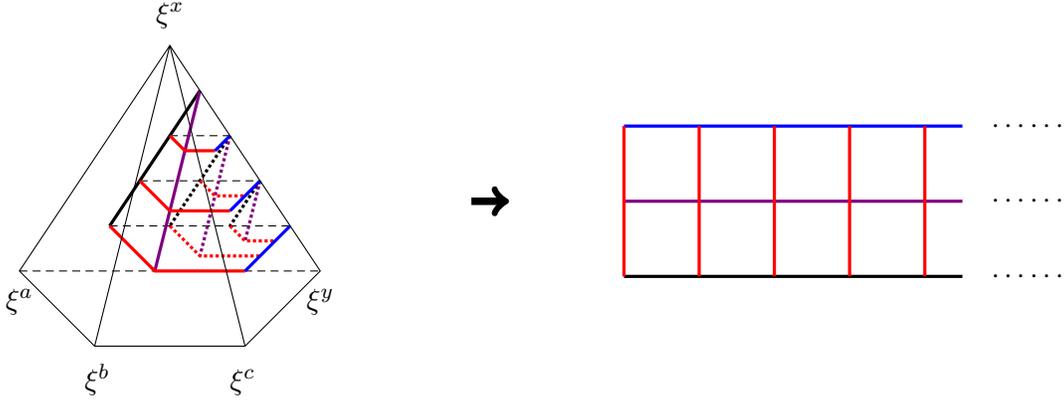
\begin{figure}
\begin{tikzpicture}
\fill[rounded corners,white] (-2.5,3.7) rectangle (2.5,-1.7); 
\draw[densely dashed] (-2,0)--(2,0);
\draw[densely dashed] (-0.8,0.6)--(1.6,0.6); \draw[densely dashed] (-0.4,1.2)--(1.2,1.2); \draw[densely dashed] (0,1.8)--(0.8,1.8);
\draw[very thick,densely dotted] (0.8,1.8)--(0,0.6); \draw[very thick,densely dotted] (1.2,1.2)--(0.8,0.6);
\draw[very thick,densely dotted,violet] (0.8,1.8)--(0.4,0.2);
\draw[very thick,densely dotted,violet] (1.2,1.2)--(1,0.4);
\draw[very thick,densely dotted,red] (0,0.6)--(0.4,0.2)--(1.2,0.2);
\draw[very thick,densely dotted,red] (0.8,0.6)--(1,0.4)--(1.4,0.4);
\draw[very thick,densely dotted,red] (0.4,1.2)--(0.6,1)--(1,1);
\draw[very thick] (0.4,2.4)--(-0.8,0.6);
\draw[very thick,violet] (0.4,2.4)--(-0.2,0);
\draw[very thick,red] (-0.8,0.6)--(-0.2,0)--(1,0); \draw[very thick,blue] (1,0)--(1.6,0.6);
\draw[very thick,red] (-0.4,1.2)--(0,0.8)--(0.8,0.8); \draw[very thick,blue] (0.8,0.8)--(1.2,1.2);
\draw[very thick,red] (0,1.8)--(0.2,1.6)--(0.6,1.6); \draw[very thick,blue] (0.6,1.6)--(0.8,1.8);
\draw (0,3)--(-2,0)--(-1,-1)--(1,-1)--(2,0)--(0,3)--(-1,-1); \draw (0,3)--(1,-1);
\draw (0,3.1) node[above]{$\xi^x$};
\draw (-2,-0.1) node[below]{$\xi^a$};
\draw (2,-0.1) node[below]{$\xi^y$};
\draw (0,-1.1) node[below]{$\xi^b$\hspace{1.6cm}$\xi^c$};
\end{tikzpicture}
\hspace{5mm}
\begin{tikzpicture}
\fill[rounded corners,white] (-1,3.7) rectangle (1,-1.7); 
\draw[line width=1mm,-to] (-0.25,1)->(0.25,1);
\end{tikzpicture}
\hspace{5mm}
\begin{tikzpicture}
\fill[rounded corners,white] (0,3.7) rectangle (6,-1.7); 
\draw[very thick] (0,0)--(4.5,0); \draw[very thick,violet] (0,1)--(4.5,1); \draw[very thick,blue] (0,2)--(4.5,2);
\draw[very thick,red] (0,2)--(0,0);
\draw[very thick,red] (1,2)--(1,0);
\draw[very thick,red] (2,2)--(2,0);
\draw[very thick,red] (3,2)--(3,0);
\draw[very thick,red] (4,2)--(4,0);
\draw (4.75,2) node[right]{$\cdots\cdots$}; \draw (4.75,1) node[right]{$\cdots\cdots$}; \draw (4.75,0) node[right]{$\cdots\cdots$};
\end{tikzpicture}\caption{\label{fig4.3}(Left) Induced graph structure on $\mathcal{A}_{N}^{xabcy}$
where $a,\,b,\,c\protect\notin\mathcal{N}_{x}\cup\mathcal{N}_{y}$.
(Right) Further-induced infinite ladder graph.}
\end{figure}

Note that $r_{N}(\xi_{n,\,k,\,\ell}^{xaby},\,\xi_{n,\,k-1,\,\ell+1}^{xaby})=k(d_{N}+\ell)\cdot r(a,\,b)$
does not depend on the number of particles at $x$ or $y$. Thus,
it is natural to imagine that an appropriate test function $F$ has
a certain local translation-invariance property on the one-dimensional
line $\mathcal{A}_{N}^{xy}$. This observation suggests us to define
$F$ on $\mathcal{A}_{N}^{xaby}$ of the form
\[
F(\xi_{n,\,k,\,\ell}^{xaby})\asymp\sum_{t=1}^{n}\mathfrak{s}(t)+\mathfrak{s}(n)\cdot\mathfrak{g}_{ab}(k,\,\ell)
\]
for some appropriate scale function $\mathfrak{s}:\llbracket0,\,N\rrbracket\to\mathbb{R}$
and a test function $\mathfrak{g}_{ab}:\mathbb{N}\times\mathbb{N}\to\mathbb{R}$.
For the former object $\mathfrak{s}$, it turns out later in Section
\ref{sec5.1} that the correct choice is $\mathfrak{s}(t)=(N-t)t$
(with proper mollification near $\xi^{x}$ and $\xi^{y}$; see \eqref{e_tf-T1}--\eqref{e_tf-T6-2}).
For the latter object $\mathfrak{g}_{ab}$, we construct a suitable
infinite ladder graph (illustrated in Figure \ref{fig4.3}-right)
and a certain resolvent equation thereon (cf. Theorem \ref{t_resolvent-equation}),
and choose $\mathfrak{g}_{ab}$ as a proper rescaling of the solution
to the resolvent equation (cf. Definition \ref{d_g-ghat-def}). This
procedure is explained in full details in Section \ref{sec4.2}.

\subsection{\label{sec4.2}Graph decomposition and a resolvent equation}

From $\mathcal{G}=(\mathcal{V},\,\mathcal{E})$, we define a new graph
$\mathcal{G}'=(\mathcal{V}',\,\mathcal{E}')$. See Figure \ref{fig4.1}-right.
\begin{defn}[Induced graph $\mathcal{G}'=(\mathcal{V}',\,\mathcal{E}')$]
\label{d_G'-def}We define $\mathcal{G}'=(\mathcal{V}',\,\mathcal{E}')$
which is obtained by \emph{contracting} the vertices in $\{x\}\cup\mathcal{N}_{x}$
and $\{y\}\cup\mathcal{N}_{y}$ to two single vertices $x$ and $y$,
respectively. More rigorously, we proceed as follows.
\begin{enumerate}
\item The vertex set is given as $\mathcal{V}':=\mathcal{V}\setminus(\mathcal{N}_{x}\cup\mathcal{N}_{y})$.
\item The edge set $\mathcal{E}'$ is given as follows, where $v,\,w\in\mathcal{V}'\setminus\{x,\,y\}$.
\begin{itemize}
\item $\{v,\,w\}\in\mathcal{E}'$ if and only if $\{v,\,w\}\in\mathcal{E}$.
\item $\{x,\,v\}\in\mathcal{E}'$ if and only if $a\sim v$ for some $a\in\mathcal{N}_{x}$.
\item $\{w,\,y\}\in\mathcal{E}'$ if and only if $w\sim b$ for some $b\in\mathcal{N}_{y}$.
\item $\{x,\,y\}\in\mathcal{E}'$ if and only if $a\sim b$ for some $a\in\mathcal{N}_{x}$
and $b\in\mathcal{N}_{y}$.
\end{itemize}
\item Define $\mathcal{A}_{x}:=\{v\in\mathcal{V}':\{x,\,v\}\in\mathcal{E}'\}$
and $\mathcal{A}_{y}:=\{w\in\mathcal{V}':\{w,\,y\}\in\mathcal{E}'\}$.
Note that it may happen that $\mathcal{A}_{x}\cap\mathcal{A}_{y}\ne\emptyset$.
\end{enumerate}
It is easy to notice that the resulting contracted graph $\mathcal{G}'=(\mathcal{V}',\,\mathcal{E}')$
is still connected.
\end{defn}

If we delete from $\mathcal{G}'$ the vertices $x$ and $y$ and also
the edges that contain $x$ or $y$, then the resulting subgraph is
divided into several connected components. We denote this decomposition
of connected subgraphs as 
\begin{equation}
\mathcal{G}_{1}'=(\mathcal{V}_{1}',\,\mathcal{E}_{1}'),\quad\mathcal{G}_{2}'=(\mathcal{V}_{2}',\,\mathcal{E}_{2}'),\quad\dots,\quad\text{and}\quad\mathcal{G}_{s}'=(\mathcal{V}_{s}',\,\mathcal{E}_{s}').\label{e_Gj'-Vj'-Ej'}
\end{equation}
Then, we have the following decompositions of $\mathcal{V}'$ and
$\mathcal{V}$:
\begin{equation}
\mathcal{V}'=\{x\}\cup\bigcup_{j=1}^{s}\mathcal{V}_{j}'\cup\{y\}\quad\text{and}\quad\mathcal{V}=\{x\}\cup\mathcal{N}_{x}\cup\bigcup_{j=1}^{s}\mathcal{V}_{j}'\cup\mathcal{N}_{y}\cup\{y\}.\label{e_V'-V-dec}
\end{equation}
Now, we fix $j\in\llbracket1,\,s\rrbracket$ and define the infinite
ladder graph $\mathscr{G}_{j}=(\mathscr{V}_{j},\,\mathscr{E}_{j})$
introduced briefly in Section \ref{sec4.1}. Refer to Figure \ref{fig4.4}-left.

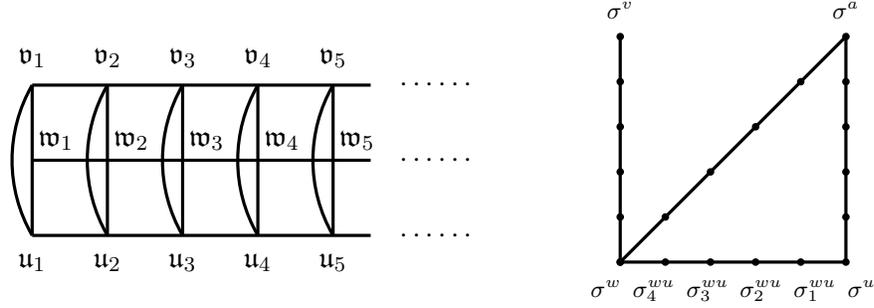
\begin{figure}
\begin{tikzpicture}
\fill[rounded corners,white] (-1,2) rectangle (6,-2); 
\draw (0,1.1) node[above]{$\mathfrak{v}_1$}; \draw (1,1.1) node[above]{$\mathfrak{v}_2$}; \draw (2,1.1) node[above]{$\mathfrak{v}_3$}; \draw (3,1.1) node[above]{$\mathfrak{v}_4$}; \draw (4,1.1) node[above]{$\mathfrak{v}_5$};
\draw (0,-1.1) node[below]{$\mathfrak{u}_1$}; \draw (1,-1.1) node[below]{$\mathfrak{u}_2$}; \draw (2,-1.1) node[below]{$\mathfrak{u}_3$}; \draw (3,-1.1) node[below]{$\mathfrak{u}_4$}; \draw (4,-1.1) node[below]{$\mathfrak{u}_5$};
\draw (0.32,0) node[above]{$\mathfrak{w}_1$}; \draw (1.32,0) node[above]{$\mathfrak{w}_2$}; \draw (2.32,0) node[above]{$\mathfrak{w}_3$}; \draw (3.32,0) node[above]{$\mathfrak{w}_4$}; \draw (4.32,0) node[above]{$\mathfrak{w}_5$};
\draw[very thick] (0,1)--(0,-1); \draw[very thick] (0,1) arc (150:210:2);
\draw[very thick] (1,1)--(1,-1); \draw[very thick] (1,1) arc (150:210:2);
\draw[very thick] (2,1)--(2,-1); \draw[very thick] (2,1) arc (150:210:2);
\draw[very thick] (3,1)--(3,-1); \draw[very thick] (3,1) arc (150:210:2);
\draw[very thick] (4,1)--(4,-1); \draw[very thick] (4,1) arc (150:210:2);
\draw[very thick] (0,1)--(4.5,1); \draw[very thick] (0,0)--(4.5,0); \draw[very thick] (0,-1)--(4.5,-1);
\draw (4.75,1) node[right]{$\cdots\cdots$}; \draw (4.75,0) node[right]{$\cdots\cdots$}; \draw (4.75,-1) node[right]{$\cdots\cdots$};
\end{tikzpicture}
\hspace{10mm}
\begin{tikzpicture}
\draw (-1.5,3.1) node[above]{\footnotesize $\sigma^v$};
\draw (1.5,3.1) node[above]{\footnotesize $\sigma^a$};
\draw (0,-0.1) node[below]{\footnotesize $\sigma^w$\hspace{1.5mm}$\sigma_4^{wu}$\hspace{1.5mm}$\sigma_3^{wu}$\hspace{1.5mm}$\sigma_2^{wu}$\hspace{1.5mm}$\sigma_1^{wu}$\hspace{1.5mm}$\sigma^u$};
\draw[very thick] (-1.5,3)--(-1.5,0)--(1.5,0)--(1.5,3)--(-1.5,0);
\fill (-1.5,0) circle (0.05); \fill (-0.9,0) circle (0.05); \fill (-0.3,0) circle (0.05); \fill (0.3,0) circle (0.05); \fill (0.9,0) circle (0.05); \fill (1.5,0) circle (0.05);
\fill (1.5,0.6) circle (0.05); \fill (1.5,1.2) circle (0.05); \fill (1.5,1.8) circle (0.05); \fill (1.5,2.4) circle (0.05); \fill (1.5,3) circle (0.05);
\fill (0.9,2.4) circle (0.05); \fill (0.3,1.8) circle (0.05); \fill (-0.3,1.2) circle (0.05); \fill (-0.9,0.6) circle (0.05);
\fill (-1.5,3) circle (0.05); \fill (-1.5,2.4) circle (0.05); \fill (-1.5,1.8) circle (0.05); \fill (-1.5,1.2) circle (0.05); \fill (-1.5,0.6) circle (0.05); \fill (-1.5,0) circle (0.05);
\end{tikzpicture}\caption{\label{fig4.4}Infinite graph $\mathscr{G}_{j}=(\mathscr{V}_{j},\,\mathscr{E}_{j})$
where $\mathcal{A}_{x,\,j}\cup\mathcal{A}_{y,\,j}=\{v,\,w,\,u\}$
(left), and an example of the corresponding graph $U_{\ell}$ for
$\ell=5$ with an additional vertex $a\in\mathcal{V}_{j}'\setminus(\mathcal{A}_{x,\,j}\cup\mathcal{A}_{y,\,j})$
(right).}
\end{figure}

\begin{defn}[Infinite graph $\mathscr{G}_{j}=(\mathscr{V}_{j},\,\mathscr{E}_{j})$]
\label{d_infinite-graph-def}We write
\[
\mathcal{A}_{x,\,j}:=\mathcal{A}_{x}\cap\mathcal{V}_{j}'\quad\text{and}\quad\mathcal{A}_{y,\,j}:=\mathcal{A}_{y}\cap\mathcal{V}_{j}'.
\]
Then, we generate a sequence of vertices $\mathfrak{v}_{1},\,\mathfrak{v}_{2},\,\dots$
for each $v\in\mathcal{A}_{x,\,j}\cup\mathcal{A}_{y,\,j}$, and collect
\begin{equation}
\mathscr{V}_{j}:=\bigcup_{v\in\mathcal{A}_{x,\,j}\cup\mathcal{A}_{y,\,j}}\bigcup_{\ell=1}^{\infty}\{\mathfrak{v}_{\ell}\}.\label{e_Vj-def}
\end{equation}
Moreover, the edge set is defined as
\[
\mathscr{E}_{j}:=\bigcup_{v\in\mathcal{A}_{x,\,j}\cup\mathcal{A}_{y,\,j}}\bigcup_{\ell=1}^{\infty}\big\{\{\mathfrak{v}_{\ell},\,\mathfrak{v}_{\ell+1}\}\big\}\cup\bigcup_{\{v,\,w\}\subseteq\mathcal{A}_{x,\,j}\cup\mathcal{A}_{y,\,j}}\bigcup_{\ell=1}^{\infty}\big\{\{\mathfrak{v}_{\ell},\,\mathfrak{w}_{\ell}\}\big\}.
\]
\end{defn}

Now, we wish to define a Markov chain on the infinite graph $\mathscr{G}_{j}=(\mathscr{V}_{j},\,\mathscr{E}_{j})$
characterized by a symmetric transition rate function $\mathfrak{r}=\mathfrak{r}_{j}:\mathscr{V}_{j}\times\mathscr{V}_{j}\to[0,\,\infty)$.
We introduce a parameter
\begin{equation}
\lambda\in(\sqrt{m_{\star}},\,1),\label{e_lambda-def}
\end{equation}
where $m_{\star}\in(0,\,1)$ is defined in \eqref{e_mstar-def}. This
parameter $\lambda$ is a key object in the remainder of the section;
refer to Remark \ref{r_Kxy-lambda-indep} at the end of the section.
We divide the definition of $\mathfrak{r}(\cdot,\,\cdot)$ into two
parts: horizontal transition rates $\mathfrak{r}(\mathfrak{v}_{\ell},\,\mathfrak{v}_{\ell+1})$
for $\ell\ge1$ and $v\in\mathcal{A}_{x,\,j}\cup\mathcal{A}_{y,\,j}$,
and vertical transition rates $\mathfrak{r}(\mathfrak{v}_{\ell},\,\mathfrak{w}_{\ell})$
for $\ell\ge1$ and $\{v,\,w\}\subseteq\mathcal{A}_{x,\,j}\cup\mathcal{A}_{y,\,j}$.
\begin{defn}[Transition rate $\mathfrak{r}$: horizontal direction]
\label{d_trans-hor}Fix $v\in\mathcal{A}_{x,\,j}\cup\mathcal{A}_{y,\,j}$.
Recall that there exists a sequence $\{\mathfrak{v}_{\ell}\}_{\ell\ge1}$
of vertices in $\mathscr{G}_{j}$ that correspond to $v$. For each
$\ell\ge1$, we define (cf. \eqref{e_cxy-def} and \eqref{e_lambda-def})
\[
\mathfrak{r}(\mathfrak{v}_{\ell},\,\mathfrak{v}_{\ell+1})=\mathfrak{r}(\mathfrak{v}_{\ell+1},\,\mathfrak{v}_{\ell}):=\frac{m_{v}^{\ell}}{\lambda^{2\ell+1}}\sum_{a\in\mathcal{N}_{x}\cup\mathcal{N}_{y}}\frac{c_{av}}{1-m_{a}}.
\]
Then, since $v\in\mathcal{A}_{x,\,j}\cup\mathcal{A}_{y,\,j}$, it
is clear that
\begin{equation}
0<\mathfrak{r}(\mathfrak{v}_{\ell},\,\mathfrak{v}_{\ell+1})\le C\frac{m_{v}^{\ell}}{\lambda^{2\ell+1}}\text{ for some constant }C>0.\label{e_trans-hor-bound}
\end{equation}
\end{defn}

\begin{defn}[Transition rate $\mathfrak{r}$: vertical direction]
\label{d_trans-ver}Fix a positive integer $\ell\ge1$. Then, we
consider a symmetric Markov chain $\{\mathscr{X}_{t}^{\ell}\}_{t\ge0}$
as follows. The space $U_{\ell}$ is given as
\[
U_{\ell}:=\Big\{\sigma\in\mathbb{N}^{\mathcal{V}_{j}'}:\sum_{v\in\mathcal{V}_{j}'}\sigma_{v}=\ell,\ \big|\{v\in\mathcal{V}_{j}':\sigma_{v}\ne0\}\big|\le2\Big\},
\]
and the transition rates are given as (cf. \eqref{e_cxy-def})
\begin{equation}
\mathfrak{r}^{\ell}(\sigma_{k}^{vw},\,\sigma_{k-1}^{vw})=\mathfrak{r}^{\ell}(\sigma_{k-1}^{vw},\,\sigma_{k}^{vw}):=m_{v}^{k-1}m_{w}^{\ell-k}c_{vw}\text{ for }v,\,w\in\mathcal{V}_{j}',\ k\in\llbracket1,\,\ell\rrbracket.\label{e_trans-ver-def}
\end{equation}
Here, $\sigma_{k}^{vw}\in U_{\ell}$ is the configuration such that
$\sigma_{k}^{vw}(v)=k$ and $\sigma_{k}^{vw}(w)=\ell-k$. The Markov
chain is irreducible since $\mathcal{V}_{j}'$ is connected. See Figure
\ref{fig4.4}-right.

Note that $\mathcal{A}_{x,\,j}\cup\mathcal{A}_{y,\,j}$ is naturally
embedded into $U_{\ell}$ by a map defined as
\begin{equation}
\mathcal{A}_{x,\,j}\cup\mathcal{A}_{y,\,j}\ni v\mapsto\sigma^{v}\in U_{\ell},\label{e_v-sigma-v}
\end{equation}
where $\sigma^{v}\in U_{\ell}$ is the configuration such that $\sigma^{v}(v)=\ell$.
Thus, with a slight abuse of notation, we may regard $\mathcal{A}_{x,\,j}\cup\mathcal{A}_{y,\,j}$
as a subset of $U_{\ell}$. Then, consider the trace process (cf.
Appendix \ref{appenA}) on $\mathcal{A}_{x,\,j}\cup\mathcal{A}_{y,\,j}$
whose transition rates are notated as $\widehat{\mathfrak{r}}^{\ell}(\sigma^{v},\,\sigma^{w})$
or $\widehat{\mathfrak{r}}_{\sigma^{v}\sigma^{w}}^{\ell}$ for $v$
and $w$ in $\mathcal{A}_{x,\,j}\cup\mathcal{A}_{y,\,j}$. The trace
process is also symmetric. By \eqref{e_RW-UB}, it holds for all $v,\,w\in\mathcal{A}_{x,\,j}\cup\mathcal{A}_{y,\,j}$
that
\begin{equation}
\widehat{\mathfrak{r}}^{\ell}(\sigma^{v},\,\sigma^{w})\le\sum_{\sigma\in U_{\ell}}\mathfrak{r}^{\ell}(\sigma^{v},\,\sigma)\le Cm_{\star}^{\ell-1},\label{e_r-hat-ell-UB}
\end{equation}
where $C>0$ is a global constant which depends only on the structure
of the original graph $\mathcal{G}=(\mathcal{V},\,\mathcal{E})$.

Finally, we define the vertical transition rates as
\[
\mathfrak{r}(\mathfrak{v}_{\ell},\,\mathfrak{w}_{\ell})=\mathfrak{r}(\mathfrak{w}_{\ell},\,\mathfrak{v}_{\ell}):=\frac{1}{\lambda^{2\ell}}\widehat{\mathfrak{r}}^{\ell}(\sigma^{v},\,\sigma^{w})\text{ for }\{v,\,w\}\subseteq\mathcal{A}_{x,\,j}\cup\mathcal{A}_{y,\,j}.
\]
Then, by \eqref{e_r-hat-ell-UB}, there exists a constant $C>0$ such
that
\begin{equation}
\mathfrak{r}(\mathfrak{v}_{\ell},\,\mathfrak{w}_{\ell})\le C\frac{m_{\star}^{\ell-1}}{\lambda^{2\ell-1}}\text{ for all }\{v,\,w\}\subseteq\mathcal{A}_{x,\,j}\cup\mathcal{A}_{y,\,j}.\label{e_trans-ver-bound}
\end{equation}
\end{defn}

Now, we state and solve a resolvent equation on $\mathscr{G}_{j}=(\mathscr{V}_{j},\,\mathscr{E}_{j})$.
We define two auxiliary functions $\mathfrak{f}:\mathscr{V}_{j}\to\mathbb{R}$
and $\mathfrak{h}:\mathscr{V}_{j}\to\mathbb{R}$ as
\[
\mathfrak{f}(\mathfrak{v}_{\ell}):=\begin{cases}
(\frac{1}{\lambda^{2}}-\frac{m_{v}}{\lambda^{3}}+\frac{m_{v}}{\lambda^{2}})\sum_{a\in\mathcal{N}_{x}\cup\mathcal{N}_{y}}\frac{c_{av}}{1-m_{a}} & \text{if }\ell=1,\\
(\frac{1}{\lambda}-1)(1-\frac{m_{v}}{\lambda})\frac{m_{v}^{\ell-1}}{\lambda^{2\ell-1}}\sum_{a\in\mathcal{N}_{x}\cup\mathcal{N}_{y}}\frac{c_{av}}{1-m_{a}} & \text{if }\ell\ge2,
\end{cases}
\]
and
\[
\mathfrak{h}(\mathfrak{v}_{\ell}):=(1-m_{v})\cdot\frac{m_{v}^{\ell-1}}{\lambda^{\ell}}\sum_{a\in\mathcal{N}_{x}}\frac{c_{av}}{1-m_{a}}.
\]
Since $v\in\mathcal{A}_{x,\,j}\cup\mathcal{A}_{y,\,j}$, we see that
$\mathfrak{f}(\mathfrak{v}_{\ell})>0$. Moreover, it is straightforward
that there exists a global constant $C>0$ such that for all $\ell\ge1$,
\begin{equation}
\frac{1}{C}\frac{m_{v}^{\ell-1}}{\lambda^{2\ell-1}}\le\mathfrak{f}(\mathfrak{v}_{\ell})\le C\frac{m_{v}^{\ell-1}}{\lambda^{2\ell-1}}\quad\text{and}\quad0\le\mathfrak{h}(\mathfrak{v}_{\ell})\le C\frac{m_{v}^{\ell-1}}{\lambda^{\ell}}.\label{e_f-h-bound}
\end{equation}
The proof of the following theorem can be skipped at first reading.
\begin{thm}[Resolvent equation on $\mathscr{G}_{j}=(\mathscr{V}_{j},\,\mathscr{E}_{j})$]
\label{t_resolvent-equation}Consider the Markov chain $\{\mathfrak{X}_{t}\}_{t\ge0}$
on $\mathscr{V}_{j}$ corresponding to $\mathfrak{r}(\cdot,\,\cdot)$,
which induces an infinitesimal generator $\mathscr{L}_{j}$.
\begin{enumerate}
\item Fix an integer $L\ge1$ and consider the reflected chain at level
$L$, which is a Markov chain $\{\mathfrak{X}_{t}^{L}\}_{t\ge0}$
on $\mathscr{V}_{j}^{L}:=\{\mathfrak{v}_{\ell}:v\in\mathcal{A}_{x,\,j}\cup\mathcal{A}_{y,\,j},\ \ell\in\llbracket1,\,L\rrbracket\}\subseteq\mathscr{V}_{j}$
with infinitesimal generator $\mathscr{L}_{j}^{L}$ obtained by annihilating
the transition rates outside $\mathscr{V}_{j}^{L}$. Define $\mathfrak{f}^{L}:\mathscr{V}_{j}^{L}\to\mathbb{R}$
and $\mathfrak{h}^{L}:\mathscr{V}_{j}^{L}\to\mathbb{R}$ as
\[
\mathfrak{f}^{L}(\mathfrak{v}_{\ell}):=\begin{cases}
\mathfrak{f}(\mathfrak{v}_{\ell}) & \text{if }\ell\in\llbracket1,\,L-1\rrbracket,\\
(\frac{1}{\lambda}-1)\frac{m_{v}^{L-1}}{\lambda^{2L-1}}\sum_{a\in\mathcal{N}_{x}\cup\mathcal{N}_{y}}\frac{c_{av}}{1-m_{a}} & \text{if }\ell=L,
\end{cases}
\]
and
\[
\mathfrak{h}^{L}(\mathfrak{v}_{\ell}):=\begin{cases}
\mathfrak{h}(\mathfrak{v}_{\ell}) & \text{if }\ell\in\llbracket1,\,L-1\rrbracket,\\
\frac{m_{v}^{L-1}}{\lambda^{L}}\sum_{a\in\mathcal{N}_{x}}\frac{c_{av}}{1-m_{a}} & \text{if }\ell=L.
\end{cases}
\]
Then, there exists a unique solution $\mathfrak{g}_{0}^{L}:\mathscr{V}_{j}^{L}\to\mathbb{R}$
to the following resolvent equation:
\begin{equation}
(\mathfrak{f}^{L}-\mathscr{L}_{j}^{L})\mathfrak{g}_{0}^{L}=\mathfrak{h}^{L}\text{ on }\mathscr{V}_{j}^{L}.\label{e_resolvent-L}
\end{equation}
Moreover, the solutions $\{\mathfrak{g}_{0}^{L}\}_{L\ge1}$ are uniformly
bounded.
\item The limit $\mathfrak{g}_{0}:=\lim_{L\to\infty}\mathfrak{g}_{0}^{L}$
exists, and it solves the following resolvent equation:
\begin{equation}
(\mathfrak{f}-\mathscr{L}_{j})\mathfrak{g}_{0}=\mathfrak{h}\text{ on }\mathscr{V}_{j}.\label{e_resolvent}
\end{equation}
\end{enumerate}
\end{thm}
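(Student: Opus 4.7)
The proof splits naturally into Part (1), a finite-dimensional construction, and Part (2), a passage to the limit. For Part (1), existence and uniqueness of $\mathfrak{g}_0^L$ on the finite space $\mathscr{V}_j^L$ follow from standard linear algebra: the reflected chain is finite and irreducible, so $-\mathscr{L}_j^L$ is positive semidefinite with respect to its reversible measure, and the addition of the strictly positive multiplication operator $\mathfrak{f}^L$ makes $\mathfrak{f}^L-\mathscr{L}_j^L$ strictly positive definite, hence invertible on $\mathbb{R}^{\mathscr{V}_j^L}$.

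For the uniform $L^{\infty}$ bound, I apply the discrete weak maximum principle: if $\mathfrak{g}_0^L$ attains its maximum at $\mathfrak{v}^{\star}\in\mathscr{V}_j^L$, then $(\mathscr{L}_j^L\mathfrak{g}_0^L)(\mathfrak{v}^{\star})\le0$, whence $\mathfrak{g}_0^L(\mathfrak{v}^{\star})\le\mathfrak{h}^L(\mathfrak{v}^{\star})/\mathfrak{f}^L(\mathfrak{v}^{\star})$. Combining \eqref{e_f-h-bound} with the analogous estimates for the modified weights at the boundary level $L$, this ratio is bounded by $C\lambda^{\ell-1}\le C$ uniformly in $\ell$ and $L$, since $\lambda<1$. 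The symmetric argument at the minimum together with $\mathfrak{h}^L\ge0$ yields $\mathfrak{g}_0^L\ge0$, so $0\le\mathfrak{g}_0^L\le C$ uniformly.

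For Part (2), uniform boundedness on the countable set $\mathscr{V}_j$ permits diagonal extraction of a pointwise convergent subsequence $\mathfrak{g}_0^{L_k}\to\mathfrak{g}_0$. For a fixed vertex $\mathfrak{v}_{\ell}$, once $L>\ell$ the equation at $\mathfrak{v}_{\ell}$ reads
\[
\mathfrak{f}(\mathfrak{v}_{\ell})\mathfrak{g}_0^L(\mathfrak{v}_{\ell})-(\mathscr{L}_j\mathfrak{g}_0^L)(\mathfrak{v}_{\ell})=\mathfrak{h}(\mathfrak{v}_{\ell}),
\]
since every neighbor of $\mathfrak{v}_{\ell}$ lies in $\mathscr{V}_j^L$ and the boundary-modified weights $\mathfrak{f}^L,\mathfrak{h}^L$ agree with $\mathfrak{f},\mathfrak{h}$ at interior levels. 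The left-hand side is a finite linear combination of values $\mathfrak{g}_0^L(\mathfrak{v}_{\ell}),\mathfrak{g}_0^L(\mathfrak{v}_{\ell\pm1}),\mathfrak{g}_0^L(\mathfrak{w}_{\ell})$, each of which converges pointwise, so passing $L_k\to\infty$ yields \eqref{e_resolvent} at $\mathfrak{v}_{\ell}$.

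The hard part is promoting subsequential convergence to convergence of the full sequence, equivalently, proving that \eqref{e_resolvent} has a unique bounded solution. The difficulty is that $\mathfrak{f}(\mathfrak{v}_{\ell})\asymp(m_v/\lambda^2)^{\ell-1}/\lambda$ decays exponentially in $\ell$, so the infinite-domain maximum principle is not immediate. My plan is to recast the homogeneous equation $(\mathfrak{f}-\mathscr{L}_j)\mathfrak{u}=0$ as the sub-stochastic fixed-point identity $\mathfrak{u}=P\mathfrak{u}$ for the kernel
\[
P(\mathfrak{v},\mathfrak{w}):=\frac{\mathfrak{r}(\mathfrak{v},\mathfrak{w})}{\mathfrak{f}(\mathfrak{v})+\sum_{\mathfrak{w}'}\mathfrak{r}(\mathfrak{v},\mathfrak{w}')},
\]
iterate $\mathfrak{u}=P^n\mathfrak{u}$, and quantify the killing rate $\mathfrak{f}/(\mathfrak{f}+\sum\mathfrak{r})$ using the relative-scaling bounds. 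From Definitions \ref{d_trans-hor}--\ref{d_trans-ver}, the estimate \eqref{e_r-hat-ell-UB}, and the constraint $\lambda^2>m_{\star}$ from \eqref{e_lambda-def}, each horizontal transition rate out of $\mathfrak{v}_{\ell}$ is a bounded multiple of $\mathfrak{f}(\mathfrak{v}_{\ell})$; the vertical contribution requires a refined two-sided bound on $\widehat{\mathfrak{r}}^{\ell}(\sigma^v,\sigma^w)$ to control the case $m_v<m_{\star}$, which I would obtain from the reversible structure of the $U_{\ell}$-chain via Dirichlet form comparison. Given these bounds, the killing probability per iteration is uniformly positive, so $\|P^n\mathfrak{u}\|_{\infty}\to0$ and $\mathfrak{u}\equiv0$. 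A parallel route that I would pursue is to construct $\mathfrak{g}_0$ directly via the Feynman--Kac representation $\mathfrak{g}_0(\mathfrak{v})=\mathbb{E}_{\mathfrak{v}}\bigl[\int_0^{\infty}e^{-\int_0^t\mathfrak{f}(\mathfrak{X}_s)\mathrm{d}s}\mathfrak{h}(\mathfrak{X}_t)\mathrm{d}t\bigr]$, verify its well-posedness from the same estimates, and identify it as $\lim_L\mathfrak{g}_0^L$ via monotone or dominated convergence.
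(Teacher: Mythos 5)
Your proposal is correct, and it reaches the conclusion by a genuinely different route in two places. For the uniform bound in part (1), you invoke the discrete maximum principle at an extremizer, which reduces everything to the ratio $\mathfrak{h}^{L}/\mathfrak{f}^{L}\le C\lambda^{\ell-1}\le C$; the paper instead expands the Feynman--Kac representation $\mathfrak{g}_{0}^{L}(\cdot)=\mathrm{E}_{\cdot}^{L}[\int_{0}^{\infty}e^{-\int_{0}^{t}\mathfrak{f}^{L}(\mathfrak{X}_{s}^{L})\mathrm{d}s}\mathfrak{h}^{L}(\mathfrak{X}_{t}^{L})\mathrm{d}t]$ over the jump times and sums a geometric series whose ratio is $H^{L}/(H^{L}+\mathfrak{f}^{L})\le C_{1}C_{2}/(C_{1}C_{2}+1)<1$. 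Your argument is shorter and needs only the two-sided bound \eqref{e_fL-hL-bound}, not the holding-rate comparison; the paper's argument, however, sets up exactly the machinery it reuses in part (2). For part (2), you pass to a subsequential limit by compactness and then close the argument via uniqueness of bounded solutions of \eqref{e_resolvent}, proved by iterating the sub-stochastic kernel $P$; the paper instead shows $\{\mathfrak{g}_{0}^{L}(\mathfrak{v}_{\ell})\}_{L\ge\ell}$ is Cauchy directly, by coupling $\mathfrak{X}^{L}$ and $\mathfrak{X}^{L'}$ up to the hitting time of level $L$ and bounding the discrepancy by $C(C_{1}C_{2}/(C_{1}C_{2}+1))^{L-\ell}$. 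These are two faces of the same estimate: your per-step killing probability $\mathfrak{f}/(\mathfrak{f}+H)\ge\delta>0$ is precisely the paper's inequality \eqref{e_t_resolvent-equation-4}, and your geometric decay of $\|P^{n}\mathfrak{u}\|_{\infty}$ is the paper's geometric decay of the survival probability after $L-\ell$ jumps. One remark on the point you flag as needing a "refined two-sided bound via Dirichlet form comparison": no such refinement is needed. The first inequality in \eqref{e_r-hat-ell-UB}, $\widehat{\mathfrak{r}}^{\ell}(\sigma^{v},\sigma^{w})\le\sum_{\sigma\in U_{\ell}}\mathfrak{r}^{\ell}(\sigma^{v},\sigma)$, combined with \eqref{e_trans-ver-def} --- every jump out of $\sigma^{v}$ in $U_{\ell}$ goes to some $\sigma_{\ell-1}^{vw}$ with rate $m_{v}^{\ell-1}c_{vw}$ --- already yields $\widehat{\mathfrak{r}}^{\ell}(\sigma^{v},\sigma^{w})\le Cm_{v}^{\ell-1}$, so the vertical contribution to $H(\mathfrak{v}_{\ell})$ is at most $Cm_{v}^{\ell-1}/\lambda^{2\ell}\le C\mathfrak{f}(\mathfrak{v}_{\ell})$ and the uniform positivity of the killing rate follows with no further work. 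With that observation your plan closes completely.
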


\begin{proof}
(1) From \eqref{e_f-h-bound} and the fact that $\mathscr{V}_{j}^{L}$
is finite, it holds that $\mathfrak{h}^{L}$ is bounded and $\mathfrak{f}^{L}$
is bounded away from $0$ on $\mathscr{V}_{j}^{L}$. Thus, it is well
known, e.g. \cite[(4.1)]{LMS resolvent}, that the resolvent equation
\eqref{e_resolvent-L} admits a unique solution $\mathfrak{g}_{0}^{L}$
which can be written as
\begin{equation}
\mathfrak{g}_{0}^{L}(\cdot)=\mathrm{E}_{\cdot}^{L}\Big[\int_{0}^{\infty}e^{-\int_{0}^{t}\mathfrak{f}^{L}(\mathfrak{X}_{s}^{L})ds}\mathfrak{h}^{L}(\mathfrak{X}_{t}^{L})\mathrm{d}t\Big],\label{e_t_resolvent-equation-1}
\end{equation}
where $\mathrm{E}_{\cdot}^{L}$ is the expectation corresponding to
the law of $\{\mathfrak{X}_{t}^{L}\}_{t\ge0}$. Thus, to conclude
the proof of part (1), it remains to prove that the solutions are
uniformly bounded. 

It is easy to see that $\mathfrak{f}^{L}$ and $\mathfrak{h}^{L}$
also satisfy the decaying conditions as in \eqref{e_f-h-bound}: for
some constant $C_{1}>0$ independent of $L$, it holds for all $\ell\in\llbracket1,\,L\rrbracket$
that
\begin{equation}
\frac{1}{C_{1}}\frac{m_{v}^{\ell-1}}{\lambda^{2\ell-1}}\le\mathfrak{f}^{L}(\mathfrak{v}_{\ell})\le C_{1}\frac{m_{v}^{\ell-1}}{\lambda^{2\ell-1}}\quad\text{and}\quad0\le\mathfrak{h}^{L}(\mathfrak{v}_{\ell})\le C_{1}\frac{m_{v}^{\ell-1}}{\lambda^{\ell}}.\label{e_fL-hL-bound}
\end{equation}
Moreover, denote by $H^{L}(\mathfrak{v}_{\ell})$ the holding rate
of $\{\mathfrak{X}_{t}^{L}\}_{t\ge0}$ at $\mathfrak{v}_{\ell}$.
Then, by \eqref{e_trans-hor-bound} and \eqref{e_trans-ver-bound},
there exists a constant $C_{2}>0$ such that\textbf{
\begin{equation}
0<H^{L}(\mathfrak{v}_{\ell})\le C_{2}\frac{m_{v}^{\ell-1}}{\lambda^{2\ell-1}}.\label{e_HL-bound}
\end{equation}
}Now, fix $\mathfrak{v}_{\ell}\in\mathscr{V}_{j}^{L}$. Denoting by
$0=\tau_{0}<\tau_{1}<\tau_{2}<\cdots$ the consecutive jump times
of the process, we calculate
\[
\begin{aligned}\mathfrak{g}_{0}^{L}(\mathfrak{v}_{\ell}) & =\mathrm{E}_{\mathfrak{v}_{\ell}}^{L}\Big[\sum_{p=0}^{\infty}\int_{\tau_{p}}^{\tau_{p+1}}e^{-\int_{0}^{t}\mathfrak{f}^{L}(\mathfrak{X}_{s}^{L})ds}\mathfrak{h}^{L}(\mathfrak{X}_{t}^{L})\mathrm{d}t\Big]\\
 & =\sum_{p=0}^{\infty}\mathrm{E}_{\mathfrak{v}_{\ell}}^{L}\Big[\prod_{q=0}^{p-1}e^{-\int_{\tau_{q}}^{\tau_{q+1}}\mathfrak{f}^{L}(\mathfrak{X}_{s}^{L})\mathrm{d}s}\cdot\int_{\tau_{p}}^{\tau_{p+1}}e^{-\int_{\tau_{p}}^{t}\mathfrak{f}^{L}(\mathfrak{X}_{s}^{L})\mathrm{d}s}\mathfrak{h}^{L}(\mathfrak{X}_{t}^{L})\mathrm{d}t\Big].
\end{aligned}
\]
By the strong Markov property (applied $p$ times in a row), this
becomes
\begin{equation}
\sum_{p=0}^{\infty}\mathrm{E}_{\mathfrak{v}_{\ell}}^{L}\Big[\prod_{q=0}^{p-1}\mathrm{E}_{\mathfrak{X}_{\tau_{q}}^{L}}^{L}\big[e^{-\int_{0}^{\tau_{1}}\mathfrak{f}^{L}(\mathfrak{X}_{s}^{L})\mathrm{d}s}\big]\cdot\mathrm{E}_{\mathfrak{X}_{\tau_{p}}^{L}}^{L}\Big[\int_{0}^{\tau_{1}}e^{-\int_{0}^{t}\mathfrak{f}^{L}(\mathfrak{X}_{s}^{L})\mathrm{d}s}\mathfrak{h}^{L}(\mathfrak{X}_{t}^{L})\mathrm{d}t\Big]\Big].\label{e_t_resolvent-equation-2}
\end{equation}
Next, for any $\mathfrak{w}_{\ell}\in\mathscr{V}_{j}^{L}$, since
$\mathfrak{X}_{t}^{L}=\mathfrak{X}_{0}^{L}$ for $0\le t<\tau_{1}$,
we may calculate
\[
\begin{aligned}\mathrm{E}_{\mathfrak{w}_{\ell}}^{L}\Big[\int_{0}^{\tau_{1}}e^{-\int_{0}^{t}\mathfrak{f}^{L}(\mathfrak{X}_{s}^{L})\mathrm{d}s}\mathfrak{h}^{L}(\mathfrak{X}_{t}^{L})\mathrm{d}t\Big] & =\mathfrak{h}^{L}(\mathfrak{w}_{\ell})\cdot\mathrm{E}_{\mathfrak{w}_{\ell}}^{L}\Big[\int_{0}^{\tau_{1}}e^{-\mathfrak{f}^{L}(\mathfrak{w}_{\ell})t}\mathrm{d}t\Big]\\
 & =\mathfrak{h}^{L}(\mathfrak{w}_{\ell})\cdot\mathrm{E}_{\mathfrak{w}_{\ell}}^{L}\Big[\frac{1-e^{-\mathfrak{f}^{L}(\mathfrak{w}_{\ell})\tau_{1}}}{\mathfrak{f}^{L}(\mathfrak{w}_{\ell})}\Big].
\end{aligned}
\]
Since the distribution of $\tau_{1}$ is exponential with mean $(H^{L}(\mathfrak{w}_{L}))^{-1}$,
this can be written as
\begin{equation}
\frac{\mathfrak{h}^{L}(\mathfrak{w}_{\ell})}{\mathfrak{f}^{L}(\mathfrak{w}_{\ell})}\cdot\frac{\mathfrak{f}^{L}(\mathfrak{w}_{\ell})}{H^{L}(\mathfrak{w}_{\ell})+\mathfrak{f}^{L}(\mathfrak{w}_{\ell})}<C\lambda^{\ell-1}\le C.\label{e_t_resolvent-equation-3}
\end{equation}
Here, the first inequality follows from \eqref{e_fL-hL-bound}. Similarly,
by \eqref{e_fL-hL-bound} and \eqref{e_HL-bound},
\begin{equation}
\mathrm{E}_{\mathfrak{w}_{\ell}}^{L}\big[e^{-\int_{0}^{\tau_{1}}\mathfrak{f}^{L}(\mathfrak{X}_{s}^{L})\mathrm{d}s}\big]=\mathrm{E}_{\mathfrak{w}_{\ell}}^{L}\big[e^{-\tau_{1}\mathfrak{f}^{L}(\mathfrak{w}_{\ell})}\big]=\frac{H^{L}(\mathfrak{w}_{\ell})}{H^{L}(\mathfrak{w}_{\ell})+\mathfrak{f}^{L}(\mathfrak{w}_{\ell})}\le\frac{C_{1}C_{2}}{C_{1}C_{2}+1}<1.\label{e_t_resolvent-equation-4}
\end{equation}
Thus, substituting \eqref{e_t_resolvent-equation-3} and \eqref{e_t_resolvent-equation-4}
to \eqref{e_t_resolvent-equation-2}, we obtain that
\[
0\le\mathfrak{g}_{0}^{L}(\mathfrak{v}_{\ell})\le\sum_{p=0}^{\infty}C\Big(\frac{C_{1}C_{2}}{C_{1}C_{2}+1}\Big)^{p}=C(C_{1}C_{2}+1)<\infty,
\]
which concludes the proof of part (1).

\noindent (2) Fix a vertex $\mathfrak{v}_{\ell}\in\mathscr{V}_{j}$
where $v\in\mathcal{A}_{x,\,j}\cup\mathcal{A}_{y,\,j}$ and $\ell\ge1$.
We show that the sequence $\{\mathfrak{g}_{0}^{L}(\mathfrak{v}_{\ell})\}_{L\ge\ell}$
is a Cauchy sequence, and thus converges. To this end, take $L'\ge L\ge\ell$.
Consider the natural coupling of two processes $\{\mathfrak{X}_{t}^{L}\}_{t\ge0}$
and $\{\mathfrak{X}_{t}^{L'}\}_{t\ge0}$, such that the trajectories
remain the same up to the hitting time $\mathcal{T}_{L}$ of the set
$\{\mathfrak{w}_{L}:w\in\mathcal{A}_{x,\,j}\cup\mathcal{A}_{y,\,j}\}$.
Then, by \eqref{e_t_resolvent-equation-1}, it holds that
\[
\begin{aligned}|\mathfrak{g}_{0}^{L}(\mathfrak{v}_{\ell})-\mathfrak{g}_{0}^{L'}(\mathfrak{v}_{\ell})| & =\Big|\mathrm{E}_{\mathfrak{v}_{\ell}}^{L}\Big[\int_{\mathcal{T}_{L}}^{\infty}e^{-\int_{0}^{t}\mathfrak{f}^{L}(\mathfrak{X}_{s}^{L})\mathrm{d}s}\mathfrak{h}^{L}(\mathfrak{X}_{t}^{L})\mathrm{d}t\Big]-\mathrm{E}_{\mathfrak{v}_{\ell}}^{L'}\Big[\int_{\mathcal{T}_{L}}^{\infty}e^{-\int_{0}^{t}\mathfrak{f}^{L'}(\mathfrak{X}_{s}^{L'})\mathrm{d}s}\mathfrak{h}^{L'}(\mathfrak{X}_{t}^{L'})\mathrm{d}t\Big]\Big|\\
 & \le\mathrm{E}_{\mathfrak{v}_{\ell}}^{L}\Big[\int_{\mathcal{T}_{L}}^{\infty}e^{-\int_{0}^{t}\mathfrak{f}^{L}(\mathfrak{X}_{s}^{L})\mathrm{d}s}\mathfrak{h}^{L}(\mathfrak{X}_{t}^{L})\mathrm{d}t\Big]+\mathrm{E}_{\mathfrak{v}_{\ell}}^{L'}\Big[\int_{\mathcal{T}_{L}}^{\infty}e^{-\int_{0}^{t}\mathfrak{f}^{L'}(\mathfrak{X}_{s}^{L'})\mathrm{d}s}\mathfrak{h}^{L'}(\mathfrak{X}_{t}^{L'})\mathrm{d}t\Big].
\end{aligned}
\]
First, we consider the first expectation in the right-hand side. By
the strong Markov property at stopping time $\mathcal{T}_{L}$, we
calculate
\[
\begin{aligned}\mathrm{E}_{\mathfrak{v}_{\ell}}^{L}\Big[\int_{\mathcal{T}_{L}}^{\infty}e^{-\int_{0}^{t}\mathfrak{f}^{L}(\mathfrak{X}_{s}^{L})\mathrm{d}s}\mathfrak{h}^{L}(\mathfrak{X}_{t}^{L})\mathrm{d}t\Big] & =\mathrm{E}_{\mathfrak{v}_{\ell}}^{L}\Big[e^{-\int_{0}^{\mathcal{T}_{L}}\mathfrak{f}^{L}(\mathfrak{X}_{s}^{L})\mathrm{d}s}\cdot\int_{\mathcal{T}_{L}}^{\infty}e^{-\int_{\mathcal{T}_{L}}^{t}\mathfrak{f}^{L}(\mathfrak{X}_{s}^{L})\mathrm{d}s}\mathfrak{h}^{L}(\mathfrak{X}_{t}^{L})\mathrm{d}t\Big]\\
 & =\mathrm{E}_{\mathfrak{v}_{\ell}}^{L}\Big[e^{-\int_{0}^{\mathcal{T}_{L}}\mathfrak{f}^{L}(\mathfrak{X}_{s}^{L})\mathrm{d}s}\cdot\mathrm{E}_{\mathfrak{X}_{\mathcal{T}_{L}}^{L}}^{L}\Big[\int_{0}^{\infty}e^{-\int_{0}^{t}\mathfrak{f}^{L}(\mathfrak{X}_{s}^{L})\mathrm{d}s}\mathfrak{h}^{L}(\mathfrak{X}_{t}^{L})\mathrm{d}t\Big]\Big]\\
 & =\mathrm{E}_{\mathfrak{v}_{\ell}}^{L}\big[e^{-\int_{0}^{\mathcal{T}_{L}}\mathfrak{f}^{L}(\mathfrak{X}_{s}^{L})\mathrm{d}s}\cdot\mathfrak{g}_{0}^{L}(\mathfrak{X}_{\mathcal{T}_{L}}^{L})\big]\le C\cdot\mathrm{E}_{\mathfrak{v}_{\ell}}^{L}\big[e^{-\int_{0}^{\mathcal{T}_{L}}\mathfrak{f}^{L}(\mathfrak{X}_{s}^{L})\mathrm{d}s}\big],
\end{aligned}
\]
where the inequality holds by the uniform boundedness proved in part
(1). By the strong Markov property and \eqref{e_t_resolvent-equation-4},
since $\mathcal{T}_{L}\ge\tau_{L-\ell}$, the last term is bounded
by
\[
C\cdot\mathrm{E}_{\mathfrak{v}_{\ell}}^{L}\Big[\prod_{q=0}^{L-\ell-1}e^{-\int_{\tau_{q}}^{\tau_{q+1}}\mathfrak{f}^{L}(\mathfrak{X}_{s}^{L})\mathrm{d}s}\Big]\le C\Big(\frac{C_{1}C_{2}}{C_{1}C_{2}+1}\Big)^{L-\ell}\to0\text{ as }L\to\infty.
\]
Similarly, we obtain that
\[
\mathrm{E}_{\mathfrak{v}_{\ell}}^{L'}\Big[\int_{\mathcal{T}_{L}}^{\infty}e^{-\int_{0}^{t}\mathfrak{f}^{L'}(\mathfrak{X}_{s}^{L'})\mathrm{d}s}\mathfrak{h}^{L'}(\mathfrak{X}_{t}^{L'})\mathrm{d}t\Big]\le C\cdot\mathrm{E}_{\mathfrak{v}_{\ell}}^{L'}\Big[e^{-\int_{0}^{\mathcal{T}_{L}}\mathfrak{f}^{L'}(\mathfrak{X}_{s}^{L'})\mathrm{d}s}\Big]\le C\Big(\frac{C_{1}C_{2}}{C_{1}C_{2}+1}\Big)^{L'-\ell}\xrightarrow{L\to\infty}0.
\]
Therefore, we have proved that $\{\mathfrak{g}_{0}^{L}(\mathfrak{v}_{\ell})\}_{L\ge\ell}$
is a Cauchy sequence, and thus it converges to some limit function
which is denoted as $\mathfrak{g}_{0}:\mathscr{V}_{j}\to\mathbb{R}$.

It remains to prove that $\mathfrak{g}_{0}$ solves \eqref{e_resolvent}.
For every $v\in\mathcal{A}_{x,\,j}\cup\mathcal{A}_{y,\,j}$ and $\ell\ge1$,
it holds that
\[
\mathfrak{f}(\mathfrak{v}_{\ell})\mathfrak{g}_{0}^{L}(\mathfrak{v}_{\ell})-(\mathscr{L}_{j}\mathfrak{g}_{0}^{L})(\mathfrak{v}_{\ell})=\mathfrak{h}(\mathfrak{v}_{\ell})\text{ for all }L\ge\ell+1.
\]
Sending $L$ to infinity, we obtain that
\[
\mathfrak{f}(\mathfrak{v}_{\ell})\mathfrak{g}_{0}(\mathfrak{v}_{\ell})-(\mathscr{L}_{j}\mathfrak{g}_{0})(\mathfrak{v}_{\ell})=\mathfrak{h}(\mathfrak{v}_{\ell}),
\]
and thus $(\mathfrak{f}-\mathscr{L}_{j})\mathfrak{g}_{0}=\mathfrak{h}$.
\end{proof}
Now, we define several functions which are the building blocks of
the test objects to be defined in Sections \ref{sec5.1} and \ref{sec6.1}.
\begin{defn}
\label{d_g-ghat-def}Define a function $\mathfrak{g}=\mathfrak{g}_{j}:\mathscr{V}_{j}\to\mathbb{R}$
as
\begin{equation}
\mathfrak{g}(\mathfrak{v}_{\ell}):=\frac{1}{\lambda^{\ell}}\mathfrak{g}_{0}(\mathfrak{v}_{\ell})\text{ for all }v\in\mathcal{A}_{x,\,j}\cup\mathcal{A}_{y,\,j},\ \ell\ge1.\label{e_g-def}
\end{equation}
Recall \eqref{e_Vj-def}. We fix $\ell\ge1$ and restrict $\mathfrak{g}$
to $\bigcup_{v\in\mathcal{A}_{x,\,j}\cup\mathcal{A}_{y,\,j}}\{\mathfrak{v}_{\ell}\}$,
which is naturally embedded into $U_{\ell}$ by the map given in \eqref{e_v-sigma-v}.
Thus, $\mathfrak{g}$ can be regarded as defined on a subset of $U_{\ell}$.
We denote by $\widehat{\mathfrak{g}}_{\ell}=\widehat{\mathfrak{g}}_{\ell,\,j}:U_{\ell}\to\mathbb{R}$
the \emph{harmonic extension} (explained in Appendix \ref{appenA})
of $\mathfrak{g}$ to the whole graph $U_{\ell}$ with respect to
the Markov chain $\{\mathscr{X}_{t}^{\ell}\}_{t\ge0}$.

For later use, we also define $\widehat{\mathfrak{g}}_{0}:\mathcal{X}_{0}\to\mathbb{R}$
as $\widehat{\mathfrak{g}}_{0}(\boldsymbol{0}):=0$, where $\boldsymbol{0}$
is the single element of $\mathcal{X}_{0}$.
\end{defn}

\begin{defn}
\label{d_gL-ghatL-def}Similarly, we define $\mathfrak{g}^{L}=\mathfrak{g}_{j}^{L}:\mathscr{V}_{j}^{L}\to\mathbb{R}$
as
\begin{equation}
\mathfrak{g}^{L}(\mathfrak{v}_{\ell}):=\frac{1}{\lambda^{\ell}}\mathfrak{g}_{0}^{L}(\mathfrak{v}_{\ell})\text{ for all }v\in\mathcal{A}_{x,\,j}\cup\mathcal{A}_{y,\,j},\ \ell\in\llbracket1,\,L\rrbracket.\label{e_gL-def}
\end{equation}
Also, for $\ell\in\llbracket1,\,L\rrbracket$, we denote by $\widehat{\mathfrak{g}}_{\ell}^{L}=\widehat{\mathfrak{g}}_{\ell,\,j}^{L}:U_{\ell}\to\mathbb{R}$
the \emph{harmonic extension} of $\mathfrak{g}^{L}$ on $\bigcup_{v\in\mathcal{A}_{x,\,j}\cup\mathcal{A}_{y,\,j}}\{\mathfrak{v}_{\ell}\}$
to the whole graph $U_{\ell}$ with respect to $\{\mathscr{X}_{t}^{\ell}\}_{t\ge0}$,
and define $\widehat{\mathfrak{g}}_{0}^{L}:\mathcal{X}_{0}\to\mathbb{R}$
as $\widehat{\mathfrak{g}}_{0}^{L}(\boldsymbol{0}):=0$.
\end{defn}

The two functions $\mathfrak{g}$ and $\widehat{\mathfrak{g}}$ are
crucially used in the definition of the test function in Section \ref{sec5.1}.
On the other hand, the two functions $\mathfrak{g}^{L}$ and $\widehat{\mathfrak{g}}^{L}$
are constructed for the test flow to be defined in Section \ref{sec6.1}.

Finally, recall from Theorem \ref{t_resolvent-equation} that $\mathfrak{g}_{0}$
and $\mathfrak{g}_{0}^{L}$ are uniformly bounded. Thus, by \eqref{e_g-def}
and \eqref{e_gL-def},
\begin{equation}
0\le\mathfrak{g}(\mathfrak{v}_{\ell}),\,\mathfrak{g}^{L}(\mathfrak{v}_{\ell})\le\frac{C}{\lambda^{\ell}}\text{ for all }v\in\mathcal{A}_{x,\,j}\cup\mathcal{A}_{y,\,j}\text{ and }\ell\ge1.\label{e_g-gL-UB}
\end{equation}
Since $\widehat{\mathfrak{g}}_{\ell}$ (resp. $\widehat{\mathfrak{g}}_{\ell}^{L}$)
is the harmonic extension of $\mathfrak{g}$ (resp. $\mathfrak{g}^{L}$),
by \eqref{e_har-extension}, we also have that
\begin{equation}
0\le\widehat{\mathfrak{g}}_{\ell}(\sigma),\,\widehat{\mathfrak{g}}_{\ell}^{L}(\sigma)\le\frac{C}{\lambda^{\ell}}\text{ for all }\sigma\in U_{\ell}\text{ and }\ell\ge1.\label{e_ghat-ghatL-UB}
\end{equation}

\subsection{\label{sec4.3}Properties of $\mathfrak{g}$ and $\mathfrak{g}^{L}$}

In this subsection, we record some properties of the functions $\mathfrak{g}:\mathscr{V}_{j}\to\mathbb{R}$
and $\mathfrak{g}^{L}:\mathscr{V}_{j}^{L}\to\mathbb{R}$. The proofs
are elementary but technical, so we defer them to Appendix \ref{appenB}.
\begin{lem}
\label{l_g-gL-prop-1}For each $j\in\llbracket1,\,s\rrbracket$ and
$v\in\mathcal{A}_{x,\,j}\cup\mathcal{A}_{y,\,j}$, the following statements
hold.
\begin{enumerate}
\item For $\ell\in\llbracket1,\,L-1\rrbracket$, it holds that
\begin{equation}
\begin{aligned} & (1-m_{v})\cdot\sum_{a\in\mathcal{N}_{x}}\frac{c_{av}m_{v}^{\ell-1}}{1-m_{a}}+\Big(\sum_{a\in\mathcal{N}_{x}\cup\mathcal{N}_{y}}\frac{c_{av}m_{v}^{\ell-1}}{1-m_{a}}\Big)\cdot(\mathfrak{g}^{L}(\mathfrak{v}_{\ell-1})-\mathfrak{g}^{L}(\mathfrak{v}_{\ell}))\\
 & +\Big(\sum_{a\in\mathcal{N}_{x}\cup\mathcal{N}_{y}}\frac{c_{av}m_{v}^{\ell}}{1-m_{a}}\Big)\cdot(\mathfrak{g}^{L}(\mathfrak{v}_{\ell+1})-\mathfrak{g}^{L}(\mathfrak{v}_{\ell}))+\sum_{w\in\mathcal{A}_{x,\,j}\cup\mathcal{A}_{y,\,j}}\widehat{\mathfrak{r}}_{\sigma^{v}\sigma^{w}}^{\ell}((\mathfrak{g}^{L}(\mathfrak{w}_{\ell})-\mathfrak{g}^{L}(\mathfrak{v}_{\ell}))=0,
\end{aligned}
\label{e_l_g-gL-prop-1-1}
\end{equation}
where $\mathfrak{g}^{L}(\mathfrak{v}_{0}):=0$.
\item It holds that
\begin{equation}
\begin{aligned} & \sum_{a\in\mathcal{N}_{x}}\frac{c_{av}m_{v}^{L-1}}{1-m_{a}}+\Big(\sum_{a\in\mathcal{N}_{x}\cup\mathcal{N}_{y}}\frac{c_{av}m_{v}^{L-1}}{1-m_{a}}\Big)\cdot(\mathfrak{g}^{L}(\mathfrak{v}_{L-1})-\mathfrak{g}^{L}(\mathfrak{v}_{L}))\\
 & +\sum_{w\in\mathcal{A}_{x,\,j}\cup\mathcal{A}_{y,\,j}}\widehat{\mathfrak{r}}_{\sigma^{v}\sigma^{w}}^{L}(\mathfrak{g}^{L}(\mathfrak{w}_{L})-\mathfrak{g}^{L}(\mathfrak{v}_{L}))=0.
\end{aligned}
\label{e_l_g-gL-prop-1-2}
\end{equation}
\end{enumerate}
\end{lem}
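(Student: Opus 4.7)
The plan is to derive both identities directly from the resolvent equation $(\mathfrak{f}^L - \mathscr{L}_j^L)\mathfrak{g}_0^L = \mathfrak{h}^L$ of Theorem \ref{t_resolvent-equation}(1), by substituting the change of variable $\mathfrak{g}_0^L(\mathfrak{v}_\ell) = \lambda^\ell \mathfrak{g}^L(\mathfrak{v}_\ell)$ from Definition \ref{d_gL-ghatL-def} and then multiplying the resulting equation by $\lambda^\ell$. The entire reduction is purely algebraic: everything hinges on the fact that the building blocks $\mathfrak{f}$, $\mathfrak{h}$, and the horizontal and vertical rates $\mathfrak{r}$ were defined with explicit powers of $\lambda$ and $m_v$ tuned precisely so that this rescaling cancels all $\lambda$-dependence and produces the $\lambda$-free expressions in \eqref{e_l_g-gL-prop-1-1}--\eqref{e_l_g-gL-prop-1-2}.

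For the bulk case $\ell \in \llbracket 2, L-1 \rrbracket$ of part (1), I would write out $(\mathscr{L}_j^L \mathfrak{g}_0^L)(\mathfrak{v}_\ell)$ as three contributions from $\mathfrak{v}_{\ell-1}$, $\mathfrak{v}_{\ell+1}$, and the vertical neighbors $\{\mathfrak{w}_\ell\}_w$, substitute $\mathfrak{g}_0^L = \lambda^k \mathfrak{g}^L$, and multiply by $\lambda^\ell$. The vertical rates $\lambda^{-2\ell}\widehat{\mathfrak{r}}^\ell_{\sigma^v\sigma^w}$ immediately collapse to $\widehat{\mathfrak{r}}^\ell_{\sigma^v\sigma^w}$ and reproduce the last sum in \eqref{e_l_g-gL-prop-1-1} exactly. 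The horizontal contributions require a slightly longer computation: the backward and forward terms become $m_v^{\ell-1}(\sum_a c_{av}/(1-m_a))(\mathfrak{g}^L(\mathfrak{v}_{\ell-1}) - \lambda\mathfrak{g}^L(\mathfrak{v}_\ell))$ and $m_v^\ell(\sum_a c_{av}/(1-m_a))(\mathfrak{g}^L(\mathfrak{v}_{\ell+1}) - \lambda^{-1}\mathfrak{g}^L(\mathfrak{v}_\ell))$ respectively, while $\lambda^\ell\mathfrak{f}(\mathfrak{v}_\ell)\mathfrak{g}_0^L(\mathfrak{v}_\ell)$ contributes $(1-\lambda)(1-m_v/\lambda)m_v^{\ell-1}(\sum_a c_{av}/(1-m_a))\mathfrak{g}^L(\mathfrak{v}_\ell)$ and $\lambda^\ell\mathfrak{h}(\mathfrak{v}_\ell) = (1-m_v)m_v^{\ell-1}\sum_{a\in\mathcal{N}_x} c_{av}/(1-m_a)$. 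Gathering the coefficient of $\mathfrak{g}^L(\mathfrak{v}_\ell)$, the $\lambda$-dependent pieces $(1 - m_v/\lambda - \lambda + m_v) + \lambda + m_v/\lambda$ collapse to $1+m_v$, and rearrangement yields \eqref{e_l_g-gL-prop-1-1}.

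The two boundary cases require only notational adjustments. For $\ell = 1$, the backward neighbor $\mathfrak{v}_0$ is absent from $\mathscr{L}_j^L$; its missing contribution is exactly compensated by the extra $m_v/\lambda^2$ summand built into the special $\ell = 1$ definition of $\mathfrak{f}(\mathfrak{v}_1)$, which under the convention $\mathfrak{g}^L(\mathfrak{v}_0) := 0$ produces precisely the $\mathfrak{g}^L(\mathfrak{v}_0) - \mathfrak{g}^L(\mathfrak{v}_1)$ term. For part (2) at $\ell = L$, the forward neighbor $\mathfrak{v}_{L+1}$ is annihilated by the reflected-chain construction, and the modified prescriptions for $\mathfrak{f}^L(\mathfrak{v}_L)$ and $\mathfrak{h}^L(\mathfrak{v}_L)$ in Theorem \ref{t_resolvent-equation}(1)—which lack exactly the coefficients corresponding to the missing forward transition—furnish the correct coefficients under the same substitute-and-multiply procedure, yielding \eqref{e_l_g-gL-prop-1-2}.

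The argument has no analytic content beyond the existence of $\mathfrak{g}_0^L$. The only real obstacle is bookkeeping: the three distinct prescriptions of $\mathfrak{f}^L$ (at $\ell = 1$, $\ell \in \{2,\ldots,L-1\}$, and $\ell = L$) must each be paired with the correspondingly modified form of the resolvent equation (no backward neighbor, interior, or no forward neighbor), and the matching between the $\lambda^\pm$ and $m_v^\pm$ powers in $\mathfrak{f}$, $\mathfrak{h}$, and $\mathfrak{r}$ must be verified term-by-term in every case. This is precisely the technical computation deferred to Appendix \ref{appenB}.
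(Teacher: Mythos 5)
Your proposal is correct and follows essentially the same route as the paper's own proof: evaluate $\mathfrak{f}^{L}(\mathfrak{v}_{\ell})\mathfrak{g}_{0}^{L}(\mathfrak{v}_{\ell})-(\mathscr{L}_{j}^{L}\mathfrak{g}_{0}^{L})(\mathfrak{v}_{\ell})-\mathfrak{h}^{L}(\mathfrak{v}_{\ell})=0$, substitute $\mathfrak{g}_{0}^{L}(\mathfrak{v}_{\ell})=\lambda^{\ell}\mathfrak{g}^{L}(\mathfrak{v}_{\ell})$, clear the power of $\lambda$, and verify the cancellation $(1-\lambda)(1-m_{v}/\lambda)+\lambda+m_{v}/\lambda=1+m_{v}$, which is exactly the computation the paper carries out for $\ell=1$ and leaves implicit for $\ell\ge2$. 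One cosmetic slip: at $\ell=1$ the term that compensates the missing backward neighbor is the $\frac{1}{\lambda^{2}}$ summand of $\mathfrak{f}(\mathfrak{v}_{1})$ (equivalently, the absence of the $-\frac{1}{\lambda}$ piece of the generic formula), not the $\frac{m_{v}}{\lambda^{2}}$ summand, though this does not affect the validity of your argument.
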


\begin{lem}
\label{l_g-gL-prop-2}For every $\ell\in\llbracket1,\,L\rrbracket$,
it holds that
\[
\sum_{a\in\mathcal{N}_{x}}\sum_{b\in\mathcal{A}_{x}}\frac{c_{ab}m_{b}^{\ell-1}}{1-m_{a}}\cdot(1+\mathfrak{g}^{L}(\mathfrak{b}_{\ell-1})-\mathfrak{g}^{L}(\mathfrak{b}_{\ell}))=\sum_{b\in\mathcal{N}_{y}}\sum_{a\in\mathcal{A}_{y}}\frac{c_{ab}m_{a}^{\ell-1}}{1-m_{b}}\cdot(\mathfrak{g}^{L}(\mathfrak{a}_{\ell})-\mathfrak{g}^{L}(\mathfrak{a}_{\ell-1})),
\]
where $\mathfrak{g}^{L}(\mathfrak{a}_{0}):=0$ and $\mathfrak{g}^{L}(\mathfrak{b}_{0}):=0$.
\end{lem}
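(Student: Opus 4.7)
The plan is to interpret the identity as a discrete flux conservation on the infinite ladder $\mathscr{G}_j$ and to derive it by summing the pointwise identities of Lemma \ref{l_g-gL-prop-1}. Concretely, for fixed $\ell\in\llbracket 1,L-1\rrbracket$ I would sum \eqref{e_l_g-gL-prop-1-1} over $v\in\mathcal{A}_{x,j}\cup\mathcal{A}_{y,j}$ and then over $j\in\llbracket 1,s\rrbracket$, i.e., over the full index set $\mathcal{A}_x\cup\mathcal{A}_y$. The vertical (trace) terms cancel pairwise by the symmetry $\widehat{\mathfrak{r}}^\ell_{\sigma^v\sigma^w}=\widehat{\mathfrak{r}}^\ell_{\sigma^w\sigma^v}$ and drop out, leaving only horizontal contributions.

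To expose the structure of the remaining sum, introduce $\gamma^x(v):=\sum_{a\in\mathcal{N}_x}c_{av}/(1-m_a)$ and $\gamma^y(v):=\sum_{b\in\mathcal{N}_y}c_{bv}/(1-m_b)$, and note the key cancellation: $\gamma^x(v)=0$ unless $v\in\mathcal{A}_x$, and $\gamma^y(v)=0$ unless $v\in\mathcal{A}_y$, because a vertex with no neighbor in $\mathcal{N}_x$ (resp.\ $\mathcal{N}_y$) has $c_{av}=0$ for every $a$ in the corresponding set. Moreover, the coefficient of the forward difference $\mathfrak{g}^L(\mathfrak{v}_{\ell+1})-\mathfrak{g}^L(\mathfrak{v}_\ell)$ in \eqref{e_l_g-gL-prop-1-1} equals $m_v(\gamma^x(v)+\gamma^y(v))\cdot m_v^{\ell-1}$, which is precisely the level-$(\ell{+}1)$ version of the backward-difference coefficient after the reindex $\ell\mapsto\ell+1$. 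Setting
\[
H(\ell):=\sum_{b\in\mathcal{A}_x}\gamma^x(b)\,m_b^{\ell-1},\qquad G(\ell):=\sum_{v\in\mathcal{A}_x\cup\mathcal{A}_y}(\gamma^x(v)+\gamma^y(v))\,m_v^{\ell-1}\bigl(\mathfrak{g}^L(\mathfrak{v}_{\ell-1})-\mathfrak{g}^L(\mathfrak{v}_\ell)\bigr),
\]
the summed identity takes the telescoping form $H(\ell)+G(\ell)=H(\ell+1)+G(\ell+1)$ on $\llbracket 1,L-1\rrbracket$, so that $H+G$ is constant on $\llbracket 1,L\rrbracket$.

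To pin the constant to zero, I would apply the same summation procedure to the boundary identity \eqref{e_l_g-gL-prop-1-2} at level $L$. The vertical terms again cancel by symmetry, and since no forward-difference term is present at the reflecting top of the ladder, the outcome reads directly $H(L)+G(L)=0$. Hence $H(\ell)+G(\ell)=0$ for every $\ell\in\llbracket 1,L\rrbracket$. Finally, splitting $G(\ell)$ into its $\mathcal{A}_x$-part $G_x(\ell)$ and $\mathcal{A}_y$-part $G_y(\ell)$ using the vanishing of $\gamma^x$ (resp.\ $\gamma^y$) off $\mathcal{A}_x$ (resp.\ $\mathcal{A}_y$), the combination $H(\ell)+G_x(\ell)$ reassembles into the LHS of the lemma after exchanging the order of summation, while $-G_y(\ell)$ equals the RHS; the identity $H(\ell)+G(\ell)=0$ is therefore precisely the claim.

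The main obstacle, though not conceptually deep, is careful bookkeeping on $\mathcal{A}_x\cap\mathcal{A}_y$: vertices $v$ adjacent to both $\mathcal{N}_x$ and $\mathcal{N}_y$ simultaneously carry nonzero $\gamma^x(v)$ and $\gamma^y(v)$ and contribute to both the $\mathcal{A}_x$- and $\mathcal{A}_y$-parts of $G$. One must keep the single sum over $\mathcal{A}_x\cup\mathcal{A}_y$ in the definition of $G$ so that the splitting through $\gamma^x(v)+\gamma^y(v)$ matches the original summation term by term, and one must likewise verify that the vanishing conditions on $\gamma^x,\gamma^y$ allow the seemingly narrower summations in the statement ($b\in\mathcal{A}_x$, $a\in\mathcal{A}_y$) to be recovered from the wider summation used in the telescoping argument.
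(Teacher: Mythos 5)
Your proposal is correct and is essentially the paper's own argument: the paper likewise sums the identities of Lemma \ref{l_g-gL-prop-1}-(1) over all $v\in\mathcal{A}_{x,\,j}\cup\mathcal{A}_{y,\,j}$ and $j$, kills the vertical terms by symmetry, and exploits the telescoping in $\ell$ together with the boundary identity \eqref{e_l_g-gL-prop-1-2}; your reformulation as ``$H+G$ is constant in $\ell$ and vanishes at $\ell=L$'' is just the one-step version of the paper's cumulative sum over $\ell'\in\llbracket\ell,\,L-1\rrbracket$ plus the level-$L$ identity. The bookkeeping points you flag (vanishing of $\gamma^{x},\gamma^{y}$ off $\mathcal{A}_{x},\mathcal{A}_{y}$ and the treatment of $\mathcal{A}_{x}\cap\mathcal{A}_{y}$) are handled correctly.
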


\begin{lem}
\label{l_g-gL-prop-3}For $j\in\llbracket1,\,s\rrbracket$ and $b\in\mathcal{A}_{x,\,j}$,
it holds that
\[
\sum_{\ell=0}^{L-1}m_{b}^{\ell}(1+\mathfrak{g}^{L}(\mathfrak{b}_{\ell})-\mathfrak{g}^{L}(\mathfrak{b}_{\ell+1}))\xrightarrow{L\to\infty}\sum_{\ell=0}^{\infty}m_{b}^{\ell}(1+\mathfrak{g}(\mathfrak{b}_{\ell})-\mathfrak{g}(\mathfrak{b}_{\ell+1})).
\]
Similarly, for $a\in\mathcal{A}_{y,\,j}$, it holds that
\[
\sum_{\ell=0}^{L-1}m_{a}^{\ell}(\mathfrak{g}^{L}(\mathfrak{a}_{\ell+1})-\mathfrak{g}^{L}(\mathfrak{a}_{\ell}))\xrightarrow{L\to\infty}\sum_{\ell=0}^{\infty}m_{a}^{\ell}(\mathfrak{g}(\mathfrak{a}_{\ell+1})-\mathfrak{g}(\mathfrak{a}_{\ell})).
\]
\end{lem}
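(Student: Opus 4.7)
The plan is to deduce both claims as simple applications of the dominated convergence theorem for series, once the pointwise convergence of the summands and an $L$-independent summable majorant have been established. I only discuss the first identity (with $b \in \mathcal{A}_{x,\,j}$) in detail; the second identity (with $a \in \mathcal{A}_{y,\,j}$) is handled verbatim, except that the ``$1+$'' inside the summand disappears.

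First I would extend the finite sum on the left to a sum over all $\ell \ge 0$ by defining $a_\ell^L := m_b^\ell(1+\mathfrak{g}^L(\mathfrak{b}_\ell)-\mathfrak{g}^L(\mathfrak{b}_{\ell+1}))$ for $\ell \in \llbracket 0,\,L-1\rrbracket$ and $a_\ell^L := 0$ for $\ell \ge L$, so the left-hand side becomes $\sum_{\ell=0}^\infty a_\ell^L$. Fix $\ell \ge 0$. By Theorem~\ref{t_resolvent-equation}(2) we have $\mathfrak{g}_0^L(\mathfrak{b}_k) \to \mathfrak{g}_0(\mathfrak{b}_k)$ as $L \to \infty$ for each fixed $k$; dividing by $\lambda^k$ as in \eqref{e_g-def} and \eqref{e_gL-def} yields $\mathfrak{g}^L(\mathfrak{b}_k) \to \mathfrak{g}(\mathfrak{b}_k)$. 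Hence for every fixed $\ell$,
\[
a_\ell^L \;\longrightarrow\; m_b^\ell(1+\mathfrak{g}(\mathfrak{b}_\ell)-\mathfrak{g}(\mathfrak{b}_{\ell+1}))\quad\text{as }L\to\infty.
\]

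Second, I would produce a summable majorant using the uniform bound \eqref{e_g-gL-UB}: for all $L$ and all $\ell \ge 0$, $0 \le \mathfrak{g}^L(\mathfrak{b}_\ell) \le C\lambda^{-\ell}$. Consequently,
\[
|a_\ell^L| \;\le\; m_b^\ell\Bigl(1+\tfrac{C}{\lambda^\ell}+\tfrac{C}{\lambda^{\ell+1}}\Bigr) \;\le\; m_b^\ell + C'\Bigl(\tfrac{m_b}{\lambda}\Bigr)^\ell \qquad\text{for some }C'>0.
\]
Since $b \in S_0$ we have $m_b \le m_\star$, and by \eqref{e_lambda-def} we have $\lambda^2 > m_\star \ge m_b$, hence $m_b/\lambda < \lambda < 1$. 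Both $\sum m_b^\ell$ and $\sum (m_b/\lambda)^\ell$ are therefore convergent geometric series, giving an $L$-independent summable upper bound for $|a_\ell^L|$. Dominated convergence for $\ell^1(\mathbb{N})$ now yields the first claim.

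The second claim is proved identically: the analogous sequence $\ell\mapsto m_a^\ell(\mathfrak{g}^L(\mathfrak{a}_{\ell+1})-\mathfrak{g}^L(\mathfrak{a}_\ell))$ (extended by zero past $L-1$) converges pointwise to $m_a^\ell(\mathfrak{g}(\mathfrak{a}_{\ell+1})-\mathfrak{g}(\mathfrak{a}_\ell))$, and is dominated by $C'((m_a/\lambda)^\ell+(m_a/\lambda)^{\ell+1}\lambda^{-1})$, which is summable under the same constraint $\lambda > \sqrt{m_\star}$. There is no essential obstacle; the only subtle point is precisely the interplay with \eqref{e_lambda-def}, which was designed so that the geometric factors $m_b/\lambda$ and $m_a/\lambda$ arising from the $1/\lambda^\ell$ blow-up of $\mathfrak{g}^L$ remain strictly below $1$, producing the needed summable majorant.
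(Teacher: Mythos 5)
Your proof is correct and follows essentially the same route as the paper: pointwise convergence $\mathfrak{g}^{L}\to\mathfrak{g}$ (from Theorem \ref{t_resolvent-equation}) combined with the $L$-uniform geometric bound \eqref{e_g-gL-UB} and the condition $\lambda>\sqrt{m_{\star}}$, which makes $(m_{b}/\lambda)^{\ell}$ a summable majorant; the paper phrases this as "tail plus partial-sum difference," while you package it as dominated convergence for series, but the content is identical.
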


Now, we define a constant $\mathfrak{K}_{xy}\in(0,\,\infty)$ as
\begin{equation}
\begin{aligned} & \frac{1}{6\mathfrak{K}_{xy}}:=\sum_{a\in\mathcal{N}_{x}}\sum_{b\in\mathcal{N}_{y}}\frac{c_{ab}}{(1-m_{a})(1-m_{b})}+\sum_{a\in\mathcal{N}_{x}}\sum_{b\in\mathcal{A}_{x}}\frac{c_{ab}}{1-m_{a}}\sum_{\ell=0}^{\infty}m_{b}^{\ell}\{1+\mathfrak{g}(\mathfrak{b}_{\ell})-\mathfrak{g}(\mathfrak{b}_{\ell+1})\}^{2}\\
 & +\sum_{a\in\mathcal{A}_{y}}\sum_{b\in\mathcal{N}_{y}}\frac{c_{ab}}{1-m_{b}}\sum_{\ell=0}^{\infty}m_{a}^{\ell}\{\mathfrak{g}(\mathfrak{a}_{\ell+1})-\mathfrak{g}(\mathfrak{a}_{\ell})\}^{2}+\sum_{j=1}^{s}\sum_{\{a,\,b\}\subseteq\mathcal{A}_{x,\,j}\cup\mathcal{A}_{y,\,j}}\sum_{\ell=1}^{\infty}\widehat{\mathfrak{r}}_{\sigma^{a}\sigma^{b}}^{\ell}\{\mathfrak{g}(\mathfrak{b}_{\ell})-\mathfrak{g}(\mathfrak{a}_{\ell})\}^{2},
\end{aligned}
\label{e_Kxy-def}
\end{equation}
where $\mathfrak{g}(\mathfrak{a}_{0}):=0$ and $\mathfrak{g}(\mathfrak{b}_{0}):=0$.
The existence of the infinite summations in the right-hand side are
guaranteed by \eqref{e_g-gL-UB}, since the summands are bounded by
$C(\frac{m_{\star}}{\lambda^{2}})^{\ell}$ where $m_{\star}<\lambda^{2}$.

Since the graph $\mathcal{G}$ is connected, it is straightforward
to see that the right-hand side of \eqref{e_Kxy-def} is strictly
positive, which then implies that $\mathfrak{K}_{xy}$ is a finite
positive real number. In the simple case of $|S_{\star}|=\kappa_{3}=2$,
this automatically implies that the limit Markov chain $Z(\cdot)$
(cf. Theorem \ref{t_3rd}-(2)) is irreducible. A brief discussion
on the irreducibility of $Z(\cdot)$ without Assumption \ref{a_Sstar-kappa3}
is given in Section \ref{sec7}.

According to the next lemma, we have a much simpler expression of
the constant $\mathfrak{K}_{xy}$.
\begin{lem}
\label{l_g-gL-prop-4}It holds that
\[
\begin{aligned}\frac{1}{6\mathfrak{K}_{xy}} & =\sum_{a\in\mathcal{N}_{x}}\sum_{b\in\mathcal{N}_{y}}\frac{c_{ab}}{(1-m_{a})(1-m_{b})}+\sum_{a\in\mathcal{N}_{x}}\sum_{b\in\mathcal{A}_{x}}\frac{c_{ab}}{1-m_{a}}\sum_{\ell=0}^{\infty}m_{b}^{\ell}(1+\mathfrak{g}(\mathfrak{b}_{\ell})-\mathfrak{g}(\mathfrak{b}_{\ell+1}))\\
 & =\sum_{a\in\mathcal{N}_{x}}\sum_{b\in\mathcal{N}_{y}}\frac{c_{ab}}{(1-m_{a})(1-m_{b})}+\sum_{a\in\mathcal{A}_{y}}\sum_{b\in\mathcal{N}_{y}}\frac{c_{ab}}{1-m_{b}}\sum_{\ell=0}^{\infty}m_{a}^{\ell}(\mathfrak{g}(\mathfrak{a}_{\ell+1})-\mathfrak{g}(\mathfrak{a}_{\ell})).
\end{aligned}
\]
\end{lem}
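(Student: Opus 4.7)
The plan is to derive a discrete Green-type identity by multiplying the resolvent relations of Lemma \ref{l_g-gL-prop-1} by $\mathfrak{g}^L(\mathfrak{v}_\ell)$ and summing, then passing to the limit $L\to\infty$. For convenience, let
\[
\alpha_v := \sum_{a\in\mathcal{N}_x}\frac{c_{av}}{1-m_a},\qquad \alpha'_v := \sum_{b\in\mathcal{N}_y}\frac{c_{bv}}{1-m_b},\qquad \gamma_v := \alpha_v+\alpha'_v,
\]
so that $\beta_\ell^v = m_v^{\ell-1}\gamma_v$ and the source in \eqref{e_l_g-gL-prop-1-1} equals $(1-m_v)m_v^{\ell-1}\alpha_v$. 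Note that $\alpha_v$ is supported on $\mathcal{A}_x$ and $\alpha'_v$ on $\mathcal{A}_y$.

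First I would multiply \eqref{e_l_g-gL-prop-1-1} (for $\ell\in\llbracket 1,L-1\rrbracket$) and \eqref{e_l_g-gL-prop-1-2} (for $\ell=L$) by $\mathfrak{g}^L(\mathfrak{v}_\ell)$ and sum over $v\in\mathcal{A}_{x,j}\cup\mathcal{A}_{y,j}$ and $j\in\llbracket 1,s\rrbracket$. The vertical terms collapse by symmetry of $\widehat{\mathfrak{r}}^\ell$ into $-D^L$, the finite-$L$ truncation of $D$. The horizontal terms telescope via discrete summation by parts, using $\mathfrak{g}^L(\mathfrak{v}_0)=0$ together with $\beta_{\ell+1}^v = m_v\beta_\ell^v$, to produce $-\sum_{j,v}\sum_{\ell=1}^L\beta_\ell^v(\delta_\ell^v)^2$ where $\delta_\ell^v := \mathfrak{g}^L(\mathfrak{v}_\ell)-\mathfrak{g}^L(\mathfrak{v}_{\ell-1})$. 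A short Abel summation of the source terms then combines their interior $(1-m_v)$-weighted contribution with the level-$L$ boundary piece from \eqref{e_l_g-gL-prop-1-2} into $\sum_{v\in\mathcal{A}_x}\alpha_v\sum_{\ell=1}^L m_v^{\ell-1}\delta_\ell^v$. Altogether this yields the clean finite-$L$ identity
\[
\sum_{j,v}\sum_{\ell=1}^L\beta_\ell^v(\delta_\ell^v)^2 \;+\; D^L \;=\; \sum_{v\in\mathcal{A}_x}\alpha_v\sum_{\ell=1}^L m_v^{\ell-1}\delta_\ell^v.
\]

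Next I would pass to the limit $L\to\infty$. By \eqref{e_g-gL-UB}--\eqref{e_ghat-ghatL-UB} we have $\mathfrak{g}^L,\mathfrak{g} = O(\lambda^{-\ell})$, and since $m_\star<\lambda^2$ by \eqref{e_lambda-def}, every $\ell$-summand is dominated by a fixed geometric sequence of ratio $m_\star/\lambda^2<1$; hence dominated convergence together with Lemma \ref{l_g-gL-prop-3} transfers the identity to $\mathfrak{g}$ and $D$. Using $\gamma_v = \alpha_v+\alpha'_v$, the LHS then splits as $\sum_{v\in\mathcal{A}_x}\alpha_v\sum_\ell m_v^{\ell-1}(\delta_\ell^v)^2 + \sum_{v\in\mathcal{A}_y}\alpha'_v\sum_\ell m_v^{\ell-1}(\delta_\ell^v)^2$, and after the substitution $k=\ell+1$ the second piece matches $C$ exactly. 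Expanding $(1-\delta_\ell^v)^2 = 1-2\delta_\ell^v+(\delta_\ell^v)^2$ in the definition of $B$ gives $B - S = \sum_{v\in\mathcal{A}_x}\alpha_v\sum_\ell m_v^{\ell-1}\bigl((\delta_\ell^v)^2-\delta_\ell^v\bigr)$, where $S$ denotes the linear expression which $B+C+D$ is supposed to equal. Substituting the limit identity gives $B+C+D=S$, and adding $A$ to both sides proves the first equality of the lemma. The second equality follows separately by summing Lemma \ref{l_g-gL-prop-2} over $\ell\ge 1$ and applying Lemma \ref{l_g-gL-prop-3}, directly equating the two linear expressions.

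The main obstacle will be the meticulous bookkeeping of the summation by parts: one must correctly align the shifts $\beta_{\ell+1}^v = m_v\beta_\ell^v$, verify that the $\ell=L$ boundary term supplied by \eqref{e_l_g-gL-prop-1-2} combines with the telescoped interior sum to produce exactly $\sum_{\ell=1}^L m_v^{\ell-1}\delta_\ell^v$ (not a shifted or incomplete version), and confirm that all level-$L$ residuals decay like $(m_\star/\lambda^2)^L$ so the identity survives in the limit. Once these calibrations are pinned down, the reduction $B+C+D=S$ is a mechanical expansion of $(1-\delta)^2=1-2\delta+\delta^2$.
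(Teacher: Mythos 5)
Your proposal is correct and follows essentially the same route as the paper's proof: both pair the resolvent relations of Lemma \ref{l_g-gL-prop-1} against $\mathfrak{g}$ to obtain a discrete Green identity that converts the quadratic terms of \eqref{e_Kxy-def} into the linear expression, and both obtain the second equality by summing Lemma \ref{l_g-gL-prop-2} over $\ell$ and invoking Lemma \ref{l_g-gL-prop-3}. The only immaterial difference is that you carry out the computation at finite $L$ and then pass to the limit, whereas the paper applies the $L\to\infty$ version of Lemma \ref{l_g-gL-prop-1}-(1) directly together with dominated convergence.
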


\begin{rem}
\label{r_Kxy-lambda-indep}At this point, one may think that $\mathfrak{K}_{xy}$
depends heavily on the selection of $\lambda\in(\sqrt{m_{\star}},\,1)$.
However, \emph{$\mathfrak{K}_{xy}$ does not depend on $\lambda$};
this will become clear after we prove Theorems \ref{t_Capacity} and
\ref{t_Capacity-specific} since the capacities are independent of
$\lambda$. Thus, $\lambda$ turns out to be just a dummy variable.
Nevertheless, the parameter $\lambda$ is essential in two aspects.
First, it is impossible to guarantee the existence of a solution to
the resolvent equation \eqref{e_resolvent} without the appearance
of $\lambda$. Second, without $\lambda$, it is difficult to guarantee
the well-definedness of the infinite summations that appear in Lemmas
\eqref{l_g-gL-prop-3} and \eqref{l_g-gL-prop-4} and in \eqref{e_Kxy-def}.
\end{rem}

\section{\label{sec5}Upper Bound of Capacity: Test Function}

In this section, we construct a test function $F=F_{\textup{test}}:\mathcal{H}_{N}\to\mathbb{R}$
approximating the equilibrium potential $h_{\mathcal{E}_{N}^{x},\,\mathcal{E}_{N}^{y}}$.
For technical reasons, we fix an \emph{even integer} parameter $N'=N'(N)$
depending only on $N$ such that for all $\epsilon>0$,
\begin{equation}
\lim_{N\to\infty}N'=\infty,\quad\lim_{N\to\infty}\frac{N'}{N}=0,\quad\lim_{N\to\infty}\frac{e^{\epsilon N'}}{N}=\infty,\quad\text{and}\quad\lim_{N\to\infty}d_{N}e^{\epsilon N'}=\infty.\label{e_N'-cond}
\end{equation}
For instance, we may take $N'(N)=2\lfloor(N\log\frac{1}{d_{N}})^{\frac{1}{2}}\rfloor$
where $\lfloor\alpha\rfloor$ is the greatest integer less than or
equal to $\alpha$. The subexponential decay of $d_{N}$ easily implies
that this choice is valid.

\subsection{\label{sec5.1}Test function}

We construct $F:\mathcal{H}_{N}\to\mathbb{R}$ in several steps. First,
we define
\[
\mathcal{M}_{N}:=\bigcup_{\{a,\,b\}\subseteq S_{0}}\mathcal{A}_{N}^{xaby}\quad\text{and}\quad\mathcal{R}_{N}:=\mathcal{H}_{N}\setminus\mathcal{M}_{N}.
\]
It will be proved later that \emph{main (M)} part of the Dirichlet
form takes place in $\mathcal{M}_{N}$, and negligible \emph{remainder
(R)} takes place between $\mathcal{M}_{N}$ and $\mathcal{R}_{N}$
and in $\mathcal{R}_{N}$. 

Recall the six types of $\{a,\,b\}\subseteq S_{0}$ introduced in
\eqref{e_ab-types}. For $\eta\in\mathcal{H}_{N}$ and $A\subseteq S$,
we write
\begin{equation}
\overline{\eta}_{x}:=\eta_{x}+\sum_{v\in\mathcal{N}_{x}}\eta_{v},\quad\overline{\eta}_{y}:=\eta_{y}+\sum_{w\in\mathcal{N}_{y}}\eta_{w},\quad\text{and}\quad\eta(A):=\sum_{v\in A}\eta_{v}.\label{e_eta-not-def}
\end{equation}

\begin{figure}
\begin{tikzpicture}
\fill[rounded corners,white] (-2.5,3.7) rectangle (2.5,-2.5); 
\fill[red!10!white] (0,3)--(-2,0)--(0,-1)--(0.4,-0.8)--(0.4,2.4)--cycle;
\fill[blue!10!white] (1.6,0.6)--(1.2,0)--(1.6,-0.2)--(2,0)--cycle;
\draw[dotted,red] (0.4,2.4)--(-1.2,0)--(0.4,-0.8);
\draw[red] (-2,0)--(-1.2,0);
\draw[blue] (1.2,0)--(2,0);
\draw (-1.2,0)--(1.2,0);
\draw[very thick,dotted,red] (0.4,2.4)--(0.4,-0.8);
\draw[very thick,red] (0,3)--(-2,0)--(0,-1)--(0,3)--(0.4,2.4); \draw[very thick,red] (0.4,-0.8)--(0,-1);
\draw[very thick,dotted,blue] (1.6,0.6)--(1.2,0)--(1.6,-0.2)--cycle;
\draw[very thick,blue] (1.6,0.6)--(2,0)--(1.6,-0.2);
\draw[thick] (0.4,2.4)--(1.6,0.6); \draw[thick] (0.4,-0.8)--(1.6,-0.2);
\draw (0,3.1) node[above]{$\xi^x$};
\draw (0.9,2.4) node[above]{\footnotesize $\xi_{N-N'}^{xy}$};
\draw (1.9,0.6) node[above]{\footnotesize $\xi_{N'}^{xy}$};
\draw (-2,-0.1) node[below]{$\xi^a$};
\draw (2,-0.1) node[below]{$\xi^y$};
\draw (0,-1.1) node[below]{$\xi^b$};
\draw (0,-1.9) node[below]{\textbf{(T1)}};
\end{tikzpicture}
\begin{tikzpicture}
\fill[rounded corners,white] (-2.5,3.7) rectangle (2.5,-2.5); 
\fill[red!10!white] (0,3)--(-0.4,2.4)--(0,2.2)--(0.4,2.4)--cycle;
\fill[blue!10!white] (-2,0)--(0,-1)--(2,0)--(1.6,0.6)--(-1.6,0.6)--cycle;
\draw[dotted,red] (-0.4,2.4)--(0.4,2.4);
\draw[blue] (-2,0)--(2,0);
\draw[very thick,dotted,red] (-0.4,2.4)--(0,2.2)--(0.4,2.4);
\draw[very thick,red] (-0.4,2.4)--(0,3)--(0.4,2.4); \draw[very thick,red] (0,3)--(0,2.2);
\draw[very thick,dotted,blue] (0,-0.2)--(-1.6,0.6)--(1.6,0.6)--cycle;
\draw[very thick,blue] (-1.6,0.6)--(-2,0)--(0,-1)--(0,-0.2); \draw[very thick,blue] (0,-1)--(2,0)--(1.6,0.6);
\draw[thick] (0.4,2.4)--(1.6,0.6); \draw[thick] (-0.4,2.4)--(-1.6,0.6); \draw[thick] (0,2.2)--(0,-0.2);
\draw (0,3.1) node[above]{$\xi^x$};
\draw (0.9,2.4) node[above]{\footnotesize $\xi_{N-N'}^{xy}$};
\draw (1.9,0.6) node[above]{\footnotesize $\xi_{N'}^{xy}$};
\draw (-2,-0.1) node[below]{$\xi^a$};
\draw (2,-0.1) node[below]{$\xi^y$};
\draw (0,-1.1) node[below]{$\xi^b$};
\draw (0,-1.9) node[below]{\textbf{(T2)}, \textbf{(T5)}, and \textbf{(T6)}};
\end{tikzpicture}
\begin{tikzpicture}
\fill[rounded corners,white] (-2.5,3.7) rectangle (2.5,-2.5); 
\fill[red!10!white] (0,3)--(-2,0)--(-1.6,-0.2)--(-1.2,0)--(0.4,2.4)--cycle;
\fill[blue!10!white] (2,0)--(1.6,0.6)--(0,-0.2)--(-0.4,-0.8)--(0,-1)--cycle;
\draw[red] (-2,0)--(-1.2,0);
\draw[blue] (1.2,0)--(2,0);
\draw[dotted,blue] (1.6,0.6)--(1.2,0)--(-0.4,-0.8);
\draw (-1.2,0)--(1.2,0);
\draw[very thick,dotted,red] (0,2.2)--(-1.6,-0.2)--(-1.2,0)--(0.4,2.4)--cycle;
\draw[very thick,red] (0,3)--(-2,0)--(-1.6,-0.2); \draw[very thick,red] (0,2.2)--(0,3)--(0.4,2.4);
\draw[very thick,dotted,blue] (-0.4,-0.8)--(0,-0.2)--(1.6,0.6);
\draw[very thick,blue] (0,-0.2)--(0,-1); \draw[very thick,blue] (-0.4,-0.8)--(0,-1)--(2,0)--(1.6,0.6);
\draw[thick] (0.4,2.4)--(1.6,0.6); \draw[thick] (-0.4,-0.8)--(-1.6,-0.2); \draw[thick] (0,2.2)--(0,-0.2);
\draw (0,3.1) node[above]{$\xi^x$};
\draw (0.9,2.4) node[above]{\footnotesize $\xi_{N-N'}^{xy}$};
\draw (1.9,0.6) node[above]{\footnotesize $\xi_{N'}^{xy}$};
\draw (-2,-0.1) node[below]{$\xi^a$};
\draw (2,-0.1) node[below]{$\xi^y$};
\draw (0,-1.1) node[below]{$\xi^b$};
\draw (0,-1.9) node[below]{\textbf{(T3)} and \textbf{(T4)}};
\end{tikzpicture}\caption{\label{fig5.1}Illustrations of the sets $\mathcal{A}_{N}^{xaby}$
according to the types \textbf{(T1)}--\textbf{(T6)} of $\{a,\,b\}\subseteq S_{0}$.
Red and blue regions (excluding the inner face surrounded by dotted
lines) denote the subset $\mathcal{O}_{N}^{xaby}$. On the red (resp.
blue) region, the test function $F=F_{\textup{test}}$ is defined
as $1$ (resp. $0$).}
\end{figure}
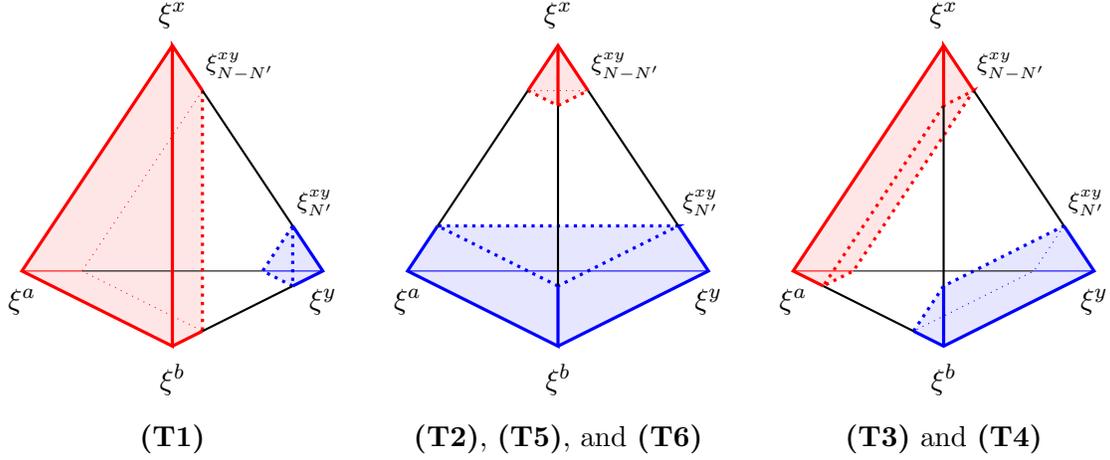

\begin{defn}[\textbf{(T1)} $a,\,b\in\mathcal{N}_{x}$]
\label{d_tf-T1}For $\eta\in\mathcal{A}_{N}^{xaby}$, we have $\overline{\eta}_{x}=\eta_{x}+\eta_{a}+\eta_{b}$
and $\overline{\eta}_{y}=\eta_{y}$.
\begin{enumerate}
\item We collect
\begin{equation}
\begin{aligned}\mathcal{O}_{N}^{xaby} & :=\{\eta\in\mathcal{A}_{N}^{xaby}:\overline{\eta}_{x}\ge N-N'+1\text{ or }\overline{\eta}_{x}\le N'-1\},\\
\mathcal{I}_{N}^{xaby} & :=\{\eta\in\mathcal{A}_{N}^{xaby}:\overline{\eta}_{x}\in\llbracket N',\,N-N'\rrbracket\},
\end{aligned}
\label{e_ON-IN-def}
\end{equation}
and define 
\begin{equation}
F(\eta):=1\text{ if }\overline{\eta}_{x}\ge N-N'+1\quad\text{and}\quad F(\eta):=0\text{ if }\overline{\eta}_{x}\le N'-1.\label{e_F-ON-def}
\end{equation}
\item For each $\eta\in\mathcal{I}_{N}^{xaby}$, we define
\begin{equation}
F(\eta):=\frac{\sum_{t=N'+1}^{\overline{\eta}_{x}}(N-t)t}{\sum_{t=N'+1}^{N-N'}(N-t)t}.\label{e_tf-T1}
\end{equation}
\end{enumerate}
\end{defn}

Also in the following definitions (Definitions \ref{d_tf-T2-T3}--\ref{d_tf-T6}),
we define subsets $\mathcal{O}_{N}^{xaby}$ and $\mathcal{I}_{N}^{xaby}$
as in \eqref{e_ON-IN-def} and define $F$ on $\mathcal{O}_{N}^{xaby}$
as in \eqref{e_F-ON-def}. Refer to Figure \ref{fig5.1} for illustrations
of the subsets $\mathcal{O}_{N}^{xaby}$ and $\mathcal{I}_{N}^{xaby}$,
which differ according to the types \textbf{(T1)}--\textbf{(T6)}.
Next, we define $F(\eta)$ for $\eta\in\mathcal{I}_{N}^{xaby}$ for
the remaining types.
\begin{defn}[\textbf{(T2)}/\textbf{(T3)} $a,\,b\in\mathcal{N}_{y}$ or $a\in\mathcal{N}_{x}$,
$b\in\mathcal{N}_{y}$]
\label{d_tf-T2-T3}Define $F$ on $\mathcal{I}_{N}^{xaby}$ as in
\eqref{e_tf-T1}.
\end{defn}

\begin{defn}[\textbf{(T4)} $a\in\mathcal{N}_{x}$, $b\notin\mathcal{N}_{x}\cup\mathcal{N}_{y}$]
\label{d_tf-T4}Since $b\notin\mathcal{N}_{x}\cup\mathcal{N}_{y}$,
by \eqref{e_V'-V-dec}, $b\in\mathcal{V}_{j}'$ for some $j\in\llbracket1,\,s\rrbracket$.
Then, recalling Definition \ref{d_g-ghat-def}, define for each $\xi_{n,\,k,\,\ell}^{xaby}\in\mathcal{I}_{N}^{xaby}$
as
\begin{equation}
F(\xi_{n,\,k,\,\ell}^{xaby}):=\frac{\sum_{t=N'+1}^{n+k}(N-t)t+(N-n-k)(n+k)\cdot\widehat{\mathfrak{g}}_{\ell}(\sigma^{b})}{\sum_{t=N'+1}^{N-N'}(N-t)t},\label{e_tf-T4}
\end{equation}
where $\sigma^{b}$ is the element of $U_{\ell}$ such that $\sigma^{b}(b)=\ell$;
recall Definition \ref{d_trans-ver}.
\end{defn}

\begin{defn}[\textbf{(T5)} $a\notin\mathcal{N}_{x}\cup\mathcal{N}_{y}$, $b\in\mathcal{N}_{y}$]
\label{d_tf-T5}Since $a\notin\mathcal{N}_{x}\cup\mathcal{N}_{y}$,
by \eqref{e_V'-V-dec}, $a\in\mathcal{V}_{j}'$ for some $j\in\llbracket1,\,s\rrbracket$.
Define for each $\xi_{n,\,k,\,\ell}^{xaby}\in\mathcal{I}_{N}^{xaby}$
as
\begin{equation}
F(\xi_{n,\,k,\,\ell}^{xaby}):=\frac{\sum_{t=N'+1}^{n}(N-t)t+(N-n)n\cdot\widehat{\mathfrak{g}}_{k}(\sigma^{a})}{\sum_{t=N'+1}^{N-N'}(N-t)t}.\label{e_tf-T5}
\end{equation}
\end{defn}

\begin{defn}[\textbf{(T6)} $a,\,b\notin\mathcal{N}_{x}\cup\mathcal{N}_{y}$]
\label{d_tf-T6}Fix $\xi_{n,\,k,\,\ell-k}^{xaby}\in\mathcal{I}_{N}^{xaby}$.
If $a\nsim b$, then we define
\begin{equation}
F(\xi_{n,\,k,\,\ell-k}^{xaby}):=\frac{\sum_{t=N'+1}^{n}(N-t)t}{\sum_{t=N'+1}^{N-N'}(N-t)t}.\label{e_tf-T6-1}
\end{equation}
If $a\sim b$, then by \eqref{e_V'-V-dec}, $a,\,b\in\mathcal{V}_{j}'$
for some $j\in\llbracket1,\,s\rrbracket$. In this case, we define
\begin{equation}
F(\xi_{n,\,k,\,\ell-k}^{xaby}):=\frac{\sum_{t=N'+1}^{n}(N-t)t+(N-n)n\cdot\widehat{\mathfrak{g}}_{\ell}(\sigma_{k}^{ab})}{\sum_{t=N'+1}^{N-N'}(N-t)t},\label{e_tf-T6-2}
\end{equation}
where $\sigma_{k}^{ab}$ is the element of $U_{\ell}$ such that $\sigma_{k}^{ab}(a)=k$
and $\sigma_{k}^{ab}(b)=\ell-k$.
\end{defn}

One can verify that $F$ is well defined on $\mathcal{M}_{N}$ by
checking on the intersections $\mathcal{A}_{N}^{xay}$ for $a\in S_{0}$.
Finally, we define $F$ on the remainder set $\mathcal{R}_{N}$.
\begin{defn}
\label{d_tf-RN}Fix $\eta\in\mathcal{R}_{N}$. If $\overline{\eta}_{x}\ge N-N'+1$
or $\overline{\eta}_{x}\le N'-1$, then we define as in \eqref{e_F-ON-def}.
Otherwise if $\overline{\eta}_{x}\in\llbracket N',\,N-N'\rrbracket$,
define $\eta^{*}\in\mathcal{H}_{N}$ as
\begin{equation}
\eta_{v}^{*}:=\begin{cases}
\overline{\eta}_{x} & \text{if }v=x,\\
\overline{\eta}_{y} & \text{if }v=y,\\
\eta_{v} & \text{if }v\in S_{0}\setminus(\mathcal{N}_{x}\cup\mathcal{N}_{y}).
\end{cases}\label{e_eta-star-def}
\end{equation}
If $\eta^{*}\in\mathcal{M}_{N}$, then define
\begin{equation}
F(\eta):=F(\eta^{*}).\label{e_tf-RN-1}
\end{equation}
If $\eta^{*}\notin\mathcal{M}_{N}$ (i.e., if $\eta^{*}\in\mathcal{R}_{N}$),
define $F(\eta)$ as in \eqref{e_tf-T1}.
\end{defn}

In the remainder of this section, we prove the following proposition.
Recall the definition of $\mathfrak{K}_{xy}$ from \eqref{e_Kxy-def}.
\begin{prop}
\label{p_Diri-form}We have
\[
\limsup_{N\to\infty}\frac{N^{2}}{d_{N}^{3}}\cdot\mathscr{D}_{N}(F)\le\frac{1}{2\mathfrak{K}_{xy}}.
\]
\end{prop}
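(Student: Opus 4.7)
The plan is to bound the Dirichlet form $\mathscr{D}_N(F)$ by decomposing the state space into regions where $F$ is either constant or explicitly computable, and then extracting the main asymptotic contribution from \eqref{e_Kxy-def}. The decomposition proceeds in two stages: first split $\mathcal{H}_N = \mathcal{M}_N \sqcup \mathcal{R}_N$, and then inside $\mathcal{M}_N$ organize the sum by tetrahedra $\mathcal{A}_N^{xaby}$, each further split into the outer region $\mathcal{O}_N^{xaby}$ (where $F \in \{0,1\}$) and the interior $\mathcal{I}_N^{xaby}$ (where $F$ is defined via the scale function $t \mapsto (N-t)t$ and the harmonic extension $\widehat{\mathfrak{g}}_\ell$). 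The goal is to show that only internal edges of $\bigcup_{\{a,b\}\subseteq S_0} \mathcal{I}_N^{xaby}$ contribute in the limit, producing $\frac{d_N^3}{N^2} \cdot \frac{1}{2\mathfrak{K}_{xy}} + o(d_N^3/N^2)$.

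The first step is to discard the negligible contributions. For any $\eta$ with $\eta(S_0) \ge N'$, the product formula \eqref{e_muN-prod} combined with \eqref{e_wN-ZN} gives
\[
\mu_N(\eta) \le C \frac{m_\star^{N'}}{Z_N} \le C' \frac{N m_\star^{N'}}{d_N},
\]
and the choice of $N'$ in \eqref{e_N'-cond} together with $m_\star < 1$ makes this bound exponentially small in $N'$, dominating all polynomial prefactors in $N$ or $1/d_N$. Inside $\mathcal{O}_N^{xaby}$ the function $F$ is constant on each connected piece, so only boundary edges at $\overline{\eta}_x \in \{N', N-N'+1\}$ carry a nonzero increment, and these lie precisely in the just-discarded small-mass region. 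Edges joining $\mathcal{R}_N$ to $\mathcal{M}_N$ are handled via the identification $F(\eta) = F(\eta^*)$ from \eqref{e_tf-RN-1}, which exploits the fact that moving a single particle between $v \in \mathcal{N}_x$ and $x$ itself does not change $\overline{\eta}_x$; the residual nonzero increments again live on a set of small $\mu_N$-mass.

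For the main contribution, I would enumerate the six types \textbf{(T1)}--\textbf{(T6)} of $\{a,b\} \subseteq S_0$ and compute each separately. Horizontal particle jumps (those changing $\overline{\eta}_x$) produce differences of $F$ proportional to $(N-t)t$, whose squared sum against $\mu_N r_N$ is of order $d_N^3 / N^3$ per edge; combined with the asymptotic $1/(NZ_N) \simeq 1/(|S_\star| d_N)$ from \eqref{e_wN-ZN} this yields the correct $d_N^3/N^2$ scale. Vertical jumps within a layer $U_\ell$ produce differences of $\widehat{\mathfrak{g}}_\ell$, and by the harmonicity of $\widehat{\mathfrak{g}}_\ell$ together with the identities of Lemma \ref{l_g-gL-prop-1} these can be rewritten as boundary contributions indexed by $\mathcal{A}_{x,j}$ and $\mathcal{A}_{y,j}$. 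Lemma \ref{l_g-gL-prop-2} matches the $x$-side and $y$-side telescoping, Lemma \ref{l_g-gL-prop-3} lets us pass from the truncated $\mathfrak{g}^L$ to $\mathfrak{g}$, and Lemma \ref{l_g-gL-prop-4} packages the result into the compact form of \eqref{e_Kxy-def}. Combined with the factor $\tfrac{1}{2}$ from the Dirichlet form definition, this produces the bound $\tfrac{1}{2\mathfrak{K}_{xy}}$.

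The main obstacle will be the bookkeeping on shared faces of neighboring tetrahedra: a two-dimensional face $\mathcal{A}_N^{xay}$ (with $a \in S_0$) lies in every $\mathcal{A}_N^{xaby}$ for varying $b$, and likewise one-dimensional edges lie in many simplexes at once, so the sum over pairs $\{a,b\}$ must be organized to count each edge of $\mathcal{H}_N$ exactly once. A related technical point is that the harmonic-extension contributions from types \textbf{(T4)}--\textbf{(T6)} must combine with the horizontal contributions from types \textbf{(T1)}--\textbf{(T3)} so as to collapse into the single expression \eqref{e_Kxy-def}; this is exactly where the resolvent equation \eqref{e_resolvent} plays its role and why the ladder graph $\mathscr{G}_j$ was built with the specific horizontal and vertical rates of Definitions \ref{d_trans-hor} and \ref{d_trans-ver}. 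Once these matchings are set up, the identification of each term of \eqref{e_Kxy-def} with a specific group of jumps is essentially mechanical.
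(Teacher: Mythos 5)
Your overall skeleton coincides with the paper's: split $\mathcal{H}_N$ into $\mathcal{M}_N\sqcup\mathcal{R}_N$, treat each tetrahedron $\mathcal{A}_N^{xaby}$ by its type \textbf{(T1)}--\textbf{(T6)}, discard the cross-terms between different simplexes and the $\mathcal{R}_N$-contributions, and assemble the six main terms into $\tfrac{1}{2\mathfrak{K}_{xy}}$. Two specific steps, however, are justified incorrectly. First, the claim that the $\mathcal{O}_N^{xaby}$--$\mathcal{I}_N^{xaby}$ boundary edges ``lie precisely in the just-discarded small-mass region'' is false for types \textbf{(T4)}--\textbf{(T6)}: at the crossing $\overline{\eta}_x=N-N'$ one can have $\eta(S_0)=O(1)$ (e.g.\ $\eta_y=N'-1$, $\eta_b=1$ in type \textbf{(T4)}), so $\mu_N$ there is only polynomially small in $d_N$, not exponentially small in $N'$. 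Moreover the $F$-increment across such an edge is genuinely nonzero, of order $N'N^{-2}\widehat{\mathfrak{g}}_\ell(\sigma^b)$ by \eqref{e_tf-T4} and \eqref{e_F-ON-def}; the negligibility requires the explicit polynomial estimate carried out in Lemma \ref{l_T4}, which yields $O(N'd_N^3/N^4)=o(d_N^3/N^2)$ via \eqref{e_N'-cond}, not a crude measure bound. Your argument as stated would not close this case.

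Second, the final assembly does not run through the resolvent equation or Lemmas \ref{l_g-gL-prop-1}--\ref{l_g-gL-prop-4}. The test function is built from the \emph{untruncated} $\mathfrak{g}$ and $\widehat{\mathfrak{g}}_\ell$, so Lemma \ref{l_g-gL-prop-3} (passage from $\mathfrak{g}^L$ to $\mathfrak{g}$) is irrelevant here, and Lemmas \ref{l_g-gL-prop-1}, \ref{l_g-gL-prop-2}, \ref{l_g-gL-prop-4} concern \emph{linear} telescoping identities needed for the divergence-free property of the test flow in Section \ref{sec6}, whereas the Dirichlet form produces \emph{quadratic} sums. The only nontrivial identity needed in the combination step is the harmonic-extension duality \eqref{e_dual-Diri}, which converts the type-\textbf{(T6)} triple sum over $U_\ell$ into the trace-rate sum $\sum\widehat{\mathfrak{r}}^{\ell}_{\sigma^a\sigma^b}\{\mathfrak{g}(\mathfrak{b}_\ell)-\mathfrak{g}(\mathfrak{a}_\ell)\}^2$; after that the total is literally $3$ times the right-hand side of \eqref{e_Kxy-def}, i.e.\ $\tfrac{1}{2\mathfrak{K}_{xy}}$, because \eqref{e_Kxy-def} was defined as exactly this sum of squares. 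No further collapsing is required, and attempting it with the linear identities you cite would not go through.
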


\subsection{\label{sec5.2}Main part}

First, we record the following properties of the test function $F$.
For every $\eta\in\mathcal{H}_{N}$, define $\eta^{*}\in\mathcal{H}_{N}$
as in \eqref{e_eta-star-def}.
\begin{lem}[Properties of $F$]
\label{l_F-prop}Recall the notation given in \eqref{e_eta-not-def}.
\begin{enumerate}
\item For $\eta,\,\zeta\in\mathcal{H}_{N}$, if $\eta^{*}=\zeta^{*}$ then
$F(\eta)=F(\zeta)$. In other words, $F(\eta)$ depends on $\eta$
only through $\overline{\eta}_{x}$, $\overline{\eta}_{y}$, and $\eta_{a}$
for $a\in S_{0}\setminus(\mathcal{N}_{x}\cup\mathcal{N}_{y})$.
\item If $\eta\in\mathcal{H}_{N}$ satisfies $\overline{\eta}_{x}\in\llbracket N',\,N-N'\rrbracket$,
then
\[
F(\eta)=\frac{\sum_{t=N'+1}^{\overline{\eta}_{x}}(N-t)t+(N-\overline{\eta}_{x})\overline{\eta}_{x}\cdot G(\eta)}{\sum_{t=N'+1}^{N-N'}(N-t)t},\quad\text{where}\quad0\le G(\eta)\le\frac{C}{\lambda^{\eta(S_{0})}}.
\]
\item For every $\eta\in\mathcal{H}_{N}$, it holds that
\[
0\le F(\eta)\le\frac{C}{\lambda^{\eta(S_{0})}}.
\]
\end{enumerate}
\end{lem}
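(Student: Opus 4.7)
The proof of this lemma is a case-by-case verification using the construction of $F$ in Definitions \ref{d_tf-T1}--\ref{d_tf-RN}. The organizing principle is that each branch of the definition was designed so that $F$ stays constant on the blue ``big-scale'' directions of Figure \ref{fig4.2}---which correspond exactly to particle transfers within $\{x\} \cup \mathcal{N}_x$ or within $\{y\} \cup \mathcal{N}_y$---so in essence $F$ factors through the collapsing map $\eta \mapsto \eta^*$ from \eqref{e_eta-star-def}, up to a harmonic-extension correction controlled by \eqref{e_ghat-ghatL-UB}. With this in mind, all three parts should follow by inspecting the formulas type by type and invoking the uniform bound on $\widehat{\mathfrak{g}}_\ell$.

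For part (1), I plan to enumerate the six types (T1)--(T6) of $\{a,b\} \subseteq S_0$ from \eqref{e_ab-types} and rewrite each formula in terms of $\overline{\eta}_x$ and the coordinates $\eta_a$ for $a \in S_0 \setminus (\mathcal{N}_x \cup \mathcal{N}_y)$. For instance, in type (T4) with $a \in \mathcal{N}_x$ and $b \notin \mathcal{N}_x \cup \mathcal{N}_y$, one has $n + k = \overline{\eta}_x$ and $\ell = \eta_b$ in \eqref{e_tf-T4}, and analogous identifications cover the other cases. The remaining step is consistency on the intersections $\mathcal{A}_N^{xay}$: when the relevant auxiliary coordinate drops to zero, the convention $\widehat{\mathfrak{g}}_0(\boldsymbol{0}) = 0$ from Definition \ref{d_g-ghat-def} kills the extra term in \eqref{e_tf-T4}, \eqref{e_tf-T5}, and \eqref{e_tf-T6-2}, reducing each back to the purely polynomial formula \eqref{e_tf-T1}. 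For $\eta \in \mathcal{R}_N$, the dependence on $\eta^*$ alone is immediate from Definition \ref{d_tf-RN}.

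Part (2) is then just reading off $G(\eta)$ from the common factored form. In types (T1)--(T3) and (T6 with $a \nsim b$) there is no auxiliary term, so $G(\eta) = 0$; in types (T4), (T5), and (T6 with $a \sim b$), $G(\eta) = \widehat{\mathfrak{g}}_\ell(\sigma)$ for a suitable $\sigma \in U_\ell$, where $\ell$ equals $\eta_b$, $\eta_a$, or $\eta_a + \eta_b$ respectively---always bounded by $\eta(S_0)$. The inequality $0 \le \widehat{\mathfrak{g}}_\ell(\sigma) \le C/\lambda^\ell$ from \eqref{e_ghat-ghatL-UB}, together with $\lambda < 1$, yields the required bound. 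For $\eta \in \mathcal{R}_N$ with $\eta^* \in \mathcal{M}_N$, I apply the just-proved bound to $\eta^*$ and use $\eta^*(S_0) \le \eta(S_0)$ (again with $\lambda < 1$) to transfer it; otherwise $G(\eta) = 0$.

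Part (3) is a short consequence. The two regimes $\overline{\eta}_x \ge N - N' + 1$ and $\overline{\eta}_x \le N' - 1$ give $F = 1$ and $F = 0$ directly from \eqref{e_F-ON-def}. In the middle regime, part (2) implies
\[F(\eta) \le 1 + \frac{(N - \overline{\eta}_x)\overline{\eta}_x}{\sum_{t=N'+1}^{N-N'}(N-t)t} \cdot G(\eta),\]
and the fraction is $O(1/N)$ since its numerator is at most $N^2/4$ while the denominator is of order $N^3$; combining with the bound on $G(\eta)$ produces $F(\eta) \le C/\lambda^{\eta(S_0)}$, while non-negativity is evident from the formulas. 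The main obstacle throughout is simply the bookkeeping across the six cases and their intersections; once the organizing principle above is made precise, every bound reduces to an immediate application of \eqref{e_ghat-ghatL-UB}.
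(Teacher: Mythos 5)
Your proposal is correct and follows essentially the same route as the paper: a type-by-type inspection of Definitions \ref{d_tf-T1}--\ref{d_tf-RN} showing that $F$ factors through $\eta\mapsto\eta^{*}$, reading off $G(\eta)$ as the relevant $\widehat{\mathfrak{g}}_{\ell}$-value (or $0$), and bounding it via \eqref{e_ghat-ghatL-UB} together with the observation that the level $\ell$ is at most $N-\overline{\eta}_{x}-\overline{\eta}_{y}\le\eta(S_{0})$. Your treatment is somewhat more explicit than the paper's (which compresses all of this into a few lines), but the content is identical.
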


\begin{proof}
(1) If $\eta^{*}\in\mathcal{M}_{N}$, then the explicit formulas given
in Definitions \ref{d_tf-T1}--\ref{d_tf-T6}, along with \eqref{e_tf-RN-1},
imply the result. On the contrary, if $\eta^{*}\in\mathcal{R}_{N}$,
then we automatically have $\eta,\,\zeta\in\mathcal{R}_{N}$, such
that Definitions \ref{d_tf-RN} deduces the result.

\noindent (2) By \eqref{e_tf-T1}--\eqref{e_tf-T6-2}, we may write
\[
F(\eta)=\frac{\sum_{t=N'+1}^{\overline{\eta}_{x}}(N-t)t+(N-\overline{\eta}_{x})\overline{\eta}_{x}\cdot G(\eta)}{\sum_{t=N'+1}^{N-N'}(N-t)t}\quad\text{with}\quad0\le G(\eta)\le\frac{C}{\lambda^{\eta(S_{0})}},
\]
where the second estimate holds by \eqref{e_ghat-ghatL-UB} and the
fact that $N-\overline{\eta}_{x}-\overline{\eta}_{y}\le\eta(S_{0})$.

\noindent (3) This follows directly from part (2) and \eqref{e_F-ON-def}.
\end{proof}
In particular, Lemma \ref{l_F-prop}-(2) implies the following lemma.
\begin{lem}[Difference of $F$]
\label{l_F-diff}For $\eta,\,\zeta\in\mathcal{H}_{N}$ satisfying
$\eta\sim\zeta$ (i.e., $r_{N}(\eta,\,\zeta)>0$),
\[
|F(\zeta)-F(\eta)|\le\frac{C}{N\lambda^{\max\{\eta(S_{0}),\,\zeta(S_{0})\}}}.
\]
\end{lem}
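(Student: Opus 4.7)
The plan is to reduce, using Lemma~\ref{l_F-prop}-(1), to estimating the difference of $F$ at the starred configurations $\eta^{*},\zeta^{*}$ defined in \eqref{e_eta-star-def}. Since $\eta^{*}(S_{0})\le\eta(S_{0})$ (and analogously for $\zeta^{*}$) and $\lambda<1$, the bound at $\eta^{*},\zeta^{*}$ with exponent $\max\{\eta^{*}(S_{0}),\zeta^{*}(S_{0})\}$ automatically implies the stated bound on $|F(\zeta)-F(\eta)|$.

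The second preliminary step is to classify the single jump $\zeta=\eta^{v,w}$ according to which of the three regions $\{x\}\cup\mathcal{N}_{x}$, $\{y\}\cup\mathcal{N}_{y}$, and $S_{0}\setminus(\mathcal{N}_{x}\cup\mathcal{N}_{y})$ contains $v$ and $w$. Inspection of \eqref{e_eta-star-def} shows that $\eta^{*}=\zeta^{*}$ whenever both $v,w$ lie in a common one of the first two regions (a purely intra-cluster move), in which case there is nothing to prove. In every other case, $\zeta^{*}$ is obtained from $\eta^{*}$ by a single particle jump across these three ``super-sites'', so in particular $|\overline{\eta^{*}}_{x}-\overline{\zeta^{*}}_{x}|\le 1$.

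For the nontrivial case, set $q:=\overline{\eta^{*}}_{x}$ and $q':=\overline{\zeta^{*}}_{x}$. If both $q,q'$ lie outside $\llbracket N',N-N'\rrbracket$, then $|q-q'|\le 1$ together with $N'=o(N)$ forces them to lie on the same side, so $F(\eta^{*})=F(\zeta^{*})\in\{0,1\}$ by \eqref{e_F-ON-def}. Otherwise the uniform representation of Lemma~\ref{l_F-prop}-(2) applies, where at the four boundary transitions $q\in\{N'-1,N',N-N',N-N'+1\}$ the $\mathcal{O}_{N}$-value $0$ or $1$ coincides with the empty- or full-sum instance of the formula. In this common form, $F(\zeta^{*})-F(\eta^{*})$ has denominator
\[
D_{N}:=\sum_{t=N'+1}^{N-N'}(N-t)t\;\asymp\;N^{3}
\]
and a numerator that splits into a polynomial contribution $\sum_{t=N'+1}^{q'}(N-t)t-\sum_{t=N'+1}^{q}(N-t)t$, of absolute value at most $N^{2}/4$ when $q\ne q'$ and vanishing otherwise, and a $G$-contribution $(N-q')q'\,G(\zeta^{*})-(N-q)q\,G(\eta^{*})$. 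Using $(N-q)q\le N^{2}/4$ together with the bound $|G(\cdot)|\le C\lambda^{-\cdot(S_{0})}$ from Lemma~\ref{l_F-prop}-(2), the $G$-contribution is at most $C N^{2}\lambda^{-\max\{\eta^{*}(S_{0}),\zeta^{*}(S_{0})\}}$, and the polynomial part is absorbed by this (since $\lambda<1$). Dividing by $D_{N}\asymp N^{3}$ yields $|F(\zeta^{*})-F(\eta^{*})|\le C/(N\lambda^{\max\{\eta^{*}(S_{0}),\zeta^{*}(S_{0})\}})$, which is the claimed estimate. No step is genuinely delicate; the only care required is in the boundary bookkeeping that matches the piecewise definitions of $F$ at the four transition values of $q$.
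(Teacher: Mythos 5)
Your proposal is correct and follows essentially the same route as the paper: both reduce to the uniform representation of Lemma \ref{l_F-prop}-(2), bound the polynomial increment by $O(N^{2})$ and the $G$-contribution by $CN^{2}\lambda^{-\max\{\eta(S_{0}),\zeta(S_{0})\}}$, and divide by $\sum_{t=N'+1}^{N-N'}(N-t)t\simeq N^{3}/6$. The preliminary reduction to $\eta^{*},\zeta^{*}$ and the explicit boundary bookkeeping are harmless elaborations of steps the paper treats implicitly via \eqref{e_F-ON-def}.
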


\begin{proof}
By Lemma \ref{l_F-prop}-(2) and \eqref{e_F-ON-def}, it readily holds
that $|F(\zeta)-F(\eta)|$ is bounded by
\[
\frac{\max\{(N-\overline{\eta}_{x})\overline{\eta}_{x},\,(N-\overline{\zeta}_{x})\overline{\zeta}_{x}\}+\max\{(N-\overline{\eta}_{x})\overline{\eta}_{x}\cdot G(\eta),\,(N-\overline{\zeta}_{x})\overline{\zeta}_{x}\cdot G(\zeta)\}}{\sum_{t=N'+1}^{N-N'}(N-t)t},
\]
where
\[
0\le G(\eta)\le\frac{C}{\lambda^{\eta(S_{0})}}\quad\text{and}\quad0\le G(\zeta)\le\frac{C}{\lambda^{\zeta(S_{0})}}.
\]
Note that by \eqref{e_N'-cond},
\begin{equation}
\sum_{t=N'+1}^{N-N'}(N-t)t=\frac{(N-1)N(N+1)}{6}-O(NN'^{2})\simeq\frac{N^{3}}{6}.\label{e_sum-N-cubed}
\end{equation}
Combining these with $(N-\overline{\eta}_{x})\overline{\eta}_{x}\le N^{2}$
and $(N-\overline{\zeta}_{x})\overline{\zeta}_{x}\le N^{2}$, we deduce
the result.
\end{proof}
Now, Proposition \ref{p_Diri-form} will be proved according to a
series of lemmas. For disjoint subsets $\mathcal{A}$ and $\mathcal{B}$
of $\mathcal{H}_{N}$, we write
\begin{equation}
\begin{aligned}\mathscr{D}_{\mathcal{A}}(F) & :=\sum_{\{\eta,\,\zeta\}\subseteq\mathcal{A}}\mu_{N}(\eta)r_{N}(\eta,\,\zeta)\{F(\zeta)-F(\eta)\}^{2},\\
\mathscr{D}_{\mathcal{A}\mathcal{B}}(F) & :=\sum_{\eta\in\mathcal{A}}\sum_{\zeta\in\mathcal{B}}\mu_{N}(\eta)r_{N}(\eta,\,\zeta)\{F(\zeta)-F(\eta)\}^{2}.
\end{aligned}
\label{e_DAf-DABf-def}
\end{equation}
Then, we may represent
\[
\mathscr{D}_{N}(F)=\mathscr{D}_{\mathcal{M}_{N}}(F)+\mathscr{D}_{\mathcal{M}_{N}\mathcal{R}_{N}}(F)+\mathscr{D}_{\mathcal{R}_{N}}(F).
\]
In this subsection, we pursue to estimate $\mathscr{D}_{\mathcal{A}_{N}^{xaby}}(F)$
for each type of $\{a,\,b\}\subseteq S_{0}$ from \textbf{(T1)} to
\textbf{(T6)}.
\begin{lem}[Dirichlet form estimate of \textbf{(T1)}/\textbf{(T2)}]
\label{l_T1-T2}If $a,\,b\in\mathcal{N}_{x}$ or $a,\,b\in\mathcal{N}_{y}$,
it holds that $\mathscr{D}_{\mathcal{A}_{N}^{xaby}}(F)=0$.
\end{lem}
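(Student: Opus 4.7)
\medskip

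My plan is to exploit the fact that, in both cases (T1) and (T2), the test function $F$ on $\mathcal{A}_N^{xaby}$ is constant along every allowed transition. The argument rests on two observations: the graph-theoretic structure of the sites $\{x,a,b,y\}$, and the conservation law satisfied by $F$ expressed in Lemma~\ref{l_F-prop}-(1).

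First I would enumerate the pairs of sites in $\{x,a,b,y\}$ that can support a particle jump (i.e., are $\sim$-connected in $\mathcal{G}$). By Definition~\ref{d_G-def}-(3), since $|S_\star|=\kappa_3=2$ forces the graph distance between $x$ and $y$ to be at least $3$, we have $\mathcal{N}_x\cap\mathcal{N}_y=\emptyset$, no edge between $x$ and $y$, and no edges from $\mathcal{N}_x$ to $y$ or from $x$ to $\mathcal{N}_y$. Consequently, in type (T1) with $a,b\in\mathcal{N}_x$, the only possible $\sim$-edges inside $\{x,a,b,y\}$ are $x\sim a$, $x\sim b$, and possibly $a\sim b$; in particular every jump between configurations in $\mathcal{A}_N^{xaby}$ transfers a particle within $\{x,a,b\}\subseteq\{x\}\cup\mathcal{N}_x$. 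The symmetric statement for (T2) is that every such jump transfers a particle within $\{y,a,b\}\subseteq\{y\}\cup\mathcal{N}_y$.

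Next I would apply Lemma~\ref{l_F-prop}-(1): $F(\eta)$ depends on $\eta$ only through $\overline{\eta}_x$, $\overline{\eta}_y$, and $\eta_c$ for $c\in S_0\setminus(\mathcal{N}_x\cup\mathcal{N}_y)$. For $\eta\in\mathcal{A}_N^{xaby}$ with $a,b\in\mathcal{N}_x$, the third family vanishes identically, so $F(\eta)$ is a function of $\overline{\eta}_x=\eta_x+\eta_a+\eta_b$ alone (with $\overline{\eta}_y=N-\overline{\eta}_x$). Any transition of the form $x\leftrightarrow a$, $x\leftrightarrow b$, or $a\leftrightarrow b$ leaves $\overline{\eta}_x$ invariant, so $F(\eta)=F(\zeta)$ for every pair $\{\eta,\zeta\}\subseteq\mathcal{A}_N^{xaby}$ with $r_N(\eta,\zeta)>0$. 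Plugging this into \eqref{e_DAf-DABf-def} gives $\mathscr{D}_{\mathcal{A}_N^{xaby}}(F)=0$. The (T2) case is identical after swapping the roles of $x$ and $y$.

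There is no real obstacle here: the statement is designed to capture the fact that the dimension-reduction heuristic from Section~\ref{sec4.1} collapses these tetrahedra entirely (cf. Figure~\ref{fig4.2}-left). The only thing to be careful about is confirming that jumps from $\mathcal{A}_N^{xaby}$ to its exterior are \emph{not} counted in $\mathscr{D}_{\mathcal{A}_N^{xaby}}(F)$, which is immediate from the definition in \eqref{e_DAf-DABf-def}, since the sum there ranges only over pairs $\{\eta,\zeta\}\subseteq\mathcal{A}_N^{xaby}$.
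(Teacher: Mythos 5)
Your proof is correct and follows essentially the same route as the paper: both invoke Lemma \ref{l_F-prop}-(1) together with the observation that, for $a,\,b\in\mathcal{N}_{x}$ (resp. $\mathcal{N}_{y}$), every transition inside $\mathcal{A}_{N}^{xaby}$ moves a particle within $\{x\}\cup\mathcal{N}_{x}$ (resp. $\mathcal{N}_{y}\cup\{y\}$) and hence preserves $\overline{\eta}_{x}$, so $F$ is constant along all such edges. Your version merely spells out the graph-distance argument ruling out edges to $y$, which the paper leaves implicit.
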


\begin{proof}
By Lemma \ref{l_F-prop}-(1), particle movements in $\{x\}\cup\mathcal{N}_{x}$
or in $\mathcal{N}_{y}\cup\{y\}$ do not change the value of $F$.
This implies that $\mathscr{D}_{\mathcal{A}_{N}^{xaby}}(F)=0$.
\end{proof}
\begin{lem}[Dirichlet form estimate of \textbf{(T3)}]
\label{l_T3}If $a\in\mathcal{N}_{x}$ and $b\in\mathcal{N}_{y}$,
it holds that
\[
\mathscr{D}_{\mathcal{A}_{N}^{xaby}}(F)=(1+o(1))\cdot\frac{3c_{ab}}{(1-m_{a})(1-m_{b})}\cdot\frac{d_{N}^{3}}{N^{2}}.
\]
\end{lem}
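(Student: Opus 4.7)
The plan is to observe that only one type of particle jump contributes to $\mathscr{D}_{\mathcal{A}_N^{xaby}}(F)$, parametrize that jump explicitly, and evaluate the resulting one-dimensional sum using the asymptotics in \eqref{e_wN-ZN}. By Lemma \ref{l_F-prop}-(1), on $\mathcal{A}_N^{xaby}$ the value of $F$ depends on $\eta$ only through $\overline{\eta}_x$. Among the particle jumps within $\mathcal{A}_N^{xaby}$, those of the form $x\leftrightarrow a$ and $b\leftrightarrow y$ preserve $\overline{\eta}_x$, while $x\leftrightarrow b$, $a\leftrightarrow y$, and $x\leftrightarrow y$ all have rate zero since $\mathcal{N}_x$ and $\mathcal{N}_y$ are disjoint and neither contains $x$ or $y$ (graph distance $\geq 3$). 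Thus only the jump $a\leftrightarrow b$ contributes, and if $a\not\sim b$ both sides of the claim vanish, so I may assume $c_{ab}>0$.

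Next, writing $\eta=\xi_{n,k,\ell}^{xaby}$ so that $\overline{\eta}_x=n+k$, I would combine Proposition \ref{p_muN-prod} (using $m_x=m_y=1$) with the uniform asymptotic $(p+1)w_N(p+1)\simeq d_N$ from \eqref{e_wN-ZN} and $Z_N\simeq 2d_N/N$ (since $|S_\star|=2$) to derive
\[
\mu_N(\eta)\,r_N(\eta,\eta^{a,b})=(1+o(1))\cdot\frac{Nd_N^3\,m_a^k m_b^\ell\,r(a,b)}{2n(N-n-k-\ell)}
\]
uniformly in the bulk region where $n\geq 1$ and $N-n-k-\ell\geq 1$. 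Simultaneously, the explicit formula \eqref{e_tf-T1} from Definition \ref{d_tf-T2-T3}, together with the boundary values \eqref{e_F-ON-def}, gives
\[
F(\eta^{a,b})-F(\eta)=-\frac{(N-n-k)(n+k)}{\sum_{t=N'+1}^{N-N'}(N-t)t}
\]
when $n+k\in\llbracket N'+1,\,N-N'\rrbracket$ and zero otherwise, with the denominator equal to $\frac{N^{3}}{6}(1+o(1))$ by \eqref{e_sum-N-cubed}.

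Multiplying and summing, for $k,\ell=O(1)$ and $n$ away from the boundary I have $(N-n-k)(n+k)=n(N-n)(1+o(1))$ and $N-n-k-\ell=(N-n)(1+o(1))$, so the ratio $(N-n-k)^2(n+k)^2/[n(N-n-k-\ell)]$ collapses to $n(N-n)(1+o(1))$. The bulk sum then factorizes as
\[
(1+o(1))\cdot\frac{18\,d_N^3\,r(a,b)}{N^5}\cdot\sum_{n=N'+1}^{N-N'-1} n(N-n)\cdot\sum_{k\geq 1} m_a^k\cdot\sum_{\ell\geq 0} m_b^\ell,
\]
and using $\sum_{n=N'+1}^{N-N'-1} n(N-n)\simeq N^3/6$, $\sum_{k\geq 1}m_a^k=m_a/(1-m_a)$, $\sum_{\ell\geq 0}m_b^\ell=1/(1-m_b)$, together with the reversibility identity $c_{ab}=m_a r(a,b)$ from \eqref{e_cxy-def}, this evaluates to $\frac{3 c_{ab}}{(1-m_a)(1-m_b)}\cdot\frac{d_N^3}{N^2}$, as claimed.

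The main obstacle will be controlling the errors uniformly when truncating the $(k,\ell)$ summation to $k+\ell=O(1)$ and handling the boundary regime where $n$ approaches $N'$ or $N-N'$. The truncation is absorbed by the geometric decay of $m_a^k m_b^\ell$ (both factors bounded by $m_\star<1$), which dominates the polynomial ratio above since $N-n-k-\ell$ remains comparable to $N-n\geq N'$ throughout the support of $F(\eta^{a,b})-F(\eta)$; the boundary contribution is at most $O(N'/N)=o(1)$ relative to the main term by \eqref{e_N'-cond}. Both estimates are routine Riemann-sum and geometric-series comparisons, but they must be carried out carefully to yield the sharp constant on the right-hand side.
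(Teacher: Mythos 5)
Your proposal is correct and follows essentially the same route as the paper's proof: both reduce the Dirichlet form to the $a\leftrightarrow b$ jumps (since $F$ depends only on $\overline{\eta}_{x}$ on $\mathcal{A}_{N}^{xaby}$ and all other jumps either preserve $\overline{\eta}_{x}$ or have zero rate), substitute the asymptotics \eqref{e_wN-ZN} and \eqref{e_sum-N-cubed}, extract the main term $n(N-n)m_{a}^{k}m_{b}^{\ell}$, and control the corrections via the extra factors of $k$ and $\ell$ being summable against the geometric decay. The only differences are cosmetic (parametrizing by $\eta_{x}$ rather than $\overline{\eta}_{x}$, and identifying the support of $F(\eta^{a,b})-F(\eta)$ directly instead of splitting off $\mathcal{O}_{N}^{xaby}$), and the uniformity issues you flag at the end are exactly the ones the paper resolves with the decomposition $\frac{n^{2}}{n-k}=n+\frac{nk}{n-k}$.
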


\begin{proof}
By \eqref{e_DAf-DABf-def}, we may decompose
\begin{equation}
\mathscr{D}_{\mathcal{A}_{N}^{xaby}}(F)=\mathscr{D}_{\mathcal{O}_{N}^{xaby}}(F)+\mathscr{D}_{\mathcal{O}_{N}^{xaby}\mathcal{I}_{N}^{xaby}}(F)+\mathscr{D}_{\mathcal{I}_{N}^{xaby}}(F).\label{e_AN-ON-ONIN-IN}
\end{equation}
Inspecting Figure \ref{fig5.1}, we readily observe that $\mathscr{D}_{\mathcal{O}_{N}^{xaby}}(F)=0$.
For $\mathscr{D}_{\mathcal{O}_{N}^{xaby}\mathcal{I}_{N}^{xaby}}(F)$,
the contribution $\eta\leftrightarrow\zeta$ for $\eta\in\mathcal{O}_{N}^{xaby}$
and $\zeta\in\mathcal{I}_{N}^{xaby}$ survives only in the following
two cases:
\begin{itemize}
\item $\eta_{x}+\eta_{a}=N-N'+1$, $\zeta_{x}+\zeta_{a}=N-N'$, and $\zeta=\eta^{a,\,b}$;
\item $\eta_{x}+\eta_{a}=N'-1$, $\zeta_{x}+\zeta_{a}=N'$, and $\zeta=\eta^{b,\,a}$.
\end{itemize}
However, by \eqref{e_tf-T1}, in the first case we have $F(\eta)=F(\zeta)=1$,
and in the second case we have $F(\eta)=F(\zeta)=0$. Thus, $\mathscr{D}_{\mathcal{O}_{N}^{xaby}\mathcal{I}_{N}^{xaby}}(F)=0$.

Finally, we calculate $\mathscr{D}_{\mathcal{I}_{N}^{xaby}}(F)$.
Since particle movements of $x\leftrightarrow a$ and $b\leftrightarrow y$
do not affect the test function by Lemma \ref{l_F-prop}-(1), we may
only consider the movements of $a\leftrightarrow b$. Thus, we may
write the summation as
\[
\sum_{n=N'+1}^{N-N'}\sum_{k=1}^{n}\sum_{\ell=0}^{N-n}\mu_{N}(\xi_{n-k,\,k,\,\ell}^{xaby})r_{N}(\xi_{n-k,\,k,\,\ell}^{xaby},\,\xi_{n-k,\,k-1,\,\ell+1}^{xaby})\{F(\xi_{n-k,\,k-1,\,\ell+1}^{xaby})-F(\xi_{n-k,\,k,\,\ell}^{xaby})\}^{2}.
\]
Substituting the exact values from Proposition \ref{p_muN-prod} and
\eqref{e_tf-T1} and applying \eqref{e_wN-ZN}, this is asymptotically
equal to
\begin{equation}
\sum_{n=N'+1}^{N-N'}\sum_{k=1}^{n}\sum_{\ell=0}^{N-n}\frac{Nd_{N}w_{N}(n-k)w_{N}(N-n-\ell)}{2}m_{a}^{k}m_{b}^{\ell}r(a,\,b)\cdot\frac{\{(N-n)n\}^{2}}{\{\sum_{t=N'+1}^{N-N'}(N-t)t\}^{2}}.\label{e_l_T3-1}
\end{equation}
Recall \eqref{e_sum-N-cubed}. If $k>\frac{N'}{2}$ or $\ell>\frac{N'}{2}$,
the summation becomes $O(m_{\star}^{\frac{N'}{2}})$, which is $o(\frac{d_{N}^{3}}{N^{2}})$
by \eqref{e_wN-ZN}, \eqref{e_N'-cond}, and \eqref{e_sum-N-cubed}.
Thus, by \eqref{e_sum-N-cubed}, we may rewrite the remaining part
of \eqref{e_l_T3-1} as
\begin{equation}
\frac{18d_{N}^{3}r(a,\,b)}{N^{5}(1+o(1))}\cdot\sum_{n=N'+1}^{N-N'}\sum_{k=1}^{N'/2}\sum_{\ell=0}^{N'/2}\frac{n^{2}(N-n)^{2}}{(n-k)(N-n-\ell)}m_{a}^{k}m_{b}^{\ell}.\label{e_l_T3-2}
\end{equation}
Note that 
\[
\frac{n^{2}}{n-k}=n+\frac{nk}{n-k}\quad\text{and}\quad\frac{(N-n)^{2}}{N-n-\ell}=N-n+\frac{(N-n)\ell}{N-n-\ell}.
\]
Moreover, if we substitute $\frac{n^{2}}{n-k}$ with $\frac{nk}{n-k}$
in \eqref{e_l_T3-2}, it becomes
\begin{equation}
\begin{aligned} & \frac{18d_{N}^{3}r(a,\,b)}{N^{5}(1+o(1))}\cdot\sum_{n=N'+1}^{N-N'}\sum_{k=1}^{N'/2}\sum_{\ell=0}^{N'/2}\frac{nk(N-n)^{2}}{(n-k)(N-n-\ell)}m_{a}^{k}m_{b}^{\ell}\\
 & \le\frac{Cd_{N}^{3}r(a,\,b)}{N^{4}}\sum_{n=N'+1}^{N-N'}\sum_{k=1}^{N'/2}\sum_{\ell=0}^{N'/2}km_{\star}^{k+\ell}\le\frac{Cd_{N}^{3}r(a,\,b)}{N^{3}}=o\Big(\frac{d_{N}^{3}}{N^{2}}\Big).
\end{aligned}
\label{e_l_T3-3}
\end{equation}
Similarly, if we substitute $\frac{(N-n)^{2}}{N-n-\ell}$ with $\frac{(N-n)\ell}{N-n-\ell}$
in \eqref{e_l_T3-2}, it becomes $o(\frac{d_{N}^{3}}{N^{2}})$. Thus,
\eqref{e_l_T3-2} equals
\[
\begin{aligned} & \frac{18d_{N}^{3}r(a,\,b)}{N^{5}(1+o(1))}\cdot\sum_{n=N'+1}^{N-N'}\sum_{k=1}^{N'/2}\sum_{\ell=0}^{N'/2}n(N-n)m_{a}^{k}m_{b}^{\ell}+o\Big(\frac{d_{N}^{3}}{N^{2}}\Big)\\
 & =\frac{18d_{N}^{3}r(a,\,b)}{N^{5}(1+o(1))}\cdot\frac{N^{3}}{6}\cdot\Big[\frac{m_{a}}{(1-m_{a})(1-m_{b})}+O(m_{\star}^{\frac{N'}{2}})+o(1)\Big].
\end{aligned}
\]
Therefore, we have proved that
\[
\mathscr{D}_{\mathcal{I}_{N}^{xaby}}(F)=(1+o(1))\cdot\frac{3d_{N}^{3}}{N^{2}}\cdot\frac{c_{ab}}{(1-m_{a})(1-m_{b})},
\]
which concludes the proof of Lemma \ref{l_T3}.
\end{proof}
\begin{lem}[Dirichlet form estimate of \textbf{(T4)}]
\label{l_T4}If $a\in\mathcal{N}_{x}$ and $b\notin\mathcal{N}_{x}\cup\mathcal{N}_{y}$,
\[
\mathscr{D}_{\mathcal{A}_{N}^{xaby}}(F)=(1+o(1))\cdot\frac{3c_{ab}}{1-m_{a}}\cdot\sum_{\ell=0}^{\infty}m_{b}^{\ell}\{1+\widehat{\mathfrak{g}}_{\ell}(\sigma^{b})-\widehat{\mathfrak{g}}_{\ell+1}(\sigma^{b})\}^{2}\cdot\frac{d_{N}^{3}}{N^{2}}.
\]
\end{lem}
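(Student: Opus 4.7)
The plan is to mimic the strategy of Lemma \ref{l_T3}: decompose $\mathscr{D}_{\mathcal{A}_{N}^{xaby}}(F) = \mathscr{D}_{\mathcal{O}_{N}^{xaby}}(F) + \mathscr{D}_{\mathcal{O}_{N}^{xaby}\mathcal{I}_{N}^{xaby}}(F) + \mathscr{D}_{\mathcal{I}_{N}^{xaby}}(F)$, show that the first two pieces are $o(d_N^3/N^2)$, and compute the third explicitly. First I would identify the active edges inside the tetrahedron $\{x,a,b,y\}$. Since $\mathcal{N}_x \cap \mathcal{N}_y = \emptyset$ (the graph distance between $x$ and $y$ is at least $3$ by Definition \ref{d_G-def}) and $b \notin \mathcal{N}_x \cup \mathcal{N}_y$, the only possible edges among the four sites are $x \sim a$ and (possibly) $a \sim b$. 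If $a \nsim b$, then $c_{ab}=0$, all active transitions in $\mathcal{A}_{N}^{xaby}$ are of the form $x \leftrightarrow a$ which leave $F$ invariant by Lemma \ref{l_F-prop}-(1), and both sides of the claim vanish. So I may assume $a \sim b$.

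On $\mathcal{O}_{N}^{xaby}$, the function $F$ is identically $1$ on the red region and $0$ on the blue one (see Figure \ref{fig5.1}, case \textbf{(T4)}), and $x \leftrightarrow a$ preserves $\overline{\eta}_x$, so no within-region transition contributes; hence $\mathscr{D}_{\mathcal{O}_{N}^{xaby}}(F) = 0$. Boundary-crossing pairs are all of the form $\eta \leftrightarrow \zeta$ with $\overline{\eta}_x = N-N'+1$ or $\overline{\eta}_x = N'-1$ produced by a single $a \leftrightarrow b$ jump; substituting into \eqref{e_tf-T4} shows the resulting jump $|F(\zeta)-F(\eta)|$ equals $(N'(N-N')/\Sigma)\widehat{\mathfrak{g}}_\ell(\sigma^b) = O((N'/N^2)\lambda^{-\ell})$, where $\Sigma = \sum_{t=N'+1}^{N-N'}(N-t)t \simeq N^3/6$ by \eqref{e_sum-N-cubed}. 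Squaring and pairing against $\mu_N r_N \asymp N^{-1} d_N^3 m_a^k m_b^\ell$, the geometric series in $\ell$ converges because $m_b \le m_\star < \lambda^2$ (from \eqref{e_lambda-def}), giving a total of order $d_N^3 N'/N^3 = o(d_N^3/N^2)$ by \eqref{e_N'-cond}.

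For $\mathscr{D}_{\mathcal{I}_{N}^{xaby}}(F)$, only $a \leftrightarrow b$ transitions contribute. Writing $m := n+k$, direct substitution into \eqref{e_tf-T4} yields
\[
F(\xi_{m-k,k-1,\ell+1}^{xaby}) - F(\xi_{m-k,k,\ell}^{xaby}) = -\frac{(N-m)m\bigl\{1+\widehat{\mathfrak{g}}_{\ell}(\sigma^b) - \widehat{\mathfrak{g}}_{\ell+1}(\sigma^b)\bigr\}}{\Sigma} + O\Bigl(\frac{\lambda^{-\ell-1}}{N^2}\Bigr).
\]
Using \eqref{e_wN-ZN} (namely $k w_N(k) \simeq d_N$ for $k \ge 1$, $(d_N+\ell)w_N(\ell) \simeq d_N$, and $Z_N \simeq 2 d_N/N$) together with reversibility $m_a r(a,b) = c_{ab}$, one obtains $\mu_N(\xi_{m-k,k,\ell}^{xaby}) r_N(\xi_{m-k,k,\ell}^{xaby}, \xi_{m-k,k-1,\ell+1}^{xaby}) \simeq N d_N^3 m_a^k m_b^\ell r(a,b)/(2m(N-m))$. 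Truncating the $k$- and $\ell$-sums at $N'/2$ (the tails being $O(m_\star^{N'/2})$ times a polynomial, hence negligible as in \eqref{e_l_T3-1}--\eqref{e_l_T3-3}) and using $\sum_m m(N-m) \simeq N^3/6$ and $\sum_{k \ge 1} m_a^k = m_a/(1-m_a)$, the three factors combine to give exactly $(3 c_{ab}/(1-m_a)) \cdot \sum_{\ell\ge 0} m_b^\ell \{1+\widehat{\mathfrak{g}}_\ell(\sigma^b)-\widehat{\mathfrak{g}}_{\ell+1}(\sigma^b)\}^2 \cdot d_N^3/N^2$.

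The main obstacle, relative to the \textbf{(T3)} case, is the presence of the $\widehat{\mathfrak{g}}_\ell(\sigma^b)$ factors: one must show that both the boundary-crossing term and the $O(\lambda^{-\ell-1}/N^2)$ remainder in the difference formula remain summable against $m_b^\ell$ and do not spoil the $(1+o(1))$ asymptotics. This is where the choice $\lambda \in (\sqrt{m_\star},1)$ is essential, since it guarantees $m_b \lambda^{-2} \le m_\star \lambda^{-2} < 1$ and so all the $\ell$-series under consideration converge geometrically; the well-definedness of the infinite sum appearing in the statement of the lemma then follows from the uniform bound \eqref{e_ghat-ghatL-UB}.
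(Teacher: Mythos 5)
Your proposal is correct and follows essentially the same route as the paper: the same decomposition into $\mathscr{D}_{\mathcal{O}_{N}^{xaby}}$, $\mathscr{D}_{\mathcal{O}_{N}^{xaby}\mathcal{I}_{N}^{xaby}}$, and $\mathscr{D}_{\mathcal{I}_{N}^{xaby}}$, the same estimate of the boundary-crossing term via the explicit value $N'(N-N')\widehat{\mathfrak{g}}_{\ell}(\sigma^{b})/\Sigma$, and the same splitting of the $F$-increment into a main term plus an $O(\lambda^{-\ell}/N^{2})$ remainder, with summability guaranteed by $\lambda^{2}>m_{\star}$. The only additions relative to the paper are the explicit (and correct) treatment of the trivial case $a\nsim b$ and a slightly cruder but still sufficient bound on the boundary term.
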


\begin{proof}
Recall the representation in \eqref{e_AN-ON-ONIN-IN}. It is clear
that $\mathscr{D}_{\mathcal{O}_{N}^{xaby}}(F)=0$. Next, since $a\in\mathcal{N}_{x}$
and $b\notin\mathcal{N}_{x}\cup\mathcal{N}_{y}$, the configurations
between $\mathcal{O}_{N}^{xaby}$ and $\mathcal{I}_{N}^{xaby}$ are
connected only via particle movements of $a\leftrightarrow b$, so
that we may write $\mathscr{D}_{\mathcal{O}_{N}^{xaby}\mathcal{I}_{N}^{xaby}}(F)$
as
\begin{equation}
\begin{aligned} & \sum_{k=0}^{N-N'}\sum_{\ell=1}^{N'}\mu_{N}(\xi_{N-N'-k,\,k,\,\ell}^{xaby})\cdot\ell(d_{N}+k)r(b,\,a)\cdot\{F(\xi_{N-N'-k,\,k+1,\,\ell-1}^{xaby})-F(\xi_{N-N'-k,\,k,\,\ell}^{xaby})\}^{2}\\
 & +\sum_{k=1}^{N'}\sum_{\ell=0}^{N-N'}\mu_{N}(\xi_{N'-k,\,k,\,\ell}^{xaby})\cdot k(d_{N}+\ell)r(a,\,b)\cdot\{F(\xi_{N'-k,\,k-1,\,\ell+1}^{xaby})-F(\xi_{N'-k,\,k,\,\ell}^{xaby})\}^{2}.
\end{aligned}
\label{e_l_T4-1}
\end{equation}
Substituting the exact values from \eqref{e_F-ON-def} and \eqref{e_tf-T4}
that
\[
F(\xi_{N-N'-k,\,k,\,\ell}^{xaby})=1-\frac{N'(N-N')\cdot\widehat{\mathfrak{g}}_{\ell}(\sigma^{b})}{\sum_{t=N'+1}^{N-N'}(N-t)t}\quad\text{and}\quad F(\xi_{N-N'-k,\,k+1,\,\ell-1}^{xaby})=1,
\]
and also applying \eqref{e_wN-ZN} and \eqref{e_sum-N-cubed}, we
see that the first line of \eqref{e_l_T4-1} is bounded by
\[
\frac{CN'^{2}d_{N}}{N^{3}}\sum_{k=0}^{N-N'}\sum_{\ell=1}^{N'}w_{N}(N-N'-k)w_{N}(N'-\ell)m_{\star}^{k+\ell}\cdot\widehat{\mathfrak{g}}_{\ell}(\sigma^{b})^{2}.
\]
By \eqref{e_ghat-ghatL-UB}, $\widehat{\mathfrak{g}}_{\ell}(\sigma^{b})^{2}\le\frac{C}{\lambda^{2\ell}}$
for some constant $C>0$. If $k>\frac{N'}{2}$ or $\ell>\frac{N'}{2}$
in the summation, then the Dirichlet summation is $O((\frac{m_{\star}}{\lambda^{2}})^{\frac{N'}{2}})$,
which is still $o(\frac{d_{N}^{3}}{N^{2}})$ by \eqref{e_N'-cond}
and the fact that $\lambda\in(\sqrt{m_{\star}},\,1)$ (cf. \eqref{e_lambda-def}).
Thus, we may only consider the case when $k\le\frac{N'}{2}$ and $\ell\le\frac{N'}{2}$,
in which case the summation is bounded by
\[
\frac{CN'^{2}d_{N}^{3}}{N^{3}}\sum_{k=0}^{N'/2}\sum_{\ell=1}^{N'/2}\frac{1}{(N-N'-k)(N'-\ell)}\Big(\frac{m_{\star}}{\lambda^{2}}\Big)^{k+\ell}\le O\Big(\frac{N'd_{N}^{3}}{N^{4}}\Big)=o\Big(\frac{d_{N}^{3}}{N^{2}}\Big).
\]
In the inequality, it is again used that $\lambda\in(\sqrt{m_{\star}},\,1)$.
The second line of \eqref{e_l_T4-1} can be similarly estimated as
$o(\frac{d_{N}^{3}}{N^{2}})$. Thus, we have proved that $\mathscr{D}_{\mathcal{O}_{N}^{xaby}\mathcal{I}_{N}^{xaby}}(F)=o(\frac{d_{N}^{3}}{N^{2}})$.

Finally, we consider $\mathscr{D}_{\mathcal{I}_{N}^{xaby}}(F)$, which
is
\[
\sum_{n=N'+1}^{N-N'}\sum_{k=1}^{n}\sum_{\ell=0}^{N-n}\mu_{N}(\xi_{n-k,\,k,\,\ell}^{xaby})r_{N}(\xi_{n-k,\,k,\,\ell}^{xaby},\,\xi_{n-k,\,k-1,\,\ell+1}^{xaby})\{F(\xi_{n-k,\,k-1,\,\ell+1}^{xaby})-F(\xi_{n-k,\,k,\,\ell}^{xaby})\}^{2}.
\]
By \eqref{e_tf-T4}, this is asymptotically equal to
\[
\begin{aligned}\sum_{n=N'+1}^{N-N'}\sum_{k=1}^{n}\sum_{\ell=0}^{N-n} & \frac{Nd_{N}w_{N}(n-k)w_{N}(N-n-\ell)}{2}m_{a}^{k}m_{b}^{\ell}r(a,\,b)\cdot\\
 & \frac{\{(N-n)n+(N-n)n\cdot\widehat{\mathfrak{g}}_{\ell}(\sigma^{b})-(N-n+1)(n-1)\cdot\widehat{\mathfrak{g}}_{\ell+1}(\sigma^{b})\}^{2}}{\{\sum_{t=N'+1}^{N-N'}(N-t)t\}^{2}}.
\end{aligned}
\]
If we substitute $(N-n)n\cdot\widehat{\mathfrak{g}}_{\ell+1}(\sigma^{b})$
for $(N-n+1)(n-1)\cdot\widehat{\mathfrak{g}}_{\ell+1}(\sigma^{b})$,
then we obtain
\begin{equation}
\begin{aligned} & \sum_{n,\,k,\,\ell}\frac{Nd_{N}w_{N}(n-k)w_{N}(N-n-\ell)}{2}m_{a}^{k}m_{b}^{\ell}r(a,\,b)\cdot\frac{(N-n)^{2}n^{2}\{1+\widehat{\mathfrak{g}}_{\ell}(\sigma^{b})-\widehat{\mathfrak{g}}_{\ell+1}(\sigma^{b})\}^{2}}{\{\sum_{t=N'+1}^{N-N'}(N-t)t\}^{2}}\\
 & =(1+o(1))\cdot\frac{3c_{ab}}{1-m_{a}}\cdot\sum_{\ell=0}^{\infty}m_{b}^{\ell}\{1+\widehat{\mathfrak{g}}_{\ell}(\sigma^{b})-\widehat{\mathfrak{g}}_{\ell+1}(\sigma^{b})\}^{2}\cdot\frac{d_{N}^{3}}{N^{2}},
\end{aligned}
\label{e_l_T4-2}
\end{equation}
which can be proved in the same way as in the proof of Lemma \ref{l_T3},
along with the bound \eqref{e_ghat-ghatL-UB} and the fact that $\lambda\in(\sqrt{m_{\star}},\,1)$.
Thus, to conclude the proof, it remains to demonstrate that
\[
\begin{aligned}\sum_{n=N'+1}^{N-N'}\sum_{k=1}^{n}\sum_{\ell=0}^{N-n} & \frac{Nd_{N}w_{N}(n-k)w_{N}(N-n-\ell)}{2}m_{a}^{k}m_{b}^{\ell}r(a,\,b)\cdot\\
 & \frac{\{(N-n)n\cdot\widehat{\mathfrak{g}}_{\ell+1}(\sigma^{b})-(N-n+1)(n-1)\cdot\widehat{\mathfrak{g}}_{\ell+1}(\sigma^{b})\}^{2}}{\{\sum_{t=N'+1}^{N-N'}(N-t)t\}^{2}}=o\Big(\frac{d_{N}^{3}}{N^{2}}\Big).
\end{aligned}
\]
To this end, by \eqref{e_sum-N-cubed}, the left-hand side is bounded
above by
\[
\frac{C}{N^{3}}\sum_{n=N'+1}^{N-N'}\sum_{k=1}^{n}\sum_{\ell=0}^{N-n}d_{N}w_{N}(n-k)w_{N}(N-n-\ell)\cdot m_{\star}^{k+\ell}\cdot\widehat{\mathfrak{g}}_{\ell+1}(\sigma^{b})^{2}.
\]
Since $\widehat{\mathfrak{g}}_{\ell+1}(\sigma^{b})^{2}\le\frac{C}{\lambda^{2\ell}}$,
we can bound this by
\[
\frac{C}{N^{3}}\sum_{n=N'+1}^{N-N'}\sum_{k=1}^{n}\sum_{\ell=0}^{N-n}d_{N}w_{N}(n-k)w_{N}(N-n-\ell)\cdot\Big(\frac{m_{\star}}{\lambda^{2}}\Big)^{k+\ell}.
\]
If $k>\frac{N'}{2}$ or $\ell>\frac{N'}{2}$, this becomes $O((\frac{m_{\star}}{\lambda^{2}})^{\frac{N'}{2}})=o(\frac{d_{N}^{3}}{N^{2}})$,
and thus we may exclude this case. Then, the remaining summation becomes
\[
\frac{Cd_{N}^{3}}{N^{3}}\sum_{n=N'+1}^{N-N'}\sum_{k=1}^{N'/2}\sum_{\ell=0}^{N'/2}\frac{1}{(n-k)(N-n-\ell)}\cdot\Big(\frac{m_{\star}}{\lambda^{2}}\Big)^{k+\ell}.
\]
Simply bounding $\frac{1}{n-k}\le\frac{2}{n}$ and $\frac{1}{N-n-\ell}\le1$,
which is suboptimal but sufficient, this is bounded above by
\[
\frac{Cd_{N}^{3}}{N^{3}}\sum_{n=N'+1}^{N-N'}\sum_{k=1}^{N'/2}\sum_{\ell=0}^{N'/2}\frac{1}{n}\cdot\Big(\frac{m_{\star}}{\lambda^{2}}\Big)^{k+\ell}=O\Big(\frac{d_{N}^{3}}{N^{3}}\log N\Big)=o\Big(\frac{d_{N}^{3}}{N^{2}}\Big).
\]
This concludes the proof.
\end{proof}
\begin{lem}[Dirichlet form estimate of \textbf{(T5)}]
\label{l_T5}If $a\notin\mathcal{N}_{x}\cup\mathcal{N}_{y}$ and
$b\in\mathcal{N}_{y}$,
\[
\mathscr{D}_{\mathcal{A}_{N}^{xaby}}(F)=(1+o(1))\cdot\frac{3c_{ab}}{1-m_{b}}\cdot\sum_{\ell=0}^{\infty}m_{a}^{\ell}\{\widehat{\mathfrak{g}}_{\ell}(\sigma^{a})-\widehat{\mathfrak{g}}_{\ell+1}(\sigma^{a})\}^{2}\cdot\frac{d_{N}^{3}}{N^{2}}.
\]
\end{lem}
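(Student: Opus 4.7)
The plan is to mirror the proof of Lemma \ref{l_T4} with the roles of $(x,\mathcal{N}_x)$ and $(y,\mathcal{N}_y)$ (and simultaneously of $a$ and $b$) exchanged. As in \eqref{e_AN-ON-ONIN-IN} I decompose
\[
\mathscr{D}_{\mathcal{A}_{N}^{xaby}}(F) = \mathscr{D}_{\mathcal{O}_{N}^{xaby}}(F) + \mathscr{D}_{\mathcal{O}_{N}^{xaby}\mathcal{I}_{N}^{xaby}}(F) + \mathscr{D}_{\mathcal{I}_{N}^{xaby}}(F).
\]
In contrast to \textbf{(T4)}, the first two summands here vanish \emph{identically}. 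Indeed, since $a\notin\mathcal{N}_{x}\cup\mathcal{N}_{y}$ and $b\in\mathcal{N}_{y}$, the only edges of $\mathcal{G}$ lying inside $\{x,\,a,\,b,\,y\}$ are $\{a,\,b\}$ (when $r(a,\,b)>0$; otherwise the whole form vanishes, matching the claim via $c_{ab}=0$) and $\{b,\,y\}$. Neither of these transitions alters $\overline{\eta}_{x}=\eta_{x}$ (since no $\mathcal{N}_{x}$ sites are occupied), so the threshold $\overline{\eta}_{x}\in\llbracket N',\,N-N'\rrbracket$ is never crossed in one step, and $F$ is in particular constant on each such transition inside $\mathcal{O}_{N}^{xaby}$. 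This yields $\mathscr{D}_{\mathcal{O}_{N}^{xaby}}(F)=\mathscr{D}_{\mathcal{O}_{N}^{xaby}\mathcal{I}_{N}^{xaby}}(F)=0$.

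The main task is to evaluate $\mathscr{D}_{\mathcal{I}_{N}^{xaby}}(F)$. Movements $b\leftrightarrow y$ are absorbed into $\overline{\eta}_{y}$ and thus leave $F$ invariant by Lemma \ref{l_F-prop}-(1), so only the $a\leftrightarrow b$ transitions $\xi_{n,\,k,\,\ell}^{xaby}\to\xi_{n,\,k-1,\,\ell+1}^{xaby}$ at rate $k(d_{N}+\ell)r(a,\,b)$ contribute. From the formula \eqref{e_tf-T5} I compute
\[
F(\xi_{n,\,k-1,\,\ell+1}^{xaby})-F(\xi_{n,\,k,\,\ell}^{xaby}) = \frac{(N-n)n\cdot\{\widehat{\mathfrak{g}}_{k-1}(\sigma^{a})-\widehat{\mathfrak{g}}_{k}(\sigma^{a})\}}{\sum_{t=N'+1}^{N-N'}(N-t)t}.
\]
Invoking \eqref{e_wN-ZN} in the form $kw_{N}(k)\cdot(d_{N}+\ell)w_{N}(\ell)\simeq d_{N}^{2}$, $w_{N}(n)\simeq d_{N}/n$ and $w_{N}(N-n-k-\ell)\simeq d_{N}/(N-n-k-\ell)$, and using \eqref{e_sum-N-cubed}, the contribution becomes asymptotically
\[
\frac{18d_{N}^{3}r(a,\,b)}{N^{5}(1+o(1))}\sum_{n}\sum_{k\ge1}\sum_{\ell\ge0}\frac{n(N-n)^{2}}{N-n-k-\ell}\,m_{a}^{k}m_{b}^{\ell}\{\widehat{\mathfrak{g}}_{k-1}(\sigma^{a})-\widehat{\mathfrak{g}}_{k}(\sigma^{a})\}^{2}.
\]

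To extract the leading order I proceed exactly as in Lemma \ref{l_T4}: the bound $\{\widehat{\mathfrak{g}}_{k-1}(\sigma^{a})-\widehat{\mathfrak{g}}_{k}(\sigma^{a})\}^{2}\le C/\lambda^{2k}$ from \eqref{e_ghat-ghatL-UB}, combined with $\lambda^{2}>m_{\star}$ (cf. \eqref{e_lambda-def}) and the growth hypotheses \eqref{e_N'-cond}, reduces the sum to $k,\,\ell\le N'/2$ modulo an $o(d_{N}^{3}/N^{2})$ error. On this truncated range $k+\ell\ll N-n$, so writing $n(N-n)^{2}/(N-n-k-\ell)=n(N-n)+n(N-n)(k+\ell)/(N-n-k-\ell)$ and dispatching the second piece by the same algebraic manoeuvre used in \eqref{e_l_T3-3}, what survives is
\[
\frac{18d_{N}^{3}r(a,\,b)}{N^{5}(1+o(1))}\cdot\frac{N^{3}}{6}\cdot\sum_{k=1}^{\infty}m_{a}^{k}\{\widehat{\mathfrak{g}}_{k-1}(\sigma^{a})-\widehat{\mathfrak{g}}_{k}(\sigma^{a})\}^{2}\cdot\sum_{\ell=0}^{\infty}m_{b}^{\ell}.
\]
Reindexing $\ell\mapsto k-1$ and using $c_{ab}=m_{a}r(a,\,b)$ delivers the claimed expression. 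The only technical step is the truncation-plus-remainder analysis, but it is identical to the type-\textbf{(T4)} case and presents no new obstacle; in fact the absence of boundary contributions between $\mathcal{O}_{N}^{xaby}$ and $\mathcal{I}_{N}^{xaby}$ makes \textbf{(T5)} slightly easier than \textbf{(T4)}.
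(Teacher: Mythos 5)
Your proposal is correct and follows exactly the route the paper intends (the paper omits this proof, stating it is almost identical to that of Lemma \ref{l_T4}): you mirror the \textbf{(T4)} argument with the roles of $x$ and $y$ exchanged, and your computation of the difference of $F$, the asymptotics via \eqref{e_wN-ZN} and \eqref{e_sum-N-cubed}, the truncation to $k,\ell\le N'/2$, and the final reindexing with $c_{ab}=m_{a}r(a,b)$ all check out. Your observation that $\mathscr{D}_{\mathcal{O}_{N}^{xaby}}(F)$ and $\mathscr{D}_{\mathcal{O}_{N}^{xaby}\mathcal{I}_{N}^{xaby}}(F)$ vanish identically (since no admissible jump in $\{x,a,b,y\}$ changes $\overline{\eta}_{x}$) is a correct minor simplification over a verbatim transcription of the \textbf{(T4)} proof.
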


\begin{proof}
We omit the proof, since it is almost identical to the proof of Lemma
\ref{l_T4}.
\end{proof}
\begin{lem}[Dirichlet form estimate of \textbf{(T6)}]
\label{l_T6}If $a,\,b\notin\mathcal{N}_{x}\cup\mathcal{N}_{y}$,
it holds that
\[
\mathscr{D}_{\mathcal{A}_{N}^{xaby}}(F)=(1+o(1))\cdot3c_{ab}\cdot\sum_{\ell=1}^{\infty}\sum_{k=1}^{\ell}m_{a}^{k-1}m_{b}^{\ell-k}\{\widehat{\mathfrak{g}}_{\ell}(\sigma_{k-1}^{ab})-\widehat{\mathfrak{g}}_{\ell}(\sigma_{k}^{ab})\}^{2}\cdot\frac{d_{N}^{3}}{N^{2}}.
\]
\end{lem}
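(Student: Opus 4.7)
The plan is to mimic the proofs of Lemmas \ref{l_T3} and \ref{l_T4}, since the geometry here is analogous to \textbf{(T4)} but with the role of the $\mathcal{N}_x$-neighbor $a$ replaced by a second interior site $b$ living on the same component $\mathcal{V}_j'$. First I decompose
\[
\mathscr{D}_{\mathcal{A}_{N}^{xaby}}(F)=\mathscr{D}_{\mathcal{O}_{N}^{xaby}}(F)+\mathscr{D}_{\mathcal{O}_{N}^{xaby}\mathcal{I}_{N}^{xaby}}(F)+\mathscr{D}_{\mathcal{I}_{N}^{xaby}}(F).
\]
The first term vanishes since $F\equiv 0$ or $F\equiv 1$ on each connected piece of $\mathcal{O}_N^{xaby}$ (cf. \eqref{e_F-ON-def}). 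Crucially, since $a,b\notin\mathcal{N}_x\cup\mathcal{N}_y$ and the graph distance between $x$ and $y$ is $\ge 3$, the only transition rates $r_N(\eta,\zeta)$ that are nonzero for $\eta,\zeta\in\mathcal{A}_N^{xaby}$ correspond to particle jumps $a\leftrightarrow b$. In particular, if $a\nsim b$ then $c_{ab}=0$, all three summands are zero, and both sides of the lemma agree. Thus I may and will assume $a\sim b$, in which case $a,b\in\mathcal{V}_j'$ for a common $j\in\llbracket 1,s\rrbracket$, and the test function on $\mathcal{I}_N^{xaby}$ is given by \eqref{e_tf-T6-2}.

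For the main term $\mathscr{D}_{\mathcal{I}_N^{xaby}}(F)$, restricting to $a\leftrightarrow b$ jumps and summing over triples $(n,k,\ell)$ with the convention that $\xi_{n,k,\ell-k}^{xaby}$ has $\ell$ particles total on $\{a,b\}$, formula \eqref{e_tf-T6-2} gives the exact difference
\[
F(\xi_{n,k-1,\ell-k+1}^{xaby})-F(\xi_{n,k,\ell-k}^{xaby})=\frac{(N-n)n\cdot\{\widehat{\mathfrak{g}}_{\ell}(\sigma_{k-1}^{ab})-\widehat{\mathfrak{g}}_{\ell}(\sigma_{k}^{ab})\}}{\sum_{t=N'+1}^{N-N'}(N-t)t}.
\]
(Note that $\ell$ is preserved by this jump, so both harmonic extensions live on the same graph $U_\ell$.) Substituting Proposition \ref{p_muN-prod} together with $r_N(\xi_{n,k,\ell-k}^{xaby},\xi_{n,k-1,\ell-k+1}^{xaby})=k(d_N+\ell-k)r(a,b)$, then applying \eqref{e_wN-ZN} and \eqref{e_sum-N-cubed}, the sum becomes asymptotically
\[
\frac{18\,d_N^{3}\,r(a,b)}{N^{5}(1+o(1))}\sum_{n=N'+1}^{N-N'}\sum_{\ell=1}^{N-n}\sum_{k=1}^{\ell}\frac{n^{2}(N-n)^{2}}{n(N-n-\ell)}\,m_a^{k}m_b^{\ell-k}\{\widehat{\mathfrak{g}}_{\ell}(\sigma_{k-1}^{ab})-\widehat{\mathfrak{g}}_{\ell}(\sigma_{k}^{ab})\}^{2}.
\]
As in the proof of Lemma \ref{l_T3}, writing $\frac{n}{1}$ times $\frac{(N-n)^2}{N-n-\ell}=(N-n)+\frac{(N-n)\ell}{N-n-\ell}$, the error arising from the second summand is $o(d_N^{3}/N^{2})$ by the same telescoping/geometric-series argument (the bound $\widehat{\mathfrak{g}}_\ell\le C/\lambda^\ell$ of \eqref{e_ghat-ghatL-UB} combined with $m_\star/\lambda^2<1$ yields a convergent geometric series in $k+\ell$). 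The main contribution therefore reduces to $\sum_n n(N-n)\simeq N^{3}/6$ times $r(a,b)\sum_{\ell,k}m_a^{k}m_b^{\ell-k}\{\cdots\}^{2}$, and using $m_a r(a,b)=c_{ab}$ turns the factor $m_a^k r(a,b)$ into $c_{ab}m_a^{k-1}$, yielding exactly the right-hand side of the lemma.

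It remains to verify that $\mathscr{D}_{\mathcal{O}_N^{xaby}\mathcal{I}_N^{xaby}}(F)=o(d_N^{3}/N^{2})$. Only $a\leftrightarrow b$ boundary jumps contribute, and the difference $F(\zeta)-F(\eta)$ across such a jump is given by \eqref{e_tf-T6-2} with $n\in\{N',N-N'\}$; in particular it equals a prefactor of order $N'(N-N')/N^{3}$ times a bounded combination of $\widehat{\mathfrak{g}}_\ell$ values. Exactly as in the treatment of \eqref{e_l_T4-1} in Lemma \ref{l_T4}, splitting the $(k,\ell)$-sum at $N'/2$ and using $\widehat{\mathfrak{g}}_\ell\le C/\lambda^\ell$ with $m_\star<\lambda^2$ gives an $O(N'd_N^{3}/N^{4})+O((m_\star/\lambda^{2})^{N'/2})$ bound, which is $o(d_N^{3}/N^{2})$ by \eqref{e_N'-cond}. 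The main obstacle throughout is simply the bookkeeping of the geometric decay in $\ell$: the cushion $\lambda^{2}>m_\star$ is used in every subestimate to justify truncating the infinite sums and to interchange limits, but no new conceptual input beyond Lemma \ref{l_T4} is required.
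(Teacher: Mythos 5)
Your proposal is correct and follows essentially the same route as the paper, which itself reduces Lemma \ref{l_T6} to the proof of Lemma \ref{l_T4} and only records the computation of the main term $\mathscr{D}_{\mathcal{I}_{N}^{xaby}}(F)$ that you reproduce in detail (including the exact cancellation coming from the fact that an $a\leftrightarrow b$ jump preserves $n$ and $\ell$). One minor imprecision: for type \textbf{(T6)} the term $\mathscr{D}_{\mathcal{O}_{N}^{xaby}\mathcal{I}_{N}^{xaby}}(F)$ is identically zero, since an $a\leftrightarrow b$ jump does not change $\overline{\eta}_{x}=\eta_{x}$ and hence no admissible transition crosses between $\mathcal{O}_{N}^{xaby}$ and $\mathcal{I}_{N}^{xaby}$, so the boundary estimate you import from Lemma \ref{l_T4} is harmless but is bounding a vanishing quantity.
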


\begin{proof}
Almost all parts of the proof are the same with the proof of Lemma
\ref{l_T4}, except for the calculation of the following object in
the case of $a\sim b$:
\[
\sum_{n=N'}^{N-N'-1}\sum_{\ell=1}^{N-n}\sum_{k=1}^{\ell}\frac{Nd_{N}w_{N}(n)w_{N}(N-n-\ell)}{2}m_{a}^{k}m_{b}^{\ell-k}r(a,\,b)\cdot\frac{(N-n)^{2}n^{2}\{\widehat{\mathfrak{g}}_{\ell}(\sigma_{k-1}^{ab})-\widehat{\mathfrak{g}}_{\ell}(\sigma_{k}^{b})\}^{2}}{\{\sum_{t=N'+1}^{N-N'}(N-t)t\}^{2}},
\]
which is an analogue of \eqref{e_l_T4-2} and in turn the main constituent
of the Dirichlet form. Using the same logic, we obtain that this equals
\[
(1+o(1))\cdot3c_{ab}\cdot\frac{d_{N}^{3}}{N^{2}}\cdot\sum_{\ell=1}^{\infty}\sum_{k=1}^{\ell}m_{a}^{k-1}m_{b}^{\ell-k}\cdot\{\widehat{\mathfrak{g}}_{\ell}(\sigma_{k-1}^{ab})-\widehat{\mathfrak{g}}_{\ell}(\sigma_{k}^{ab})\}^{2},
\]
which completes the proof.
\end{proof}

\subsection{\label{sec5.3}Negligible part in $\mathcal{M}_{N}$}

In this subsection, we prove that the remaining Dirichlet summation
in $\mathscr{D}_{\mathcal{M}_{N}}(F)$ that is not handled in the
previous subsection is negligible, i.e., equals $o(\alpha_{N})$.
These contributions are exactly those $\eta\leftrightarrow\zeta$
where $\eta$ and $\zeta$ belong to different sets $\mathcal{A}_{N}^{xaby}$
and $\mathcal{A}_{N}^{xa'b'y}$, respectively, with $\{a,\,b\}\ne\{a',\,b'\}\subseteq S_{0}$.
This contribution $\eta\leftrightarrow\zeta$ happens only if, e.g.,
$b=b'$, at least one particle exists at $b$, and a sole particle
moves between sites $a$ and $a'$. Namely, we are left to prove that
\[
\sum_{n=0}^{N-1}\sum_{\ell=1}^{N-n-1}\mu_{N}(\xi_{n,\,1,\,\ell}^{xaby})\cdot d_{N}r(a,\,a')\cdot\{F(\xi_{n,\,1,\,\ell}^{xa'by})-F(\xi_{n,\,1,\,\ell}^{xaby})\}^{2}=o\Big(\frac{d_{N}^{3}}{N^{2}}\Big).
\]
If $\ell>\frac{N'}{2}$, then by Lemma \ref{l_F-prop}-(4), the summation
is bounded by $O(N(\frac{m_{\star}}{\lambda^{2}})^{\frac{N'}{2}})$,
which is negligible compared to $\frac{d_{N}^{3}}{N^{2}}$ by \eqref{e_N'-cond}.
Thus, it suffices to prove that
\[
\sum_{n=0}^{N-1}\sum_{\ell=1}^{(N-n-1)\wedge(N'/2)}\mu_{N}(\xi_{n,\,1,\,\ell}^{xaby})\cdot d_{N}r(a,\,a')\cdot\{F(\xi_{n,\,1,\,\ell}^{xa'by})-F(\xi_{n,\,1,\,\ell}^{xaby})\}^{2}=o\Big(\frac{d_{N}^{3}}{N^{2}}\Big),
\]
where $\alpha\wedge\beta:=\min\{\alpha,\,\beta\}$. If $n\le\frac{N'}{2}-2$,
then we have $n+\ell+1\le N'-1$, and thus in this case Lemma \ref{l_F-prop}-(2)
implies that $F(\xi_{n,\,1,\,\ell}^{xaby})=F(\xi_{n,\,1,\,\ell}^{xa'by})=0$.
Moreover, if $n\ge N-N'+1$, then again Lemma \ref{l_F-prop}-(2)
implies that $F(\xi_{n,\,1,\,\ell}^{xaby})=F(\xi_{n,\,1,\,\ell}^{xa'by})=1$.
Thus, we are left to prove
\[
\sum_{n=N'/2-1}^{N-N'}\sum_{\ell=1}^{N'/2}\mu_{N}(\xi_{n,\,1,\,\ell}^{xaby})\cdot d_{N}r(a,\,a')\cdot\{F(\xi_{n,\,1,\,\ell}^{xa'by})-F(\xi_{n,\,1,\,\ell}^{xaby})\}^{2}=o\Big(\frac{d_{N}^{3}}{N^{2}}\Big).
\]
By \eqref{e_wN-ZN}, this is equivalent to
\begin{equation}
Nd_{N}^{4}\sum_{n=N'/2-1}^{N-N'}\sum_{\ell=1}^{N'/2}\frac{m_{\star}^{\ell+1}}{n\ell(N-n-\ell-1)}\cdot\{F(\xi_{n,\,1,\,\ell}^{xa'by})-F(\xi_{n,\,1,\,\ell}^{xaby})\}^{2}=o\Big(\frac{d_{N}^{3}}{N^{2}}\Big).\label{e_neg-MN}
\end{equation}
By Lemma \ref{l_F-diff}, it holds that
\[
|F(\xi_{n,\,1,\,\ell}^{xa'by})-F(\xi_{n,\,1,\,\ell}^{xaby})|\le\frac{C}{N\lambda^{\ell}}.
\]
Inserting this to \eqref{e_neg-MN}, the left-hand side is bounded
by
\[
\frac{Cd_{N}^{4}}{N}\sum_{n=N'/2-1}^{N-N'}\sum_{\ell=1}^{N'/2}\frac{1}{n\ell(N-n-\ell-1)}\cdot\Big(\frac{m_{\star}}{\lambda^{2}}\Big)^{\ell}=O\Big(\frac{d_{N}^{4}}{N^{2}}\log N\Big)=o\Big(\frac{d_{N}^{3}}{N^{2}}\Big).
\]
Therefore, we have proved the following lemma.
\begin{lem}
\label{l_MNMN}If $\{a,\,b\}\ne\{a',\,b'\}\subseteq S_{0}$, then
it holds that
\[
\sum_{\eta\in\mathcal{A}_{N}^{xaby}\setminus\mathcal{A}_{N}^{xa'b'y}}\sum_{\zeta\in\mathcal{A}_{N}^{xa'b'y}\setminus\mathcal{A}_{N}^{xaby}}\mu_{N}(\eta)r_{N}(\eta,\,\zeta)\{f(\zeta)-f(\eta)\}^{2}=o\Big(\frac{d_{N}^{3}}{N^{2}}\Big).
\]
\end{lem}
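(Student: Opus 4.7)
The plan is to reduce the cross-Dirichlet sum to transitions of a very restricted form, and then combine the sharp test-function difference bound (Lemma \ref{l_F-diff}) with the stationary-weight decay coming from the extra site in $S_0$ that must be occupied during the transition. If $\eta \in \mathcal{A}_N^{xaby}\setminus\mathcal{A}_N^{xa'b'y}$ and $\zeta \in \mathcal{A}_N^{xa'b'y}\setminus\mathcal{A}_N^{xaby}$ are connected by $r_N(\eta,\zeta)>0$, a single particle is exchanged between two sites of $S_0$. Since $\{a,b\}\ne\{a',b'\}$, this forces---after relabeling---$b=b'$, exactly one particle at $a$ in $\eta$, exactly one at $a'$ in $\zeta$, and the same $\ell\ge 1$ particles at $b$ in both configurations. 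Hence it suffices to bound
\[
\Sigma := \sum_{n=0}^{N-1}\sum_{\ell=1}^{N-n-1}\mu_N(\xi_{n,1,\ell}^{xaby})\,d_N\,r(a,a')\,\{F(\xi_{n,1,\ell}^{xa'by})-F(\xi_{n,1,\ell}^{xaby})\}^2.
\]

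Next I would truncate in three stages. The tail $\ell > N'/2$ is controlled by the crude pointwise bound $0\le F \le C/\lambda^{\eta(S_0)}$ from Lemma \ref{l_F-prop}(3): on such configurations $\eta(S_0)\ge\ell+1$, so the contribution carries a factor $(m_\star/\lambda^2)^{N'/2}$, which is $o(d_N^3/N^2)$ by the growth condition \eqref{e_N'-cond} on $N'$ and the defining constraint $\lambda>\sqrt{m_\star}$. The boundary ranges $n \le N'/2-2$ and $n\ge N-N'+1$ fall away entirely, since on them $\overline{\eta}_x=\overline{\zeta}_x$ lies in the frozen regime of \eqref{e_F-ON-def}, forcing $F(\eta)=F(\zeta)\in\{0,1\}$. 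On the surviving window $n\in[N'/2-1,N-N']$, $\ell\in[1,N'/2]$, I would invoke the sharp difference bound $|F(\zeta)-F(\eta)|\le C/(N\lambda^\ell)$ from Lemma \ref{l_F-diff} together with the weight asymptotics
\[
\mu_N(\xi_{n,1,\ell}^{xaby})\,d_N\,r(a,a') \asymp \frac{N\,d_N^4\,m_\star^{\ell+1}}{n\,\ell\,(N-n-\ell-1)}
\]
provided by Proposition \ref{p_muN-prod} and \eqref{e_wN-ZN}. The resulting double sum factors, via the convergent geometric series $\sum_\ell(m_\star/\lambda^2)^\ell$ and a partial-fraction expansion of $\sum_n 1/[n(N-n-\ell-1)]$, into $\Sigma = O(d_N^4\log N/N^2)$.

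The only nontrivial hypothesis on $d_N$ enters at this last step: a spare factor of $d_N$ appears beyond the expected $d_N^3$ because the transported particle sits alone at its source, so its jump rate contains $d_N$ in place of a particle count, and an additional logarithmic factor arises from the crude sum $\sum_n 1/n$. Both are absorbed into $o(1)$ precisely by the hypothesis $d_N\log N\to 0$ of \eqref{e_dN-logN}. This is therefore the chief---and essentially sole---obstacle of the argument; any weakening of \eqref{e_dN-logN} would necessitate a refined treatment of the boundary corrections in the $n$-sum, though not a fundamentally different strategy.
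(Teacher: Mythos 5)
Your proposal is correct and follows essentially the same route as the paper: the same reduction to single-particle $a\leftrightarrow a'$ jumps with a common occupied third site, the same three-stage truncation (tail in $\ell$ via the pointwise bound on $F$, frozen boundary ranges in $n$, and Lemma \ref{l_F-diff} on the surviving window), and the same final bound $O(d_N^4\log N/N^2)=o(d_N^3/N^2)$. Your closing observation that the spare factor $d_N\log N$ is exactly what condition \eqref{e_dN-logN} absorbs is accurate and consistent with the paper's computation.
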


\subsection{\label{sec5.4}Remaining part}

In this subsection, we calculate the remaining negligible parts of
the Dirichlet form, $\mathscr{D}_{\mathcal{M}_{N}\mathcal{R}_{N}}(F)$
and $\mathscr{D}_{\mathcal{R}_{N}}(F)$.
\begin{lem}
\label{l_MNRN-RN}It holds that
\[
\mathscr{D}_{\mathcal{M}_{N}\mathcal{R}_{N}}(F)+\mathscr{D}_{\mathcal{R}_{N}}(F)=o\Big(\frac{d_{N}^{3}}{N^{2}}\Big).
\]
\end{lem}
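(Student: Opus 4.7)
The plan is to reduce the sum to transitions that actually change $F$, bound them using the elementary estimates from Lemmas \ref{l_F-prop} and \ref{l_F-diff}, and then split configurations into a bulk regime and a boundary regime near the condensate sites. First, by Lemma \ref{l_F-prop}(1), $F(\eta)$ depends on $\eta$ only through $\overline{\eta}_{x}$, $\overline{\eta}_{y}$, and $\eta_{a}$ for $a \in S_{0} \setminus (\mathcal{N}_{x} \cup \mathcal{N}_{y})$, so any transition $\eta \to \eta^{u,v}$ with $\{u,v\} \subseteq \{x\} \cup \mathcal{N}_{x}$ or $\{u,v\} \subseteq \{y\} \cup \mathcal{N}_{y}$ leaves $F$ invariant. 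Since $r(x,\cdot)$ is supported on $\mathcal{N}_{x}$ and $r(y,\cdot)$ on $\mathcal{N}_{y}$, every remaining (non-trivial) transition must have both endpoints in $S_{0}$; for such a transition $\eta_{u}, \eta_{v} \leq k := \eta(S_{0})$, which gives $r_{N}(\eta, \eta^{u,v}) \leq Ck^{2}$. Combining this with $|F(\eta^{u,v}) - F(\eta)|^{2} \leq C/(N^{2}\lambda^{2k})$ from Lemma \ref{l_F-diff} and using reversibility to retain only the endpoint with larger $S_{0}$-occupation, it suffices to prove
\[
\frac{C}{N^{2}}\sum_{k \geq 2}\frac{k^{2}}{\lambda^{2k}}\sum_{\eta \in \mathcal{R}_{N},\,\eta(S_{0})=k}\mu_{N}(\eta) = o\!\left(\frac{d_{N}^{3}}{N^{2}}\right).
\]

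Next, I would estimate the inner sum via Proposition \ref{p_muN-prod} and the asymptotics \eqref{e_wN-ZN}, splitting into a \emph{bulk} regime ($\eta_{x}, \eta_{y} \geq 1$) and a \emph{boundary} regime ($\eta_{x} = 0$ or $\eta_{y} = 0$). In the bulk, $\mu_{N}(\eta) \asymp Nd_{N}^{j+1}\prod m_{a_{i}}^{k_{i}} / (\eta_{x}\eta_{y}\prod k_{i})$, where $j \geq 3$ is the number of distinct occupied $S_{0}$ sites; summing $(\eta_{x}, \eta_{y})$ over $\eta_{x} + \eta_{y} = N - k$ produces a factor $\log N / N$, and summing compositions of $k$ produces $(\log k)^{j-1}/k$. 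The dominant $j = 3$ term then yields a bulk contribution of order $d_{N}^{4}\log N/N^{3}$, which is $o(d_{N}^{3}/N^{2})$ since $d_{N}\log N = o(1)$ by \eqref{e_dN-logN}.

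In the boundary regime, $w_{N}(0) = 1$ replaces a factor $d_{N}/n_{0}$, stripping one power of $d_{N}$ and leaving a naive bound of only $O(d_{N}^{3}/N^{2})$. The resolution is that if $\eta_{y} = 0$ then $\overline{\eta}_{x} = N - \eta(\mathcal{N}_{y})$, so $F(\eta)$ being non-constant (i.e., $\overline{\eta}_{x} \in \llbracket N',\,N-N'\rrbracket$) forces $\eta(\mathcal{N}_{y}) \geq N'$ and hence $k \geq N'$. The case $\eta_{x} = 0$ is symmetric, and the residual transitions in which $\overline{\eta}_{x}$ lies exactly at a truncation boundary $N'-1$ or $N - N' + 1$ (so the move carries it across) impose the same constraint via the analogous argument applied to the neighbour. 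In all such cases the sum carries a factor $m_{\star}^{N'}$, which by \eqref{e_N'-cond} combined with the subexponential decay \eqref{e_dN-decay-subexp} satisfies $m_{\star}^{N'} = o(d_{N}^{q})$ for every $q \geq 0$; hence the boundary contribution is $o(d_{N}^{3}/N^{2})$.

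The main obstacle is precisely the boundary regime: the heuristic ``each occupied $S_{0}$ site contributes one factor of $d_{N}$ to $\mu_{N}$'' degenerates when one of the condensate sites is empty, reducing the gain from $d_{N}^{j+1}$ to $d_{N}^{j}$ and bringing the naive bound to the critical order $d_{N}^{3}/N^{2}$. Upgrading the bound from $O$ to $o$ in this regime requires exactly the super-polynomial suppression $m_{\star}^{N'}$ afforded by the truncation scale $N'$ of \eqref{e_N'-cond}; this is the technical reason for its introduction, and the remaining estimates reduce to elementary counting of compositions together with the asymptotics $w_{N}(k) \simeq d_{N}/k$ and $Z_{N} \simeq |S_{\star}|d_{N}/N$.
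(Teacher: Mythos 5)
Your proposal is correct and follows essentially the same route as the paper: reduce to $S_{0}$-internal jumps via Lemma \ref{l_F-prop}-(1), bound the rates crudely by $C\eta(S_{0})^{2}$ and the differences by Lemma \ref{l_F-diff}, exploit that $\mathcal{R}_{N}$ forces at least three occupied $S_{0}$-sites so that $\mu_{N}$ carries $d_{N}^{j+1}$ with $j\ge3$, and close with the composition estimate and $d_{N}\log N\to0$; your bulk/boundary split according to whether $\eta_{x},\eta_{y}\ge1$ is a reorganization of the paper's restriction of $n$ and $\ell$ to $\llbracket N'/2-1,\,N-N'\rrbracket$ and $\llbracket3,\,N'/2\rrbracket$. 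Two harmless slips: the bulk contribution should read $O(d_{N}^{4}\log N/N^{2})$ rather than $/N^{3}$ (consistent with your appeal to $d_{N}\log N=o(1)$), and for $\eta\in\mathcal{R}_{N}$ one has $\eta(S_{0})\ge3$, not merely $\ge2$.
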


\begin{proof}
By Lemma \ref{l_F-prop}-(1), any particle movements involving $x$
or $y$ do not change the value of $F$. Thus, we may only consider
the particle movements within $S_{0}$. Precisely, we may bound as
\[
\mathscr{D}_{\mathcal{M}_{N}\mathcal{R}_{N}}(F)+\mathscr{D}_{\mathcal{R}_{N}}(F)\le\sum_{\eta\in\mathcal{R}_{N}}\mu_{N}(\eta)\sum_{b,\,b'\in S_{0}}\eta_{b}(d_{N}+\eta_{b'})r(b,\,b')\{F(\eta^{b,\,b'})-F(\eta)\}^{2}.
\]
Here, we have the inequality instead of an equality, since the connected
pairs in $\mathcal{R}_{N}$ are counted twice. Note that $\eta\in\mathcal{R}_{N}$
if and only if there exist $c_{1},\,c_{2},\,c_{3}\in S_{0}$ such
that $\eta_{c_{1}},\,\eta_{c_{2}},\,\eta_{c_{3}}\ge1$ (recall Notation
\ref{n_different}). Thus, applying the crude bound $\eta_{b}(d_{N}+\eta_{b'})r(b,\,b')\le C\eta(S_{0})^{2}$
and also Lemma \ref{l_F-diff}, we may bound the right-hand side by
\[
C\sum_{n=0}^{N-3}\sum_{\ell=3}^{N-n}\sum_{j=3}^{\ell}\sum_{\{a_{1},\,\dots,\,a_{j}\}\subseteq S_{0}}\sum_{\substack{k_{1},\,\dots,\,k_{j}\ge1:\\
k_{1}+\cdots+k_{j}=\ell
}
}\mu_{N}(\xi_{n,\,k_{1},\,\dots,\,k_{j}}^{xa_{1}\cdots a_{j}y})\cdot\ell^{2}\cdot\Big(\frac{C}{N\lambda^{\ell}}\Big)^{2}.
\]
As done in the proof of Lemma \ref{l_MNMN}, we may restrict the summation
in $\ell\in\llbracket3,\,\frac{N'}{2}\rrbracket$ and $n\in\llbracket\frac{N'}{2}-1,\,N-N'\rrbracket$,
so that we are left to bound
\[
C\sum_{n=N'/2-1}^{N-N'}\sum_{\ell=3}^{N'/2}\sum_{j=3}^{\ell}\sum_{\{a_{1},\,\dots,\,a_{j}\}\subseteq S_{0}}\sum_{\substack{k_{1},\,\dots,\,k_{j}\ge1:\\
k_{1}+\cdots+k_{j}=\ell
}
}\mu_{N}(\xi_{n,\,k_{1},\,\dots,\,k_{j}}^{xa_{1}\cdots a_{j}y})\cdot\ell^{2}\cdot\Big(\frac{1}{N\lambda^{\ell}}\Big)^{2}.
\]
Substituting the exact values, this is bounded by
\[
\frac{C}{N}\sum_{n=N'/2-1}^{N-N'}\sum_{\ell=3}^{N'/2}\sum_{j=3}^{\ell}\sum_{\substack{k_{1},\,\dots,\,k_{j}\ge1:\\
k_{1}+\cdots+k_{j}=\ell
}
}\frac{d_{N}^{j+1}\ell^{2}}{nk_{1}\cdots k_{j}(N-n-\ell)}\cdot\Big(\frac{m_{\star}}{\lambda^{2}}\Big)^{\ell}.
\]
The single summation in $n$ of $\frac{1}{n(N-n-\ell)}$ is bounded
by $O(\frac{1}{N}\log N)$. Thus, we are left to prove that
\begin{equation}
\frac{\log N}{N^{2}}\sum_{\ell=3}^{N'/2}\ell^{2}\Big(\frac{m_{\star}}{\lambda^{2}}\Big)^{\ell}\sum_{j=3}^{\ell}\sum_{\substack{k_{1},\,\dots,\,k_{j}\ge1:\\
k_{1}+\cdots+k_{j}=\ell
}
}\frac{d_{N}^{j+1}}{k_{1}\cdots k_{j}}=o\Big(\frac{d_{N}^{3}}{N^{2}}\Big).\label{e_l_MNRN-RN}
\end{equation}
It is elementary (see, e.g. \cite[Lemma 9.1]{KS NRIP}) to see that
\[
\sum_{\substack{k_{1},\,\dots,\,k_{j}\ge1:\\
k_{1}+\cdots+k_{j}=\ell
}
}\frac{1}{k_{1}\cdots k_{j}}\le\Big(\frac{C\log(\ell+1)}{\ell}\Big)^{j-1}\le C^{j+1}.
\]
Therefore, we bound the left-hand side of \eqref{e_l_MNRN-RN} by
\[
\frac{\log N}{N^{2}}\sum_{\ell=3}^{N'/2}\ell^{2}\Big(\frac{m_{\star}}{\lambda^{2}}\Big)^{\ell}\sum_{j=3}^{\ell}(Cd_{N})^{j+1}\le\frac{C\log N}{N^{2}}\sum_{\ell=3}^{N'/2}\ell^{2}\Big(\frac{m_{\star}}{\lambda^{2}}\Big)^{\ell}\cdot d_{N}^{4}=O\Big(\frac{d_{N}^{4}}{N^{2}}\log N\Big)=o\Big(\frac{d_{N}^{3}}{N^{2}}\Big).
\]
This completes the proof.
\end{proof}

\subsection{\label{sec5.5}Proof of Proposition \ref{p_Diri-form}}

To conclude this section, we present the proof of Proposition \ref{p_Diri-form}.
\begin{proof}[Proof of Proposition \ref{p_Diri-form}]
 By Lemmas \ref{l_MNMN} and \ref{l_MNRN-RN}, we calculate
\[
\mathscr{D}_{N}(F)=\mathscr{D}_{\mathcal{M}_{N}}(F)+\mathscr{D}_{\mathcal{M}_{N}\mathcal{R}_{N}}(F)+\mathscr{D}_{\mathcal{R}_{N}}(F)\le\sum_{i=1}^{6}\sum_{\{a,\,b\}\subseteq S_{0}:\,\text{type }\textup{\textbf{(T}}\boldsymbol{i}\textup{\textbf{)}}}\mathscr{D}_{\mathcal{A}_{N}^{xaby}}(F)+o\Big(\frac{d_{N}^{3}}{N^{2}}\Big).
\]
Then, by Lemmas \ref{l_T1-T2}--\ref{l_T6}, it holds that $\frac{N^{2}}{d_{N}^{3}}\cdot\mathscr{D}_{N}(F)$
is bounded by $(1+o(1))$ times
\begin{equation}
\begin{aligned} & \sum_{a\in\mathcal{N}_{x}}\sum_{b\in\mathcal{N}_{y}}\frac{3c_{ab}}{(1-m_{a})(1-m_{b})}+\sum_{a\in\mathcal{N}_{x}}\sum_{b\in S_{0}\setminus(\mathcal{N}_{x}\cup\mathcal{N}_{y})}\frac{3c_{ab}}{1-m_{a}}\cdot\sum_{\ell=0}^{\infty}m_{b}^{\ell}\{1+\widehat{\mathfrak{g}}_{\ell}(\sigma^{b})-\widehat{\mathfrak{g}}_{\ell+1}(\sigma^{b})\}^{2}\\
 & +\sum_{a\in S_{0}\setminus(\mathcal{N}_{x}\cup\mathcal{N}_{y})}\sum_{b\in\mathcal{N}_{y}}\frac{3c_{ab}}{1-m_{b}}\cdot\sum_{\ell=0}^{\infty}m_{a}^{\ell}\{\widehat{\mathfrak{g}}_{\ell}(\sigma^{a})-\widehat{\mathfrak{g}}_{\ell+1}(\sigma^{a})\}^{2}\\
 & +\sum_{\{a,\,b\}\subseteq S_{0}\setminus(\mathcal{N}_{x}\cup\mathcal{N}_{y}):\,a\sim b}3c_{ab}\cdot\sum_{\ell=1}^{\infty}\sum_{k=1}^{\ell}m_{a}^{k-1}m_{b}^{\ell-k}\{\widehat{\mathfrak{g}}_{\ell}(\sigma_{k-1}^{ab})-\widehat{\mathfrak{g}}_{\ell}(\sigma_{k}^{ab})\}^{2}.
\end{aligned}
\label{e_p_Diri-form-1}
\end{equation}
In terms of the decomposition in \eqref{e_Gj'-Vj'-Ej'}, the second
and third (double) summations in \eqref{e_p_Diri-form-1} can be rewritten
as
\begin{equation}
\begin{aligned} & 3\sum_{j=1}^{s}\sum_{b\in\mathcal{A}_{x,\,j}}\sum_{a\in\mathcal{N}_{x}}\frac{c_{ab}}{1-m_{a}}\cdot\sum_{\ell=0}^{\infty}m_{b}^{\ell}\{1+\mathfrak{g}(\mathfrak{b}_{\ell})-\mathfrak{g}(\mathfrak{b}_{\ell+1})\}^{2}\\
 & +3\sum_{j=1}^{s}\sum_{a\in\mathcal{A}_{y,\,j}}\sum_{b\in\mathcal{N}_{y}}\frac{c_{ab}}{1-m_{b}}\cdot\sum_{\ell=0}^{\infty}m_{a}^{\ell}\{\mathfrak{g}(\mathfrak{a}_{\ell})-\mathfrak{g}(\mathfrak{a}_{\ell+1})\}^{2}.
\end{aligned}
\label{e_p_Diri-form-2}
\end{equation}
Moreover, by \eqref{e_trans-ver-def}, the fourth (triple) summation
in \eqref{e_p_Diri-form-1} can be rewritten as
\begin{equation}
3\sum_{j=1}^{s}\sum_{\ell=1}^{\infty}\sum_{\{a,\,b\}\subseteq\mathcal{V}_{j}'}\sum_{k=1}^{\ell}\mathfrak{r}^{\ell}(\sigma_{k}^{ab},\,\sigma_{k-1}^{ab})\{\widehat{\mathfrak{g}}_{\ell}(\sigma_{k-1}^{ab})-\widehat{\mathfrak{g}}_{\ell}(\sigma_{k}^{ab})\}^{2}.\label{e_p_Diri-form-3}
\end{equation}
Since $\widehat{\mathfrak{g}}_{\ell}$ is the harmonic extension of
$\mathfrak{g}$, by \eqref{e_dual-Diri}, this is equal to
\begin{equation}
3\sum_{j=1}^{s}\sum_{\ell=1}^{\infty}\sum_{\{a,\,b\}\subseteq\mathcal{A}_{x,\,j}\cup\mathcal{A}_{y,\,j}}\widehat{\mathfrak{r}}_{\mathfrak{a}_{\ell}\mathfrak{b}_{\ell}}^{\ell}\{\mathfrak{g}(\mathfrak{b}_{\ell})-\mathfrak{g}(\mathfrak{a}_{\ell})\}^{2}.\label{e_p_Diri-form-4}
\end{equation}
Therefore, by \eqref{e_p_Diri-form-1}, \eqref{e_p_Diri-form-2},
\eqref{e_p_Diri-form-3}, and \eqref{e_p_Diri-form-4}, along with
\eqref{e_Kxy-def}, we have proved that
\[
\frac{N^{2}}{d_{N}^{3}}\cdot\mathscr{D}_{N}(F)\le(1+o(1))\cdot\frac{1}{2\mathfrak{K}_{xy}}.
\]
Sending $N\to\infty$, we conclude the proof of Proposition \ref{p_Diri-form}.
\end{proof}

\section{\label{sec6}Lower Bound of Capacity: Test Flow}

In this section, we provide a matching lower bound of the capacity
$\mathrm{Cap}_{N}(\mathcal{E}_{N}^{x},\,\mathcal{E}_{N}^{y})$. Throughout
this section, we fix a positive integer $L$. Occasionally, we abbreviate
as $h=h_{\mathcal{E}_{N}^{x},\,\mathcal{E}_{N}^{y}}$.

\subsection{\label{sec6.1}Test flow}

We construct a test flow $\psi=\psi_{\textup{test}}^{L}\in\mathfrak{F}$.
The flow $\psi$ is constructed on each $\mathcal{A}_{N}^{xaby}$,
where $\{a,\,b\}\subseteq S_{0}$ is of one of the following four
types:
\[
\begin{aligned} & \textbf{(L3)}\ a\in\mathcal{N}_{x},\ b\in\mathcal{N}_{y},\ a\sim b;\quad &  & \textbf{(L4)}\ a\in\mathcal{N}_{x},\ b\notin\mathcal{N}_{x}\cup\mathcal{N}_{y},\ a\sim b;\\
 & \textbf{(L5)}\ a\notin\mathcal{N}_{x}\cup\mathcal{N}_{y},\ b\in\mathcal{N}_{y},\ a\sim b;\quad &  & \textbf{(L6)}\ a,\,b\notin\mathcal{N}_{x}\cup\mathcal{N}_{y},\ a\sim b.
\end{aligned}
\]
Namely, for $i\in\{3,\,4,\,5,\,6\}$, $\{a,\,b\}\subseteq S_{0}$
is of type \textbf{(L$\boldsymbol{i}$)} if and only if it is of type
\textbf{(T$\boldsymbol{i}$)} and moreover $a\sim b$.

We divide the definition of $\psi$ into bulk flow $\psi^{ab}$ for
each type of $\{a,\,b\}\subseteq S_{0}$, outer correction flows $\varphi^{a}$
for $a\in\mathcal{N}_{x}$ and $\varphi^{b}$ for $b\in\mathcal{N}_{y}$,
and edge correction flows $\varphi^{x}$ and $\varphi^{y}$, such
that
\begin{equation}
\psi:=\sum_{i=3}^{6}\sum_{\{a,\,b\}\subseteq S_{0}:\,\text{type }\textup{\textbf{(L}}\boldsymbol{i}\textup{\textbf{)}}}\psi^{ab}+\sum_{a\in\mathcal{N}_{x}}\varphi^{a}+\sum_{b\in\mathcal{N}_{y}}\varphi^{b}+\varphi^{x}+\varphi^{y}.\label{e_psi-def}
\end{equation}
To define the flows, instead of the objects $\mathfrak{g}$ and $\widehat{\mathfrak{g}}_{\ell}$
used in Section \ref{sec5.1}, we must use the truncated objects $\mathfrak{g}^{L}$
and $\widehat{\mathfrak{g}}_{\ell}^{L}$. This is necessary to control
the small flows on the level $L$ as $0$ rather than just negligible.

\begin{figure}
\begin{tikzpicture}
\fill[rounded corners,white] (-2.5,3.7) rectangle (2.5,-1.7); 
\fill[black!10!white] (0.2,2.7)--(0,2.4)--(1.4,0.3)--(1.6,0.2)--(1.8,0.3)--cycle;
\draw[densely dashed] (-2,0)--(2,0);
\draw (0,3)--(-2,0)--(0,-1)--cycle; \draw (0,-1)--(2,0)--(0,3);
\draw (1.4,0.3)--(1.8,0.3);
\draw[very thick] (0.2,2.7)--(0,2.4)--(1.4,0.3)--(1.6,0.2)--(1.8,0.3)--cycle; \draw[very thick] (0,2.4)--(0.2,2.3)--(0.2,2.7); \draw[very thick] (0.2,2.3)--(1.6,0.2);
\draw (0,3.15) node[above]{\textcolor{white}{$\xi_n^{xy}$}};
\draw (0,-1.15) node[below]{\textcolor{white}{$\xi_{n-\ell,\,\ell}^{xby}$}};
\draw (0,3.1) node[above]{$\xi^x$};
\draw (0.6,2.7) node[above]{\footnotesize $\xi_{N-L}^{xy}$};
\draw (2.1,0.3) node[above]{\footnotesize $\xi_{L}^{xy}$};
\draw (-2,-0.1) node[below]{$\xi^a$};
\draw (2,-0.1) node[below]{$\xi^y$};
\draw (0,-1.1) node[below]{$\xi^b$};
\end{tikzpicture}
\begin{tikzpicture}
\fill[rounded corners,white] (-2.5,3.7) rectangle (2.5,-1.7); 
\draw[densely dashed] (-2,0)--(2,0);
\draw (0,3)--(-2,0)--(0,-1)--cycle; \draw (0,-1)--(2,0)--(0,3);
\draw[very thick,-to,red] (-0.4,2.4)--(0,2.2);
\draw[very thick,-to,red] (-0.8,1.8)--(-0.4,1.6); \draw[very thick,-to,red] (-0.4,1.6)--(0,1.4);
\draw[very thick,-to,red] (-1.2,1.2)--(-0.8,1); \draw[very thick,-to,red] (-0.8,1)--(-0.4,0.8); \draw[very thick,-to,red] (-0.4,0.8)--(0,0.6);
\draw[very thick,-to,red] (-1.6,0.6)--(-1.2,0.4); \draw[very thick,-to,red] (-1.2,0.4)--(-0.8,0.2); \draw[very thick,-to,red] (-0.8,0.2)--(-0.4,0); \draw[very thick,-to,red] (-0.4,0)--(0,-0.2);
\draw[very thick,-to,red] (-2,0)--(-1.6,-0.2); \draw[very thick,-to,red] (-1.6,-0.2)--(-1.2,-0.4); \draw[very thick,-to,red] (-1.2,-0.4)--(-0.8,-0.6); \draw[very thick,-to,red] (-0.8,-0.6)--(-0.4,-0.8); \draw[very thick,-to,red] (-0.4,-0.8)--(0,-1);
\draw[very thick,-to,blue] (-0.4,-0.8)--(0,-0.2);
\draw[very thick,-to,blue] (-0.8,-0.6)--(-0.4,0); \draw[very thick,-to,blue] (-0.4,0)--(0,0.6);
\draw[very thick,-to,blue] (-1.2,-0.4)--(-0.8,0.2); \draw[very thick,-to,blue] (-0.8,0.2)--(-0.4,0.8); \draw[very thick,-to,blue] (-0.4,0.8)--(0,1.4);
\draw[very thick,-to,blue] (-1.6,-0.2)--(-1.2,0.4); \draw[very thick,-to,blue] (-1.2,0.4)--(-0.8,1); \draw[very thick,-to,blue] (-0.8,1)--(-0.4,1.6); \draw[very thick,-to,blue] (-0.4,1.6)--(0,2.2);
\draw[very thick,-to,green] (0,3)--(-0.4,2.4); \draw[very thick,-to,green] (-0.4,2.4)--(-0.8,1.8); \draw[very thick,-to,green] (-0.8,1.8)--(-1.2,1.2); \draw[very thick,-to,green] (-1.2,1.2)--(-1.6,0.6); \draw[very thick,-to,green] (-1.6,0.6)--(-2,0);
\draw[very thick,-to,green] (0,2.2)--(0.4,2.4);
\draw[very thick,-to,green] (0,1.4)--(0.4,1.6); \draw[very thick,-to,green] (0.4,1.6)--(0.8,1.8);
\draw[very thick,-to,green] (0,0.6)--(0.4,0.8); \draw[very thick,-to,green] (0.4,0.8)--(0.8,1); \draw[very thick,-to,green] (0.8,1)--(1.2,1.2);
\draw[very thick,-to,green] (0,-0.2)--(0.4,0); \draw[very thick,-to,green] (0.4,0)--(0.8,0.2); \draw[very thick,-to,green] (0.8,0.2)--(1.2,0.4); \draw[very thick,-to,green] (1.2,0.4)--(1.6,0.6);
\draw[very thick,-to,green] (0,-1)--(0.4,-0.8); \draw[very thick,-to,green] (0.4,-0.8)--(0.8,-0.6); \draw[very thick,-to,green] (0.8,-0.6)--(1.2,-0.4); \draw[very thick,-to,green] (1.2,-0.4)--(1.6,-0.2); \draw[very thick,-to,green] (1.6,-0.2)--(2,0);
\draw (0,3.1) node[above]{$\xi_n^{xy}$};
\draw (-2,-0.1) node[below]{$\xi_{n-\ell,\,\ell}^{xay}$};
\draw (2,-0.1) node[below]{$\xi_{n-\ell}^{xy}$};
\draw (0,-1.1) node[below]{$\xi_{n-\ell,\,\ell}^{xby}$};
\end{tikzpicture}
\begin{tikzpicture}
\fill[rounded corners,white] (-2.5,3.7) rectangle (2.5,-1.7); 
\draw (0,3)--(-2,0)--(0,-1)--cycle;
\draw[very thick,-to,red] (-0.4,2.4)--(0,2.2);
\draw[very thick,-to,red] (-0.8,1.8)--(-0.4,1.6); \draw[very thick,-to,red] (-0.4,1.6)--(0,1.4);
\draw[very thick,-to,red] (-1.2,1.2)--(-0.8,1); \draw[very thick,-to,red] (-0.8,1)--(-0.4,0.8); \draw[very thick,-to,red] (-0.4,0.8)--(0,0.6);
\draw[very thick,-to,red] (-1.6,0.6)--(-1.2,0.4); \draw[very thick,-to,red] (-1.2,0.4)--(-0.8,0.2); \draw[very thick,-to,red] (-0.8,0.2)--(-0.4,0); \draw[very thick,-to,red] (-0.4,0)--(0,-0.2);
\draw[very thick,-to,red] (-2,0)--(-1.6,-0.2); \draw[very thick,-to,red] (-1.6,-0.2)--(-1.2,-0.4); \draw[very thick,-to,red] (-1.2,-0.4)--(-0.8,-0.6); \draw[very thick,-to,red] (-0.8,-0.6)--(-0.4,-0.8); \draw[very thick,-to,red] (-0.4,-0.8)--(0,-1);
\draw[very thick,-to,blue] (-0.4,-0.8)--(0,-0.2);
\draw[very thick,-to,blue] (-0.8,-0.6)--(-0.4,0); \draw[very thick,-to,blue] (-0.4,0)--(0,0.6);
\draw[very thick,-to,blue] (-1.2,-0.4)--(-0.8,0.2); \draw[very thick,-to,blue] (-0.8,0.2)--(-0.4,0.8); \draw[very thick,-to,blue] (-0.4,0.8)--(0,1.4);
\draw[very thick,-to,blue] (-1.6,-0.2)--(-1.2,0.4); \draw[very thick,-to,blue] (-1.2,0.4)--(-0.8,1); \draw[very thick,-to,blue] (-0.8,1)--(-0.4,1.6); \draw[very thick,-to,blue] (-0.4,1.6)--(0,2.2);
\draw[very thick,-to,green] (0,3)--(-0.4,2.4); \draw[very thick,-to,green] (-0.4,2.4)--(-0.8,1.8); \draw[very thick,-to,green] (-0.8,1.8)--(-1.2,1.2); \draw[very thick,-to,green] (-1.2,1.2)--(-1.6,0.6); \draw[very thick,-to,green] (-1.6,0.6)--(-2,0);
\draw (0,3.1) node[above]{$\xi_n^{xy}$};
\draw (-2,-0.1) node[below]{$\xi_{n-\ell,\,\ell}^{xay}$};
\draw (0,-1.1) node[below]{$\xi_{n-\ell,\,\ell}^{xby}$};
\end{tikzpicture}\\
\begin{tikzpicture}
\fill[rounded corners,white] (-2.5,0.2) rectangle (2.5,-1.7); 
\draw (2,0)--(-2,0)--(0,-1)--cycle;
\draw[very thick,-to,red] (1.2,0)--(1.6,-0.2);
\draw[very thick,-to,red] (0.4,0)--(0.8,-0.2); \draw[very thick,-to,red] (0.8,-0.2)--(1.2,-0.4);
\draw[very thick,-to,red] (-0.4,0)--(0,-0.2); \draw[very thick,-to,red] (0,-0.2)--(0.4,-0.4); \draw[very thick,-to,red] (0.4,-0.4)--(0.8,-0.6);
\draw[very thick,-to,red] (-1.2,0)--(-0.8,-0.2); \draw[very thick,-to,red] (-0.8,-0.2)--(-0.4,-0.4); \draw[very thick,-to,red] (-0.4,-0.4)--(0,-0.6); \draw[very thick,-to,red] (0,-0.6)--(0.4,-0.8);
\draw[very thick,-to,red] (-2,0)--(-1.6,-0.2); \draw[very thick,-to,red] (-1.6,-0.2)--(-1.2,-0.4); \draw[very thick,-to,red] (-1.2,-0.4)--(-0.8,-0.6); \draw[very thick,-to,red] (-0.8,-0.6)--(-0.4,-0.8); \draw[very thick,-to,red] (-0.4,-0.8)--(0,-1);
\draw[very thick,-to,blue] (-1.6,-0.2)--(-1.2,0);
\draw[very thick,-to,blue] (-1.2,-0.4)--(-0.8,-0.2); \draw[very thick,-to,blue] (-0.8,-0.2)--(-0.4,0);
\draw[very thick,-to,blue] (-0.8,-0.6)--(-0.4,-0.4); \draw[very thick,-to,blue] (-0.4,-0.4)--(0,-0.2); \draw[very thick,-to,blue] (0,-0.2)--(0.4,0);
\draw[very thick,-to,blue] (-0.4,-0.8)--(0,-0.6); \draw[very thick,-to,blue] (0,-0.6)--(0.4,-0.4); \draw[very thick,-to,blue] (0.4,-0.4)--(0.8,-0.2); \draw[very thick,-to,blue] (0.8,-0.2)--(1.2,0);
\draw[very thick,-to,green] (0,-1)--(0.4,-0.8); \draw[very thick,-to,green] (0.4,-0.8)--(0.8,-0.6); \draw[very thick,-to,green] (0.8,-0.6)--(1.2,-0.4); \draw[very thick,-to,green] (1.2,-0.4)--(1.6,-0.2); \draw[very thick,-to,green] (1.6,-0.2)--(2,0);
\draw (-2,-0.1) node[below]{$\xi_{n,\,\ell}^{xay}$};
\draw (2,-0.1) node[below]{$\xi_n^{xy}$};
\draw (0,-1.1) node[below]{$\xi_{n,\,\ell}^{xby}$};
\end{tikzpicture}
\begin{tikzpicture}
\fill[rounded corners,white] (-2.5,0.2) rectangle (2.5,-1.7); 
\draw (2,0)--(-2,0)--(0,-1)--cycle;
\draw[very thick,-to,red] (1.2,0)--(1.6,-0.2);
\draw[very thick,-to,red] (0.4,0)--(0.8,-0.2); \draw[very thick,-to,red] (0.8,-0.2)--(1.2,-0.4);
\draw[very thick,-to,red] (-0.4,0)--(0,-0.2); \draw[very thick,-to,red] (0,-0.2)--(0.4,-0.4); \draw[very thick,-to,red] (0.4,-0.4)--(0.8,-0.6);
\draw[very thick,-to,red] (-1.2,0)--(-0.8,-0.2); \draw[very thick,-to,red] (-0.8,-0.2)--(-0.4,-0.4); \draw[very thick,-to,red] (-0.4,-0.4)--(0,-0.6); \draw[very thick,-to,red] (0,-0.6)--(0.4,-0.8);
\draw[very thick,-to,red] (-2,0)--(-1.6,-0.2); \draw[very thick,-to,red] (-1.6,-0.2)--(-1.2,-0.4); \draw[very thick,-to,red] (-1.2,-0.4)--(-0.8,-0.6); \draw[very thick,-to,red] (-0.8,-0.6)--(-0.4,-0.8); \draw[very thick,-to,red] (-0.4,-0.8)--(0,-1);
\draw (-2,-0.1) node[below]{$\xi_{n,\,\ell}^{xay}$};
\draw (2,-0.1) node[below]{$\xi_n^{xy}$};
\draw (0,-1.1) node[below]{$\xi_{n,\,\ell}^{xby}$};
\end{tikzpicture}\caption{\label{fig6.1}Diagrams of bulk flow $\psi^{ab}$ and outer correction
flows $\varphi^{a}$ and $\varphi^{b}$ on $\mathcal{A}_{N}^{xaby}$.
The flows are supported on the gray region in the first figure. The
remaining four figures illustrate the flows according to the types
\textbf{(L3)}--\textbf{(L6)}, respectively. Red arrows indicate the
main part $\phi^{ab}$, blue arrows indicate the inner correction
part $\varphi^{ab}$, and green arrows indicate the outer correction
flows $\varphi^{a}$ (\textbf{(L3)} and \textbf{(L4)}) and $\varphi^{b}$
(\textbf{(L3)} and \textbf{(L5)}).}
\end{figure}
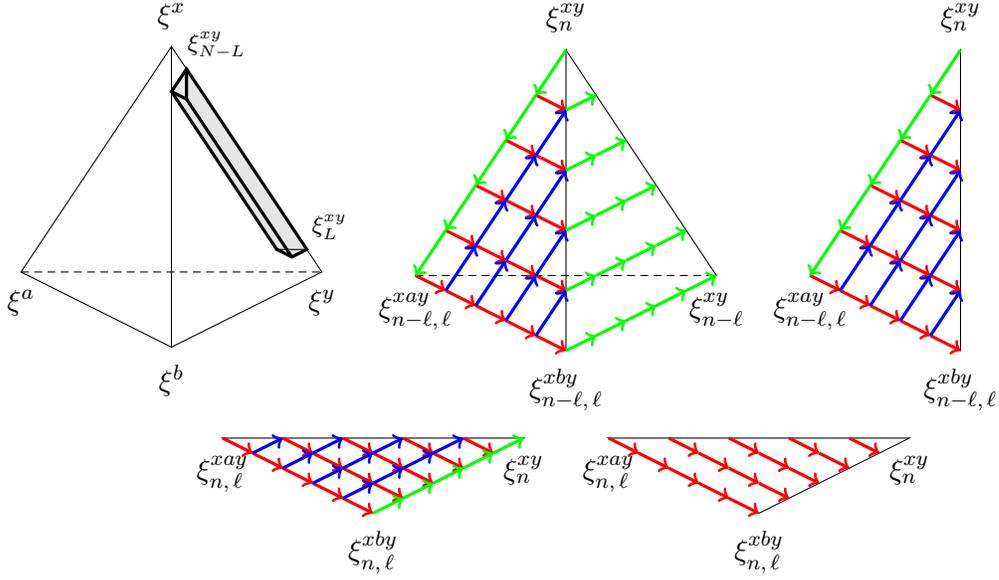

\begin{defn}[\textbf{(L3)} Bulk flow on $\mathcal{A}_{N}^{xaby}$ if $a\in\mathcal{N}_{x}$,
$b\in\mathcal{N}_{y}$, $a\sim b$]
\label{d_tfl-L3}The definition of bulk flow $\psi^{ab}$ is divided
into main part $\phi^{ab}$ and inner correction part $\varphi^{ab}$;
refer to Figure \ref{fig6.1}.
\begin{enumerate}
\item For $n\in\llbracket2L,\,N-L\rrbracket$, $\ell\in\llbracket1,\,L-1\rrbracket$,
and $k\in\llbracket1,\,\ell\rrbracket$, define
\begin{equation}
\phi^{ab}(\xi_{n-\ell,\,k,\,\ell-k}^{xaby},\,\xi_{n-\ell,\,k-1,\,\ell-k+1}^{xaby}):=c_{ab}\cdot m_{a}^{k-1}m_{b}^{\ell-k}.\label{e_tfl-L3-1}
\end{equation}
Moreover, for $k\in\llbracket1,\,L\rrbracket$, define
\begin{equation}
\phi^{ab}(\xi_{n-L,\,k,\,L-k}^{xaby},\,\xi_{n-L,\,k-1,\,L-k+1}^{xaby}):=c_{ab}\cdot\frac{m_{a}^{k-1}m_{b}^{L-k}}{1-m_{a}}.\label{e_tfl-L3-2}
\end{equation}
The main flow $\phi^{ab}$ is depicted as red arrows in Figure \ref{fig6.1}.
\item For $n\in\llbracket2L,\,N-L\rrbracket$, $\ell\in\llbracket2,\,L\rrbracket$,
and $k\in\llbracket1,\,\ell-1\rrbracket$, define
\begin{equation}
\varphi^{ab}(\xi_{n-\ell,\,k,\,\ell-k}^{xaby},\,\xi_{n-\ell+1,\,k-1,\,\ell-k}^{xaby}):=-\sum_{j=\ell}^{L}(\mathrm{div}\,\phi^{ab})(\xi_{n-j,\,k+j-\ell,\,\ell-k}^{xaby}).\label{e_tfl-L3-3}
\end{equation}
Moreover, for $n\in\llbracket N-2L+2,\,N-L\rrbracket$, $\ell\in\llbracket1,\,n-(N-2L+1)\rrbracket$,
and $k\in\llbracket0,\,\ell-1\rrbracket$, define
\begin{equation}
\varphi^{ab}(\xi_{n-\ell,\,k,\,\ell-k}^{xaby},\,\xi_{n-\ell,\,k+1,\,\ell-k-1}^{xaby}):=\phi^{ab}(\xi_{n-\ell,\,1,\,\ell-1}^{xaby},\,\xi_{n-\ell,\,\ell}^{xby})+\varphi^{ab}(\xi_{n-\ell-1,\,1,\,\ell}^{xaby},\,\xi_{n-\ell,\,\ell}^{xaby}).\label{e_tfl-L3-4}
\end{equation}
The inner correction flow $\varphi^{ab}$ is depicted as blue arrows
in Figure \ref{fig6.1}. The exceptional flows defined in \eqref{e_tfl-L3-4},
which are not depicted in Figure \ref{fig6.1}, are designed to handle
small leftovers near $\xi^{x}$.
\item Finally, define $\psi^{ab}:=\phi^{ab}+\varphi^{ab}$.
\end{enumerate}
\end{defn}

\begin{defn}[\textbf{(L4)} Bulk flow on $\mathcal{A}_{N}^{xaby}$ if $a\in\mathcal{N}_{x}$,
$b\notin\mathcal{N}_{x}\cup\mathcal{N}_{y}$, $a\sim b$]
\label{d_tfl-L4}$ $
\begin{enumerate}
\item For $n\in\llbracket2L,\,N-L\rrbracket$, $\ell\in\llbracket1,\,L-1\rrbracket$,
and $k\in\llbracket1,\,\ell\rrbracket$, define
\begin{equation}
\phi^{ab}(\xi_{n-\ell,\,k,\,\ell-k}^{xaby},\,\xi_{n-\ell,\,k-1,\,\ell-k+1}^{xaby}):=c_{ab}\cdot m_{a}^{k-1}m_{b}^{\ell-k}\cdot(1+\widehat{\mathfrak{g}}_{\ell-k}^{L}(\sigma^{b})-\widehat{\mathfrak{g}}_{\ell-k+1}^{L}(\sigma^{b})).\label{e_tfl-L4-1}
\end{equation}
Moreover, for $k\in\llbracket1,\,L\rrbracket$, define
\begin{equation}
\phi^{ab}(\xi_{n-L,\,k,\,L-k}^{xaby},\,\xi_{n-L,\,k-1,\,L-k+1}^{xaby}):=c_{ab}\cdot\frac{m_{a}^{k-1}m_{b}^{L-k}(1+\widehat{\mathfrak{g}}_{L-k}^{L}(\sigma^{b})-\widehat{\mathfrak{g}}_{L-k+1}^{L}(\sigma^{b}))}{1-m_{a}}.\label{e_tfl-L4-2}
\end{equation}
\item Define $\varphi^{ab}$ as in \eqref{e_tfl-L3-3} and \eqref{e_tfl-L3-4}.
\item Finally, define $\psi^{ab}:=\phi^{ab}+\varphi^{ab}$.
\end{enumerate}
\end{defn}

\begin{defn}[\textbf{(L5)} Bulk flow on $\mathcal{A}_{N}^{xaby}$ if $a\notin\mathcal{N}_{x}\cup\mathcal{N}_{y}$,
$b\in\mathcal{N}_{y}$, $a\sim b$]
\label{d_tfl-L5}$ $
\begin{enumerate}
\item For $n\in\llbracket L,\,N-2L\rrbracket$, $\ell\in\llbracket1,\,L-1\rrbracket$,
and $k\in\llbracket1,\,\ell\rrbracket$, define
\[
\phi^{ab}(\xi_{n,\,k,\,\ell-k}^{xaby},\,\xi_{n,\,k-1,\,\ell-k+1}^{xaby}):=c_{ab}\cdot m_{a}^{k-1}m_{b}^{\ell-k}\cdot(\widehat{\mathfrak{g}}_{k}^{L}(\sigma^{a})-\widehat{\mathfrak{g}}_{k-1}^{L}(\sigma^{a})).
\]
Moreover, for $k\in\llbracket1,\,L\rrbracket$, define
\[
\phi^{ab}(\xi_{n,\,k,\,L-k}^{xaby},\,\xi_{n,\,k-1,\,L-k+1}^{xaby}):=c_{ab}\cdot\frac{m_{a}^{k-1}m_{b}^{L-k}(\widehat{\mathfrak{g}}_{k}^{L}(\sigma^{a})-\widehat{\mathfrak{g}}_{k-1}^{L}(\sigma^{a}))}{1-m_{b}}.
\]
\item For $n\in\llbracket L,\,N-2L\rrbracket$, $\ell\in\llbracket2,\,L\rrbracket$,
and $k\in\llbracket1,\,\ell-1\rrbracket$, define
\begin{equation}
\varphi^{ab}(\xi_{n,\,k,\,\ell-k}^{xaby},\,\xi_{n,\,k,\,\ell-k-1}^{xaby}):=-\sum_{j=\ell}^{L}(\mathrm{div}\,\phi^{ab})(\xi_{n,\,k,\,j-k}^{xaby}).\label{e_tfl-L5-1}
\end{equation}
Moreover, for $n\in\llbracket L,\,2L-2\rrbracket$, $\ell\in\llbracket1,\,2L-1-n\rrbracket$,
and $k\in\llbracket1,\,\ell\rrbracket$, define
\begin{equation}
\varphi^{ab}(\xi_{n,\,k,\,\ell-k}^{xaby},\,\xi_{n,\,k-1,\,\ell-k+1}^{xaby}):=-\phi^{ab}(\xi_{n,\,\ell}^{xay},\,\xi_{n,\,\ell-1,\,1}^{xaby})+\varphi^{ab}(\xi_{n,\,\ell,\,1}^{xaby},\,\xi_{n,\,\ell}^{xay}).\label{e_tfl-L5-2}
\end{equation}
The exceptional flows defined in \eqref{e_tfl-L5-2}, which are not
depicted in Figure \ref{fig6.1}, are designed to handle small leftovers
near $\xi^{y}$.
\item Finally, define $\psi^{ab}:=\phi^{ab}+\varphi^{ab}$.
\end{enumerate}
\end{defn}

\begin{defn}[\textbf{(L6)} Bulk flow on $\mathcal{A}_{N}^{xaby}$ if $a,\,b\notin\mathcal{N}_{x}\cup\mathcal{N}_{y}$,
$a\sim b$]
\label{d_tfl-L6}In this final case, we do not need an inner correction
flow. For real numbers $\alpha$ and $\beta$, write $\alpha\vee\beta:=\max\{\alpha,\,\beta\}$.
\begin{enumerate}
\item For $n\in\llbracket L,\,N-2L\rrbracket$, $\ell\in\llbracket1\vee(2L-n),\,L\rrbracket$,
and $k\in\llbracket1,\,\ell\rrbracket$, define
\begin{equation}
\phi^{ab}(\xi_{n,\,k,\,\ell-k}^{xaby},\,\xi_{n,\,k-1,\,\ell-k+1}^{xaby}):=c_{ab}\cdot m_{a}^{k-1}m_{b}^{\ell-k}\cdot(\widehat{\mathfrak{g}}_{\ell}^{L}(\sigma_{k}^{ab})-\widehat{\mathfrak{g}}_{\ell}^{L}(\sigma_{k-1}^{ab})).\label{e_tfl-L6}
\end{equation}
\item Then, define $\psi^{ab}:=\phi^{ab}$.
\end{enumerate}
\end{defn}

Next, we define the outer correction flows $\varphi^{a}$ and $\varphi^{b}$
for $a\in\mathcal{N}_{x}$ and $b\in\mathcal{N}_{y}$.
\begin{defn}[Outer correction flows $\varphi^{a}$, $a\in\mathcal{N}_{x}$ and
$\varphi^{b}$, $b\in\mathcal{N}_{y}$]
\label{d_tfl-outer}The outer correction flows $\varphi^{a}$ and
$\varphi^{b}$ are depicted as green arrows in Figure \ref{fig6.1}.
\begin{enumerate}
\item First, fix $a\in\mathcal{N}_{x}$. For $n\in\llbracket2L,\,N-L\rrbracket$
and $\ell\in\llbracket1,\,L\rrbracket$, define
\begin{equation}
\varphi^{a}(\xi_{n-\ell+1,\,\ell-1}^{xay},\,\xi_{n-\ell,\,\ell}^{xay}):=\sum_{b\in S_{0}\setminus\mathcal{N}_{x}:\,a\sim b}\sum_{j=\ell}^{L}\psi^{ab}(\xi_{n-j,\,j}^{xay},\,\xi_{n-j,\,j-1,\,1}^{xaby}).\label{e_tfl-outer-1}
\end{equation}
\item Next, fix $b\in\mathcal{N}_{y}$. For $n\in\llbracket L,\,N-2L\rrbracket$
and $\ell\in\llbracket1,\,L\rrbracket$, define
\begin{equation}
\varphi^{b}(\xi_{n,\,\ell}^{xby},\,\xi_{n,\,\ell-1}^{xby}):=\sum_{a\in S_{0}\setminus\mathcal{N}_{y}:\,a\sim b}\sum_{j=\ell}^{L}\psi^{ab}(\xi_{n,\,1,\,j-1}^{xaby},\,\xi_{n,\,j}^{xby}).\label{e_tfl-outer-2}
\end{equation}
\end{enumerate}
\end{defn}

Finally, we define the edge correction flows $\varphi^{x}$ and $\varphi^{y}$.

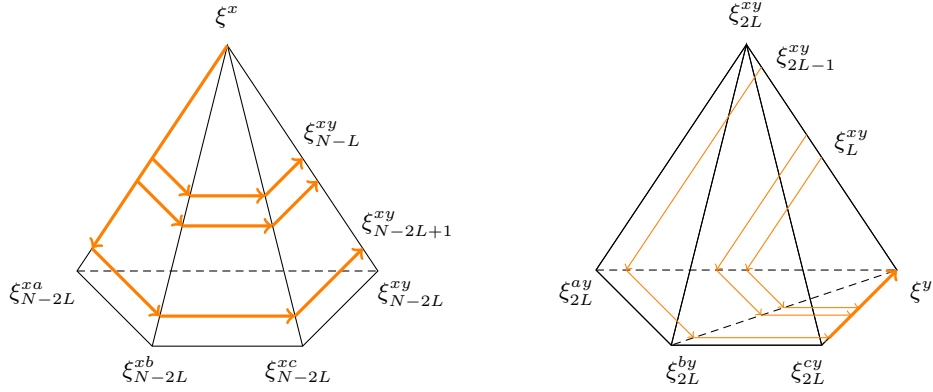
\begin{figure}
\begin{tikzpicture}
\fill[rounded corners,white] (-3,3.6) rectangle (3,-1.6); 
\draw[densely dashed] (-2,0)--(2,0);
\draw (0,3)--(-2,0)--(-1,-1)--(1,-1)--(2,0)--cycle; \draw (-1,-1)--(0,3)--(1,-1);
\draw (0,3.1) node[above]{\footnotesize $\xi^x$};
\draw (1.4,1.5) node[above]{\footnotesize $\xi_{N-L}^{xy}$};
\draw (2.4,0.3) node[above]{\footnotesize $\xi_{N-2L+1}^{xy}$};
\draw (0,0) node[below]{\footnotesize $\xi_{N-2L}^{xa}$\hspace{4cm}$\xi_{N-2L}^{xy}$};
\draw (0,-1) node[below]{\footnotesize $\xi_{N-2L}^{xb}$\hspace{1cm}$\xi_{N-2L}^{xc}$};
\draw[very thick,to-,orange] (1,1.5)--(0.5,1);
\draw[very thick,to-,orange] (0.5,1)--(-0.5,1);
\draw[very thick,to-,orange] (-0.5,1)--(-1,1.5);
\draw[very thick,to-,orange] (1.2,1.2)--(0.6,0.6);
\draw[very thick,to-,orange] (0.6,0.6)--(-0.6,0.6);
\draw[very thick,to-,orange] (-0.6,0.6)--(-1.2,1.2);
\draw[very thick,to-,orange] (1.8,0.3)--(0.9,-0.6);
\draw[very thick,to-,orange] (0.9,-0.6)--(-0.9,-0.6);
\draw[very thick,to-,orange] (-0.9,-0.6)--(-1.8,0.3); 
\draw[very thick,to-,orange] (-1.8,0.3)--(0,3);
\end{tikzpicture}
\hspace{5mm}
\begin{tikzpicture}
\fill[rounded corners,white] (-3,3.6) rectangle (3,-1.6); 
\draw[densely dashed] (-2,0)--(2,0)--(-1,-1);
\draw (0,3)--(-2,0)--(-1,-1)--(1,-1)--(2,0)--cycle; \draw (-1,-1)--(0,3)--(1,-1);
\draw (0,3)--(-2,0)--(-1,-1)--(1,-1)--(2,0)--cycle; \draw (-1,-1)--(0,3)--(1,-1);
\draw (0,3.1) node[above]{\footnotesize $\xi_{2L}^{xy}$};
\draw (0.8,2.5) node[above]{\footnotesize $\xi_{2L-1}^{xy}$};
\draw (1.4,1.4) node[above]{\footnotesize $\xi_L^{xy}$};
\draw (0,0) node[below]{\footnotesize $\xi_{2L}^{ay}$\hspace{4.2cm}$\xi^y$};
\draw (0,-1) node[below]{\footnotesize $\xi_{2L}^{by}$\hspace{1.2cm}$\xi_{2L}^{cy}$};
\draw[-to,orange] (1,1.5)--(0,0);
\draw[-to,orange] (0,0)--(0.5,-0.5);
\draw[-to,orange] (0.5,-0.5)--(1.5,-0.5);
\draw[-to,orange] (0.8,1.8)--(-0.4,0);
\draw[-to,orange] (-0.4,0)--(0.2,-0.6);
\draw[-to,orange] (0.2,-0.6)--(1.4,-0.6);
\draw[-to,orange] (0.2,2.7)--(-1.6,0);
\draw[-to,orange] (-1.6,0)--(-0.7,-0.9);
\draw[-to,orange] (-0.7,-0.9)--(1.1,-0.9);
\draw[very thick,-to,orange] (1.1,-0.9)--(2,0);
\end{tikzpicture}\caption{\label{fig6.2}Edge correction flows $\varphi^{x}$ (left figure)
and $\varphi^{y}$ (right figure) along the path $x\sim a\sim b\sim c\sim y$.}
\end{figure}

\begin{defn}[Edge correction flows $\varphi^{x}$ and $\varphi^{y}$]
\label{d_tfl-edge}Fix a sequence of sites $x=a_{0}\sim a_{1}\sim\cdots\sim a_{p-1}\sim a_{p}=y$
where $p\ge3$ and $a_{1},\,\dots,\,a_{p-1}\in S_{0}$, which is possible
since $\mathcal{G}=(\mathcal{V},\,\mathcal{E})$ is connected (cf.
Definition \ref{d_G-def}). Illustrations of $\varphi^{x}$ and $\varphi^{y}$
are given in Figure \ref{fig6.2}, in the case of $p=4$.
\begin{enumerate}
\item For $\ell\in\llbracket L,\,2L-1\rrbracket$, $i\in\llbracket1,\,p-1\rrbracket$,
and $j\in\llbracket1,\,\ell\rrbracket$, define
\[
\varphi^{x}(\xi_{N-\ell,\,j}^{xa_{i}a_{i+1}},\,\xi_{N-\ell,\,j-1}^{xa_{i}a_{i+1}}):=\sum_{a\in\mathcal{N}_{x}}\varphi^{a}(\xi_{N-\ell}^{xy},\,\xi_{N-\ell-1,\,1}^{xay}).
\]
Then, for $\ell\in\llbracket1,\,2L-1\rrbracket$ define
\[
\varphi^{x}(\xi_{N-\ell+1}^{xa_{1}},\,\xi_{N-\ell}^{xa_{1}}):=\sum_{j=\ell\vee L}^{2L-1}\varphi^{x}(\xi_{N-j}^{xa_{1}},\,\xi_{N-j,\,j-1}^{xa_{1}a_{2}}).
\]
\item Similarly, for $\ell\in\llbracket L,\,2L-1\rrbracket$, $i\in\llbracket0,\,p-2\rrbracket$,
and $j\in\llbracket1,\,\ell\rrbracket$, define
\[
\varphi^{y}(\xi_{j,\,\ell-j}^{a_{i}a_{i+1}y},\,\xi_{j-1,\,\ell-j+1}^{a_{i}a_{i+1}y}):=\sum_{b\in\mathcal{N}_{y}}\varphi^{b}(\xi_{\ell,\,1}^{xby},\,\xi_{\ell}^{xy}).
\]
Then, for $\ell\in\llbracket1,\,2L-1\rrbracket$ define
\[
\varphi^{y}(\xi_{\ell}^{a_{p-1}y},\,\xi_{\ell-1}^{a_{p-1}y}):=\sum_{j=\ell\vee L}^{2L-1}\varphi^{y}(\xi_{1,\,j-1}^{a_{p-2}a_{p-1}y}\,\xi_{j}^{a_{p-1}y}).
\]
\end{enumerate}
\end{defn}

In this section, we prove the following proposition.
\begin{prop}
\label{p_flow}The following statements are valid.
\begin{enumerate}
\item The test flow $\psi$ is a flow from $\xi^{x}$ to $\xi^{y}$ (cf.
\eqref{e_flow-A-B-def}) such that
\[
\lim_{L\to\infty}(\mathrm{div}\,\psi)(\xi^{x})=\frac{1}{6\mathfrak{K}_{xy}}.
\]
\item It holds that
\[
\limsup_{L\to\infty}\limsup_{N\to\infty}\frac{d_{N}^{3}}{N^{2}}\cdot\|\psi\|^{2}\le\frac{1}{18\mathfrak{K}_{xy}}.
\]
\end{enumerate}
\end{prop}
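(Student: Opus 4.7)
The plan is to treat the two claims separately: part (1) is a bookkeeping verification that $\psi$ is a flow with the correct source/sink structure and to identify the divergence at $\xi^x$, while part (2) is an asymptotic estimate of $\|\psi\|^2$ where only the bulk flows $\phi^{ab}$ contribute to leading order. The key tools throughout will be Lemmas \ref{l_g-gL-prop-1}--\ref{l_g-gL-prop-4}, which reformulate the resolvent equation \eqref{e_resolvent-L} and its $L\to\infty$ limit in the coordinates native to the configuration simplexes, together with the stationary measure asymptotics \eqref{e_wN-ZN}.

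For part (1), I will verify $(\mathrm{div}\,\psi)(\eta)=0$ for $\eta\ne\xi^x,\xi^y$ in three stages. On the interior of each three-dimensional simplex $\mathcal{A}_N^{xaby}$, the recursive definitions \eqref{e_tfl-L3-3} and \eqref{e_tfl-L5-1} of $\varphi^{ab}$ as telescoping sums of $\mathrm{div}\,\phi^{ab}$ make the cancellation automatic by construction; for type \textbf{(L6)} no correction is required because $\widehat{\mathfrak{g}}_\ell^L$ is already harmonic for $\{\mathscr{X}^\ell_t\}$. On each two-dimensional face $\mathcal{A}_N^{xay}$ with $a\in\mathcal{N}_x$, the outer correction $\varphi^a$ in \eqref{e_tfl-outer-1} collects the residual fluxes from all $\psi^{ab}$ with $b\sim a$; identity \eqref{e_l_g-gL-prop-1-1} of Lemma \ref{l_g-gL-prop-1} is precisely the resolvent equation rewritten in these coordinates and translates into vertex-by-vertex divergence cancellation, while \eqref{e_l_g-gL-prop-1-2} handles the reflected boundary at level $L$. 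On the one-dimensional edge $\mathcal{A}_N^{xy}$, contributions from the $\mathcal{N}_x$-side and the $\mathcal{N}_y$-side meet; Lemma \ref{l_g-gL-prop-2} guarantees that their aggregate fluxes at each level agree, so that $\varphi^x,\varphi^y$ (routed along a fixed path $x=a_0\sim\cdots\sim a_p=y$) need only funnel the common flux to $\xi^x$ and $\xi^y$, and their telescoping definitions make them divergence-free in their interiors. The exceptional definitions \eqref{e_tfl-L3-4} and \eqref{e_tfl-L5-2} mop up small leftovers near the corners. To evaluate $(\mathrm{div}\,\psi)(\xi^x)$, I will sum the outgoing fluxes, which reduce after this bookkeeping to a sum over $a\in\mathcal{N}_x$, $b\sim a$, and $\ell\ge0$; applying Lemma \ref{l_g-gL-prop-3} to pass to the $L\to\infty$ limit and Lemma \ref{l_g-gL-prop-4} to recognize the limit expression, I obtain exactly $\tfrac{1}{6\mathfrak{K}_{xy}}$.

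For part (2), I will expand $\|\psi\|^2=\sum_{\{\eta,\zeta\}}\psi(\eta,\zeta)^2/(\mu_N(\eta)r_N(\eta,\zeta))$ and show that only $\sum_{\{a,b\}}\|\phi^{ab}\|^2$ survives the iterated limit $\lim_{L\to\infty}\lim_{N\to\infty}(d_N^3/N^2)\,\|\cdot\|^2$. A direct calculation on each $\mathcal{A}_N^{xaby}$, using Proposition \ref{p_muN-prod}, the asymptotics \eqref{e_wN-ZN}, the identity $c_{ab}=m_a r(a,b)$, and $Z_N\simeq|S_\star|d_N/N$ with $|S_\star|=2$, yields for type \textbf{(L3)} the estimate
\[
\|\phi^{ab}\|_{(L3)}^2 \simeq \frac{c_{ab}\,N^2}{3\,(1-m_a)(1-m_b)\,d_N^3},
\]
and analogous expressions for types \textbf{(L4)}--\textbf{(L6)}, each producing exactly $\tfrac{1}{3}$ of the corresponding term in the formula \eqref{e_Kxy-def} once the harmonic extension is used in type \textbf{(L6)} to rewrite the triple sum as one of trace-process rates on $\mathcal{A}_{x,j}\cup\mathcal{A}_{y,j}$ (matching the last line of \eqref{e_Kxy-def}) and Lemmas \ref{l_g-gL-prop-3}--\ref{l_g-gL-prop-4} are invoked for the $L\to\infty$ passage. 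Summing over the four types yields $\tfrac{1}{3}\cdot\tfrac{1}{6\mathfrak{K}_{xy}}=\tfrac{1}{18\mathfrak{K}_{xy}}$. The inner corrections $\varphi^{ab}$ are geometric tails of terms bounded by $(m_\star/\lambda^2)^\ell$ via \eqref{e_ghat-ghatL-UB}, so their squared contribution to the norm is $o(N^2/d_N^3)$ by the same type of estimate used in Section \ref{sec5.4}. The outer and edge corrections are supported on strips of width $O(L)$, contributing an extra factor $L/N$ to the norm, which vanishes in the iterated limit.

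The main obstacle is the combinatorial verification in part (1): despite the many independent definitions \eqref{e_tfl-L3-1}--\eqref{e_tfl-L6}, one must simultaneously match divergences on every shared face and edge of the tetrahedral decomposition, invoking the identities of Section \ref{sec4.3} at exactly the right interfaces, with careful attention to the reflected boundary at level $L$ and to the ad hoc corner fixes near $\xi^x,\xi^y$. Once this is carried out, part (2) becomes an asymptotic computation structurally parallel to the Dirichlet form estimates of Section \ref{sec5.2}, differing only in that the roles of the test function $F$ and the test flow $\psi$ are dual.
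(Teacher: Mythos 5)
Your proposal follows essentially the same route as the paper: the divergence-free verification proceeds simplex by simplex using the telescoping structure of $\varphi^{ab}$, the outer and edge corrections, and Lemmas \ref{l_g-gL-prop-1}--\ref{l_g-gL-prop-4} at exactly the interfaces the paper uses, and the norm estimate isolates the bulk flows $\phi^{ab}$ type by type with the corrections shown negligible, yielding $\tfrac13\cdot\tfrac{1}{6\mathfrak{K}_{xy}}=\tfrac{1}{18\mathfrak{K}_{xy}}$. One small correction: the resolvent identity \eqref{e_l_g-gL-prop-1-1} is invoked on the faces $\mathcal{A}_{N}^{xvy}$ with $v\in\mathcal{A}_{x}\cup\mathcal{A}_{y}$ (i.e., $v\notin\mathcal{N}_{x}\cup\mathcal{N}_{y}$), not on the faces through $a\in\mathcal{N}_{x}$, where the cancellation is immediate from the definition of $\varphi^{a}$.
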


\subsection{\label{sec6.2}Flow norm}

In this subsection, we calculate the (square of the) flow norm, $\|\psi\|^{2}$.
First, we deal with $\|\psi^{ab}\|^{2}$ for $\{a,\,b\}\subseteq S_{0}$.
\begin{lem}
\label{l_flow-norm-1}It holds that
\begin{equation}
\limsup_{L\to\infty}\limsup_{N\to\infty}\frac{d_{N}^{3}}{N^{2}}\cdot\sum_{i=3}^{6}\sum_{\{a,\,b\}\subseteq S_{0}:\,\text{type }\textup{\textbf{(L}}\boldsymbol{i}\textup{\textbf{)}}}\|\psi^{ab}\|^{2}\le\frac{1}{18\mathfrak{K}_{xy}}.\label{e_l_flow-norm-1}
\end{equation}
\end{lem}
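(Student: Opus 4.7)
The plan is to exploit the fact that within each $\psi^{ab} = \phi^{ab} + \varphi^{ab}$, the main flow $\phi^{ab}$ and the inner correction $\varphi^{ab}$ are supported on \emph{disjoint} edge sets. Inspecting Definitions \ref{d_tfl-L3}--\ref{d_tfl-L6}, $\phi^{ab}$ carries only $a\leftrightarrow b$ jumps (changing $(\eta_a,\eta_b)$ with $\eta_x,\eta_y$ fixed), whereas $\varphi^{ab}$ carries $a\leftrightarrow x$ jumps for types \textbf{(L3)}, \textbf{(L4)} and $b\leftrightarrow y$ jumps for type \textbf{(L5)}; type \textbf{(L6)} requires no inner correction. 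Consequently $\|\psi^{ab}\|^2 = \|\phi^{ab}\|^2 + \|\varphi^{ab}\|^2$, and the lemma reduces to (i) computing the leading asymptotics of $\|\phi^{ab}\|^2$ type by type, and (ii) showing $\|\varphi^{ab}\|^2 = o(N^2/d_N^3)$.

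For step (i), I will substitute Proposition \ref{p_muN-prod}, the identity $c_{ab}^2/r(a,b) = m_a^2 r(a,b)$, and the asymptotics \eqref{e_wN-ZN} (namely $kw_N(k)\simeq d_N$, $(d_N+j)w_N(j)\simeq d_N$ and $Z_N\simeq 2d_N/N$) into a generic bulk edge to obtain, on every type uniformly,
\[
\frac{\phi^{ab}(\eta,\zeta)^2}{\mu_N(\eta)\, r_N(\eta,\zeta)} \;\simeq\; \frac{2\, c_{ab}\, m_a^{k-1} m_b^{\ell-k}\,(n-\ell)(N-n)}{N d_N^3}\cdot G_{\ell,k}^2,
\]
where $G_{\ell,k}$ is the structural factor built into $\phi^{ab}$: $G_{\ell,k}\equiv1$ for \textbf{(L3)}; $G_{\ell,k}=1+\widehat{\mathfrak{g}}_{\ell-k}^L(\sigma^b)-\widehat{\mathfrak{g}}_{\ell-k+1}^L(\sigma^b)$ for \textbf{(L4)}; $G_{\ell,k}=\widehat{\mathfrak{g}}_k^L(\sigma^a)-\widehat{\mathfrak{g}}_{k-1}^L(\sigma^a)$ for \textbf{(L5)}; and $G_{\ell,k}=\widehat{\mathfrak{g}}_\ell^L(\sigma_k^{ab})-\widehat{\mathfrak{g}}_\ell^L(\sigma_{k-1}^{ab})$ for \textbf{(L6)}. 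Summing over the bulk range of $n$ gives the universal $\sum_n(n-\ell)(N-n)\simeq N^3/6$, extracting the prefactor $c_{ab}/3 \cdot N^2/d_N^3$. The boundary layer $\ell=L$ (carrying an extra $(1-m_\bullet)^{-2}$) is damped by $O(Lm_\star^L)\to 0$. After identifying $\widehat{\mathfrak{g}}_\ell^L(\sigma^{v}) = \mathfrak{g}^L(\mathfrak{v}_\ell)$ and sending $L\to\infty$ via Lemma \ref{l_g-gL-prop-3} (for \textbf{(L4)}, \textbf{(L5)}) and via the dual Dirichlet identity \eqref{e_dual-Diri} applied exactly as in \eqref{e_p_Diri-form-3}--\eqref{e_p_Diri-form-4} (for \textbf{(L6)}, converting the $\mathfrak{r}^\ell$-sum into a $\widehat{\mathfrak{r}}^\ell$-sum on $\mathcal{A}_{x,j}\cup\mathcal{A}_{y,j}$), the four type-contributions reproduce precisely $\tfrac{1}{3}$ of the four summands of $\tfrac{1}{6\mathfrak{K}_{xy}}$ in \eqref{e_Kxy-def}, giving the total $\tfrac{1}{3}\cdot\tfrac{1}{6\mathfrak{K}_{xy}} = \tfrac{1}{18\mathfrak{K}_{xy}}$.

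For step (ii), a telescoping computation shows $|(\mathrm{div}\,\phi^{ab})(\xi_{n-j,k',\ell'}^{xaby})| \le C\, m_a^{k'-1}m_b^{\ell'-1}$ (the $\widehat{\mathfrak{g}}$-factors in types \textbf{(L4)}--\textbf{(L6)} being uniformly absorbed by \eqref{e_ghat-ghatL-UB}). Summing the telescoping series \eqref{e_tfl-L3-3} then yields $|\varphi^{ab}(\eta,\zeta)|\le C\,m_a^{k-1}m_b^{\ell-k-1}/(1-m_a)$, and combining with the asymptotic $\mu_N(\eta)r_N(\eta,\zeta)\asymp d_N^3 N\, m_a^k m_b^{\ell-k}/((\ell-k)(N-n))$ on an $a\leftrightarrow x$ edge and summing over $(n,\ell,k)$ (the $n$-sum contributes $N^2$, the $(\ell,k)$-sum is geometrically convergent since $m_\star<1$) gives $\|\varphi^{ab}\|^2 = O(N/d_N^3)=o(N^2/d_N^3)$. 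The exceptional flows \eqref{e_tfl-L3-4} and \eqref{e_tfl-L5-2} live on $O(L^2)$ edges near $\xi^x$ or $\xi^y$ and are controlled with a larger margin by the same crude bound. The principal technical obstacle is the \textbf{(L6)} case in step (i): the resulting Dirichlet-form-like sum must be cleanly identified with the fourth summand of \eqref{e_Kxy-def} via \eqref{e_dual-Diri}, and the passage $L\to\infty$ of the resulting infinite series relies on the uniform boundedness of $\{\mathfrak{g}_0^L\}_L$ established in Theorem \ref{t_resolvent-equation}-(1).
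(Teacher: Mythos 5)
Your proposal is correct and follows essentially the same route as the paper: compute $\|\phi^{ab}\|^{2}$ type by type to extract $\tfrac{c_{ab}}{3}$ times the corresponding summand of \eqref{e_Kxy-def}, show $\|\varphi^{ab}\|^{2}=o(N^{2}/d_{N}^{3})$ via the telescoping divergence bound, and identify the total with $(18\mathfrak{K}_{xy})^{-1}$ exactly as in the proof of Proposition \ref{p_Diri-form}. One small inaccuracy: the exceptional correction flows \eqref{e_tfl-L3-4} and \eqref{e_tfl-L5-2} live on $a\leftrightarrow b$ edges and therefore overlap the support of $\phi^{ab}$ near $\xi^{x}$ (resp.\ $\xi^{y}$), so the exact identity $\|\psi^{ab}\|^{2}=\|\phi^{ab}\|^{2}+\|\varphi^{ab}\|^{2}$ fails; however, the triangle inequality $\|\psi^{ab}\|\le\|\phi^{ab}\|+\|\varphi^{ab}\|$ (which is what the paper uses) yields the same $\limsup$ since the cross term is $o(N^{2}/d_{N}^{3})$.
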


\begin{proof}
First, we calculate the flow norm of $\psi^{ab}$ for each type \textbf{(L3)}--\textbf{(L6)}.
\begin{itemize}
\item \textbf{(L3)} $a\in\mathcal{N}_{x}$, $b\in\mathcal{N}_{y}$, and
$a\sim b$: We claim that
\begin{equation}
\limsup_{L\to\infty}\limsup_{N\to\infty}\frac{d_{N}^{3}}{N^{2}}\cdot\|\psi^{ab}\|^{2}\le\frac{c_{ab}}{3(1-m_{a})(1-m_{b})}.\label{e_flow-norm-L3-1}
\end{equation}
First, we prove that 
\begin{equation}
\limsup_{N\to\infty}\frac{d_{N}^{3}}{N^{2}}\cdot\|\phi^{ab}\|^{2}\le\frac{c_{ab}}{3(1-m_{a})(1-m_{b})}.\label{e_flow-norm-L3-2}
\end{equation}
By the definition of $\phi^{ab}$, we calculate
\[
\|\phi^{ab}\|^{2}=\sum_{n=2L}^{N-L}\sum_{\ell=1}^{L}\sum_{k=1}^{\ell}\frac{\phi^{ab}(\xi_{n-\ell,\,k,\,\ell-k}^{xaby},\,\xi_{n-\ell,\,k-1,\,\ell-k+1}^{xaby})^{2}}{\mu_{N}(\xi_{n-\ell,\,k,\,\ell-k}^{xaby})\cdot k(d_{N}+\ell-k)r(a,\,b)}.
\]
By \eqref{e_tfl-L3-1}, \eqref{e_tfl-L3-2}, and \eqref{e_wN-ZN},
the right-hand side equals
\[
\begin{aligned}\frac{2(1+o(1))}{Nd_{N}^{3}}\cdot\Big[ & \sum_{n=2L}^{N-L}\sum_{\ell=1}^{L-1}\sum_{k=1}^{\ell}(n-\ell)(N-n)\cdot m_{a}^{k-1}m_{b}^{\ell-k}\cdot m_{a}r(a,\,b)\\
 & +\sum_{n=2L}^{N-L}\sum_{k=1}^{L}(n-L)(N-n)\cdot\frac{m_{a}^{k-1}m_{b}^{L-k}}{(1-m_{a})^{2}}\cdot m_{a}r(a,\,b)\Big].
\end{aligned}
\]
Applying $n-\ell\le n$ and $n-L\le n$, and enlarging the ranges
of the summations to $n\in\llbracket1,\,N-1\rrbracket$ and $1\le\ell<\infty$,
this is bounded by
\begin{equation}
\frac{2(1+o(1))}{Nd_{N}^{3}}\cdot\frac{(N-1)N(N+1)}{6}\cdot\frac{c_{ab}}{(1-m_{a})(1-m_{b})}=\frac{(1+o(1))\cdot c_{ab}}{3(1-m_{a})(1-m_{b})}\cdot\frac{N^{2}}{d_{N}^{3}},\label{e_flow-norm-L3-3}
\end{equation}
thereby proving \eqref{e_flow-norm-L3-2}. Thus, since $\|\psi^{ab}\|\le\|\phi^{ab}\|+\|\varphi^{ab}\|$,
it remains to prove that
\begin{equation}
\lim_{L\to\infty}\lim_{N\to\infty}\frac{d_{N}^{3}}{N^{2}}\cdot\|\varphi^{ab}\|^{2}=0.\label{e_flow-norm-L3-4}
\end{equation}
By definition,
\begin{equation}
\begin{aligned}\|\varphi^{ab}\|^{2}= & \sum_{n=2L}^{N-L}\sum_{\ell=2}^{L}\sum_{k=1}^{\ell-1}\frac{\varphi^{ab}(\xi_{n-\ell,\,k,\,\ell-k}^{xaby},\,\xi_{n-\ell+1,\,k-1,\,\ell-k}^{xaby})^{2}}{\mu_{N}(\xi_{n-\ell,\,k,\,\ell-k}^{xaby})\cdot k(d_{N}+n-\ell)r(a,\,x)}.\\
 & +\sum_{n=N-2L+2}^{N-L}\sum_{\ell=1}^{n-(N-2L+1)}\sum_{k=0}^{\ell-1}\frac{\varphi^{ab}(\xi_{n-\ell,\,k,\,\ell-k}^{xaby},\,\xi_{n-\ell+1,\,k+1,\,\ell-k-1}^{xaby})^{2}}{\mu_{N}(\xi_{n-\ell,\,k,\,\ell-k}^{xaby})\cdot(\ell-k)(d_{N}+k)r(b,\,a)}.
\end{aligned}
\label{e_flow-norm-L3-5}
\end{equation}
Note that for each $j\in\llbracket\ell,\,L\rrbracket$, $|(\mathrm{div}\,\phi^{ab})(\xi_{n-j,\,k+j-\ell,\,\ell-k}^{xaby})|$
equals
\begin{equation}
\begin{aligned} & \big|\phi^{ab}(\xi_{n-j,\,k+j-\ell,\,\ell-k}^{xaby},\,\xi_{n-j,\,k+j-\ell-1,\,\ell-k+1}^{xaby})-\phi^{ab}(\xi_{n-j,\,k+j-\ell+1,\,\ell-k-1}^{xaby},\,\xi_{n-j,\,k+j-\ell,\,\ell-k}^{xaby})\big|\\
 & =c_{ab}\cdot m_{a}^{k+j-\ell-1}m_{b}^{\ell-k-1}\cdot|m_{a}-m_{b}|\le Cm_{a}^{k+j-\ell-1}m_{b}^{\ell-k},
\end{aligned}
\label{e_flow-norm-L3-6}
\end{equation}
which implies that
\begin{equation}
|\varphi^{ab}(\xi_{n-\ell,\,k,\,\ell-k}^{xaby},\,\xi_{n-\ell+1,\,k+1,\,\ell-k-1}^{xaby})|\le\sum_{j=\ell}^{L}|(\mathrm{div}\,\phi^{ab})(\xi_{n-j,\,k+j-\ell,\,\ell-k}^{xaby})|\le C\cdot m_{a}^{k-1}m_{b}^{\ell-k}.\label{e_flow-norm-L3-7}
\end{equation}
Thus, the first line in the right-hand side of \eqref{e_flow-norm-L3-5}
is bounded above by
\begin{equation}
\sum_{n=2L}^{N-L}\sum_{\ell=2}^{L}\sum_{k=1}^{\ell-1}\frac{C(\ell-k)(N-n)m_{a}^{2k-2}m_{b}^{2\ell-2k}}{d_{N}^{3}N\cdot m_{a}^{k}m_{b}^{\ell-k}}\le\frac{C}{d_{N}^{3}N}\sum_{n=2L}^{N-L}\sum_{\ell=2}^{L}\sum_{k=1}^{\ell-1}(\ell-k)(N-n)m_{\star}^{\ell-1}\le\frac{CN}{d_{N}^{3}}.\label{e_flow-norm-L3-8}
\end{equation}
Moreover, by \eqref{e_tfl-L3-1} and \eqref{e_flow-norm-L3-7}, for
$\ell\in\llbracket1,\,n-(N-2L+1)\rrbracket$,
\begin{equation}
\begin{aligned}|\varphi^{ab}(\xi_{n-\ell,\,k,\,\ell-k}^{xaby},\,\xi_{n-\ell+1,\,k+1,\,\ell-k-1}^{xaby})| & \le|(\mathrm{div}\,\phi^{ab})(\xi_{n-\ell,\,\ell}^{xby})|+|\varphi^{ab}(\xi_{n-\ell-1,\,1,\,\ell}^{xaby},\,\xi_{n-\ell,\,\ell}^{xaby})|\\
 & =|\phi^{ab}(\xi_{n-\ell,\,1,\,\ell-1}^{xaby},\,\xi_{n-\ell,\,\ell}^{xby})|+|\varphi^{ab}(\xi_{n-\ell-1,\,1,\,\ell}^{xaby},\,\xi_{n-\ell,\,\ell}^{xaby})|\\
 & \le C\cdot m_{\star}^{\ell-1}.
\end{aligned}
\label{e_flow-norm-L3-9}
\end{equation}
Thus, letting $m_{\star\star}:=\min_{a\in S_{0}}m_{a}\in(0,\,1)$,
the second line in the right-hand side of \eqref{e_flow-norm-L3-5}
is bounded above by
\begin{equation}
\begin{aligned} & C\sum_{n=N-2L+2}^{N-L}\sum_{\ell=1}^{n-(N-2L+1)}\sum_{k=0}^{\ell-1}\frac{n(N-n)m_{\star}^{\ell-1}}{d_{N}^{3}N\cdot m_{a}^{k}m_{b}^{\ell-k}}\\
 & \le\frac{C}{d_{N}^{3}N}\sum_{n=N-2L+2}^{N-L}\sum_{\ell=1}^{n-(N-2L+1)}\sum_{k=0}^{\ell-1}n(N-n)\cdot\Big(\frac{m_{\star}}{m_{\star\star}}\Big)^{\ell-1}\le\frac{C}{d_{N}^{3}}\cdot L^{3}C^{L}.
\end{aligned}
\label{e_flow-norm-L3-10}
\end{equation}
Therefore, by \eqref{e_flow-norm-L3-8} and \eqref{e_flow-norm-L3-10},
\[
\limsup_{L\to\infty}\limsup_{N\to\infty}\frac{d_{N}^{3}}{N^{2}}\cdot\|\varphi^{ab}\|^{2}\le\limsup_{L\to\infty}\limsup_{N\to\infty}\Big(\frac{C}{N}+\frac{L^{3}C^{L}}{N^{2}}\Big)=0,
\]
so that we have proved \eqref{e_flow-norm-L3-4} and thus \eqref{e_flow-norm-L3-1}.
\item \textbf{(L4)} $a\in\mathcal{N}_{x}$, $b\notin\mathcal{N}_{x}\cup\mathcal{N}_{y}$,
and $a\sim b$: We claim that
\begin{equation}
\limsup_{L\to\infty}\limsup_{N\to\infty}\frac{d_{N}^{3}}{N^{2}}\cdot\|\psi^{ab}\|^{2}\le\frac{c_{ab}}{3(1-m_{a})}\cdot\sum_{\ell=0}^{\infty}m_{b}^{\ell}\{1+\widehat{\mathfrak{g}}_{\ell}(\sigma^{b})-\widehat{\mathfrak{g}}_{\ell+1}(\sigma^{b})\}^{2}.\label{e_flow-norm-L4-1}
\end{equation}
The fact that $\|\phi^{ab}\|^{2}$ is bounded above by the right-hand
side of \eqref{e_flow-norm-L4-1} can be proved in the same way. Moreover,
the negligibility of $\|\varphi^{ab}\|^{2}$ also follows by replacing
\eqref{e_flow-norm-L3-7} and \eqref{e_flow-norm-L3-9} with, respectively,
\[
|\varphi^{ab}(\xi_{n-\ell,\,k,\,\ell-k}^{xaby},\,\xi_{n-\ell+1,\,k+1,\,\ell-k-1}^{xaby})|\le\frac{Cm_{a}^{k-1}m_{b}^{\ell-k}}{\lambda^{\ell-k}}
\]
and
\[
|\varphi^{ab}(\xi_{n-\ell,\,k,\,\ell-k}^{xaby},\,\xi_{n-\ell+1,\,k+1,\,\ell-k-1}^{xaby})|\le C\cdot\frac{m_{\star}^{\ell-1}}{\lambda^{\ell}}.
\]
\item \textbf{(L5)} $a\notin\mathcal{N}_{x}\cup\mathcal{N}_{y}$, $b\in\mathcal{N}_{y}$,
and $a\sim b$: We claim that
\[
\limsup_{L\to\infty}\limsup_{N\to\infty}\frac{d_{N}^{3}}{N^{2}}\cdot\|\psi^{ab}\|^{2}\le\frac{c_{ab}}{3(1-m_{b})}\cdot\sum_{\ell=0}^{\infty}m_{a}^{\ell}\{\widehat{\mathfrak{g}}_{\ell+1}(\sigma^{a})-\widehat{\mathfrak{g}}_{\ell}(\sigma^{a})\}^{2}.
\]
The proof is similar; we omit the details.
\item \textbf{(L6)} $a,\,b\notin\mathcal{N}_{x}\cup\mathcal{N}_{y}$ and
$a\sim b$: We claim that
\[
\limsup_{L\to\infty}\limsup_{N\to\infty}\frac{d_{N}^{3}}{N^{2}}\cdot\|\psi^{ab}\|^{2}\le\frac{c_{ab}}{3}\cdot\sum_{\ell=1}^{\infty}\sum_{k=1}^{\ell}m_{a}^{k-1}m_{b}^{\ell-k}\{\widehat{\mathfrak{g}}_{\ell}(\sigma_{k}^{ab})-\widehat{\mathfrak{g}}_{\ell}(\sigma_{k-1}^{ab})\}^{2}.
\]
Again, we omit the proof.
\end{itemize}
Now, gathering the four inequalities above, the left-hand side of
\eqref{e_l_flow-norm-1} is bounded by
\[
\begin{aligned} & \sum_{a\in\mathcal{N}_{x}}\sum_{b\in\mathcal{N}_{y}}\frac{c_{ab}}{3(1-m_{a})(1-m_{b})}+\sum_{a\in\mathcal{N}_{x}}\sum_{b\in S_{0}\setminus(\mathcal{N}_{x}\cup\mathcal{N}_{y})}\frac{c_{ab}}{3(1-m_{a})}\cdot\sum_{\ell=0}^{\infty}m_{b}^{\ell}\{1+\widehat{\mathfrak{g}}_{\ell}(\sigma^{b})-\widehat{\mathfrak{g}}_{\ell+1}(\sigma^{b})\}^{2}\\
 & +\sum_{a\in S_{0}\setminus(\mathcal{N}_{x}\cup\mathcal{N}_{y})}\sum_{b\in\mathcal{N}_{y}}\frac{c_{ab}}{3(1-m_{b})}\cdot\sum_{\ell=0}^{\infty}m_{a}^{\ell}\{\widehat{\mathfrak{g}}_{\ell+1}(\sigma^{a})-\widehat{\mathfrak{g}}_{\ell}(\sigma^{a})\}^{2}\\
 & +\sum_{\{a,\,b\}\subseteq S_{0}\setminus(\mathcal{N}_{x}\cup\mathcal{N}_{y}):\,a\sim b}\frac{c_{ab}}{3}\cdot\sum_{\ell=1}^{\infty}\sum_{k=1}^{\ell}m_{a}^{k-1}m_{b}^{\ell-k}\{\widehat{\mathfrak{g}}_{\ell}(\sigma_{k}^{ab})-\widehat{\mathfrak{g}}_{\ell}(\sigma_{k-1}^{ab})\}^{2}.
\end{aligned}
\]
As demonstrated in the proof of Proposition \ref{p_Diri-form}, this
equals $(18\mathfrak{K}_{xy})^{-1}$.
\end{proof}
Next, we handle the outer correction flows $\varphi^{a}$, $a\in\mathcal{N}_{x}$
and $\varphi^{b}$, $b\in\mathcal{N}_{y}$.
\begin{lem}
\label{l_flow-norm-2}For $a\in\mathcal{N}_{x}$ and $b\in\mathcal{N}_{y}$,
it holds that
\[
\lim_{L\to\infty}\lim_{N\to\infty}\frac{d_{N}^{3}}{N^{2}}\cdot\|\varphi^{a}\|^{2}=\lim_{L\to\infty}\lim_{N\to\infty}\frac{d_{N}^{3}}{N^{2}}\cdot\|\varphi^{b}\|^{2}=0.
\]
\end{lem}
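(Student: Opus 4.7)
The plan is to verify directly that each of $\|\varphi^a\|^2$ and $\|\varphi^b\|^2$ is of smaller order than $N^2/d_N^3$ for every fixed $L$, whence the iterated limit vanishes. For $a \in \mathcal{N}_x$, the flow $\varphi^a$ is supported on the edges $(\xi_{n-\ell+1,\ell-1}^{xay}, \xi_{n-\ell,\ell}^{xay})$ for $n \in \llbracket 2L, N-L \rrbracket$ and $\ell \in \llbracket 1, L \rrbracket$. By definition \eqref{e_tfl-outer-1}, on such an edge the value of $\varphi^a$ is the sum over $b \in S_0 \setminus \mathcal{N}_x$ with $a \sim b$ (which is always of type \textbf{(L3)} or \textbf{(L4)}) and over $j \in \llbracket \ell, L \rrbracket$ of $\psi^{ab}(\xi_{n-j,j}^{xay}, \xi_{n-j,j-1,1}^{xaby})$. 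Reading off directly from \eqref{e_tfl-L3-1}--\eqref{e_tfl-L3-2} and \eqref{e_tfl-L4-1}--\eqref{e_tfl-L4-2}, together with the uniform bound \eqref{e_ghat-ghatL-UB} on $\widehat{\mathfrak{g}}_\ell^L$, one obtains $|\phi^{ab}(\xi_{n-j,j}^{xay}, \xi_{n-j,j-1,1}^{xaby})| \leq C m_a^{j-1}$ with $C$ independent of $L$. The inner correction $\varphi^{ab}$ can contribute to this same edge only when $n \in \llbracket N-2L+2, N-L \rrbracket$ (via \eqref{e_tfl-L3-4}), and there the bound \eqref{e_flow-norm-L3-9} yields $|\varphi^{ab}| \leq C m_\star^{j-1}$. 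Summing over $j$ gives
\[
|\varphi^a(\xi_{n-\ell+1,\ell-1}^{xay}, \xi_{n-\ell,\ell}^{xay})| \leq \frac{C\, m_a^{\ell-1}}{1-m_a} + C_L\,\mathbbm{1}\{n \geq N-2L+2\}\, m_\star^{\ell-1},
\]
where only the second (exceptional) term is $L$-dependent.

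Next, by \eqref{e_wN-ZN} and $|S_\star|=2$ (so $NZ_N/d_N \to 2$), one obtains the asymptotic
\[
\mu_N(\xi_{n-\ell+1,\ell-1}^{xay}) \cdot (n-\ell+1)(d_N+\ell-1)\, r(x,a) = (1+o(1)) \cdot \frac{N d_N^2 m_a^{\ell-1} r(x,a)}{2(N-n)}.
\]
The contribution from the bulk range $n \in \llbracket 2L, N-2L+1 \rrbracket$ to $\|\varphi^a\|^2$ is thus bounded by
\[
\frac{C}{N d_N^2 r(x,a)} \sum_{n=2L}^{N-2L+1} (N-n) \sum_{\ell=1}^L m_a^{\ell-1} = O\Bigl(\frac{N}{d_N^2}\Bigr),
\]
and multiplying by $d_N^3/N^2$ yields $O(d_N/N) = o(1)$. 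The exceptional range $n \in \llbracket N-2L+2, N-L \rrbracket$ consists of only $O(L)$ values of $n$, each with $N - n \leq 2L$; with $M := \max_{a \in S_0}(m_\star^2/m_a) \vee 1$, its contribution is bounded by $O(L^3 M^L/(N d_N^2))$, and after normalization by $d_N^3/N^2$ it is $O(L^3 M^L d_N / N^3) \to 0$ as $N \to \infty$ with $L$ fixed. Combining the two, $\lim_{N \to \infty} \frac{d_N^3}{N^2} \|\varphi^a\|^2 = 0$ for every $L$, and hence the iterated limit is zero.

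The argument for $\varphi^b$ with $b \in \mathcal{N}_y$ is entirely symmetric, using \eqref{e_tfl-outer-2}: the relevant edges are $(\xi_{n,\ell}^{xby}, \xi_{n,\ell-1}^{xby})$ for $n \in \llbracket L, N-2L\rrbracket$ and $\ell \in \llbracket 1, L \rrbracket$, the bound $|\psi^{ab}(\xi_{n,1,j-1}^{xaby}, \xi_{n,j}^{xby})| \leq C m_b^{j-1}$ follows from the explicit formulas for \textbf{(L3)} and \textbf{(L5)} together with \eqref{e_ghat-ghatL-UB}, and the denominator evaluates asymptotically to $\frac{N d_N^2 m_b^{\ell} r(b,y)}{2n}$; an identical computation then gives the claim. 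There are no serious obstacles in this proof, the argument being entirely mechanical once the pointwise bounds extracted from Lemma \ref{l_flow-norm-1} are in hand; the only point requiring mild care is that the exceptional contribution from the inner correction near $\xi^x$ (resp.\ $\xi^y$) produces an $L$-dependent constant, but this is harmless because $L$ is held fixed while $N \to \infty$ in the iterated limit.
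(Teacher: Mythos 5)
Your proposal is correct and follows essentially the same route as the paper: one bounds $\varphi^a$ pointwise on its supporting edges by the bounds on $\psi^{ab}$ already extracted in the proof of Lemma \ref{l_flow-norm-1} (geometric decay in $\ell$, uniform in $N$), computes the reversible edge weight via \eqref{e_wN-ZN}, and finds that $\|\varphi^a\|^2=O(N/d_N^2)$ up to an $L$-dependent constant, so that $\frac{d_N^3}{N^2}\|\varphi^a\|^2=O(d_N/N)\to0$ for each fixed $L$. The only cosmetic difference is that you separate the bulk range from the exceptional range $n\ge N-2L+2$ where the inner correction contributes, whereas the paper absorbs everything into the single uniform bound $|\varphi^a|\le Cm_\star^{\ell-1}/\lambda^{\ell}$; both yield the same conclusion.
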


\begin{proof}
First, we calculate $\|\varphi^{a}\|^{2}$, which is
\begin{equation}
\sum_{n=2L}^{N-L}\sum_{\ell=1}^{L}\frac{\varphi^{a}(\xi_{n-\ell+1,\,\ell-1}^{xay},\,\xi_{n-\ell,\,\ell}^{xay})^{2}}{\mu_{N}(\xi_{n-\ell+1,\,\ell-1}^{xay})\cdot(n-\ell+1)(d_{N}+\ell-1)r(x,\,a)}.\label{e_l_flow-norm-2-1}
\end{equation}
By \eqref{e_tfl-outer-1} and Definitions \ref{d_tfl-L3} and \ref{d_tfl-L4},
we have
\begin{equation}
\varphi^{a}(\xi_{n-\ell+1,\,\ell-1}^{xay},\,\xi_{n-\ell,\,\ell}^{xay})=\sum_{b\in S_{0}\setminus\mathcal{N}_{x}:\,a\sim b}\sum_{j=\ell}^{L}\psi^{ab}(\xi_{n-j,\,j}^{xay},\,\xi_{n-j,\,j-1,\,1}^{xaby})\le C\sum_{j=\ell}^{L}\frac{m_{\star}^{j-1}}{\lambda^{j}}\le C\frac{m_{\star}^{\ell-1}}{\lambda^{\ell}}.\label{e_l_flow-norm-2-2}
\end{equation}
Therefore, we may bound \eqref{e_l_flow-norm-2-1} by
\[
C\sum_{n=2L}^{N-L}\sum_{\ell=1}^{L}\frac{(N-n)m_{\star}^{\ell-1}}{Nd_{N}^{2}m_{\star\star}^{\ell-1}}\le\frac{CN}{d_{N}^{2}}\cdot\Big(\frac{m_{\star}}{m_{\star\star}}\Big)^{L},
\]
which proves the limit for $\varphi^{a}$. The proof for $\varphi^{b}$
is almost identical.
\end{proof}
Finally, we deal with the edge correction flows $\varphi^{x}$ and
$\varphi^{y}$.
\begin{lem}
\label{l_flow-norm-3}It holds that
\[
\lim_{L\to\infty}\lim_{N\to\infty}\frac{d_{N}^{3}}{N^{2}}\cdot\|\varphi^{x}\|^{2}=\lim_{L\to\infty}\lim_{N\to\infty}\frac{d_{N}^{3}}{N^{2}}\cdot\|\varphi^{y}\|^{2}=0.
\]
\end{lem}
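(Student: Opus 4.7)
The plan is to mirror the argument used for Lemma \ref{l_flow-norm-2}, since the edge correction flows $\varphi^{x}$ and $\varphi^{y}$ are defined by straightforward aggregations of the outer correction flows already bounded there. I will focus on $\varphi^{x}$; the treatment of $\varphi^{y}$ is symmetric under the roles of $x$ and $y$.

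First I would derive pointwise bounds on $\varphi^{x}$ along each of the two families of edges in Definition \ref{d_tfl-edge}. For the ``middle'' edges $\xi_{N-\ell,\,j}^{xa_{i}a_{i+1}}\to\xi_{N-\ell,\,j-1}^{xa_{i}a_{i+1}}$ with $\ell\in\llbracket L,\,2L-1\rrbracket$, the definition directly equates the flow value to $\sum_{a\in\mathcal{N}_{x}}\varphi^{a}(\xi_{N-\ell}^{xy},\,\xi_{N-\ell-1,\,1}^{xay})$, so the bound \eqref{e_l_flow-norm-2-2} obtained in the proof of Lemma \ref{l_flow-norm-2} yields
\[
\big|\varphi^{x}(\xi_{N-\ell,\,j}^{xa_{i}a_{i+1}},\,\xi_{N-\ell,\,j-1}^{xa_{i}a_{i+1}})\big|\le C\frac{m_{\star}^{\ell-1}}{\lambda^{\ell}}.
\]
For the ``initial'' edges $\xi_{N-\ell+1}^{xa_{1}}\to\xi_{N-\ell}^{xa_{1}}$ with $\ell\in\llbracket1,\,2L-1\rrbracket$, I telescope the defining sum over $j\in\llbracket\ell\vee L,\,2L-1\rrbracket$ of middle-edge values and obtain $|\varphi^{x}(\xi_{N-\ell+1}^{xa_{1}},\,\xi_{N-\ell}^{xa_{1}})|\le CL\,m_{\star}^{(\ell\vee L)-1}/\lambda^{\ell\vee L}$.

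Next I would substitute these bounds into the flow norm. Using Proposition \ref{p_muN-prod} together with the asymptotics \eqref{e_wN-ZN}, the configurations $\xi_{N-\ell,\,j}^{xa_{i}a_{i+1}}$ satisfy $\mu_{N}\cdot r_{N}\ge c\,d_{N}^{2}\,m_{a_{i}}^{j}m_{a_{i+1}}^{\ell-j}$ (up to trivial modifications when $\ell-j=0$), while $\mu_{N}(\xi_{N-\ell+1}^{xa_{1}})\cdot r_{N}(\xi_{N-\ell+1}^{xa_{1}},\,\xi_{N-\ell}^{xa_{1}})\ge c\,Nd_{N}\,m_{a_{1}}^{\ell-1}$. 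Combining with the pointwise bounds and summing over the (finitely many in $L$ and $p$, but not in $N$) relevant edges gives
\[
\|\varphi^{x}\|^{2}\le\frac{C_{L}}{d_{N}^{2}}+\frac{C_{L}'}{Nd_{N}},
\]
for constants $C_{L},\,C_{L}'$ depending only on $L$ and on the fixed path $x=a_{0}\sim a_{1}\sim\cdots\sim a_{p}=y$, where the ratios $m_{\star}/m_{a_{i}}\le 1/m_{\star\star}$ are absorbed into these constants (recall $m_{\star\star}:=\min_{a\in S_{0}}m_{a}>0$).

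Multiplying by $d_{N}^{3}/N^{2}$ gives $O(d_{N}/N^{2})+O(d_{N}^{2}/N^{3})=o(1)$ as $N\to\infty$ for each fixed $L$, from which the stated double limit follows at once. The argument for $\varphi^{y}$ proceeds identically, using \eqref{e_tfl-outer-2} in place of \eqref{e_tfl-outer-1} and the dual bound on $\varphi^{b}$. The main obstacle is purely bookkeeping: one must carefully handle the boundary cases where some coordinate of $\xi_{n_{1},\,\dots,\,n_{r-1}}^{a_{1}\cdots a_{r}}$ equals zero so that the relevant $w_{N}(0)=1$ factor replaces a $w_{N}(k)\simeq d_{N}/k$ factor, but this affects only finitely many terms per $L$ and is absorbed into the constants $C_{L},\,C_{L}'$.
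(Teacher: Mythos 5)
Your overall strategy coincides with the paper's: pointwise bounds on the two families of edges of $\varphi^{x}$, the explicit lower bounds $\mu_{N}r_{N}\ge cd_{N}^{2}m_{a_{i}}^{j}m_{a_{i+1}}^{\ell-j}$ (middle edges) and $\ge cNd_{N}m_{a_{1}}^{\ell-1}$ (initial edges) via \eqref{e_wN-ZN}, and the observation that the number of edges is bounded in terms of $L$ and $p$ only, so that $\|\varphi^{x}\|^{2}\le C_{L}/d_{N}^{2}+C_{L}'/(Nd_{N})$ and the double limit vanishes. The conclusion is correct, but your pointwise bound on the middle edges is misstated. By definition, $\varphi^{x}(\xi_{N-\ell,\,j}^{xa_{i}a_{i+1}},\,\xi_{N-\ell,\,j-1}^{xa_{i}a_{i+1}})=\sum_{a\in\mathcal{N}_{x}}\varphi^{a}(\xi_{N-\ell}^{xy},\,\xi_{N-\ell-1,\,1}^{xay})$, and the argument of $\varphi^{a}$ here is the instance of \eqref{e_tfl-outer-1} whose own index equals $1$ (one particle at $a$ in the target configuration), not $\ell$. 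Hence \eqref{e_l_flow-norm-2-2} gives only the bound $C m_{\star}^{0}/\lambda=C$, a constant independent of $\ell$; your claimed bound $Cm_{\star}^{\ell-1}/\lambda^{\ell}$ with $\ell\in\llbracket L,\,2L-1\rrbracket$ substitutes the edge-correction index into the outer-correction estimate and is false (it is far smaller than the actual value). The same misindexing propagates to your bound $CLm_{\star}^{(\ell\vee L)-1}/\lambda^{\ell\vee L}$ on the initial edges, where the correct bound is $CL$. Fortunately this does not damage the proof: since the ranges $\ell\in\llbracket L,\,2L-1\rrbracket$, $i\in\llbracket1,\,p-1\rrbracket$, $j\in\llbracket1,\,\ell\rrbracket$ are finite for fixed $L$, the constant bounds $C$ and $CL$ already yield $\|\varphi^{x}\|^{2}\le C_{L}/d_{N}^{2}+C_{L}'/(Nd_{N})$ exactly as you assert (this is what the paper does), and the rest of your argument, including the symmetric treatment of $\varphi^{y}$ and the handling of $w_{N}(0)=1$ boundary terms, goes through.
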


\begin{proof}
We first deal with $\varphi^{x}$. By \eqref{e_l_flow-norm-2-2} and
Definitions \ref{d_tfl-outer} and \ref{d_tfl-edge}, for $i\in\llbracket1,\,p-1\rrbracket$
and $j\in\llbracket1,\,\ell\rrbracket$, we have $|\varphi^{x}(\xi_{N-\ell,\,j}^{xa_{i}a_{i+1}},\,\xi_{N-\ell,\,j-1}^{xa_{i}a_{i+1}})|\le C$.
Then, again by Definition \ref{d_tfl-edge},
\[
|\varphi^{x}(\xi_{N-\ell+1}^{xa_{1}},\,\xi_{N-\ell}^{xa_{1}})|\le C\sum_{j=\ell\vee L}^{2L-1}\varphi^{x}(\xi_{N-j}^{xa_{1}},\,\xi_{N-j,\,j-1}^{xa_{1}a_{2}})\le CL.
\]
Therefore, we calculate $\|\varphi^{x}\|^{2}$ as
\[
\begin{aligned} & \sum_{\ell=L}^{2L-1}\sum_{i=1}^{p-1}\sum_{j=1}^{\ell}\frac{\varphi^{x}(\xi_{N-\ell,\,j}^{xa_{i}a_{i+1}},\,\xi_{N-\ell,\,j-1}^{xa_{i}a_{i+1}})^{2}}{\mu_{N}(\xi_{N-\ell,\,j}^{xa_{i}a_{i+1}})\cdot j(d_{N}+\ell-j)r(a_{i},\,a_{i+1})}+\sum_{\ell=1}^{2L-1}\frac{\varphi^{x}(\xi_{N-\ell+1}^{xa_{1}},\,\xi_{N-\ell}^{xa_{1}})^{2}}{\mu_{N}(\xi_{N-\ell}^{xa_{1}})\cdot\ell(d_{N}+N-\ell)r(a_{1},\,x)}\\
 & \le C\sum_{\ell=L}^{2L-1}\sum_{i=1}^{p-1}\sum_{j=1}^{\ell}\frac{N-\ell}{Nd_{N}^{2}m_{\star\star}^{\ell}}+C\sum_{\ell=1}^{2L-1}\frac{L^{2}}{Nd_{N}m_{\star\star}^{\ell}}\le C\Big(\frac{L}{d_{N}^{2}m_{\star\star}^{L}}+\frac{L^{2}}{Nd_{N}m_{\star\star}^{L}}\Big).
\end{aligned}
\]
This proves the result for $\varphi^{x}$. The remaining proof for
$\varphi^{y}$ is analogous.
\end{proof}
Gathering the three lemmas above, we arrive at the conclusion of this
subsection.
\begin{lem}
\label{l_flow-norm-4}It holds that
\[
\limsup_{L\to\infty}\limsup_{N\to\infty}\frac{d_{N}^{3}}{N^{2}}\cdot\|\psi\|^{2}\le\frac{1}{18\mathfrak{K}_{xy}}.
\]
\end{lem}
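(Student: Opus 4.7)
The plan is to bootstrap the three preceding lemmas via the Hilbert-space structure of the flow norm. Decompose
\[
\psi_{\mathrm{bulk}}:=\sum_{i=3}^{6}\sum_{\{a,b\}\subseteq S_{0}\text{ of type \textbf{(L}}i\textbf{)}}\psi^{ab},\qquad \psi_{\mathrm{corr}}:=\sum_{a\in\mathcal{N}_{x}}\varphi^{a}+\sum_{b\in\mathcal{N}_{y}}\varphi^{b}+\varphi^{x}+\varphi^{y},
\]
so that $\psi=\psi_{\mathrm{bulk}}+\psi_{\mathrm{corr}}$ and
\[
\|\psi\|^{2}=\|\psi_{\mathrm{bulk}}\|^{2}+2\langle\psi_{\mathrm{bulk}},\,\psi_{\mathrm{corr}}\rangle+\|\psi_{\mathrm{corr}}\|^{2},
\]
where $\langle\cdot,\cdot\rangle$ is the bilinear form whose associated quadratic form is the squared flow norm $\|\cdot\|^{2}$.

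The first step is to verify that the bulk flows $\{\psi^{ab}\}$ have pairwise disjoint supports, whence $\|\psi_{\mathrm{bulk}}\|^{2}=\sum_{\{a,b\}}\|\psi^{ab}\|^{2}$. Inspection of Definitions \ref{d_tfl-L3}--\ref{d_tfl-L6} shows that every edge on which $\psi^{ab}$ is nonzero corresponds to a particle jump inside $\mathcal{A}_{N}^{xaby}$ whose two endpoint configurations together identify the unordered pair $\{a,b\}\subseteq S_{0}$ uniquely: the main parts $\phi^{ab}$ are supported on transitions $a\leftrightarrow b$, while the inner correction parts $\varphi^{ab}$ in \eqref{e_tfl-L3-3} and \eqref{e_tfl-L5-1} are supported on transitions $a\leftrightarrow x$ (or $b\leftrightarrow y$) at configurations with strictly positive occupancy of the complementary site $b$ (resp.\ $a$). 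Consequently $\langle\psi^{ab},\psi^{a'b'}\rangle=0$ whenever $\{a,b\}\ne\{a',b'\}$, and Lemma \ref{l_flow-norm-1} gives
\[
\limsup_{L\to\infty}\limsup_{N\to\infty}\frac{d_{N}^{3}}{N^{2}}\cdot\|\psi_{\mathrm{bulk}}\|^{2}\le\frac{1}{18\mathfrak{K}_{xy}}.
\]

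The second step controls the remaining cross and correction contributions. Minkowski's inequality applied to the finite collection of correction flows, together with Lemmas \ref{l_flow-norm-2} and \ref{l_flow-norm-3}, yields $(d_{N}^{3}/N^{2})\|\psi_{\mathrm{corr}}\|^{2}=o(1)$ in the iterated limit $N\to\infty$ then $L\to\infty$; equivalently, $\|\psi_{\mathrm{corr}}\|=o(N/d_{N}^{3/2})$. Combined with the bound $\|\psi_{\mathrm{bulk}}\|=O(N/d_{N}^{3/2})$ from the first step and Cauchy--Schwarz,
\[
|\langle\psi_{\mathrm{bulk}},\psi_{\mathrm{corr}}\rangle|\le\|\psi_{\mathrm{bulk}}\|\,\|\psi_{\mathrm{corr}}\|=o(N^{2}/d_{N}^{3}).
\]
Therefore $\|\psi\|^{2}\le\|\psi_{\mathrm{bulk}}\|^{2}+o(N^{2}/d_{N}^{3})$, and taking the iterated limsup gives the desired $(18\mathfrak{K}_{xy})^{-1}$.

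The main obstacle is the disjoint-support claim for the bulk flows; although clean for the main parts and interior corrections, one must scrutinize the boundary edges introduced by the special formulas \eqref{e_tfl-L3-4} and \eqref{e_tfl-L5-2}, where a starting or ending configuration can temporarily live on a lower-dimensional face such as $\mathcal{A}_{N}^{xay}$ or $\mathcal{A}_{N}^{xby}$ shared between several tetrahedra. Should strict disjointness fail on such a sparse set of edges, I would fall back on the uniform pointwise estimate $|\varphi^{ab}|\le Cm_{\star}^{\ell-1}/\lambda^{\ell}$ from \eqref{e_flow-norm-L3-9} and bound the resulting cross terms directly: the offending contribution is supported on $O(L^{2})$ edges with summand of order $O(L^{c}C^{L}/(N d_{N}^{3}))$, hence is $o(N^{2}/d_{N}^{3})$ after sending $N\to\infty$ before $L\to\infty$, and is harmlessly absorbed into the $o(N^{2}/d_{N}^{3})$ remainder above.
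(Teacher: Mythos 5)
Your proposal is correct and follows essentially the same route as the paper, whose proof is a one-liner: the supports of the bulk flows $\psi^{ab}$ are pairwise disjoint, so Lemma \ref{l_flow-norm-1} controls the bulk, and Lemmas \ref{l_flow-norm-2} and \ref{l_flow-norm-3} make the correction flows negligible via the triangle inequality. Your additional scrutiny of the exceptional edges in \eqref{e_tfl-L3-4} and \eqref{e_tfl-L5-2} and the Cauchy--Schwarz treatment of the cross term merely supply details the paper leaves implicit.
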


\begin{proof}
Since the support of each $\psi^{ab}$ for $\{a,\,b\}\subseteq S_{0}$
is disjoint with each other, it suffices to collect \eqref{e_psi-def}
and Lemmas \ref{l_flow-norm-1}, \ref{l_flow-norm-2}, and \ref{l_flow-norm-3}.
\end{proof}

\subsection{\label{sec6.3}Flow divergence}

In this subsection, we first prove that $\psi$ is a flow from $\xi^{x}$
to $\xi^{y}$. Then, we calculate the value of $\psi$, which is by
definition $(\mathrm{div}\,\psi)(\xi^{x})$. Recall the collection
$\widehat{\mathcal{A}}_{N}^{R}$, $R\subseteq S$ defined in Definition
\ref{d_AN-ANhat-def}.
\begin{lem}
\label{l_flow-div-free-1}For all $\{a,\,b\}\subseteq S_{0}$ and
$\eta\in\widehat{\mathcal{A}}_{N}^{xaby}$, it holds that $(\mathrm{div}\,\psi)(\eta)=0$.
\end{lem}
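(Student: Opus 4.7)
The plan is to show two things: (i) for $\eta \in \widehat{\mathcal{A}}_N^{xaby}$, the only component of $\psi$ that can contribute a non-zero flow edge incident to $\eta$ is the single bulk flow $\psi^{ab}$, and (ii) the inner correction $\varphi^{ab}$ (or, in case \textbf{(L6)}, the harmonicity of $\widehat{\mathfrak{g}}_{\ell}^{L}$) was tailored precisely to cancel the divergence of the main flow $\phi^{ab}$ at every such $\eta$.

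For (i), the key observation is that every neighbor $\zeta$ of $\eta$ under the dynamics has support either in $\{x, a, b, y\}$ or in $\{x, a, b, y, c\}$ for some $c \in S \setminus \{x, a, b, y\}$. Since $\psi^{a'b'}$ is supported on $\mathcal{A}_N^{xa'b'y}$, an incident flow edge of $\psi^{a'b'}$ at $\eta$ would force $\{a, b\} \subseteq \{a', b'\}$, so $\{a', b'\} = \{a, b\}$. The outer correction flows $\varphi^{a'}, \varphi^{b'}$ live on edges inside $\mathcal{A}_N^{xa'y}$ or $\mathcal{A}_N^{xb'y}$, whose configurations satisfy $\eta_v = 0$ for every $v \in S_0 \setminus \{a'\}$ (resp.\ $S_0 \setminus \{b'\}$), which is incompatible with $\eta_a, \eta_b \ge 1$. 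Similarly $\varphi^x$ and $\varphi^y$ are supported on configurations with $\eta_y = 0$ or $\eta_x = 0$, which $\eta$ does not satisfy. Hence $(\mathrm{div}\,\psi)(\eta) = (\mathrm{div}\,\psi^{ab})(\eta)$, which is already zero when $\{a,b\}$ is not of type \textbf{(L3)}--\textbf{(L6)}.

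For the remaining cases, write $\eta = \xi_{n-\ell,k,\ell-k}^{xaby}$ (cases \textbf{(L3)}--\textbf{(L5)}) or $\eta = \xi_{n,k,\ell-k}^{xaby}$ (case \textbf{(L6)}) with $1 \le k \le \ell-1$. In case \textbf{(L6)}, the definition \eqref{e_tfl-L6} gives
\[
(\mathrm{div}\,\phi^{ab})(\eta) = c_{ab} m_a^{k-1} m_b^{\ell-k}\big(\widehat{\mathfrak{g}}_\ell^L(\sigma_k^{ab}) - \widehat{\mathfrak{g}}_\ell^L(\sigma_{k-1}^{ab})\big) - c_{ab} m_a^k m_b^{\ell-k-1}\big(\widehat{\mathfrak{g}}_\ell^L(\sigma_{k+1}^{ab}) - \widehat{\mathfrak{g}}_\ell^L(\sigma_k^{ab})\big),
\]
which is (up to sign) the action of the generator associated with $\mathfrak{r}^{\ell}$ from \eqref{e_trans-ver-def} on $\widehat{\mathfrak{g}}_\ell^L$ at the interior point $\sigma_k^{ab} \in U_\ell$; since $\widehat{\mathfrak{g}}_\ell^L$ is harmonic outside $\{\sigma^v : v \in \mathcal{A}_{x,j} \cup \mathcal{A}_{y,j}\}$, this vanishes. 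In cases \textbf{(L3)}--\textbf{(L5)}, the telescoping definition \eqref{e_tfl-L3-3} (and its analog \eqref{e_tfl-L5-1}) unwraps to give
\[
(\mathrm{div}\,\varphi^{ab})(\eta) = -\sum_{j=\ell}^{L}(\mathrm{div}\,\phi^{ab})(\xi_{n-j,\,k+j-\ell,\,\ell-k}^{xaby}) + \sum_{j=\ell+1}^{L}(\mathrm{div}\,\phi^{ab})(\xi_{n-j,\,k+j-\ell,\,\ell-k}^{xaby}) = -(\mathrm{div}\,\phi^{ab})(\eta),
\]
so $(\mathrm{div}\,\psi^{ab})(\eta) = 0$ in the interior regime $\ell \in \llbracket 2, L-1 \rrbracket$.

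The main obstacle is the bookkeeping at the boundary strata. At $\ell = L$ the level-$(L+1)$ incoming correction is absent, and the compensation comes from the $\tfrac{1}{1-m_a}$ or $\tfrac{1}{1-m_b}$ factors appearing in \eqref{e_tfl-L3-2}, \eqref{e_tfl-L4-2}, and their (L5) analog; I would verify by direct substitution that these are exactly what is needed so that the one-sided telescope $-\sum_{j=L}^{L}$ still delivers $-(\mathrm{div}\,\phi^{ab})(\eta)$. For the exceptional corner regions defined in \eqref{e_tfl-L3-4} and \eqref{e_tfl-L5-2}, the saving observation is that the right-hand side of each exceptional assignment is independent of the $k$-coordinate, so the outgoing and incoming exceptional edges at $\eta$ carry the same value and cancel pairwise in the divergence without disturbing the main telescope from the non-exceptional part. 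Once these boundary verifications are in place, divergence-freeness of $\psi$ at every $\eta \in \widehat{\mathcal{A}}_N^{xaby}$ follows.
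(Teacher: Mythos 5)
Your proposal is correct and follows essentially the same route as the paper: reduction to the single bulk flow $\psi^{ab}$, the telescoping cancellation of $\varphi^{ab}$ against $\mathrm{div}\,\phi^{ab}$ for types \textbf{(L3)}--\textbf{(L5)}, harmonicity of $\widehat{\mathfrak{g}}_{\ell}^{L}$ for \textbf{(L6)}, and the $k$-independence of the exceptional corner flows. The one point you flagged for verification resolves more simply than you suggest: at $\ell=L$ the out-edge of \eqref{e_tfl-L3-3} is the single term $-(\mathrm{div}\,\phi^{ab})$ at that configuration and there is no incoming level-$(L+1)$ correction, so the one-sided telescope closes automatically whatever value $\phi^{ab}$ takes there; the $\frac{1}{1-m_a}$ factors are instead needed for the divergence balance on $\widehat{\mathcal{A}}_{N}^{xay}$ and at $\xi_{n}^{xy}$ in the subsequent lemmas.
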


\begin{proof}
The inner correction flow $\varphi^{ab}$ is designed for this lemma.
We divide into three cases.
\begin{itemize}
\item \textbf{(L3)}/\textbf{(L4)}: First, for $n\in\llbracket2L,\,N-2L\rrbracket$,
$\ell\in\llbracket2,\,L\rrbracket$, and $k\in\llbracket1,\,\ell-1\rrbracket$,
it holds that
\[
\begin{aligned}(\mathrm{div}\,\psi)(\xi_{n-\ell,\,k,\,\ell-k}^{xaby})= & \phi^{ab}(\xi_{n-\ell,\,k,\,\ell-k}^{xaby},\,\xi_{n-\ell,\,k-1,\,\ell-k+1}^{xaby})-\phi^{ab}(\xi_{n-\ell,\,k+1,\,\ell-k-1}^{xaby},\,\xi_{n-\ell,\,k,\,\ell-k}^{xaby})\\
 & +\varphi^{ab}(\xi_{n-\ell,\,k,\,\ell-k}^{xaby},\,\xi_{n-\ell+1,\,k-1,\,\ell-k}^{xaby})-\varphi^{ab}(\xi_{n-\ell-1,\,k+1,\,\ell-k}^{xaby},\,\xi_{n-\ell,\,k,\,\ell-k}^{xaby}).
\end{aligned}
\]
By \eqref{e_tfl-L3-1}, \eqref{e_tfl-L3-2}, and \eqref{e_tfl-L3-3},
the second line in the right-hand side equals $[\sum_{j=\ell}^{L}-\sum_{j=\ell+1}^{L}]$
of
\[
(\phi^{ab}(\xi_{n-j,\,k+j-\ell+1,\,\ell-k-1}^{xaby},\,\xi_{n-j,\,k+j-\ell,\,\ell-k}^{xaby})-\phi^{ab}(\xi_{n-j,\,k+j-\ell,\,\ell-k}^{xaby},\,\xi_{n-j,\,k+j-\ell-1,\,\ell-k+1}^{xaby})),
\]
which in turn equals
\[
\phi^{ab}(\xi_{n-\ell,\,k+1,\,\ell-k-1}^{xaby},\,\xi_{n-\ell,\,k,\,\ell-k}^{xaby})-\phi^{ab}(\xi_{n-\ell,\,k,\,\ell-k}^{xaby},\,\xi_{n-\ell,\,k-1,\,\ell-k+1}^{xaby}),
\]
and thus it cancels out with the first line in the right-hand side.
Next, suppose that $n\in\llbracket N-2L+1,\,N-L\rrbracket$. If $\ell\in\llbracket n-N+2L,\,L\rrbracket$,
then the same logic is valid. If $\ell\in\llbracket1,\,n-N+2L-1\rrbracket$,
then the exceptional flow defined in \eqref{e_tfl-L3-4} plays a role,
but they also cancel out with each other since for each $k\in\llbracket1,\,\ell-1\rrbracket$,
\[
\varphi^{ab}(\xi_{n-\ell,\,k,\,\ell-k}^{xaby},\,\xi_{n-\ell,\,k+1,\,\ell-k-1}^{xaby})=\varphi^{ab}(\xi_{n-\ell,\,k-1,\,\ell-k+1}^{xaby},\,\xi_{n-\ell,\,k,\,\ell-k}^{xaby}).
\]
Thus, it holds that $(\mathrm{div}\,\psi^{ab})(\eta)=0$ for any $\eta\in\widehat{\mathcal{A}}_{N}^{xaby}$
if $\{a,\,b\}$ is of type \textbf{(L3)} or \textbf{(L4)}.
\item \textbf{(L5)}: The situation is totally symmetric; thus, the proof
is almost identical.
\item \textbf{(L6)} For $n\in\llbracket L,\,N-2L\rrbracket$, $\ell\in\llbracket2\vee(2L-n),\,L\rrbracket$,
and $k\in\llbracket1,\,\ell-1\rrbracket$, by \eqref{e_tfl-L6},
\begin{equation}
\begin{aligned}(\mathrm{div}\,\psi)(\xi_{n,\,k,\,\ell-k}^{xaby}) & =\phi^{ab}(\xi_{n,\,k,\,\ell-k}^{xaby},\,\xi_{n,\,k-1,\,\ell-k+1}^{xaby})-\phi^{ab}(\xi_{n,\,k+1,\,\ell-k-1}^{xaby},\,\xi_{n,\,k,\,\ell-k}^{xaby})\\
 & =c_{ab}\cdot m_{a}^{k-1}m_{b}^{\ell-k-1}\cdot[m_{b}(\widehat{\mathfrak{g}}_{\ell}^{L}(\sigma_{k}^{ab})-\widehat{\mathfrak{g}}_{\ell}^{L}(\sigma_{k-1}^{ab}))-m_{a}(\widehat{\mathfrak{g}}_{\ell}^{L}(\sigma_{k+1}^{ab})-\widehat{\mathfrak{g}}_{\ell}^{L}(\sigma_{k}^{ab}))].
\end{aligned}
\label{e_l_flow-div-free-1-1}
\end{equation}
Recall from Definition \ref{d_gL-ghatL-def} that $\widehat{\mathfrak{g}}_{\ell}^{L}:U_{\ell}\to\mathbb{R}$
is the harmonic extension of $\mathfrak{g}^{L}$. Thus, by \eqref{e_har-extension},
\[
\widehat{\mathfrak{g}}_{\ell}^{L}(\sigma_{k}^{ab})=\mathrm{E}_{\sigma_{k}^{ab}}^{\ell}[\mathscr{X}_{\mathcal{T}_{\mathcal{A}_{x}\cup\mathcal{A}_{y}}}^{\ell}],
\]
where $\mathrm{E}_{\sigma_{k}^{ab}}^{\ell}$ is the expectation of
the law of $\{\mathscr{X}_{t}^{\ell}\}_{t\ge0}$ (cf. Definition \ref{d_trans-ver})
starting from $\sigma_{k}^{ab}$. Thus, by the strong Markov property,
\[
\widehat{\mathfrak{g}}_{\ell}^{L}(\sigma_{k}^{ab})=\frac{\mathfrak{r}^{\ell}(\sigma_{k}^{vw},\,\sigma_{k-1}^{vw})\cdot\widehat{\mathfrak{g}}_{\ell}^{L}(\sigma_{k-1}^{ab})}{\mathfrak{r}^{\ell}(\sigma_{k}^{vw},\,\sigma_{k-1}^{vw})+\mathfrak{r}^{\ell}(\sigma_{k}^{vw},\,\sigma_{k+1}^{vw})}+\frac{\mathfrak{r}^{\ell}(\sigma_{k}^{vw},\,\sigma_{k+1}^{vw})\cdot\widehat{\mathfrak{g}}_{\ell}^{L}(\sigma_{k+1}^{ab})}{\mathfrak{r}^{\ell}(\sigma_{k}^{vw},\,\sigma_{k-1}^{vw})+\mathfrak{r}^{\ell}(\sigma_{k}^{vw},\,\sigma_{k+1}^{vw})}.
\]
Substituting the exact values from \eqref{e_trans-ver-def}, we obtain
that
\begin{equation}
\widehat{\mathfrak{g}}_{\ell}^{L}(\sigma_{k}^{ab})=\frac{m_{b}}{m_{b}+m_{a}}\cdot\widehat{\mathfrak{g}}_{\ell}^{L}(\sigma_{k-1}^{ab})+\frac{m_{a}}{m_{b}+m_{a}}\cdot\widehat{\mathfrak{g}}_{\ell}^{L}(\sigma_{k+1}^{ab}).\label{e_l_flow-div-free-1-2}
\end{equation}
Therefore, combining \eqref{e_l_flow-div-free-1-1} and \eqref{e_l_flow-div-free-1-2},
we obtain that $(\mathrm{div}\,\psi)(\xi_{n,\,k,\,\ell-k}^{xaby})=0$
and thus complete the proof.
\end{itemize}
\end{proof}
Moreover, by definition, the following lemma is clear.
\begin{lem}
\label{l_flow-div-free-2}For $\eta\in\mathcal{A}_{N}^{xab}\cup\mathcal{A}_{N}^{aby}$
where $\{a,\,b\}\subseteq S_{0}$, it holds that $(\mathrm{div}\,\psi)(\eta)=0$.
\end{lem}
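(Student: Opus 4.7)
By the $x\leftrightarrow y$ symmetry built into the construction of $\psi$, it suffices to treat $\eta\in\mathcal{A}_{N}^{xab}$ distinct from the source $\xi^{x}$; the case $\eta\in\mathcal{A}_{N}^{aby}$ follows identically upon reversing the roles of $x$ and $y$ and replacing $\varphi^{x}$ by $\varphi^{y}$. The plan is first to eliminate every flow component other than $\varphi^{x}$ as having empty support near $\eta$, and then to verify that $\varphi^{x}$ is divergence-free at $\eta$ by direct inspection of Definition~\ref{d_tfl-edge}.

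For the elimination step, observe that $\eta_{y}=0$ for every $\eta\in\mathcal{A}_{N}^{xab}$. Inspecting Definitions~\ref{d_tfl-L3}--\ref{d_tfl-outer}, every supporting edge of the bulk flows $\psi^{a'b'}$ and of the outer correction flows $\varphi^{a'}$ and $\varphi^{b'}$ has both endpoints satisfying $\eta_{y}\ge L$: this follows from the parameter ranges $n\in\llbracket2L,N-L\rrbracket$ in types \textbf{(L3)}--\textbf{(L4)}, $n+\ell\le N-L$ in types \textbf{(L5)}--\textbf{(L6)}, and the analogous restrictions on the outer corrections on $\mathcal{A}_{N}^{xa'y}$ and $\mathcal{A}_{N}^{xb'y}$. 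The same applies to $\varphi^{y}$, whose edges all satisfy $\eta_{y}\ge N-2L+1$. Hence only $\varphi^{x}$ can contribute to the divergence at $\eta$.

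The core observation is then that the value of $\varphi^{x}$ on every bridge edge,
\[
\varphi^{x}\bigl(\xi_{N-\ell,j}^{xa_{i}a_{i+1}},\xi_{N-\ell,j-1}^{xa_{i}a_{i+1}}\bigr)=\sum_{a\in\mathcal{N}_{x}}\varphi^{a}\bigl(\xi_{N-\ell}^{xy},\xi_{N-\ell-1,1}^{xay}\bigr),
\]
depends on neither the bridge index $i$ nor the position $j$. Consequently the divergence vanishes trivially at every interior bridge configuration $\xi_{N-\ell,j}^{xa_{i}a_{i+1}}$ with $1\le j\le\ell-1$; and at every junction $\xi_{N-\ell,\ell}^{xa_{i}}$ with $i\in\llbracket2,p-1\rrbracket$, the outgoing edge into $\mathcal{A}_{N}^{xa_{i}a_{i+1}}$ carries exactly the same value as the incoming edge out of $\mathcal{A}_{N}^{xa_{i-1}a_{i}}$. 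Finally, for configurations on the chain $\mathcal{A}_{N}^{xa_{1}}$, writing $T_{j}:=\sum_{a\in\mathcal{N}_{x}}\varphi^{a}(\xi_{N-j}^{xy},\xi_{N-j-1,1}^{xay})$, the telescoping definition yields, at $\eta=\xi_{N-\ell}^{xa_{1}}$, an incoming chain-flow $\sum_{j=\ell\vee L}^{2L-1}T_{j}$, an outgoing chain-flow $\sum_{j=(\ell+1)\vee L}^{2L-1}T_{j}$ (interpreted as $0$ when $\ell=2L-1$), and an outgoing bridge-flow $T_{\ell}\cdot\mathbbm{1}\{\ell\ge L\}$; these three quantities cancel in each of the three regimes $\ell<L$, $L\le\ell\le2L-2$, and $\ell=2L-1$.

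The main obstacle is purely notational: one must carefully pair each edge of $\varphi^{x}$ with the boundary configuration it touches, particularly at the transition $\ell=L$ (where the bridge edge first appears) and at $\ell=2L-1$ (where the chain edge vanishes but the bridge edge remains). No conceptual difficulty should arise, as the translation-invariance of the bridge values and the telescoping on the chain are engineered precisely to enforce divergence-freeness on the boundary face $\mathcal{A}_{N}^{xab}$.
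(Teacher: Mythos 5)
Your proof is correct, and it supplies exactly the verification that the paper omits: the paper simply declares this lemma ``clear by definition,'' whereas you spell out the two ingredients it is relying on, namely that every edge of the bulk and outer-correction flows (and of $\varphi^{y}$) has both endpoints with $\eta_{y}>0$ and hence misses $\mathcal{A}_{N}^{xab}$ entirely, and that $\varphi^{x}$ is divergence-free there because its bridge values depend only on $\ell$ while the chain values telescope. Your explicit exclusion of $\xi^{x}$ (and symmetrically $\xi^{y}$) is also the right reading of the statement, since the divergence at $\xi^{x}$ is nonzero and is computed separately in Lemma \ref{l_flow-div-free-6}.
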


Next, we check that $\psi$ is divergence free on $\widehat{\mathcal{A}}_{N}^{xay}$
for $a\in\mathcal{N}_{x}$ and $\widehat{\mathcal{A}}_{N}^{xby}$
for $b\in\mathcal{N}_{y}$.
\begin{lem}
\label{l_flow-div-free-3}For $\eta\in\widehat{\mathcal{A}}_{N}^{xay}\cup\widehat{\mathcal{A}}_{N}^{xby}$
where $a\in\mathcal{N}_{x}$ and $b\in\mathcal{N}_{y}$, it holds
that $(\mathrm{div}\,\psi)(\eta)=0$.
\end{lem}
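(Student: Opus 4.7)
The strategy mirrors Lemma \ref{l_flow-div-free-1}: first identify which components of the decomposition \eqref{e_psi-def} can contribute at $\eta$, then verify that their contributions cancel through the telescoping structure of the outer correction flows. I treat $\eta = \xi^{xay}_{m,\ell} \in \widehat{\mathcal{A}}_N^{xay}$ with $m,\,\ell \ge 1$ and $m+\ell \le N-1$; the case $\widehat{\mathcal{A}}_N^{xby}$ is entirely symmetric, handled by $\varphi^b$ and bulk flows of types \textbf{(L3)} and \textbf{(L5)}.

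The first step is a support analysis. Among the summands in \eqref{e_psi-def}, the outer correction $\varphi^{a'}$ for $a' \ne a$ is supported in $\mathcal{A}_N^{xa'y}$, which does not meet $\eta$ since $\eta$ has positive occupation at $a$; likewise $\varphi^b$ for any $b \in \mathcal{N}_y$ is supported in $\mathcal{A}_N^{xby}$, disjoint from $\widehat{\mathcal{A}}_N^{xay}$ because $\mathcal{N}_x \cap \mathcal{N}_y = \emptyset$ by Definition \ref{d_G-def}-(3). The edge correction $\varphi^x$ lives on configurations with no particle at $y$, and $\varphi^y$ on configurations with no particle at $x$, so neither meets $\eta$. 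A bulk flow $\psi^{a'b'}$ can intersect $\eta$ only if $a \in \{a',\,b'\}$, and since $a \in \mathcal{N}_x$ is disjoint from $\mathcal{N}_y$, this leaves only the flows $\psi^{ab'}$ with $b' \in S_0 \setminus \mathcal{N}_x$ and $a \sim b'$, i.e., of types \textbf{(L3)} and \textbf{(L4)}.

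The core cancellation in the interior range $m+\ell \in \llbracket 2L,\,N-L\rrbracket$, $\ell \in \llbracket 1,\,L\rrbracket$ is a telescoping identity. Writing $B := \{b' \in S_0 \setminus \mathcal{N}_x : a \sim b'\}$, antisymmetry of $\psi$ and the definition \eqref{e_tfl-outer-1} give
\[
(\mathrm{div}\,\varphi^a)(\eta) = \varphi^a(\eta,\,\xi^{xay}_{m-1,\ell+1}) - \varphi^a(\xi^{xay}_{m+1,\ell-1},\,\eta) = -\sum_{b' \in B} \psi^{ab'}\bigl(\eta,\,\xi^{xab'y}_{m,\ell-1,1}\bigr),
\]
since the telescoping sums over $j \ge \ell+1$ and $j \ge \ell$ differ by exactly the $j=\ell$ term. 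On the other hand, the unique edge of a bulk flow $\psi^{ab'}$ meeting $\eta$ corresponds to $k=\ell$ in \eqref{e_tfl-L3-1}/\eqref{e_tfl-L4-1} and contributes $\sum_{b' \in B}\phi^{ab'}(\eta,\,\xi^{xab'y}_{m,\ell-1,1})$ to the divergence; the inner correction $\varphi^{ab'}$ from \eqref{e_tfl-L3-3} does not touch $\eta$ because its parameter range $k \le \ell'-1$ forces positive occupation at $b'$ on both endpoints of each edge. Adding these contributions yields $(\mathrm{div}\,\psi)(\eta)=0$.

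Finally, the boundary cases must be treated separately. For $\ell = L$ the outer correction has an incoming but no outgoing edge at $\eta$, while the bulk flows use \eqref{e_tfl-L3-2}/\eqref{e_tfl-L4-2} with the factor $1/(1-m_a) = \sum_{k \ge 0} m_a^k$ absorbing the truncation tail, and the identity above still holds. When $m+\ell$ lies outside $\llbracket 2L,\,N-L\rrbracket$, neither $\varphi^a$ nor the bulk flows $\psi^{ab'}$ are defined at $\eta$, so the divergence vanishes trivially. The one delicate subcase is $m+\ell \in \llbracket N-2L+2,\,N-L\rrbracket$ with small $\ell$: here the inner correction \eqref{e_tfl-L3-4} becomes active and contributes an additional incoming flux into $\eta$, but its recursive definition is engineered so that the accumulated contribution unfolds into a telescoping sum of upstream divergences of $\phi^{ab'}$ that exactly compensates the bulk leakage. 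Managing this recursive unfolding near $\xi^x$ is the main technical obstacle I anticipate; once it is sorted out, the symmetric case $\eta \in \widehat{\mathcal{A}}_N^{xby}$ follows verbatim by exchanging the roles of $x$ and $y$ and using \eqref{e_tfl-outer-2} in place of \eqref{e_tfl-outer-1}.
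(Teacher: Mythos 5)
Your support analysis and the telescoping identity $(\mathrm{div}\,\varphi^{a})(\eta)=-\sum_{b'\in B}\psi^{ab'}(\eta,\,\xi^{xab'y}_{m,\ell-1,1})$ are exactly the paper's argument and are correct, including the trivial vanishing outside $m+\ell\in\llbracket2L,\,N-L\rrbracket$ and the degenerate case $\ell=L$ (where the outgoing $\varphi^{a}$-edge is absent and the telescoping sum reduces to its single $j=L$ term). The place where you stop short is the subcase $m+\ell\in\llbracket N-2L+2,\,N-L\rrbracket$, which you flag as requiring a ``recursive unfolding'' of \eqref{e_tfl-L3-4}. No such unfolding is needed: the exceptional inner correction meets $\eta=\xi^{xay}_{m,\ell}$ only through its $k=\ell-1$ instance, which lives on the \emph{same} edge $\{\eta,\,\xi^{xab'y}_{m,\ell-1,1}\}$ as the $k=\ell$ instance of $\phi^{ab'}$. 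Hence the total bulk contribution to $(\mathrm{div}\,\psi)(\eta)$ is $\sum_{b'\in B}\psi^{ab'}(\eta,\,\xi^{xab'y}_{m,\ell-1,1})$ with $\psi^{ab'}=\phi^{ab'}+\varphi^{ab'}$, and since the outer correction \eqref{e_tfl-outer-1} is defined through $\psi^{ab'}$ rather than $\phi^{ab'}$, the cancellation against $(\mathrm{div}\,\varphi^{a})(\eta)$ is exact edge-for-edge in every subcase. In short: replace $\phi^{ab'}$ by $\psi^{ab'}$ in your statement of the bulk contribution and the ``main technical obstacle'' disappears; with that correction your proof coincides with the paper's.
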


\begin{proof}
The outer correction flows $\varphi^{a}$ and $\varphi^{b}$ are designed
for this lemma. Indeed, first assume that $a\in\mathcal{N}_{x}$.
Then, for $n\in\llbracket2L,\,N-L\rrbracket$ and $\ell\in\llbracket1,\,L\rrbracket$,
$(\mathrm{div}\,\psi)(\xi_{n-\ell,\,\ell}^{xay})$ equals
\[
\begin{aligned} & \sum_{b\in S_{0}\setminus\mathcal{N}_{x}:\,a\sim b}\psi^{ab}(\xi_{n-\ell,\,\ell}^{xay},\,\xi_{n-\ell,\,\ell-1,\,1}^{xaby})+\varphi^{a}(\xi_{n-\ell,\,\ell}^{xay},\,\xi_{n-\ell-1,\,\ell+1}^{xay})-\varphi^{a}(\xi_{n-\ell+1,\,\ell-1}^{xay},\,\xi_{n-\ell,\,\ell}^{xay})\\
 & =\sum_{b\in S_{0}\setminus\mathcal{N}_{x}:\,a\sim b}\psi^{ab}(\xi_{n-\ell,\,\ell}^{xay},\,\xi_{n-\ell,\,\ell-1,\,1}^{xaby})-\sum_{b\in S_{0}\setminus\mathcal{N}_{x}:\,a\sim b}\psi^{ab}(\xi_{n-\ell,\,\ell}^{xay},\,\xi_{n-\ell,\,\ell-1,\,1}^{xaby})=0.
\end{aligned}
\]
Similarly, we can prove that $(\mathrm{div}\,\psi)(\xi_{n,\,\ell}^{xby})=0$
for all $n\in\llbracket L,\,N-2L\rrbracket$ and $\ell\in\llbracket1,\,L\rrbracket$.
\end{proof}
\begin{lem}
\label{l_flow-div-free-4}For $\eta\in\widehat{\mathcal{A}}_{N}^{xvy}$
where $v\in S_{0}\setminus(\mathcal{N}_{x}\cup\mathcal{N}_{y})$,
it holds that $(\mathrm{div}\,\psi)(\eta)=0$.
\end{lem}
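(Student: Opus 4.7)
The plan is to compute $(\mathrm{div}\,\psi)(\eta)$ at $\eta:=\xi_{n,\ell}^{xvy}$ directly and organize the contributions so that they reduce, term for term, to the left-hand side of the resolvent identity \eqref{e_l_g-gL-prop-1-1} (or \eqref{e_l_g-gL-prop-1-2} at level $L$), which vanishes by Lemma~\ref{l_g-gL-prop-1}. First I isolate the components of $\psi$ in \eqref{e_psi-def} that are incident to $\eta$. Inspection of the supports shows that the outer corrections $\varphi^a$, $\varphi^b$ live on $\mathcal{A}_N^{xay}$ and $\mathcal{A}_N^{xby}$, and the edge corrections $\varphi^x$, $\varphi^y$ live on $\mathcal{A}_N^{xa_ia_{i+1}}$ or $\mathcal{A}_N^{a_ia_{i+1}y}$; since $v\notin\mathcal{N}_x\cup\mathcal{N}_y$ and $\eta\in\widehat{\mathcal{A}}_N^{xvy}$ has strictly positive mass at $v$, $x$ and $y$ simultaneously, none of these subgraphs contain $\eta$ or any of its neighbors in a way that can yield a nonzero flow. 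Hence only the bulk flows $\psi^{wv}$ (or $\psi^{vw}$) with $w\in\mathcal{N}_v$ can contribute, and these classify as type \textbf{(L4)}, \textbf{(L5)}, or \textbf{(L6)} according to whether $w\in\mathcal{N}_x$, $w\in\mathcal{N}_y$, or $w\in S_0\setminus(\mathcal{N}_x\cup\mathcal{N}_y)$.

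For the main case $v\in\mathcal{A}_{x,j}\cup\mathcal{A}_{y,j}$, $\ell\in\llbracket 1,L\rrbracket$, with $n$ in the generic range where the exceptional clauses \eqref{e_tfl-L3-4} and \eqref{e_tfl-L5-2} do not intrude, each contribution is computed explicitly. Fix $w\in\mathcal{N}_x\cap\mathcal{N}_v$: the incoming main flow $\phi^{wv}(\xi_{n,1,\ell-1}^{xwvy},\eta)$ supplies $-c_{wv}m_v^{\ell-1}(1+\widehat{\mathfrak{g}}_{\ell-1}^L(\sigma^v)-\widehat{\mathfrak{g}}_\ell^L(\sigma^v))$, while the inner correction $\varphi^{wv}(\xi_{n-1,1,\ell}^{xwvy},\eta)$, by the telescoping definition \eqref{e_tfl-L3-3}, equals $-\sum_{j=\ell+1}^L(\mathrm{div}\,\phi^{wv})(\xi_{n+\ell-j,\,j-\ell,\,\ell}^{xwvy})$. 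Each inner divergence is a two-term expression in $\widehat{\mathfrak{g}}_{\ell\pm 1}^L(\sigma^v)$; the geometric series in $m_w$ partially telescopes, and the level-$L$ boundary formula \eqref{e_tfl-L4-2} supplies exactly the residual needed to produce a global factor $(1-m_w)^{-1}$. After algebraic simplification, the (L4) contribution at $w$ equals the negative of the $w$-summand of $(1-m_v)\tfrac{c_{wv}m_v^{\ell-1}}{1-m_w}+\tfrac{c_{wv}m_v^{\ell-1}}{1-m_w}(\widehat{\mathfrak{g}}_{\ell-1}^L(\sigma^v)-\widehat{\mathfrak{g}}_\ell^L(\sigma^v))+\tfrac{c_{wv}m_v^\ell}{1-m_w}(\widehat{\mathfrak{g}}_{\ell+1}^L(\sigma^v)-\widehat{\mathfrak{g}}_\ell^L(\sigma^v))$, that is, the $w$-summands of the first three terms of \eqref{e_l_g-gL-prop-1-1}. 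The (L5) calculation with $w\in\mathcal{N}_y\cap\mathcal{N}_v$ is parallel and produces the analogous expression without the $(1-m_v)$ piece. For (L6), the contribution summed over $w\in\mathcal{V}_j'\cap\mathcal{N}_v$ is $\sum_w c_{vw}m_v^{\ell-1}(\widehat{\mathfrak{g}}_\ell^L(\sigma^v)-\widehat{\mathfrak{g}}_\ell^L(\sigma_{\ell-1}^{vw}))$; since $\widehat{\mathfrak{g}}_\ell^L$ is the harmonic extension of $\mathfrak{g}^L$ on $U_\ell$ (Definition~\ref{d_gL-ghatL-def}), the defining boundary relation (the reduction of the generator action at the boundary point $\sigma^v$ to the trace-generator action on $\mathcal{A}_{x,j}\cup\mathcal{A}_{y,j}$) identifies this sum with $\sum_{w\in\mathcal{A}_{x,j}\cup\mathcal{A}_{y,j}}\widehat{\mathfrak{r}}_{\sigma^v\sigma^w}^\ell(\widehat{\mathfrak{g}}_\ell^L(\sigma^v)-\widehat{\mathfrak{g}}_\ell^L(\sigma^w))$, which is the negative of the last term of \eqref{e_l_g-gL-prop-1-1}. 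Collecting everything, $(\mathrm{div}\,\psi)(\eta)$ equals exactly the negative of the left-hand side of \eqref{e_l_g-gL-prop-1-1} (via $\widehat{\mathfrak{g}}_\ell^L(\sigma^v)=\mathfrak{g}^L(\mathfrak{v}_\ell)$), hence vanishes by Lemma~\ref{l_g-gL-prop-1}(1) for $\ell\in\llbracket 1,L-1\rrbracket$; at $\ell=L$ the same scheme yields the negative of the LHS of \eqref{e_l_g-gL-prop-1-2}, which vanishes by Lemma~\ref{l_g-gL-prop-1}(2); for $\ell>L$ no bulk flow reaches $\eta$ and the divergence is trivially zero.

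Two auxiliary situations are handled separately. If $v\in\mathcal{V}_j'\setminus(\mathcal{A}_{x,j}\cup\mathcal{A}_{y,j})$, then neither (L4) nor (L5) arises, only (L6) contributes, and since $\sigma^v$ is an interior point of $U_\ell$ the harmonicity of $\widehat{\mathfrak{g}}_\ell^L$ at $\sigma^v$ in the chain $\{\mathscr{X}_t^\ell\}_{t\ge 0}$ immediately gives $(\mathrm{div}\,\psi)(\eta)=0$. For $n$ in the boundary regimes close to $0$ or $N$, the exceptional clauses \eqref{e_tfl-L3-4}--\eqref{e_tfl-L5-2} activate and replace part of the correction contribution; a direct computation verifies that they are arranged so that the aggregated incoming correction at $\eta$ still telescopes to the same $\sum_{j=\ell+1}^L(\mathrm{div}\,\phi^{wv})$ (or its (L5) analogue), leaving the reduction to Lemma~\ref{l_g-gL-prop-1} intact.

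The main obstacle will be the careful bookkeeping of the telescoping correction sums: one must verify that the level-$L$ boundary formulas \eqref{e_tfl-L4-2} and its (L5) analogue inside Definition~\ref{d_tfl-L5} supply exactly the residual needed to generate the $(1-m_w)^{-1}$ factor appearing in \eqref{e_l_g-gL-prop-1-1}--\eqref{e_l_g-gL-prop-1-2}, and (secondarily) that the exceptional boundary clauses \eqref{e_tfl-L3-4}--\eqref{e_tfl-L5-2} preserve this cancellation in the extreme $n$-regimes. Both verifications are essentially algebraic but delicate; the cleanest bookkeeping is to group the main and inner-correction contributions by the associated pair $\{v,w\}$ before reducing via Lemma~\ref{l_g-gL-prop-1}.
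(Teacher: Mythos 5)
Your proposal is correct and follows essentially the same route as the paper's proof: isolate the bulk flows of types \textbf{(L4)}--\textbf{(L6)} incident to $\xi_{n-\ell,\,\ell}^{xvy}$, convert the \textbf{(L6)} contribution into the trace-rate term via the duality \eqref{e_dual-gen} for the harmonic extension, and reduce the total divergence to the resolvent identities \eqref{e_l_g-gL-prop-1-1}--\eqref{e_l_g-gL-prop-1-2} of Lemma \ref{l_g-gL-prop-1}, with the interior case $v\in\mathcal{V}_{j}'\setminus(\mathcal{A}_{x,\,j}\cup\mathcal{A}_{y,\,j})$ handled directly by harmonicity and the extreme $n$-regimes by the exceptional flows.
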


\begin{proof}
First, assume that $v\in\mathcal{A}_{x}$, such that $v\in\mathcal{A}_{x,\,j}$
for some $j\in\llbracket1,\,s\rrbracket$. Then, we calculate for
$n\in\llbracket2L,\,N-2L\rrbracket$ and $\ell\in\llbracket1,\,L\rrbracket$
that
\[
\begin{aligned}(\mathrm{div}\,\psi)(\xi_{n-\ell,\,\ell}^{xvy})= & -\sum_{a\in\mathcal{N}_{x}:\,a\sim v}(\phi^{ab}(\xi_{n-\ell,\,1,\,\ell-1}^{xavy},\,\xi_{n-\ell,\,\ell}^{xvy})+\varphi^{ab}(\xi_{n-\ell-1,\,1,\,\ell}^{xavy},\,\xi_{n-\ell,\,\ell}^{xvy}))\\
 & +\sum_{w\in\mathcal{V}_{j}':\,v\sim w}\phi^{ab}(\xi_{n-\ell,\,\ell}^{xvy},\,\xi_{n-\ell,\,\ell-1,\,1}^{xvwy}).
\end{aligned}
\]
If $\ell\in\llbracket1,\,L-1\rrbracket$, substituting the exact values,
the right-hand side equals
\[
\begin{aligned} & -\sum_{a\in\mathcal{N}_{x}:\,a\sim v}\frac{c_{av}}{1-m_{a}}\cdot[m_{v}^{\ell-1}(1+\mathfrak{g}^{L}(\mathfrak{v}_{\ell-1})-\mathfrak{g}^{L}(\mathfrak{v}_{\ell}))-m_{v}^{\ell}(1+\mathfrak{g}^{L}(\mathfrak{v}_{\ell})-\mathfrak{g}^{L}(\mathfrak{v}_{\ell+1}))]\\
 & +\sum_{w\in\mathcal{V}_{j}':\,v\sim w}\mathfrak{r}^{\ell}(\sigma_{\ell}^{vw},\,\sigma_{\ell-1}^{vw})\cdot(\widehat{\mathfrak{g}}_{\ell}^{L}(\sigma_{\ell}^{vw})-\widehat{\mathfrak{g}}_{\ell+1}^{L}(\sigma_{\ell-1}^{vw})).
\end{aligned}
\]
By \eqref{e_dual-gen}, this equals
\[
\begin{aligned} & -\sum_{a\in\mathcal{N}_{x}}\frac{c_{av}}{1-m_{a}}\cdot[m_{v}^{\ell-1}(1+\mathfrak{g}^{L}(\mathfrak{v}_{\ell-1})-\mathfrak{g}^{L}(\mathfrak{v}_{\ell}))-m_{v}^{\ell}(1+\mathfrak{g}^{L}(\mathfrak{v}_{\ell})-\mathfrak{g}^{L}(\mathfrak{v}_{\ell+1}))]\\
 & +\sum_{w\in\mathcal{A}_{x,\,j}\cup\mathcal{A}_{y,\,j}}\widehat{\mathfrak{r}}^{\ell}(\sigma^{v},\,\sigma^{w})\cdot(\mathfrak{g}^{L}(\mathfrak{v}_{\ell})-\mathfrak{g}^{L}(\mathfrak{w}_{\ell})).
\end{aligned}
\]
Thus, this is zero by Lemma \ref{l_g-gL-prop-1}-(1). Similarly, $(\mathrm{div}\,\psi)(\xi_{n-L,\,L}^{xvy})=0$.
If $n\in\llbracket N-2L+1,\,N-L\rrbracket$, then the exceptional
flow guarantees that the divergence is zero. Moreover, the case of
$v\in\mathcal{A}_{y}$ can be handled by the same logic.

Finally, assume that $v\in\mathcal{V}_{j}'\setminus(\mathcal{A}_{x,\,j}\cup\mathcal{A}_{y,\,j})$.
Then, for $n\in\llbracket L,\,N-2L\rrbracket$ and $\ell\in\llbracket1\vee(2L-n),\,L\rrbracket$,
\[
(\mathrm{div}\,\psi)(\xi_{n-\ell,\,\ell}^{xvy})=\sum_{w\in\mathcal{V}_{j}'}\phi^{ab}(\xi_{n-\ell,\,\ell}^{xvy},\,\xi_{n-\ell,\,\ell-1,\,1}^{xvwy})=\sum_{w\in\mathcal{V}_{j}'}\mathfrak{r}_{\sigma_{\ell}^{vw}\sigma_{\ell-1}^{vw}}^{\ell}\cdot(\widehat{\mathfrak{g}}_{\ell}^{L}(\sigma_{\ell}^{vw})-\widehat{\mathfrak{g}}_{\ell+1}^{L}(\sigma_{\ell-1}^{vw}))=0,
\]
by \eqref{e_har-extension} and the strong Markov property. Thus,
the proof is completed.
\end{proof}
The only remaining domain is $\mathcal{A}_{N}^{xy}$. We first handle
its interior, $\widehat{\mathcal{A}}_{N}^{xy}$.
\begin{lem}
\label{l_flow-div-free-5}For $\eta\in\widehat{\mathcal{A}}_{N}^{xy}$,
it holds that $(\mathrm{div}\,\psi)(\eta)=0$.
\end{lem}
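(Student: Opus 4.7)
The plan is to fix $\eta = \xi_n^{xy}$ with $n \in \llbracket 1, N-1\rrbracket$ and reduce $(\mathrm{div}\,\psi)(\xi_n^{xy})$ to a difference of outer-correction contributions. Since $\xi_n^{xy}$ has no particles in $S_0$, the only edges incident to it are $(\xi_n^{xy}, \xi_{n-1,1}^{xay})$ for $a \in \mathcal{N}_x$ (i.e., $x \to a$) and $(\xi_n^{xy}, \xi_{n,1}^{xby})$ for $b \in \mathcal{N}_y$ (i.e., $y \to b$). Inspecting Definitions \ref{d_tfl-L3}--\ref{d_tfl-edge}, one sees that the bulk flows $\psi^{ab}$ never touch configurations having zero particles at both $a$ and $b$, so the only components of $\psi$ contributing to the divergence are the outer corrections $\varphi^a$, $\varphi^b$, and (in the endpoint regimes) the edge corrections $\varphi^x$, $\varphi^y$.

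In the bulk regime $n \in \llbracket 2L, N-2L\rrbracket$, both $\varphi^a$ and $\varphi^b$ are active, and the divergence collapses to
\[
(\mathrm{div}\,\psi)(\xi_n^{xy}) = \sum_{a\in\mathcal{N}_x}\varphi^a(\xi_n^{xy},\xi_{n-1,1}^{xay}) - \sum_{b\in\mathcal{N}_y}\varphi^b(\xi_{n,1}^{xby},\xi_n^{xy}).
\]
Expanding via \eqref{e_tfl-outer-1} and \eqref{e_tfl-outer-2}, each side becomes a double sum over pairs $(a,b)$ of type \textbf{(L3)}, \textbf{(L4)}, or \textbf{(L5)} and over an index $j \in \llbracket 1,L\rrbracket$; critically, every summand is independent of $n$, so the potential dependence on $n$ disappears. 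I would then split by type: the \textbf{(L4)} contributions from the $\varphi^a$-sum match the \textbf{(L5)} contributions from the $\varphi^b$-sum via Lemma \ref{l_g-gL-prop-2} at level $\ell = 1$, after resumming the geometric tails in $j$ using the conventions $\widehat{\mathfrak{g}}_0^L(\boldsymbol{0}) = 0$ and $\mathfrak{g}^L(\mathfrak{v}_0) = 0$. For the \textbf{(L3)} pairs, which appear on both sides and are not covered by Lemma \ref{l_g-gL-prop-2}, I would verify the balance directly by unpacking \eqref{e_tfl-L3-1}--\eqref{e_tfl-L3-2} and exploiting the identity $\sum_{j=0}^{L-2} m^j + \frac{m^{L-1}}{1-m} = \frac{1}{1-m}$ together with the telescoping already enforced by the construction of $\varphi^{ab}$ on the levels $\ell \ge 2$.

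In the transition regimes $n \in \llbracket L, 2L-1\rrbracket$ and $n \in \llbracket N-2L+1, N-L\rrbracket$, one of the two outer corrections ceases to be defined on the relevant edge, and the edge corrections $\varphi^y$ or $\varphi^x$ take over. These are built in Definition \ref{d_tfl-edge} via cascading telescoping sums along the path $x = a_0 \sim a_1 \sim \cdots \sim a_p = y$ with exactly the correct value to absorb the residual divergence, so the argument reduces to a bookkeeping check combining the bulk-type computation with the telescoping defining $\varphi^x$ and $\varphi^y$. Finally, in the extreme regimes $n \in \llbracket 1, L-1\rrbracket$ and $n \in \llbracket N-L+1, N-1\rrbracket$, no component of $\psi$ is supported on any edge incident to $\xi_n^{xy}$, so $(\mathrm{div}\,\psi)(\xi_n^{xy}) = 0$ trivially.

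The main obstacle is the careful index tracking required in the bulk case, especially reorganizing the $(b,j)$-sum in $\varphi^a$ and the $(a,j)$-sum in $\varphi^b$ so that the matching pairs are visible and Lemma \ref{l_g-gL-prop-2} can be applied cleanly; the rest is mechanical. A secondary obstacle is verifying that the transition-regime formulas of $\varphi^x$ and $\varphi^y$ connect smoothly with the bulk-regime formulas of $\varphi^a$ and $\varphi^b$ at the boundary indices $n = 2L$ and $n = N-2L$, which amounts to checking that the sum over the last layer $\ell = L$ telescopes correctly into the first layer of edge corrections.
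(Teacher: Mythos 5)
Your proposal follows the paper's proof essentially verbatim: the same case split into the bulk window $n\in\llbracket 2L,N-2L\rrbracket$, the two transition windows $\llbracket L,2L-1\rrbracket$ and $\llbracket N-2L+1,N-L\rrbracket$ handled by $\varphi^{y}$ and $\varphi^{x}$, and the trivial extremes; the same reduction of the bulk divergence to $\sum_{a}\varphi^{a}-\sum_{b}\varphi^{b}$ with $n$-independent summands; the same direct geometric-sum cancellation (with the $\varphi^{ab}$ telescoping) for the \textbf{(L3)} pairs; and the same application of Lemma \ref{l_g-gL-prop-2} at level $\ell=1$ for the \textbf{(L4)}/\textbf{(L5)} pairs. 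The only point to watch when executing the \textbf{(L3)} bookkeeping is that the flow delivered onto $\mathcal{A}_{N}^{xby}$ and collected by $\varphi^{b}$ must include the inner-correction contribution $\varphi^{ab}(\xi_{n-1,\,1,\,j}^{xaby},\,\xi_{n,\,j}^{xby})$ coming from the $k=1$ edges of \eqref{e_tfl-L3-3} (the third term in the paper's expansion), without which the raw sums $\sum_{j}m_{a}^{j-1}$ and $\sum_{j}m_{b}^{j-1}$ would not cancel for $m_{a}\ne m_{b}$ --- your reference to the telescoping enforced by $\varphi^{ab}$ indicates you have this in hand.
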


\begin{proof}
First, we consider $\xi_{n}^{xy}\in\widehat{\mathcal{A}}_{N}^{xy}$
for $n\in\llbracket2L,\,n-2L\rrbracket$. By Definition \ref{d_tfl-outer},
we expand $(\mathrm{div}\,\psi)(\xi_{n}^{xy})$ as
\begin{equation}
\begin{aligned} & \sum_{a\in\mathcal{N}_{x}}\sum_{b\in\mathcal{N}_{y}:\,a\sim b}\sum_{j=1}^{L}[\psi^{ab}(\xi_{n-j,\,j}^{xay},\,\xi_{n-j,\,j-1,\,1}^{xaby})-\psi^{ab}(\xi_{n,\,1,\,j-1}^{xaby},\,\xi_{n,\,j}^{xby})-\psi^{ab}(\xi_{n-1,\,1,\,j}^{xaby},\,\xi_{n,\,j}^{xby})]\\
 & +\sum_{a\in\mathcal{N}_{x}}\sum_{b\in\mathcal{A}_{x}:\,a\sim b}\sum_{j=1}^{L}\psi^{ab}(\xi_{n-j,\,j}^{xay},\,\xi_{n-j,\,j-1,\,1}^{xaby})-\sum_{b\in\mathcal{N}_{y}}\sum_{a\in\mathcal{A}_{y}:\,a\sim b}\sum_{j=1}^{L}\psi^{ab}(\xi_{n,\,1,\,j-1}^{xaby},\,\xi_{n,\,j}^{xby}).
\end{aligned}
\label{e_l_flow-div-free-5-1}
\end{equation}
First, we prove that the first line in the right-hand side of \eqref{e_l_flow-div-free-5-1}
is zero. Indeed, By Definition \ref{d_tfl-L3}, it equals $c_{ab}$
times
\[
\sum_{a\in\mathcal{N}_{x}}\sum_{b\in\mathcal{N}_{y}:\,a\sim b}c_{ab}\cdot\Big[\sum_{j=1}^{L-1}m_{a}^{j-1}+\frac{m_{a}^{L-1}}{1-m_{a}}-\sum_{j=1}^{L-1}\Big(m_{b}^{j-1}-\sum_{i=j+1}^{L}(\mathrm{div}\,\phi^{ab})(\xi_{n+j-i,\,i-j,\,j}^{xaby})\Big)-\frac{m_{b}^{L-1}}{1-m_{a}}\Big].
\]
Rearranging, this becomes
\[
\begin{aligned}\sum_{a\in\mathcal{N}_{x}}\sum_{b\in\mathcal{N}_{y}:\,a\sim b}c_{ab}\cdot\Big[ & \frac{1}{1-m_{a}}-\frac{1-m_{b}^{L-1}}{1-m_{b}}\\
 & +\sum_{j=1}^{L-1}(m_{b}-m_{a})\Big(\sum_{i=j+1}^{L-1}m_{a}^{i-j-1}m_{b}^{j-1}+\frac{m_{a}^{L-j-1}m_{b}^{j-1}}{1-m_{a}}\Big)-\frac{m_{b}^{L-1}}{1-m_{a}}\Big].
\end{aligned}
\]
Further calculating, we rewrite this as
\[
\begin{aligned} & \sum_{a\in\mathcal{N}_{x}}\sum_{b\in\mathcal{N}_{y}:\,a\sim b}c_{ab}\cdot\Big[\frac{1}{1-m_{a}}-\frac{1-m_{b}^{L-1}}{1-m_{b}}+(m_{b}-m_{a})\sum_{j=1}^{L-1}\frac{m_{b}^{j-1}}{1-m_{a}}-\frac{m_{b}^{L-1}}{1-m_{a}}\Big]\\
 & =\sum_{a\in\mathcal{N}_{x}}\sum_{b\in\mathcal{N}_{y}:\,a\sim b}c_{ab}\cdot\Big[\frac{1}{1-m_{a}}-\frac{1-m_{b}^{L-1}}{1-m_{b}}+\frac{(m_{b}-m_{a})(1-m_{b}^{L-1})}{(1-m_{a})(1-m_{b})}-\frac{m_{b}^{L-1}}{1-m_{a}}\Big]=0.
\end{aligned}
\]
Next, we prove that the second line in the right-hand side of \eqref{e_l_flow-div-free-5-1}
is zero. By Definitions \ref{d_tfl-L4} and \ref{d_tfl-L5}, this
equals
\[
\sum_{a\in\mathcal{N}_{x}}\sum_{b\in\mathcal{A}_{x}:\,a\sim b}\frac{c_{ab}\cdot(1-\mathfrak{g}^{L}(\mathfrak{b}_{1}))}{1-m_{a}}-\sum_{b\in\mathcal{N}_{y}}\sum_{a\in\mathcal{A}_{y}:\,a\sim b}\frac{c_{ab}\cdot\mathfrak{g}^{L}(\mathfrak{a}_{1})}{1-m_{b}}.
\]
This is zero by Lemma \ref{l_g-gL-prop-2}. Thus, we have demonstrated
that $(\mathrm{div}\,\psi)(\xi_{n}^{xy})=0$ for $n\in\llbracket2L,\,N-2L\rrbracket$.

Next, assume that $n\in\llbracket N-2L+1,\,N-L\rrbracket$. Then,
by Definitions \ref{d_tfl-outer} and \ref{d_tfl-edge}, we may rewrite
$(\mathrm{div}\,\psi)(\xi_{n}^{xy})$ as
\[
\sum_{a\in\mathcal{N}_{x}}\varphi^{a}(\xi_{n}^{xy},\,\xi_{n-1,\,1}^{xay})-\varphi^{x}(\xi_{n,\,1}^{xa_{p-1}y},\,\xi_{n}^{xy})=0.
\]
Similarly, if $n\in\llbracket L,\,2L-1\rrbracket$, then it holds
that
\[
(\mathrm{div}\,\psi)(\xi_{n}^{xy})=-\sum_{b\in\mathcal{N}_{y}}\varphi^{b}(\xi_{n,\,1}^{xby},\,\xi_{n}^{xy})+\varphi^{y}(\xi_{n}^{xy},\,\xi_{n-1,\,1}^{xa_{1}y})=0.
\]
This concludes the proof.
\end{proof}
Finally, we are ready to prove the following lemma.
\begin{lem}
\label{l_flow-div-free-6}The test flow $\psi$ is a flow from $\xi^{x}$
to $\xi^{y}$ such that
\[
\lim_{L\to\infty}(\mathrm{div}\,\psi)(\xi^{x})=\frac{1}{6\mathfrak{K}_{xy}}.
\]
\end{lem}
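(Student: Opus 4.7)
The plan is to separate the argument into two stages: first, verify that $\psi$ is a flow from $\xi^x$ to $\xi^y$, and second, evaluate $(\mathrm{div}\,\psi)(\xi^x)$ and pass to the limit $L\to\infty$.

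For the first stage, Lemmas \ref{l_flow-div-free-1}--\ref{l_flow-div-free-5} together show that $(\mathrm{div}\,\psi)(\eta)=0$ for every $\eta\in\mathcal{H}_N\setminus\{\xi^x,\xi^y\}$: each such $\eta$ lies either in the interior of some tetrahedron $\widehat{\mathcal{A}}_N^{xaby}$ (Lemma \ref{l_flow-div-free-1}), in the interior of a face $\widehat{\mathcal{A}}_N^{xay}$, $\widehat{\mathcal{A}}_N^{xby}$, or $\widehat{\mathcal{A}}_N^{xvy}$ (Lemmas \ref{l_flow-div-free-3} and \ref{l_flow-div-free-4}), in the interior of the base edge $\widehat{\mathcal{A}}_N^{xy}$ (Lemma \ref{l_flow-div-free-5}), or on the lower-dimensional boundary $\mathcal{A}_N^{xab}\cup\mathcal{A}_N^{aby}$ (Lemma \ref{l_flow-div-free-2}). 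Combined with the antisymmetry identity $\sum_{\eta\in\mathcal{H}_N}(\mathrm{div}\,\psi)(\eta)=0$, this forces $(\mathrm{div}\,\psi)(\xi^x)=-(\mathrm{div}\,\psi)(\xi^y)$, and so $\psi$ is indeed a flow from $\xi^x$ to $\xi^y$ whose value equals $(\mathrm{div}\,\psi)(\xi^x)$.

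For the second stage, I would compute $(\mathrm{div}\,\psi)(\xi^x)$ by unwinding the layered definitions of the correction flows at $\xi^x$. Checking the support and range conditions in Definitions \ref{d_tfl-L3}--\ref{d_tfl-edge}, the only nonzero flow on an edge with endpoint $\xi^x$ is the edge correction flow $\varphi^x(\xi^x,\xi_{N-1,1}^{xa_1})$, so the divergence reduces to this single value. Applying Definition \ref{d_tfl-edge} expands it as a sum over $j\in\llbracket L,2L-1\rrbracket$ of $\varphi^x(\xi_{N-j}^{xa_1},\xi_{N-j,j-1}^{xa_1a_2})$, and each of these reduces via Definition \ref{d_tfl-edge}(1) to $\sum_{a\in\mathcal{N}_x}\varphi^a(\xi_{N-j}^{xy},\xi_{N-j-1,1}^{xay})$. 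Unfolding these in turn via Definition \ref{d_tfl-outer} splits the computation into type \textbf{(L3)} contributions (with $b\in\mathcal{N}_y$) and type \textbf{(L4)} contributions (with $b\in\mathcal{A}_x$), after which the explicit formulas in Definitions \ref{d_tfl-L3} and \ref{d_tfl-L4} make the internal geometric summations telescope against the $\ell=L$ boundary term, producing closed-form expressions in $c_{ab}$, $m_a$, $m_b$, and $\mathfrak{g}^L(\mathfrak{b}_\ell)$.

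Passing to the limit $L\to\infty$ via Lemma \ref{l_g-gL-prop-3} promotes $\mathfrak{g}^L$ to $\mathfrak{g}$, and Lemma \ref{l_g-gL-prop-4} identifies the resulting expression with $\frac{1}{6\mathfrak{K}_{xy}}$, as desired. The hard part is the combinatorial bookkeeping required to match, term by term, the unfolded divergence sum with the right-hand side of the formula in Lemma \ref{l_g-gL-prop-4}, and to show that the truncation residues supported near level $L$ are controlled by the uniform bound \eqref{e_g-gL-UB} combined with the constraint $\lambda^2>m_\star$ recorded in \eqref{e_lambda-def}.
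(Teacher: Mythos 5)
Your proposal is correct and follows essentially the same route as the paper: combine Lemmas \ref{l_flow-div-free-1}--\ref{l_flow-div-free-5} to get $(\mathrm{div}\,\psi)(\eta)=0$ off $\{\xi^{x},\xi^{y}\}$, then unwind $(\mathrm{div}\,\psi)(\xi^{x})$ through $\varphi^{x}\to\varphi^{a}\to\psi^{ab}$, split into the \textbf{(L3)} and \textbf{(L4)} contributions, telescope, and conclude with Lemmas \ref{l_g-gL-prop-3} and \ref{l_g-gL-prop-4}. The only remaining work is the explicit telescoping computation you defer, which proceeds exactly as you outline.
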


\begin{proof}
Combining Lemmas \ref{l_flow-div-free-1}--\ref{l_flow-div-free-5},
we obtain that $(\mathrm{div}\,\psi)(\eta)=0$ for all $\eta\in\mathcal{H}_{N}\setminus\{\xi^{x},\,\xi^{y}\}$.
Thus, it remains to prove that
\[
\lim_{L\to\infty}(\mathrm{div}\,\psi)(\xi^{x})=\frac{1}{6\mathfrak{K}_{xy}}.
\]
To prove this, we collection the exact definitions to calculate $(\mathrm{div}\,\psi)(\xi^{x})$
as
\begin{equation}
\begin{aligned}\sum_{j=L}^{2L-1}\varphi^{x}(\xi_{N-j}^{xa_{1}},\,\xi_{N-j,\,j-1}^{xa_{1}a_{2}}) & =\sum_{j=L}^{2L-1}\sum_{a\in\mathcal{N}_{x}}\varphi^{a}(\xi_{N-j}^{xy},\,\xi_{N-j-1,\,1}^{xay})\\
 & =\sum_{j=L}^{2L-1}\sum_{a\in\mathcal{N}_{x}}\sum_{b\in S_{0}\setminus\mathcal{N}_{x}:\,a\sim b}\sum_{i=1}^{L}\psi^{ab}(\xi_{N-j-i,\,i}^{xay},\,\xi_{N-j-i,\,i-1,\,1}^{xaby}).
\end{aligned}
\label{e_l_flow-div-free-6-1}
\end{equation}
If $b\in\mathcal{N}_{y}$, then the total summation in \eqref{e_l_flow-div-free-6-1}
becomes $\sum_{a\in\mathcal{N}_{x}}\sum_{b\in\mathcal{N}_{y}:\,a\sim b}$
times
\[
\begin{aligned} & \sum_{j=L}^{2L-1}\sum_{i=2L-j}^{L}\phi^{ab}(\xi_{N-j-i,\,i}^{xay},\,\xi_{N-j-i,\,i-1,\,1}^{xaby})+\sum_{j=L}^{2L-1}\sum_{i=1}^{2L-j-1}(\phi^{ab}+\varphi^{ab})(\xi_{N-j-i,\,i}^{xay},\,\xi_{N-j-i,\,i-1,\,1}^{xaby})\\
 & =\sum_{j=L}^{2L-1}\frac{c_{ab}m_{a}^{2L-j-1}}{1-m_{a}}+c_{ab}\cdot\sum_{j=L}^{2L-1}\sum_{i=1}^{2L-j-1}\Big(m_{a}^{i-1}-m_{b}^{i-1}+\frac{m_{b}^{i}}{1-m_{a}}-\frac{m_{a}m_{b}^{i-1}}{1-m_{a}}\Big)\\
 & =c_{ab}\cdot\sum_{j=L}^{2L-1}\Big(\frac{1}{1-m_{a}}-\frac{1-m_{b}^{2L-j-1}}{1-m_{a}}\Big)=c_{ab}\cdot\frac{1-m_{b}^{L}}{(1-m_{a})(1-m_{b})}\xrightarrow{L\to\infty}\frac{c_{ab}}{(1-m_{a})(1-m_{b})}.
\end{aligned}
\]
Similarly, if $b\in\mathcal{A}_{x}$, then the total summation in
\eqref{e_l_flow-div-free-6-1} becomes $\sum_{a\in\mathcal{N}_{x}}\sum_{b\in\mathcal{A}_{x}:\,a\sim b}$
times
\[
\begin{aligned} & \sum_{j=L}^{2L-1}\sum_{i=2L-j}^{L}\phi^{ab}(\xi_{N-j-i,\,i}^{xay},\,\xi_{N-j-i,\,i-1,\,1}^{xaby})+\sum_{j=L}^{2L-1}\sum_{i=1}^{2L-j-1}(\phi^{ab}+\varphi^{ab})(\xi_{N-j-i,\,i}^{xay},\,\xi_{N-j-i,\,i-1,\,1}^{xaby})\\
 & =\sum_{j=L}^{2L-1}\Big[\frac{c_{ab}m_{a}^{2L-j-1}(1-\mathfrak{g}^{L}(\mathfrak{b}_{1}))}{1-m_{a}}+c_{ab}\sum_{i=1}^{2L-j-1}\Big(m_{a}^{i-1}(1-\mathfrak{g}^{L}(\mathfrak{b}_{1}))-m_{b}^{i-1}+\frac{m_{b}^{i}-m_{a}m_{b}^{i-1}}{1-m_{a}}\Big)\Big]\\
 & =\frac{c_{ab}}{1-m_{a}}\cdot\sum_{j=0}^{L-1}m_{b}^{j}(1+\widehat{\mathfrak{g}}_{j}^{L}(\sigma^{b})-\widehat{\mathfrak{g}}_{j+1}^{L}(\sigma^{b}))\xrightarrow{L\to\infty}\frac{c_{ab}}{1-m_{a}}\cdot\sum_{j=0}^{\infty}m_{b}^{j}(1+\widehat{\mathfrak{g}}_{j}(\sigma^{b})-\widehat{\mathfrak{g}}_{j+1}(\sigma^{b})),
\end{aligned}
\]
where the last convergence holds by Lemma \ref{l_g-gL-prop-3}. Thus,
we conclude that $\lim_{L\to\infty}(\mathrm{div}\,\psi)(\xi^{x})$
equals
\[
\sum_{a\in\mathcal{N}_{x}}\sum_{b\in\mathcal{N}_{y}}\frac{c_{ab}}{(1-m_{a})(1-m_{b})}+\sum_{a\in\mathcal{N}_{x}}\sum_{b\in\mathcal{A}_{x}}\frac{c_{ab}}{1-m_{a}}\cdot\sum_{j=0}^{\infty}m_{b}^{j}(1+\widehat{\mathfrak{g}}_{j}(\sigma^{b})-\widehat{\mathfrak{g}}_{j+1}(\sigma^{b})),
\]
which is exactly $\frac{1}{6\mathfrak{K}_{xy}}$ by Lemma \ref{l_g-gL-prop-4}.
This concludes the proof.
\end{proof}

\subsection{\label{sec6.4}Proofs of Proposition \ref{p_flow} and Theorems \ref{t_Capacity}
and \ref{t_Capacity-specific}}
\begin{proof}[Proof of Proposition \ref{p_flow}]
 This is now straightforward by Lemmas \ref{l_flow-norm-4} and \ref{l_flow-div-free-6}.
\end{proof}
\begin{proof}[Proof of Theorem \ref{t_Capacity}]
 By the Dirichlet principle (Proposition \ref{p_DP}) and Proposition
\ref{p_Diri-form},
\[
\limsup_{N\to\infty}\frac{N^{2}}{d_{N}^{3}}\cdot\mathrm{Cap}_{N}(\mathcal{E}_{N}^{x},\,\mathcal{E}_{N}^{y})\le\limsup_{N\to\infty}\frac{N^{2}}{d_{N}^{3}}\cdot\mathscr{D}_{N}(F_{\textup{test}})\le\frac{1}{2\mathfrak{K}_{xy}}.
\]
By the Thomson principle (Proposition \ref{p_TP}) and Proposition
\ref{p_flow}, it holds that
\[
\liminf_{N\to\infty}\frac{N^{2}}{d_{N}^{3}}\cdot\mathrm{Cap}_{N}(\mathcal{E}_{N}^{x},\,\mathcal{E}_{N}^{y})\ge\liminf_{L\to\infty}\liminf_{N\to\infty}\frac{(\mathrm{div}\,\psi)(\xi^{x})^{2}}{\|\psi\|^{2}}\ge\frac{1}{2\mathfrak{K}_{xy}}.
\]
The displayed two inequalities prove Theorem \ref{t_Capacity}\@.
\end{proof}
\begin{proof}[Proof of Theorem \ref{t_Capacity-specific}]
 In the special case of $|S_{\star}|=\kappa_{3}=2$, Theorem \ref{t_Capacity-specific}
is equivalent to Theorem \ref{t_Capacity} and there is nothing more
to prove. Refer to Section \ref{sec7} for a brief explanation on
the proof of the general case.
\end{proof}

\section{\label{sec7}Comments on Proof of the General Case}

In this last section, we settle a few issues regarding the general
case without Assumption \ref{a_Sstar-kappa3}.
\begin{itemize}
\item In the definition \eqref{e_Kxy-def} of $\mathfrak{K}_{xy}$, it was
immediate that the constant $\mathfrak{K}_{xy}$ is a finite positive
real number. This is not true in general; it may happen that there
exist $i,\,j\in\llbracket1,\,\kappa_{3}\rrbracket$ such that $\mathfrak{K}_{xy}=\infty$
for any $x\in S_{\star i}^{3}$ and $y\in S_{\star j}^{3}$, and thus
$r^{\textup{3rd}}(i,\,j)=0$. Nevertheless, since the underlying random
walk $r(\cdot,\,\cdot)$ is irreducible, even for such $i,\,j\in\llbracket1,\,\kappa_{3}\rrbracket$,
there exists a sequence $i=i_{0},\,i_{1},\,\dots,\,i_{n}=j\in\llbracket1,\,\kappa_{3}\rrbracket$
such that $r^{\textup{3rd}}(i_{m},\,i_{m+1})>0$ for all $m\in\llbracket0,\,n-1\rrbracket$.
Thus, in the general case the irreducibility of the scaling limit
$Z(\cdot)$ is still valid.
\item It is quite straightforward to generalize the construction of the
test function $F_{\textup{test}}$ conducted in Section \ref{sec5}.
The main rule to obey is Lemma \ref{l_F-prop}-(1); namely, in the
general case, it must hold that $F_{\textup{test}}(\eta)$ depends
on $\eta$ only through $\sum_{x\in S_{\star i}^{3}}\overline{\eta}_{x}$
for each $i\in\llbracket1,\,\kappa_{3}\rrbracket$, where $\overline{\eta}_{x}$
is defined in \eqref{e_eta-not-def}. After the generalized construction,
similar estimates as the ones given in the remainder of Section \ref{sec5}
are also valid in the general case as well.
\item The construction of the test flow $\psi_{\textup{test}}^{L}$ is even
simpler than the construction of the test function. We can simply
define the test flows on each tetrahedron $\mathcal{A}_{N}^{xaby}$
for all relevant $\{a,\,b\}\subseteq S_{0}$, $x\in S_{\star i}^{3}$,
and $y\in S_{\star j}^{3}$ for $i,\,j\in\llbracket1,\,\kappa_{3}\rrbracket$.
Then, the same strategy works perfectly.
\item In the general case, the proof of Theorem \ref{t_Capacity-specific}
needs a little bit more effort. Suppose as in the theorem that $i\in\llbracket1,\,\kappa_{3}\rrbracket$
and $x\in S_{\star i}^{3}$. Recall that our strategy of calculating
$\mathrm{Cap}_{N}(\mathcal{E}_{N}(S_{\star i}^{3}),\,\mathcal{E}_{N}(S_{\star}\setminus S_{\star i}^{3}))$
is to construct a test function $F=F_{\textup{test}}:\mathcal{H}_{N}\to\mathbb{R}$
and a test flow $\psi=\psi_{\textup{test}}^{L}\in\mathfrak{F}_{\mathcal{E}_{N}(S_{\star i}^{3}),\,\mathcal{E}_{N}(S_{\star}\setminus S_{\star i}^{3})}^{\gamma}$
such that
\[
\limsup_{N\to\infty}\frac{N^{2}}{d_{N}^{3}}\mathrm{Cap}_{N}(\mathcal{E}_{N}(S_{\star i}^{3}),\,\mathcal{E}_{N}(S_{\star}\setminus S_{\star i}^{3}))\le\limsup_{N\to\infty}\frac{N^{2}}{d_{N}^{3}}\mathscr{D}_{N}(F)\le\frac{1}{|S_{\star}|}\cdot\sum_{j\in\llbracket1,\,\kappa_{3}\rrbracket:\,j\ne i}\frac{1}{\mathfrak{K}_{ij}},
\]
and
\[
\liminf_{N\to\infty}\frac{N^{2}}{d_{N}^{3}}\mathrm{Cap}_{N}(\mathcal{E}_{N}(S_{\star i}^{3}),\,\mathcal{E}_{N}(S_{\star}\setminus S_{\star i}^{3}))\ge\liminf_{L\to\infty}\liminf_{N\to\infty}\frac{N^{2}}{d_{N}^{3}}\frac{\gamma^{2}}{\|\psi\|^{2}}\ge\frac{1}{|S_{\star}|}\cdot\sum_{j\in\llbracket1,\,\kappa_{3}\rrbracket:\,j\ne i}\frac{1}{\mathfrak{K}_{ij}}.
\]
The test function $F$ satisfies $F=1$ on $\mathcal{E}_{N}(S_{\star i}^{3})\ni\xi^{x}$
and $F=0$ on $\mathcal{E}_{N}(S_{\star}\setminus S_{\star i}^{3})$,
such that we may still calculate via the Dirichlet principle (Proposition
\ref{p_DP}) that
\begin{equation}
\limsup_{N\to\infty}\frac{N^{2}}{d_{N}^{3}}\mathrm{Cap}_{N}(\xi^{x},\,\mathcal{E}_{N}(S_{\star}\setminus S_{\star i}^{3}))\le\limsup_{N\to\infty}\frac{N^{2}}{d_{N}^{3}}\mathscr{D}_{N}(F)\le\frac{1}{|S_{\star}|}\cdot\sum_{j\in\llbracket1,\,\kappa_{3}\rrbracket:\,j\ne i}\frac{1}{\mathfrak{K}_{ij}}.\label{e_gen-proof-cap-1}
\end{equation}
On the other hand, the test flow $\psi$ cannot be used directly,
since it is a flow from $\mathcal{E}_{N}(S_{\star i}^{3})$ to $\mathcal{E}_{N}(S_{\star}\setminus S_{\star i}^{3})$
but \emph{not} a flow from $\xi^{x}$ to $\mathcal{E}_{N}(S_{\star}\setminus S_{\star i}^{3})$.
This issue can be overcome by sending the remaining divergence of
$\psi$ on $\mathcal{E}_{N}(S_{\star i}^{3})\setminus\{\xi^{x}\}$
to $\xi^{x}$ by adding an additional correction flow, which can be
defined as done in the proof of \eqref{e_t_3rd-2}, whose flow norm
is negligible compared to $\frac{N^{2}}{d_{N}^{3}}$. Therefore, this
modified flow $\psi'$ is now a flow from $\xi^{x}$ to $\mathcal{E}_{N}(S_{\star}\setminus S_{\star i}^{3})$
with $\|\psi'\|^{2}\simeq\|\psi\|^{2}$, so that by the Thomson principle
(Proposition \ref{p_TP}),
\begin{equation}
\liminf_{N\to\infty}\frac{N^{2}}{d_{N}^{3}}\mathrm{Cap}_{N}(\xi^{x},\,\mathcal{E}_{N}(S_{\star}\setminus S_{\star i}^{3}))\ge\liminf_{L\to\infty}\liminf_{N\to\infty}\frac{N^{2}}{d_{N}^{3}}\frac{\gamma^{2}}{\|\psi'\|^{2}}=\frac{1}{|S_{\star}|}\cdot\sum_{j\in\llbracket1,\,\kappa_{3}\rrbracket:\,j\ne i}\frac{1}{\mathfrak{K}_{ij}}.\label{e_gen-proof-cap-2}
\end{equation}
Gathering \eqref{e_gen-proof-cap-1} and \eqref{e_gen-proof-cap-2}
completes the verification of Theorem \ref{t_Capacity-specific} in
the general case.
\end{itemize}

\appendix

\section{\label{appenA}Potential Theory}

We present some essential results from discrete-space potential theory
used in the article. Fix a finite state space $V$ and a continuous-time
irreducible, reversible Markov chain $\{v_{t}\}_{t\ge0}$ on $V$
with corresponding infinitesimal generator $\mathcal{L}$ acting on
functions $f:V\to\mathbb{R}$ by
\[
(\mathcal{L}f)(v)=\sum_{w\in V}R(v,\,w)(f(w)-f(v))\text{ for all }v\in V.
\]
Denote by $\pi$ the stationary distribution of $\{v_{t}\}_{t\ge0}$.
Moreover, we fix two disjoint non-empty subsets $A$ and $B$ of $V$.

For each $f:V\to\mathbb{R}$, the \emph{Dirichlet form} is defined
as $\mathcal{D}(f):=\langle f,\,-\mathcal{L}f\rangle_{\pi}$. It is
straightforward to obtain the following alternative representation:
\[
\mathcal{D}(f)=\frac{1}{2}\sum_{v,\,w\in V}\pi(v)R(v,\,w)(f(w)-f(v))^{2}.
\]
The \emph{equilibrium potential} $h_{A,\,B}:V\to\mathbb{R}$ is defined
as $h_{A,\,B}(v):=\mathbf{P}_{v}[\mathcal{T}_{A}<\mathcal{T}_{B}]$
for $v\in V$, where $\mathcal{T}_{A}$ (resp. $\mathcal{T}_{B}$)
is the hitting time of the set $A$ (resp. $B$) and $\mathbf{P}_{v}$
is the law of the process starting from $v$. Then, $h_{A,\,B}$ is
the unique solution to the following Dirichlet problem:
\begin{equation}
h=1\text{ on }A,\quad h=0\text{ on }B,\quad\text{and}\quad\mathcal{L}h=0\text{ on }(A\cup B)^{c}.\label{e_h-Diri-problem}
\end{equation}
The \emph{capacity} between $A$ and $B$ is defined as $\mathrm{cap}(A,\,B):=\mathcal{D}(h_{A,\,B})$.
It can be easily verified that $\mathrm{cap}(A,\,B)=\mathrm{cap}(B,\,A)$.

\subsubsection*{Variational principles}

As a matter of fact, $\mathrm{cap}(A,\,B)$ can be alternatively defined
in terms of the following variational principle, widely known as the
\emph{Dirichlet principle}.
\begin{prop}[Dirichlet principle]
\label{p_DP}It holds that
\[
\mathrm{cap}(A,\,B)=\inf\{\mathcal{D}(F):F=1\text{ on }A,\ F=0\text{ on }B\}.
\]
Moreover, the equilibrium potential $h_{A,\,B}$ is the unique minimizer.
\end{prop}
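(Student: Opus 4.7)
The plan is to carry out the standard orthogonal decomposition argument in the Hilbert space $L^2(\pi)$, exploiting the fact that $h := h_{A,B}$ solves the Dirichlet problem \eqref{e_h-Diri-problem}. First I would observe that $h$ itself satisfies the boundary conditions $h = 1$ on $A$ and $h = 0$ on $B$, so $h$ belongs to the feasible set of the variational problem; hence $\inf\{\mathcal{D}(F)\} \le \mathcal{D}(h) = \mathrm{cap}(A,B)$.

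For the matching lower bound, I would take an arbitrary feasible $F$ and write $F = h + g$, where $g := F - h$ vanishes on $A \cup B$. Expanding by bilinearity of the Dirichlet form,
\begin{equation*}
\mathcal{D}(F) \;=\; \mathcal{D}(h) \;+\; 2\langle -\mathcal{L}h,\, g\rangle_\pi \;+\; \mathcal{D}(g).
\end{equation*}
The key step is that the cross term vanishes: by \eqref{e_h-Diri-problem}, $(\mathcal{L}h)(v) = 0$ for every $v \in (A\cup B)^c$, while $g(v) = 0$ for every $v \in A\cup B$, so the pointwise product $(-\mathcal{L}h)(v)\,g(v)$ is identically zero on $V$. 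Consequently $\mathcal{D}(F) = \mathcal{D}(h) + \mathcal{D}(g) \ge \mathcal{D}(h)$, which proves the infimum is attained at $h$ and equals $\mathrm{cap}(A,B)$.

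Uniqueness of the minimizer then falls out of the same identity: equality $\mathcal{D}(F) = \mathcal{D}(h)$ forces $\mathcal{D}(g) = 0$. Since the underlying chain is irreducible, the only functions with zero Dirichlet form are the constants, and since $g \equiv 0$ on the non-empty set $A$, the constant must be $0$. Hence $F = h$.

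I do not anticipate any genuine obstacle; the only point requiring a modicum of care is verifying that the cross term annihilates, which is precisely the harmonicity of $h$ off $A\cup B$ combined with the boundary-vanishing of $g$. Everything else is linear algebra in $L^2(\pi)$, and irreducibility is invoked solely to secure uniqueness.
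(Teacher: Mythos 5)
Your proof is correct and is exactly the standard projection argument: the paper itself does not prove Proposition \ref{p_DP} but defers to \cite[Section 7.3]{BdH}, where the same orthogonal decomposition $F=h+g$ with the cross term $\langle -\mathcal{L}h,\,g\rangle_{\pi}$ killed by harmonicity of $h$ off $A\cup B$ and the vanishing of $g$ on $A\cup B$ is used. The only implicit ingredient worth flagging is that writing the cross term as $2\langle-\mathcal{L}h,\,g\rangle_{\pi}$ uses the self-adjointness of $\mathcal{L}$ in $L^{2}(\pi)$, i.e.\ reversibility, which is part of the standing assumptions, and your use of irreducibility for uniqueness is likewise exactly as needed.
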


By choosing a suitable test function $F_{\textup{test}}$ approximating
$h_{A,\,B}$, we obtain an upper bound for the capacity: $\mathrm{cap}(A,\,B)\le\mathcal{D}(F_{\textup{test}})$.

For a lower bound, we first introduce the flow structure. For $v,\,w\in V$,
we write $v\sim w$ if $R(v,\,w)>0$. By reversibility, it is clear
that $v\sim w$ if and only if $w\sim v$. A function $\phi:V\times V\to\mathbb{R}$
is a \emph{flow} on $V$, collected as $\mathfrak{F}=\mathfrak{F}_{V}$,
if $\phi(v,\,w)=-\phi(w,\,v)$ for all $(v,\,w)\in V\times V$ and
$\phi(v,\,w)\ne0$ only if $v\sim w$.

For each $\phi\in\mathfrak{F}$, the \emph{norm} $\|\phi\|$ and the
\emph{divergence} $\mathrm{div}\,\phi$ are defined as
\[
\Vert\phi\Vert^{2}:=\frac{1}{2}\sum_{v,\,w\in V:\,v\sim w}\frac{\phi(v,\,w)^{2}}{\pi(v)R(v,\,w)}\quad\text{and}\quad(\mathrm{div}\,\phi)(v):=\sum_{w\in V}\phi(v,\,w),\ v\in V.
\]
A flow $\phi\in\mathfrak{F}$ is called a \emph{flow from $A$ to
$B$ of value $\gamma\in\mathbb{R}$} if
\begin{equation}
\sum_{v\in A}(\mathrm{div}\,\phi)(v)=\gamma,\quad\sum_{v\in B}(\mathrm{div}\,\phi)(v)=-\gamma,\quad\text{and}\quad(\mathrm{div}\,\phi)(v)=0\text{ for all }v\notin A\cup B.\label{e_flow-A-B-def}
\end{equation}
We denote by $\mathfrak{F}_{A,\,B}^{\gamma}$ the collection of such
flows. The \emph{harmonic flow} $\Phi_{A,\,B}$ is defined as $\Phi_{A,\,B}(v,\,w):=\pi(v)R(v,\,w)(h_{A,\,B}(v)-h_{A,\,B}(w))$
for $(v,\,w)\in V\times V$. One can easily verify that $\Phi_{A,\,B}$
is a flow from $A$ to $B$.

The following \emph{Thomson principle} is a natural counterpart to
the Dirichlet principle.
\begin{prop}[Thomson principle]
\label{p_TP}It holds that
\[
\mathrm{cap}(A,\,B)=\sup_{\psi\in\mathfrak{F}_{A,\,B}^{\gamma}:\,\gamma\ne0}\frac{\gamma^{2}}{\|\psi\|^{2}}.
\]
Moreover, the supremum is achieved if and only if $\psi=\gamma\Phi_{A,\,B}$
for some $\gamma\ne0$.
\end{prop}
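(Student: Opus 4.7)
The plan is to establish the Thomson principle as a matching pair of inequalities, with the harmonic flow $\Phi_{A,B}$ serving as the simultaneous extremizer. Both bounds flow from a single summation-by-parts identity and Cauchy--Schwarz.

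First, I would verify that $\Phi_{A,B}\in\mathfrak{F}_{A,B}^{\gamma_{0}}$ with $\gamma_{0}=\mathrm{cap}(A,B)$ and that $\|\Phi_{A,B}\|^{2}=\mathrm{cap}(A,B)$. Writing $(\mathrm{div}\,\Phi_{A,B})(v)=-\pi(v)(\mathcal{L}h_{A,B})(v)$, the harmonicity part of \eqref{e_h-Diri-problem} gives divergence zero on $(A\cup B)^{c}$. Summing over $v\in A$ and using $h_{A,B}=\mathbbm{1}_{A}$ on $A\cup B$, I identify the sum as $\langle h_{A,B},-\mathcal{L}h_{A,B}\rangle_{\pi}=\mathcal{D}(h_{A,B})=\mathrm{cap}(A,B)$; the sum over $v\in B$ is $-\mathrm{cap}(A,B)$ by the same calculation. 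A direct computation in the definition of $\|\cdot\|$ produces one cancellation of $\pi(v)R(v,w)$, reducing $\|\Phi_{A,B}\|^{2}$ to $\mathcal{D}(h_{A,B})=\mathrm{cap}(A,B)$. Hence $\gamma_{0}^{2}/\|\Phi_{A,B}\|^{2}=\mathrm{cap}(A,B)$, which already gives the $\ge$ half of the variational formula and identifies $\Phi_{A,B}$ as a candidate maximizer.

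For the reverse inequality, I would take any $\psi\in\mathfrak{F}_{A,B}^{\gamma}$ with $\gamma\ne0$ and use the summation-by-parts identity
\[
\gamma=\sum_{v\in V}h_{A,B}(v)(\mathrm{div}\,\psi)(v)=\tfrac{1}{2}\sum_{v,w:\,v\sim w}\bigl(h_{A,B}(v)-h_{A,B}(w)\bigr)\psi(v,w),
\]
where the first equality uses the boundary values of $h_{A,B}$ combined with the divergence conditions of \eqref{e_flow-A-B-def}, and the second uses antisymmetry of $\psi$. Splitting the summand as $[(h_{A,B}(v)-h_{A,B}(w))\sqrt{\pi(v)R(v,w)}]\cdot[\psi(v,w)/\sqrt{\pi(v)R(v,w)}]$ and applying Cauchy--Schwarz yields $\gamma^{2}\le\mathcal{D}(h_{A,B})\,\|\psi\|^{2}=\mathrm{cap}(A,B)\,\|\psi\|^{2}$, which is exactly the upper bound $\gamma^{2}/\|\psi\|^{2}\le\mathrm{cap}(A,B)$.

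The characterization of the optimizer then falls out of the equality case of Cauchy--Schwarz: equality forces $\psi(v,w)/\sqrt{\pi(v)R(v,w)}$ to be proportional to $(h_{A,B}(v)-h_{A,B}(w))\sqrt{\pi(v)R(v,w)}$, i.e., $\psi$ is a scalar multiple of $\Phi_{A,B}$; matching the flow values then fixes the scalar. There is no deep obstacle here; the only subtle step is the summation-by-parts identity, where one must carefully verify that the interior contributions drop out using the divergence-freeness of $\psi$ on $(A\cup B)^{c}$ together with the Dirichlet boundary values $h_{A,B}=1$ on $A$ and $h_{A,B}=0$ on $B$. Once this identity is in hand, both directions of the principle are essentially one application of Cauchy--Schwarz.
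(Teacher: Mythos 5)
Your proof is correct and follows the standard route; the paper itself does not prove Proposition \ref{p_TP} but defers to \cite[Section 7.3]{BdH}, where essentially this same summation-by-parts plus Cauchy--Schwarz argument (with $\Phi_{A,\,B}$ as the extremizer) is given. The only cosmetic remark is that Cauchy--Schwarz bounds $|\gamma|$ rather than $\gamma$, which is what you square anyway, so the sign of $\gamma$ causes no issue.
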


Provided a nice test flow $\psi_{\textup{test}}\in\mathfrak{F}_{A,\,B}^{\gamma}$
approximating $\gamma'\Phi_{A,\,B}$, we obtain $\mathrm{cap}(A,\,B)\ge\gamma^{2}/\|\psi_{\textup{test}}\|^{2}$.
For the proofs of the two variational principles, refer to \cite[Section 7.3]{BdH}.

\subsubsection*{Trace process}

Fix a non-empty proper subset $W$ of $V$ and consider the \emph{trace
process} $\{v_{t}^{W}\}_{t\ge0}$ on $W$. This is defined by considering
the local time $T^{W}(t)$ spent in $W$ and its inverse $S^{W}(t)$,
\[
T_{t}^{W}:=\int_{0}^{t}\mathbbm{1}\{v_{s}\in W\}\mathrm{d}s\quad\text{and}\quad S_{t}^{W}:=\sup\{s\ge0:T_{s}^{W}\le t\},
\]
and then defining $v_{t}^{W}:=v_{S_{t}^{W}}$ for all $t\ge0$. By
\cite[Proposition 6.1 and Lemma 6.7]{BL TM}, $\{v_{t}^{W}\}_{t\ge0}$
is an irreducible, reversible Markov chain on $W$. If $\{v_{t}\}_{t\ge0}$
is symmetric, $\pi$ becomes the uniform distribution and thus $\{v_{t}^{W}\}_{t\ge0}$
is also symmetric.

Denote by $R^{W}$, $\mathcal{L}^{W}$, and $\mathcal{D}^{W}$ the
transition rate, generator, and Dirichlet form of $\{v_{t}^{W}\}_{t\ge0}$,
respectively. With a slight abuse of notation, we also write the mean
transition rate as
\begin{equation}
R^{W}(A,\,B):=\sum_{v\in A}\frac{\pi(v)}{\pi(A)}\sum_{w\in B}R(v,\,w).\label{e_mean-trans-rate}
\end{equation}
By \cite[Corollary 6.2]{BL TM}, it holds for all $w,\,w'\in W$ that
\begin{equation}
R^{W}(w,\,w')=R(w,\,w')+\sum_{v\in V\setminus W}R(w,\,v)\cdot\mathbf{P}_{v}[\mathcal{T}_{W}=\mathcal{T}_{w'}]\le\sum_{v\in V}R(w,\,v),\label{e_RW-UB}
\end{equation}
where the inequality follows from $\mathbf{P}_{v}[\mathcal{T}_{W}=\mathcal{T}_{w'}]\le1$.
By \cite[Lemma 6.8]{BL TM}, for $A,\,B\subseteq W$,
\begin{equation}
\pi(A)\cdot R^{W}(A,\,B)=\frac{1}{2}\big[\mathrm{cap}(A,\,W\setminus A)+\mathrm{cap}(B,\,W\setminus B)-\mathrm{cap}(A\cup B,\,W\setminus(A\cup B))\big].\label{e_cap-cap-cap}
\end{equation}
By \cite[Proposition 6.10]{BL TM}, the following magic formula holds:
for non-empty $B\subseteq V$ and $v\notin B$,
\begin{equation}
\mathbf{E}_{v}[\mathcal{T}_{B}]=\frac{1}{\mathrm{cap}(v,\,B)}\cdot\sum_{w\in V}\pi(w)\cdot h_{v,\,B}(w).\label{e_magic-formula}
\end{equation}

\subsubsection*{Harmonic extension}

For given $g:W\to\mathbb{R}$, we consider the following Dirichlet
problem: find $f:V\to\mathbb{R}$ such that
\[
f=g\text{ on }W\quad\text{and}\quad\mathcal{L}f=0\text{ on }V\setminus W.
\]
It is well known (e.g., \cite[Theorem 7.1]{BdH}) that there exists
a unique solution $\widehat{g}:V\to\mathbb{R}$ to this problem, called
the \emph{harmonic extension} of $g$, which can be represented as
\begin{equation}
\widehat{g}(v)=\mathbf{E}_{v}[g(v_{\mathcal{T}_{W}})]\text{ for all }v\in V,\label{e_har-extension}
\end{equation}
where $\mathbf{E}_{v}$ is the expectation with respect to $\mathbf{P}_{v}$.

\subsubsection*{Duality}

Now, we state the duality between the trace process and the harmonic
extension. For a detailed proof, we refer the readers to \cite[Appendix A]{BGL}.
\begin{enumerate}
\item (Generator) It holds that
\begin{equation}
(\mathcal{L}\widehat{g})(w)=(\mathcal{L}^{W}g)(w)\text{ for all }w\in W.\label{e_dual-gen}
\end{equation}
\item (Dirichlet form) It holds that $\mathcal{D}(\widehat{g})=\pi(W)\cdot\mathcal{D}^{W}(g)$.
This is equivalent to
\begin{equation}
\sum_{\{v,\,v'\}\subseteq V}\pi(v)R(v,\,v')(\widehat{g}(v')-\widehat{g}(v))^{2}=\sum_{\{w,\,w'\}\subseteq W}\pi(w)R^{W}(w,\,w')(g(w')-g(w))^{2}.\label{e_dual-Diri}
\end{equation}
\end{enumerate}

\section{\label{appenB}Proof of results in Section \ref{sec4.3}}

Here, we present the technical proofs of the lemmas given in Section
\ref{sec4.3}.
\begin{proof}[Proof of Lemma \ref{l_g-gL-prop-1}]
 (1) Recall from Theorem \ref{t_resolvent-equation}-(1) that $\mathfrak{g}_{0}^{L}:\mathscr{V}_{j}^{L}\to\mathbb{R}$
solves the resolvent equation \eqref{e_resolvent-L}. If $\ell=1$,
then according to the exact definitions, we calculate $\mathfrak{f}^{L}(\mathfrak{v}_{1})\mathfrak{g}_{0}^{L}(\mathfrak{v}_{1})-(\mathscr{L}_{j}^{L}\mathfrak{g}_{0}^{L})(\mathfrak{v}_{1})-\mathfrak{h}^{L}(\mathfrak{v}_{1})$
(which is zero) as
\[
\begin{aligned} & \Big(\frac{1}{\lambda^{2}}-\frac{m_{v}}{\lambda^{3}}+\frac{m_{v}}{\lambda^{2}}\Big)\sum_{a\in\mathcal{N}_{x}\cup\mathcal{N}_{y}}\frac{c_{av}}{1-m_{a}}\cdot\mathfrak{g}_{0}^{L}(\mathfrak{v}_{1})-\Big(\frac{m_{v}}{\lambda^{3}}\sum_{a\in\mathcal{N}_{x}\cup\mathcal{N}_{y}}\frac{c_{av}}{1-m_{a}}\Big)\cdot(\mathfrak{g}_{0}^{L}(\mathfrak{v}_{2})-\mathfrak{g}_{0}^{L}(\mathfrak{v}_{1}))\\
 & -\sum_{w\in\mathcal{A}_{x,\,j}\cup\mathcal{A}_{y,\,j}}\frac{1}{\lambda^{2}}\widehat{\mathfrak{r}}^{1}(\sigma^{v},\,\sigma^{w})\cdot(\mathfrak{g}_{0}^{L}(\mathfrak{w}_{1})-\mathfrak{g}_{0}^{L}(\mathfrak{v}_{1}))-(1-m_{v})\cdot\frac{1}{\lambda}\sum_{a\in\mathcal{N}_{x}}\frac{c_{av}}{1-m_{a}}.
\end{aligned}
\]
Substituting \eqref{e_gL-def} and multiplying $\lambda$ on both
sides, we obtain that
\[
\begin{aligned} & (1+m_{v})\sum_{a\in\mathcal{N}_{x}\cup\mathcal{N}_{y}}\frac{c_{av}}{1-m_{a}}\cdot\mathfrak{g}^{L}(\mathfrak{v}_{1})-\Big(m_{v}\sum_{a\in\mathcal{N}_{x}\cup\mathcal{N}_{y}}\frac{c_{av}}{1-m_{a}}\Big)\cdot\mathfrak{g}^{L}(\mathfrak{v}_{2})\\
 & =\sum_{w\in\mathcal{A}_{x,\,j}\cup\mathcal{A}_{y,\,j}}\widehat{\mathfrak{r}}^{1}(\sigma^{v},\,\sigma^{w})\cdot(\mathfrak{g}^{L}(\mathfrak{w}_{1})-\mathfrak{g}^{L}(\mathfrak{v}_{1}))-(1-m_{v})\cdot\sum_{a\in\mathcal{N}_{x}}\frac{c_{av}}{1-m_{a}},
\end{aligned}
\]
which is exactly \eqref{e_l_g-gL-prop-1-1} for $\ell=1$. The remaining
cases of $\ell\in\llbracket2,\,L\rrbracket$ can be proved by the
same strategy, which is evaluating $0=\mathfrak{f}^{L}(\mathfrak{v}_{\ell})\mathfrak{g}_{0}^{L}(\mathfrak{v}_{\ell})-(\mathscr{L}_{j}^{L}\mathfrak{g}_{0}^{L})(\mathfrak{v}_{\ell})-\mathfrak{h}^{L}(\mathfrak{v}_{\ell})$.
We omit the tedious repetition of the calculation.
\end{proof}
\begin{proof}[Proof of Lemma \ref{l_g-gL-prop-2}]
 The idea is to sum up the identities in Lemma \ref{l_g-gL-prop-1}-(1)
for all $\ell'\in\llbracket\ell,\,L-1\rrbracket$ and Lemma \ref{l_g-gL-prop-1}-(2),
also for all $v\in\mathcal{A}_{x,\,j}\cup\mathcal{A}_{y,\,j}$ and
$j\in\llbracket1,\,s\rrbracket$. Notice that there are four terms
in \eqref{e_l_g-gL-prop-1-1} and three terms in \eqref{e_l_g-gL-prop-1-2}
to be summed up. Computing the first terms in \eqref{e_l_g-gL-prop-1-1}
and \eqref{e_l_g-gL-prop-1-2}, we obtain
\begin{equation}
\sum_{j=1}^{s}\sum_{v\in\mathcal{A}_{x,\,j}\cup\mathcal{A}_{y,\,j}}\sum_{a\in\mathcal{N}_{x}}\frac{c_{av}}{1-m_{a}}\Big(\sum_{\ell'=\ell}^{L-1}(1-m_{v})\cdot m_{v}^{\ell'-1}+m_{v}^{L-1}\Big)=\sum_{a\in\mathcal{N}_{x}}\sum_{v\in\mathcal{A}_{x}}\frac{c_{av}m_{v}^{\ell-1}}{1-m_{a}}.\label{e_l_g-gL-prop-2-1}
\end{equation}
Computing the second and third terms in \eqref{e_l_g-gL-prop-1-1}
and the second term in \eqref{e_l_g-gL-prop-1-2}, a beautiful interpolation
takes place and we are left with
\begin{equation}
\Big[\sum_{a\in\mathcal{N}_{x}}\sum_{v\in\mathcal{A}_{x}}+\sum_{a\in\mathcal{N}_{y}}\sum_{v\in\mathcal{A}_{y}}\Big]\frac{c_{av}m_{v}^{\ell-1}}{1-m_{a}}\cdot(\mathfrak{g}^{L}(\mathfrak{v}_{\ell-1})-\mathfrak{g}^{L}(\mathfrak{v}_{\ell})).\label{e_l_g-gL-prop-2-2}
\end{equation}
Computing the fourth term in \eqref{e_l_g-gL-prop-1-1} and the third
term in \eqref{e_l_g-gL-prop-1-2}, we obtain
\begin{equation}
\sum_{j=1}^{s}\sum_{v\in\mathcal{A}_{x,\,j}\cup\mathcal{A}_{y,\,j}}\sum_{w\in\mathcal{A}_{x,\,j}\cup\mathcal{A}_{y,\,j}}\sum_{\ell'=\ell}^{L}\widehat{\mathfrak{r}}^{\ell'}(\sigma^{v},\,\sigma^{w})(\mathfrak{g}^{L}(\mathfrak{w}_{\ell'})-\mathfrak{g}^{L}(\mathfrak{v}_{\ell'}))=0,\label{e_l_g-gL-prop-2-3}
\end{equation}
since each term is counted twice with opposite signs with respect
to the summations in $v$ and $w$. Finally, Lemma \ref{l_g-gL-prop-1}
indicates that the summation of the terms in \eqref{e_l_g-gL-prop-2-1},
\eqref{e_l_g-gL-prop-2-2}, and \eqref{e_l_g-gL-prop-2-3} equals
$0$. This is exactly what we wanted.
\end{proof}
\begin{proof}[Proof of Lemma \ref{l_g-gL-prop-3}]
 First, we prove the first statement. The existence of the series
in the right-hand side is guaranteed by \eqref{e_g-gL-UB}, since
$|m_{b}^{\ell}(1+\mathfrak{g}(\mathfrak{b}_{\ell})-\mathfrak{g}(\mathfrak{b}_{\ell+1})|\le C(\frac{m_{b}}{\lambda})^{\ell}$
where $m_{b}<\lambda$. Thus, it suffices to prove that
\[
\lim_{L\to\infty}\Big|\sum_{\ell=0}^{L-1}m_{b}^{\ell}(1+\mathfrak{g}^{L}(\mathfrak{b}_{\ell})-\mathfrak{g}^{L}(\mathfrak{b}_{\ell+1}))-\sum_{\ell=0}^{L-1}m_{b}^{\ell}(1+\mathfrak{g}(\mathfrak{b}_{\ell})-\mathfrak{g}(\mathfrak{b}_{\ell+1}))\Big|=0.
\]
This limit holds since $\mathfrak{g}^{L}$ converges to $\mathfrak{g}$
pointwisely as $L\to\infty$, and $\{m_{b}^{\ell}\cdot\mathfrak{g}^{L}(\mathfrak{v}_{\ell})\}_{\ell\ge L}$
decays exponentially and uniformly as $L\to\infty$. The second statement
of the lemma can be proved by the same logic.
\end{proof}
\begin{proof}[Proof of Lemma \ref{l_g-gL-prop-4}]
 The second equality is immediate by summing up the identities in
Lemma \ref{l_g-gL-prop-2} for $\ell\in\llbracket1,\,L\rrbracket$,
sending $L$ to infinity, and then applying Lemma \ref{l_g-gL-prop-3}.
Thus, we focus on proving the first equality.

We may rewrite the last (triple) summation in the right-hand side
of \eqref{e_Kxy-def} as
\[
\sum_{j=1}^{s}\sum_{\{a,\,b\}\subseteq\mathcal{A}_{x,\,j}\cup\mathcal{A}_{y,\,j}}\sum_{\ell=1}^{\infty}\widehat{\mathfrak{c}}_{\sigma^{a}\sigma^{b}}^{\ell}\{\mathfrak{g}(\mathfrak{b}_{\ell})-\mathfrak{g}(\mathfrak{a}_{\ell})\}^{2}=\sum_{j=1}^{s}\sum_{v,\,w\in\mathcal{A}_{x,\,j}\cup\mathcal{A}_{y,\,j}}\sum_{\ell=1}^{\infty}\widehat{\mathfrak{c}}_{\sigma^{v}\sigma^{w}}^{\ell}\mathfrak{g}(\mathfrak{v}_{\ell})(\mathfrak{g}(\mathfrak{v}_{\ell})-\mathfrak{g}(\mathfrak{w}_{\ell})).
\]
By Lemma \ref{l_g-gL-prop-1}-(1) (with $L\to\infty$ and applying
the dominated convergence theorem), the right-hand side equals
\[
\begin{aligned}\sum_{\ell=1}^{\infty}\sum_{j=1}^{s}\sum_{v\in\mathcal{A}_{x,\,j}\cup\mathcal{A}_{y,\,j}}\mathfrak{g}(\mathfrak{v}_{\ell})\Big[ & (1-m_{v})\cdot\sum_{a\in\mathcal{N}_{x}}\frac{c_{av}m_{v}^{\ell-1}}{1-m_{a}}+\Big(\sum_{a\in\mathcal{N}_{x}\cup\mathcal{N}_{y}}\frac{c_{av}m_{v}^{\ell-1}}{1-m_{a}}\Big)\cdot(\mathfrak{g}(\mathfrak{v}_{\ell-1})-\mathfrak{g}(\mathfrak{v}_{\ell}))\\
 & +\Big(\sum_{a\in\mathcal{N}_{x}\cup\mathcal{N}_{y}}\frac{c_{av}m_{v}^{\ell}}{1-m_{a}}\Big)\cdot(\mathfrak{g}(\mathfrak{v}_{\ell+1})-\mathfrak{g}(\mathfrak{v}_{\ell}))\Big].
\end{aligned}
\]
Rearranging, this becomes
\[
\begin{aligned} & \sum_{a\in\mathcal{N}_{x}}\sum_{b\in\mathcal{A}_{x}}\frac{c_{ab}}{1-m_{a}}\Big[\sum_{\ell=1}^{\infty}m_{b}^{\ell-1}\mathfrak{g}(\mathfrak{b}_{\ell})(1+\mathfrak{g}(\mathfrak{b}_{\ell-1})-\mathfrak{g}(\mathfrak{b}_{\ell}))-\sum_{\ell=1}^{\infty}m_{b}^{\ell}\mathfrak{g}(\mathfrak{b}_{\ell})(1+\mathfrak{g}(\mathfrak{b}_{\ell})-\mathfrak{g}(\mathfrak{b}_{\ell+1}))\Big]\\
 & +\sum_{a\in\mathcal{A}_{y}}\sum_{b\in\mathcal{N}_{y}}\frac{c_{ab}}{1-m_{b}}\Big[\sum_{\ell=1}^{\infty}m_{a}^{\ell-1}\mathfrak{g}(\mathfrak{a}_{\ell})(\mathfrak{g}(\mathfrak{a}_{\ell})-\mathfrak{g}(\mathfrak{a}_{\ell-1}))-\sum_{\ell=1}^{\infty}m_{a}^{\ell}\mathfrak{g}(\mathfrak{a}_{\ell})(\mathfrak{g}(\mathfrak{a}_{\ell+1})-\mathfrak{g}(\mathfrak{a}_{\ell}))\Big].
\end{aligned}
\]
Therefore, we may calculate $(6\mathfrak{K}_{xy})^{-1}$ by substituting
this expression to the definition:
\[
\frac{1}{6\mathfrak{K}_{xy}}=\sum_{a\in\mathcal{N}_{x}}\sum_{b\in\mathcal{N}_{y}}\frac{c_{ab}}{(1-m_{a})(1-m_{b})}+\sum_{a\in\mathcal{N}_{x}}\sum_{b\in\mathcal{A}_{x}}\frac{c_{ab}}{1-m_{a}}\sum_{\ell=0}^{\infty}m_{b}^{\ell}(1+\mathfrak{g}(\mathfrak{b}_{\ell})-\mathfrak{g}(\mathfrak{b}_{\ell+1})).
\]
This completes the proof.
\end{proof}
\begin{acknowledgement*}
SK was supported by the following grants.
\begin{itemize}
\item Samsung Science and Technology Foundation grant (No. SSTF-BA1901-03).
\item National Research Foundation of Korea (NRF) grant funded by the Korean
government (MSIT) (No. 2022R1F1A106366811, 2022R1A5A600084012, 2023R1A2C100517311,
and NRF-2019-Global Ph.D. Fellowship Program).
\end{itemize}
\end{acknowledgement*}

\end{document}